 \newcommand{\tv}[1]{\bm {\mathcal{#1}}}
\DeclareMathAlphabet{\mathbbm}{U}{bbm}{m}{n}% from bbm.sty
\definecolor{CadetBlue}{cmyk}{0.62, 0.57, 0.23, 0 }
\definecolor{black}{cmyk}{1, 0.5, 0, 0 }
\definecolor{RedViolet}{cmyk}{0.07, 0.9, 0, 0.34 }
\definecolor{SeaGreen}{cmyk}{0.69, 0, 0.5, 0}
\DeclareMathAlphabet{\mathpzc}{OT1}{pzc}{m}{it}
\newcommand{\R}{\mathbb R}
\newcommand{\C}{\mathbb C}
\newcommand{\K}{\mathbb K}
\newcommand{\N}{\mathbb N}
\newcommand{\PR}{\mathbb P}
\newcommand{\Q}{\mathbb Q}
\newcommand{\Z}{\mathbb Z}
\newtheorem{theo}{Theorem}
\newtheorem{lemm}{Lemma}
\newtheorem{prop}{Proposition}
\newtheorem{coro}{Corollary}
\theoremstyle{definition}
\theoremstyle{remark}
\newtheorem{note}{Note}
\newcommand{\bast}{{}^{\ast}}
\newcommand{\bstar}{{}^{\star}}
\title[Arithmetic of Diophantine Approximation Groups II]{The Arithmetic of Diophantine Approximation Groups II: Mahler Arithmetic}
\author{T.M. Gendron}
\address{Instituto de Matem\'{a}ticas -- Unidad Cuernavaca, Universidad
Nacional Autonoma de M\'{e}xico, Av. Universidad S/N, C.P. 62210
Cuernavaca, Morelos, M\'{E}XICO}
\email{tim@matcuer.unam.mx}
\date{28 June 2014}
\subjclass[2000]{Primary, 11J99, 11U10, 11R99}
\keywords{Diophantine approximation groups, approximate ideal arithmetic, Mahler classification, resultant arithmetic }
\begin{document}
\vspace{2cm}
\begin{abstract}  This is the second paper in a series of two in which a
global algebraic number theory of the reals
 is formulated with the purpose of providing a unified setting for 
algebraic and transcendental number theory.  In this paper,  to any real number $\uptheta$ we associate its {\it polynomial diophantine approximation ring} 
$ \bast \tilde{\tv{\mathcal{Z}}}(\uptheta )=\{  \bast\tilde{\tv{Z}}^{\upmu}_{\upnu}(\uptheta )_{\bast d}\} $:
a tri-filtered subring of a nonstandard model of the polynomial ring $\Z[X]$.  We characterize the filtration structure of $ \bast \tilde{\tv{\mathcal{Z}}}(\uptheta )$ according 
to the Mahler class and the Mahler type of $\uptheta$.
The {\it arithmetic} of polynomial diophantine approximation groups is introduced in terms of the {\it resultant} or {\it tensor product} of polynomials.  In particular, it is shown that 
polynomial diophantine approximation groups have the structure of  {\it approximate ideals}: wherein a partial resultant product of two polynomial diophantine approximation groups may be performed by restriction
to substructures of tri-filtration.  The explicit characterization of this partial product law is the main theorem of this paper.
 \end{abstract}
 \maketitle
%\tableofcontents
\section*{Introduction}

This is the second paper in a series of two (independent) papers devoted to {\it global algebraic number theory}: an algebraic treatment of Diophantine Approximation leading to a synthesis of
algebraic and transcendental number theory into a single theory.  The accommodation of transcendental number theory into algebraic number theory is made possible 
by replacing the classical players in algebraic number theory -- Dedekind domains and their polynomial rings --
by nonstandard models of the same occurring as ultrapowers.  In our first paper \cite{Ge0}, the linear theory of Diophantine Approximation was dealt with; in this paper, we treat the nonlinear or Mahler theory, an extension
of the linear theory to approximation of real numbers by polynomials. 

As algebraic number theory issues from a study of the arithmetic of ideals in Dedekind domain,  
we incorporate transcendental number theory by introducing a generalized notion of ideal called a {\it diophantine approximation group}.
Diophantine approximation groups occur as subgroups of nonstandard models of classical Dedekind domains or polynomial rings over such.
 In particular, to  $\uptheta\in \R$ we may associate various Diophantine approximation groups 
depending on how one approximates $\uptheta$ -- by rational integers, by algebraic integers, by polynomials.  
Diophantine approximation groups come with natural filtrations -- called {\it approximate ideal structures} -- along which one can partially define products: the study of which gives rise to an arithmetic extending 
the usual arithmetic of ideals.   
In \cite{Ge0} we studied diophantine approximation groups occurring in models of Dedekind domains,  paying particular attention to how their arithmetic reflects the linear classification of real numbers.

The present paper has been written so that it can be read independently of \cite{Ge0}, and in particular, we assume no results from the latter in this work.  Nevertheless, it would be a disservice to the reader
if we did not offer first a summary of the intuition and constructions of \cite{Ge0} which are to be extended to the nonlinear setting.  We provide this summary now.

\vspace{3mm}

\begin{center}
$\diamond$
\end{center}

\vspace{3mm}

\subsection*{1. Ultrapowers.}

Given $\mathfrak{u}$ a nonprincipal ultrafilter on $\N$, the {\bf {\em ultrapower}} 
\[  \bast\Z:=\Z^{\N}/\mathfrak{u}\] is a quotient ring of the $\N$-power, whose elements are thus equivalence classes of sequences.   The corresponding ultrapower of the reals \[ \bast\R := \R^{\N}/\mathfrak{u}\supset \R\] is a field
extending $\R$.  The subgroup $ \bast\R_{\upvarepsilon}\subset\bast\R$ of classes containing a representative converging to $0$ is the group of infinitesimals and for $\bast r,\bast x\in\R$ we write 
\[ \bast x\simeq \bast y \]
if $\bast x-\bast y\in \bast\R_{\upvarepsilon}$.  The ring of bounded elements $\bast\R_{\rm fin}$ is local with maximal ideal $ \bast\R_{\upvarepsilon}$ and the residue class field is $\R$.  See \S \ref{NSStructures} for a short introduction to this construction.

\subsection*{2. Diophantine Approximation Groups.}

For $\uptheta\in\R$, the  {\bf {\em diophantine approximation group}}  \cite{Ge1}, \cite{Ge0} \[ \bast\Z (\uptheta)\subset \bast\Z\]
is the subgroup of $\bast n\in\bast\Z$ for which there exists $\bast n^{\perp}\in\bast\Z$ such that
\[ \bast n\uptheta -\bast n^{\perp} \simeq 0.\]  
The dual element $\bast n^{\perp}$ 
is uniquely determined by $\bast n$ and we refer to $(\bast n^{\perp},\bast n)$ as a ``numerator denominator pair'',  denoting it here using a suggestive pseudo fractional notation
\[ 
 \begin{array}{l}
 \bast n^{\perp} \\
 \widetilde{\bast n\;\;}
 \end{array}. 
       \]
       When $\uptheta = a/b\in\Q$ then $\bast\Z (\uptheta)=\bast(b)$ = the ultrapower of the ideal $(b)$. Otherwise $\bast\Z (\uptheta)$ is only a group and $\bast\Z (\uptheta )\cap\Z=0$:
  that is, to ``observe'' $\uptheta$ by way of $\Z$ it is essential that we leave the standard model.   $\bast\Z (\uptheta)$
  provides a notion of fundamental group for the Kronecker foliation of slope $\uptheta$ and so also plays a central role in the definition of the quantum modular invariant \cite{CaGe}.

One would like to manipulate numerator denominator pairs using the usual fractional arithmetic e.g. if  $\uptheta,\upeta\in \R$ and
\[ \begin{array}{l}
 \bast m^{\perp} \\
 \widetilde{\bast m\;\;}
 \end{array},\quad
  \begin{array}{l}
 \bast n^{\perp} \\
 \widetilde{\bast n\;\;} 
  \end{array}\]
  are numerator denominator pairs associated to
$\bast m\in \bast\Z (\uptheta)$,
$\bast n\in \bast\Z (\upeta)$, we would like to assert that
\[    \begin{array}{l}
 \bast m^{\perp} \\
 \widetilde{\bast m\;\;}
 \end{array} 
 \cdot 
  \begin{array}{l}
 \bast n^{\perp} \\
 \widetilde{\bast n\;\;} 
  \end{array}:=  \begin{array}{l}
\bast m^{\perp}\cdot \bast n^{\perp} \\
 \widetilde{\bast m\cdot\bast n\;\;\;\;}
 \end{array},\quad 
  \begin{array}{l}
 \bast m^{\perp} \\
 \widetilde{\bast m\;\;}
 \end{array} 
 \pm
  \begin{array}{l}
 \bast n^{\perp} \\
 \widetilde{\bast n\;\;} 
  \end{array}:=
   \begin{array}{l}
\left(\bast m\bast n^{\perp}\pm\bast m^{\perp}\bast n\right) \\
 \widetilde{{}\quad\quad\bast m\cdot\bast n\;\;\;\;\quad{}}
 \end{array}\]be numerator denominator pairs associated to
 diophantine approximations of $ \uptheta \upeta, \uptheta\pm\upeta$.
  It is not difficult to see that this cannot be true unconditionally except when $\uptheta,\upeta\in\Q$. The description and study of conditions under which such a fractional arithmetic holds 
  is in a sense the central theme in this series of papers.
  
\subsection*{3. Approximate Ideal Structure.}\label{Para3}

A setting under which fractional arithmetic of numerator denominator pairs holds may be described by specifying a bi-filtration of subgroups 
  \[ \bast \Z (\uptheta ) = \{\bast\Z^{\upmu}_{\upnu}(\uptheta )\}.\]
 The indices $\upmu, \upnu$ are elements of the quotient 
  \[ \bstar\PR\R = \bast \R_{+}/(\bast\R_{\rm fin})_{+}^{\times} \]
  which has the structure of a totally ordered tropical (max-times) ring, associated to the valuation
  \[ \bast\R \longrightarrow \bstar\PR\R, \quad \bast r\longmapsto \langle \bast r\rangle := |\bast r| \cdot (\bast\R_{\rm fin})_{+}^{\times}. \]
  Then
  $\bast\Z^{\upmu}_{\upnu}(\uptheta )$ is the subgroup of elements $\bast n\in \bast \Z (\uptheta ) $ for which the {\bf {\em growth}} $\bast n^{-1}$ satisfies
   \[ \langle \bast n^{-1}\rangle >\upmu\] and whose {\bf {\em decay}} 
  \[ \upvarepsilon (\bast n) := \bast n\uptheta -\bast n^{\perp}\]
  satisfies 
  \[   \langle  \upvarepsilon (\bast n)\rangle \leq \upnu.\]  
  Then in \cite{Ge0} we proved that the ordinary product induces a bilinear map
  \begin{align}\label{gdprodintro} \bast\Z^{\upmu}_{\upnu}(\uptheta )\times\bast\Z^{\upnu}_{\upmu}(\upeta )\stackrel{\cdot}{\longrightarrow} \bast\Z^{\upmu\cdot\upnu}(\uptheta \upeta)\cap\bast\Z^{\upmu\cdot\upnu}(\uptheta +\upeta) \cap\bast\Z^{\upmu\cdot\upnu}(\uptheta -\upeta).
\end{align}
As a consequence, whenever $\bast m\in  \bast\Z^{\upmu}_{\upnu}(\uptheta )$ and $\bast n\in \bast\Z^{\upnu}_{\upmu}(\upeta )$, then their numerator denominator pairs may be multiplied and 
added/subtracted exactly as formulated in Paragraph 2 above.  
When $\uptheta =a/b$, $\upeta=c/d\in\Q$, (\ref{gdprodintro}) reduces to the product map
 $\bast (b)\times \bast (d)\rightarrow \bast (bd)$ of the principal ideals generated by the denominators.  The pairing (\ref{gdprodintro}) will be a subcase
 of a more general pairing proved in this paper for polynomial diophantine approximations, described further below. 
 
 The concept of an approximate ideal generalizes naturally that of ideal as follows.  If we consider just the ``growth filtration''
$\bast\Z = \{\bast \Z^{\upnu}\}$ where $\bast\Z^{\upnu}=\{\bast n|\; \upnu<\upmu(\bast n)\}$ then for each $\upmu,\upnu\in \bstar\PR\R_{\upvarepsilon}$, 
\[ \bast  \Z^{\upnu}\cdot \bast\Z^{\upmu}_{\upnu}(\uptheta )\subset \bast\Z^{\upmu\cdot\upnu}(\uptheta ).\] 
 By forgetting the indices one recovers  the usual definition of an ideal.  
%The notion of a partially defined product of algebraic structures deployed along a filtration of substructures,
%while unorthodox, seems nevertheless natural in this context.

The group ${\rm PGL}_{2}(\Z )$ -- which defines equivalence of real numbers -- induces isomorphisms of diophantine approximation groups preserving
their approximate ideal structures e.g. if $A\in {\rm PGL}_{2}(\Z )$ then for all $\upmu, \upnu$, $A$ defines an isomorphism
\[ \bast\Z^{\upmu}_{\upnu}(\uptheta )\cong  \bast\Z^{\upmu}_{\upnu}(A(\uptheta) ). \]

\subsection*{4. Nonvanishing Spectra.}

We define the {\bf {\em nonvanishing spectrum}} of $\uptheta$ to be
\[ {\rm Spec}(\uptheta)=\{ (\upmu,\upnu)|\; \bast\Z^{\upmu}_{\upnu}(\uptheta )\not=0\}.\]
%For $\uptheta\not\in\Q$, the nonvanishing spectrum is delimited by growth-decay pairs corresponding to {\it best denominator classes}: diophantine approximations $\bast  \widehat{q}$
%built from the sequence of denominators of principal convergents of $\uptheta$.
In \cite{Ge0}, we characterized the linear classification of the reals -- rational, badly approximable, (very) well approximable and Liouville -- in terms of their
nonvanishing spectra, see Figure 1 below.  In this paper, we will obtain spectral portraits for polynomial diophantine approximations which are organized according
to the Mahler classification.

\begin{figure}[htbp]\label{Spectralportraits}
\centering
\includegraphics[width=5in]{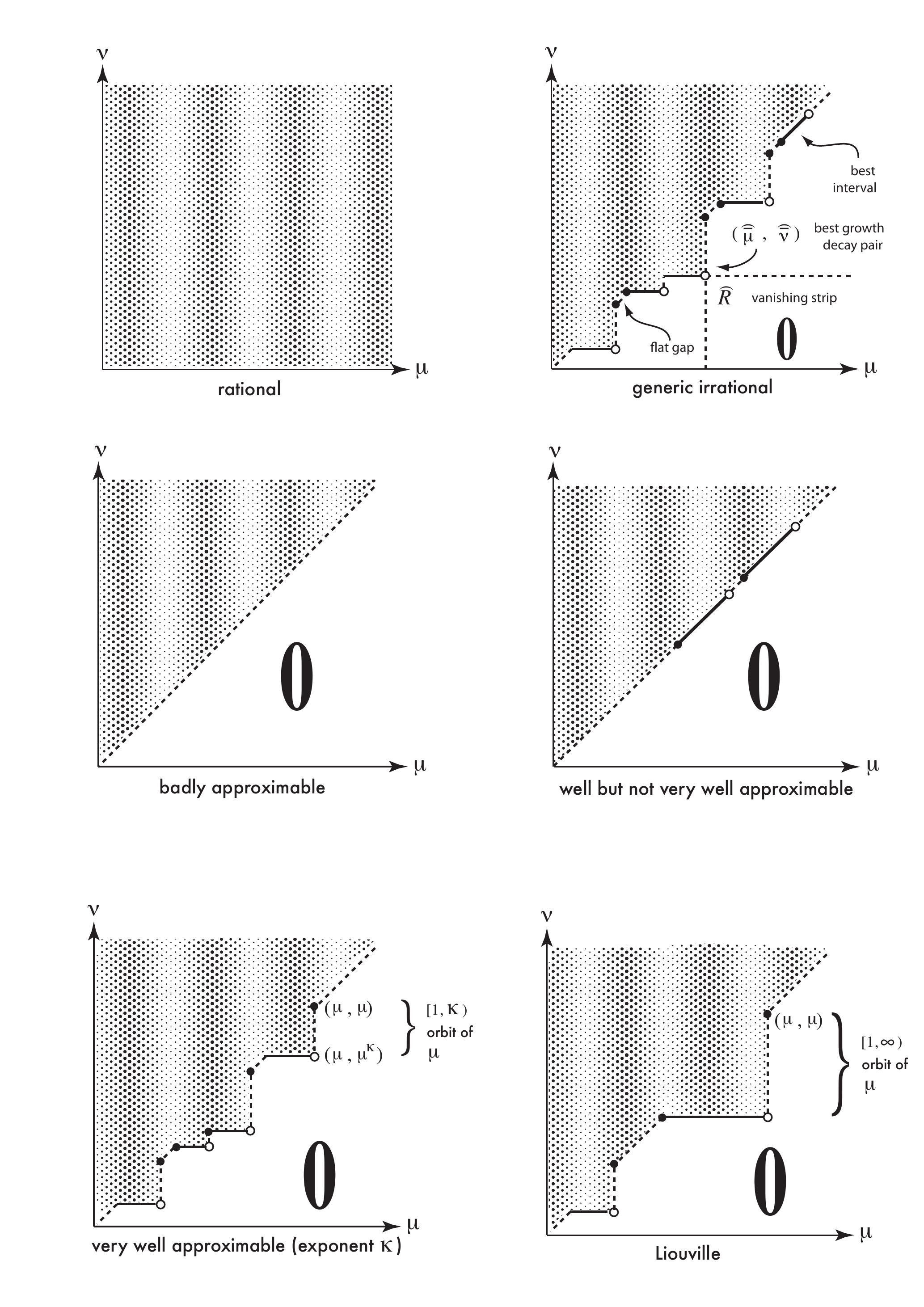}
\caption{Portraits of Spectra.  Shaded regions and heavy lines represent nonvanishing.}\label{portraits}
\end{figure}

\subsection*{5. Composability}  For $\upmu\geq\upnu$ we define the  ``composability'' relation 
\[\uptheta {}_{\upmu}\!\!\owedge_{\upnu}\upeta\]
whenever the groups appearing in the product (\ref{gdprodintro}) are nontrivial i.e.\ for $(\upmu,\upnu)\in {\rm Spec}(\uptheta)$, $(\upnu,\upmu)\in {\rm Spec}(\upeta)$.
  Roughly speaking, composability increases as one progresses
from the badly approximable numbers to the Liouville numbers.  In this connection a new phenomenon emerges: the existence of {\bf {\em  antiprimes}} -- classes of numbers for which the relation ${}_{\upmu}\!\owedge_{\upnu}$ is empty
for all  possible growth-decay parameters.  The largest anti prime set is the set $\mathfrak{B}$ of badly approximable numbers.  % are {\it antiprimes}: the relation ${}_{\upmu}\!\owedge_{\upnu}$ is empty
%on $\mathfrak{B}^{2}$ for all pairs $(\upmu,\upnu)$.  On other hand, the very well approximable numbers $\mathfrak{W}_{>1}$ are {\it omnidivisors}:
%for any $\uptheta\in \mathfrak{W}_{>1}$ and $\upeta\in\R$, there exist $\upmu\geq\upnu$ such that $\uptheta {}_{\upmu}\!\!\owedge_{\upnu}\upeta$.

\subsection*{6. $K$-Diophantine Approximation Groups}
Let
$K/\Q$ be a finite extension, $\mathcal{O}$ the ring of $K$-integers and $\K\cong \R^{d}$ the Minkowski space of $K$. The diophantine approximation group of $\boldsymbol z\in\K$ has the structure of an
approximate ideal 
\[ \bast\mathcal{O}(\boldsymbol z)=\{ \bast \mathcal{O}^{\boldsymbol\upmu}_{\boldsymbol\upnu}(\boldsymbol z)\}. \] These $K$-approximate ideals
may be multiplied according to an obvious 
analogue of (\ref{gdprodintro}).  If $K/\Q$ is Galois, then the action of ${\rm Gal}(K/\Q)$ on $\K$ extends to an action
on growth-decay indices so that the growth-decay product becomes Galois natural.  The $K$-nonvanishing spectrum ${\rm Spec}_{K}(\boldsymbol z)$ may be used to define the nontrivial classes of
$K$-badly approximable, $K$-(very) well approximable and $K$-Liouville elements of $\K$.
One observes the phenomenon of {\bf {\em antiprime splitting}}, where a $\Q$-badly approximable number $\uptheta$ loses
its antiprime status upon diagonal inclusion in $\K$: this happens for quadratic Pisot-Vijayaraghavan numbers. 
%There are trace and norm maps that
%map $\bast\mathcal{O}$-diophantine approximations to $\bast\Z$-diophantine approximations, however their validity with respect to the ideological %structure is restricted, owing to the fact that the growth is multiplicative whereas decay is additive.

\subsection*{7. Approximate Ideal Classes}
The {\bf {\em approximate ideal class}} of $\bast\mathcal{O}(\boldsymbol z)$ 
is defined
%the sum/difference/product range of the $K$-ideological product
 %($K/\Q$ a finite degree extension)
%is explained in terms of the product of (nonprincipal) ideals.  
\[ \bast[\mathcal{O}](\boldsymbol z ) :=  \bast \mathcal{O}(\boldsymbol z)+\bast \mathcal{O}(\boldsymbol z)^{\perp},\quad  \bast \mathcal{O}(\boldsymbol z)^{\perp} := \{ \bast \upalpha^{\perp}|\;
\bast\upalpha\in   \bast \mathcal{O}(\boldsymbol z)\}.\]
The set of  approximate ideal classes $\mathcal{C}l (\K)$ extends the usual ideal class group $\mathcal{C}l (K)$ of $K/\Q$: if 
\[ \mathfrak{a} = (\upalpha,\upbeta ),\quad \mathfrak{a}' = (\upalpha',\upbeta' )\subset\mathcal{O}\]  are classical ideals and $\upgamma = \upalpha/\upbeta,\upgamma' = \upalpha'/\upbeta'$ then 
\[ \bast [\mathcal{O}](\upgamma)=\bast [\mathcal{O}](\upgamma') \Longleftrightarrow [\mathfrak{a}]=[\mathfrak{a}'] \;\; \text{(equality of ideal classes)}.\]
There is a canonical surjective map  \[ {\rm PGL}_{2}(\mathcal{O})\backslash\K \longrightarrow \mathcal{C}l (\K)\]
which extends the bijection ${\rm PGL}_{2}(\mathcal{O})\backslash K\leftrightarrow   \mathcal{C}l (K)$ and which is conjecturally a bijection as well.
 When $K=\Q$,  ${\rm PGL}_{2}(\Z)\backslash\R$ is the moduli space of quantum tori.
 
 \vspace{3mm}

\begin{center}
$\diamond$
\end{center}

\vspace{3mm}

We now turn to the contents of the present paper: the arithmetic of polynomial diophantine approximation.  
%Due to the role power laws play in Mahler's classification it is natural to introduce ideologies based on the Frobenius semi ring $\bstar\overline{\PR\R}_{\upvarepsilon}$ in which elements of $\bstar\PR\R_{\upvarepsilon}$ define subideologies.  
Denote the ultraproduct of $\Z[X]$ \[ \bast\tilde{\tv{Z}}=\bast (\Z [X]) = \Z[X]^{\N}/\mathfrak{u},\] the ring of possibly infinite degree nonstandard polynomials. 
%and  $\bast\tv{Z}:=(\bast\Z )[X]$ the subring of finite degree polynomials.
Then we may define the {\it ring} of polynomial diophantine approximations
\[ \bast\tilde{\tv{Z}}(\uptheta )= \{ \bast f(X)\in \bast\tilde{\tv{Z}}|\; \bast f(\uptheta )\simeq 0\}  \]
which acquires the structure of approximate ideal indexed not just by growth and decay but also by a third index which sets a bound $\bast d$ on the degree e.g.
\[ \bast\tilde{\tv{Z}}(\uptheta )=\{  \bast\tilde{\tv{Z}}^{\upmu}_{\upnu}(\uptheta )_{\bast d}\} .\]
In \S \ref{polyspectramahlersection} we study the 
%Frobenius 
nonvanishing spectrum 
%\[ \overline{\text{\rm Spec}}[X](\uptheta ) = \{ \overline{\text{\rm Spec}}[X](\uptheta )_{\bast\breve{d}}\},\] which is viewed as a directed set, as well as the filtration of subsets defined by 
%$\bast d\in\bast\breve{d}$.  We 
and characterize
the Mahler classes and Mahler types according to the portraits of their nonvanishing spectra e.g. see Figure \ref{Mahlerportraits}, Theorems \ref{Atheo}--\ref{Utheo} and Theorems \ref{SType}--\ref{UType}.

The definition of approximate ideal arithmetic in the polynomial setting is dictated by the wholly reasonable demand that the inclusion
\[  \bast\Z (\uptheta )\hookrightarrow\bast\tilde{\tv{Z}}(\uptheta ),\quad \bast n\mapsto \bast n X-\bast n^{\perp},\]
be an approximate ideal monomorphism.  The nonlinear analogue of fractional arithmetic of linear equations is provided by the {\bf {\em resultant sum}} 
and the {\bf {\em resultant product}}
\[ f\boxplus g,\quad f\boxtimes  g\] of polynomials $f,g$, which are essentially characterized by the property that their root sets are the sum and the product, respectively, of the root sets of $f$ and $g$.  The resultant product
was first defined in \cite{BC}, \cite{Gl} (where it is referred to as the tensor product); in \S \ref{resultantarithmetic} we develop the basic properties that we require of them from scratch.

Both
$\boxplus$ and $\boxtimes$ distribute over the (Cauchy) product; if we let $\check{\tv{Z}}$
be the Cauchy monoid of integral non-0 polynomials modulo multiplication by non-0 integers, then $\boxplus$ and $\boxplus$
define operations on $\check{\tv{Z}}$ making the latter a double monoid, in which the inclusion $\Q\hookrightarrow \check{\tv{Z}}$,
$a/b\mapsto aX-b$,
takes the operations $+$ to $\boxplus$ and $\times$ to $\boxtimes$.  Thus we can view $\check{\tv{Z}}$ as a generalized
field extension of $\Q$; since $\boxtimes$ does not distribute over $\boxplus$, it is a spiritual relative of the nonlinear number field construction of \cite{GeVe}.

Approximate ideal arithmetic of the resultant product $\boxtimes$ for finite degree polynomials is studied in \S \ref{polygdasec};
the infinite degree case presents particular complications and is deferred to
 \S\ref{polygdasec2}.  The most general version of the approximate ideal product is given by Theorem \ref{semivirtualarith} of \S\ref{polygdasec2}, which asserts the existence of a product
\begin{align*}
%\label{polygdprodintro}     
\bast\tilde{\tv{Z}}^{\upmu}_{(\upmu^{1-\bast e^{-1}}\cdot\upnu)^{\bast e}}(\uptheta )_{\bast d} \times\bast\tilde{\tv{Z}}^{\upnu}_{(\upmu\cdot\upnu^{1-\bast d^{-1}})^{\bast d}}(\upeta )_{\bast e}\stackrel{\boxtimes}{\longrightarrow}  \bast\tilde{\tv{Z}}^{\upmu\cdot\upnu}(\uptheta\upeta )_{\bast d\bast e}
\end{align*}  
that is Cauchy bilinear and agrees with (\ref{gdprodintro}) on the images of $\bast\Z(\uptheta)\subset \bast\tv{Z}(\uptheta )$,$\bast\Z (\upeta)\subset\bast\tv{Z}(\upeta )$. 
%assuming
%$\bar{\upmu}\geq\bar{\upnu}$ it reduces upon passage to the Frobenius ideology to
%\begin{align}\label{Frobpolygdprodintro}   \bast\tilde{\tv{Z}}^{\bar{\upmu}}_{\bar{\upnu}^{\bast e}}(\uptheta )_{\bast d} \times
%\bast\tilde{\tv{Z}}^{\bar{\upnu}}_{\bar{\upnu}^{\bast d}}(\upeta )_{\bast e}\stackrel{\boxtimes}{\longrightarrow}  \bast\tilde{\tv{Z}}^{\bar{\upnu}}(\uptheta\upeta )_{\bast d\bast e}.
%\end{align}
An analogue of the composability relation ${}_{\upmu}\!\!\owedge_{\upnu}$
%\begin{align*}
%\label{polyrelationintro}   
%\uptheta\, {}^{\bast d\!\!}_{\;\;\bar{\upmu}}\!\boxtimes^{\!\!\bast e}_{\,\bar{\upnu}}\,\upeta
%\end{align*}
is introduced and its satisfaction according to the
Mahler classes of its arguments is studied: composability increasing as one passes through the  $S$-number, $T$-number, $U$-number hierarchy, the $S$-numbers providing
essentially an antiprime set, see  Theorem \ref{Anumarith}.

\vspace{3mm}

\noindent {\bf  Acknowledgements.}  I would like to thank Gregor Weingart for suggesting the proof of Theorem \ref{resulininteral} and Jos\'{e} Antonio de la Pe\~{n}a for pointing out the reference \cite{Gl}.   This paper was supported in part by the CONACyT grant 058537 as well as the PAPIIT grant IN103708.

\section{Ultrapowers}\label{NSStructures}

This brief section contains all the reader will need to know about ultrapowers, see also \cite{ChKei}, \cite{Go}.

Let $I$ be a set.  A {\bf filter} on $I$ is a subset $\mathfrak{f}\subset \text{\sf 2}^{I}$ satisfying
\begin{enumerate}
\item[-] If $X, Y\in \mathfrak{f}$ then $X\cap Y\in \mathfrak{f}$.
\item[-] If $X\in \mathfrak{f}$ and $X\subset Y$ then $Y\in \mathfrak{f}$.
\item[-] $\emptyset\not\in \mathfrak{f}$.
\end{enumerate}
Any set $\mathcal{F}\subset \text{\sf 2}^{I}$ satisfying the finite intersection property generates a filter, denoted $\langle \mathcal{F}\rangle$.
A maximal filter $\mathfrak{u}$ is called an {\bf ultrafilter}.  Equivalently, a filter $\mathfrak{u}$ is an ultrafilter $\Leftrightarrow$ for all $X\in \text{\sf 2}^{I}$, $X\in \mathfrak{u}$ or $I-X\in \mathfrak{u}$.
An ultrafilter $\mathfrak{u}$ is {\bf principal} if it contains a finite set $F$: equivalently $\mathfrak{u}=\langle F\rangle$.  Otherwise it is {\bf nonprincipal}.  By Zorn's lemma, every filter is contained in an ultrafilter.

Now let $\{G_{i}\}_{i\in I}$ be a family of algebraic structures of a fixed type: for our purposes, they will be groups, rings, fields. Let $\mathfrak{u}$ be an ultrafilter on $I$. The quotient
\[  \prod_{i\in I} G_{i}/\sim_{\mathfrak{u}},\quad (g_{i})\sim_{\mathfrak{u}} (g_{i}') \Longleftrightarrow \{ i|\; g_{i}=g_{i}'\}  \in\mathfrak{u} \]
is called the {\bf ultraproduct} of the $G_{i}$ w.r.t.\ $\mathfrak{u}$.  By the Fundamental Theorem of Ultraproducts (\L o\'{s}'s Theorem) \cite{ChKei}, the ultraproduct is also a group/ring/field according to the case.  If $G_{i}=G$ for all $i$ the ultraproduct is called an {\bf ultrapower} and is denoted 
\[ \bast G=\bast G_{\mathfrak{u}}.\]
Elements of $\bast G$ will be denoted 
\[ \bast g = \bast \{ g_{i}\}.\]
The canonical inclusion $G\hookrightarrow\bast G$ given by constants $g\mapsto(g=g_{i})$ is a monomorphism.  If $\mathfrak{u}$ is nonprincipal, this map is not onto and 
again by \L o\'{s}, exhibits $\bast G$ as a {\bf nonstandard model} of $G$: that is, the set of sentences in first order logic satisfied by $\bast G$ coincides with that of $G$.

If $I=\N$ and $\mathfrak{u}$ is a nonprincipal ultrafilter on $\N$ we denote by \[ \bast\Z\subset\bast\Q\subset\bast\R\subset\bast\C\] corresponding ultrapowers of $\Z\subset \Q\subset \R\subset \C$.  The field $\bast \R$ is totally ordered and the absolute value $|\cdot |$ extends to a map $|\cdot|:\bast \R\rightarrow \bast \R_{+}\cup \{0\}$.
We define the local subring of bounded elements
\[ \bast\R_{\rm fin} := \{\bast r\in\bast\R|\; \exists M\in\R_{+}\text { such that } |\bast r|< M\} \]
whose maximal ideal is the ideal of {\bf infinitesimals}
\[ \bast\R_{\upvarepsilon}  :=\{ \bast r\in\bast\R_{\rm fin}|\; \forall M\in \R_{+},\; |\bast r|< M \}.\]
Then $\bast\R$ is the field of fractions of $\bast\R_{\rm fin}$ and the residue class field is 
\[ \bast\R_{\rm fin}/ \bast\R_{\upvarepsilon}\cong \R.\]

\section{Tropical Growth-Decay Semi-Ring}\label{tropical}

Let $\bast\Z\subset \bast\R$ be ultrapowers of $\Z\subset\R$ with respect to a fixed non-principal ultrafilter on $\N$.  As usual 
$\bast \R_{\rm fin}\subset \bast\R$ denotes the subring of bounded sequence classes and $\bast\R_{\upvarepsilon}\subset \bast \R_{\rm fin}$
the maximal ideal of infinitesimals. The subscript ``$+$'' will refer to
the positive elements of any of the above sets e.g.\ $\bast\Z_{+}\subset\bast\Z$ is the subset
of positive elements in $\bast\Z$.  
%The superscript ``$\times$'' indicates that additive structure is ignored and only multiplicative structure is considered.
%See \S 1 of \cite{Ge4} and references contained therein for more background.

 Let
$(\bast\R_{\rm fin})_{+}^{\times}$ = the group of positive units in the ring $\bast\R_{\rm fin}$; thus $(\bast\R_{\rm fin})_{+}^{\times}$ is
the multiplicative
subgroup of noninfinitesimal, noninfinite elements in $\bast\R_{+}$.
Consider the multiplicative quotient group 
\[  \bstar\PR\R := 
%\bast\R\bast/\bast\R_{\rm fin}^{\times}=
\bast\R_{+}/(\bast\R_{\rm fin})_{+}^{\times},\] 
whose elements will be written 
\[ \upmu = \bast x\cdot(\bast\R_{\rm fin})_{+}^{\times}.\]  We denote the product
in $\bstar\PR\R$ by ``$\cdot$''.

\begin{prop}\label{integerrepre} Every element $\upmu\in \bstar\PR\R$ may be written in the form 
\[ \bast n^{\upvarepsilon}\cdot(\bast\R_{\rm fin})_{+}^{\times}\]
where $\bast n\in\bast \Z_{+}-\Z_{+}$ or $\bast n =1$, and $\upvarepsilon=\pm 1$.
\end{prop}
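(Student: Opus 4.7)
The plan is to split the proof into three exhaustive cases according to the order-of-magnitude trichotomy for positive hyperreals: every $\bast x\in\bast\R_+$ is either (i) in $(\bast\R_{\rm fin})_+^\times$ (noninfinitesimal and noninfinite), (ii) a positive infinitesimal, or (iii) positive infinite. The first case is trivial and handled with $\bast n=1$; the third case is the substantive one; the second case reduces to the third by inversion.

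For the substantive case, given $\upmu=\bast x\cdot(\bast\R_{\rm fin})_+^\times$ with $\bast x$ positive infinite, the plan is to replace $\bast x$ by its nonstandard floor. Writing $\bast x=\bast\{x_i\}$, I set $\bast n=\bast\{\lfloor x_i\rfloor\}\in\bast\Z_+$. I must verify two things. First, that $\bast n\notin\Z_+$: since $\bast x>M$ for every $M\in\R_+$, {\L}o\'s's theorem (or the definition of the ultraproduct order) gives $\bast n>M-1$ for every such $M$, so $\bast n$ is infinite and in particular not a standard positive integer. Second, that $\bast x$ and $\bast n$ represent the same class: from $\bast n\leq \bast x<\bast n+1$ we deduce $1\leq \bast x/\bast n<1+1/\bast n$, and $1/\bast n$ is infinitesimal, so $\bast x/\bast n\in(\bast\R_{\rm fin})_+^\times$. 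Hence $\upmu=\bast n\cdot(\bast\R_{\rm fin})_+^\times$ with $\upvarepsilon=+1$.

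For the infinitesimal case, I would apply the previous case to $1/\bast x$, which is positive infinite: it yields $1/\bast x=\bast n\cdot r$ with $\bast n\in\bast\Z_+-\Z_+$ and $r\in(\bast\R_{\rm fin})_+^\times$, so $\upmu=\bast n^{-1}\cdot(\bast\R_{\rm fin})_+^\times$ with $\upvarepsilon=-1$. The only mild subtlety worth being careful about is the interplay between the representative-level choice (taking floors componentwise) and the ultrafilter equivalence: the inequality $\lfloor x_i\rfloor\leq x_i<\lfloor x_i\rfloor+1$ holds on the whole index set, so it descends to $\bast n\leq\bast x<\bast n+1$ in $\bast\R$ without any ultrafilter argument, and the infiniteness transfer is the only place where $\mathfrak{u}$ really intervenes. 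That will be the main (though still short) step.
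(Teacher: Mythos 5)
Your proposal is correct and follows essentially the same route as the paper: the same trichotomy (unit class, infinite, infinitesimal), the same floor-type decomposition $\bast x=\bast n+\bast{\bar r}$ with $\bast{\bar r}\in[0,1)$ giving $\bast x/\bast n=1+\bast{\bar r}/\bast n\in(\bast\R_{\rm fin})_{+}^{\times}$, and the infinitesimal case handled by inversion. Your explicit componentwise-floor and \L o\'s verification just makes precise what the paper asserts by transfer; no substantive difference.
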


 \begin{proof} Every element of $\bstar\PR\R$ is the class of 1, the class of an infinite element or the class of an infinitesimal element. 
 %In fact, every infinite element is the class of some infinite integer $\bast n\in\bast\Z$
%and every infinitesimal element is the class of some inverse infinite integer $\bast m^{-1}$.
If $\upmu$ is the class of $\bast r$ infinite, then there exists $\bast \bar{r}\in [0,1)=\{\bast x|\; 0\leq \bast x <1\}$ and $\bast n\in\bast\Z_{+}$
for which $\bast r= \bast n+\bast  \bar{r} = \bast n \cdot ((\bast n+\bast  \bar{r})/\bast n)$.  But  
$(\bast n+\bast  \bar{r})/\bast n = 1+\bast\bar{r}/\bast n\in(\bast\R_{\rm fin})_{+}^{\times}$, so  $\upmu = \bast n\cdot (\bast\R_{\rm fin})_{+}^{\times}$.  A similar argument may be used to show that when $\upmu$ represents an infinitesimal class, $\upmu =\bast n^{-1}\cdot(\bast\R_{\rm fin})_{+}^{\times}$ for some $\bast n\in\bast \Z_{+}-\Z_{+}$.
\end{proof}

\begin{prop}\label{denselinord}  $ \bstar\PR\R$ is a densely ordered group.
\end{prop}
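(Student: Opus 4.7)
The plan is to first verify that $\bstar\PR\R$ carries a natural total order compatible with its group law, and then to produce an explicit interpolant for density using square roots in the ultrapower. Since $\bstar\PR\R = \bast\R_{+}/(\bast\R_{\rm fin})_{+}^{\times}$ is already an abelian group (a quotient of the abelian multiplicative group $\bast\R_{+}$ by a subgroup), the substance lies in the order and density.

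Concretely, I would declare $\upmu=\bast x\cdot(\bast\R_{\rm fin})_{+}^{\times}<\upnu=\bast y\cdot(\bast\R_{\rm fin})_{+}^{\times}$ iff $\bast x/\bast y\in \bast\R_{\upvarepsilon}$. Well-definedness: if $\bast x'=c\bast x,\ \bast y'=c'\bast y$ with $c,c'\in(\bast\R_{\rm fin})_{+}^{\times}$, then $\bast x'/\bast y'=(c/c')(\bast x/\bast y)$ is the product of a noninfinitesimal bounded element and an infinitesimal, hence still infinitesimal. Totality follows from the trichotomy that any positive ratio $\bast r=\bast x/\bast y$ is either infinitesimal ($\upmu<\upnu$), a non-infinitesimal element of $\bast\R_{\rm fin}$ i.e.\ a member of $(\bast\R_{\rm fin})_{+}^{\times}$ ($\upmu=\upnu$), or infinite ($\upmu>\upnu$). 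Order-compatibility with the group law is immediate from $(\bast x\bast z)/(\bast y\bast z)=\bast x/\bast y$.

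For density, take $\upmu<\upnu$ and form the quotient $\upnu\cdot\upmu^{-1}>[1]$. By Proposition \ref{integerrepre}, this quotient has the form $[\bast n]$ with $\bast n\in\bast\Z_{+}-\Z_{+}$ an infinite integer (the only possibility compatible with being strictly greater than $[1]$). Since $\bast\R_{+}$ is closed under square roots (as the ultrapower of a real closed field, or directly: extract $\sqrt{\cdot}$ coordinatewise on a positive representative), the element $\sqrt{\bast n}\in\bast\R_{+}$ exists and satisfies $[\sqrt{\bast n}]^{2}=[\bast n]$. Now $[\sqrt{\bast n}]$ cannot be $[1]$ nor the class of an infinitesimal, since squaring either of those would not yield an infinite class; hence $[\sqrt{\bast n}]$ is itself the class of an infinite element, in particular $[1]<[\sqrt{\bast n}]$. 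Because $[\bast n]/[\sqrt{\bast n}]=[\sqrt{\bast n}]$ is also strictly above $[1]$, we conclude $[1]<[\sqrt{\bast n}]<[\bast n]$.

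Multiplying this chain through by $\upmu$, which preserves strict order by group-compatibility, yields $\upmu<\upmu\cdot[\sqrt{\bast n}]<\upmu\cdot[\bast n]=\upnu$, producing an interpolant and completing the proof. The main obstacle is simply bookkeeping: one must be careful to verify well-definedness of the order on classes and to confirm that the nonstandard square root lands in an infinite (rather than bounded or infinitesimal) class. Both reduce to elementary first-order properties of $\bast\R$, so the density argument itself is very short.
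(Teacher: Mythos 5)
Your proof is correct and follows essentially the same route as the paper: define the order on classes (your infinitesimal-ratio criterion is equivalent to the paper's comparison of arbitrary representatives), check compatibility with the group law, and establish density. The only difference is that where the paper dismisses density as evident, you supply an explicit interpolant via the square root of an infinite representative (a geometric-mean argument), which is a welcome filling-in of detail rather than a different approach.
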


\begin{proof}  The order is defined
by declaring that $\upmu<\upmu'$ in  $\bstar\PR\R$ if for any pair of representatives
$\bast x\in\upmu, \bast x'\in\upmu'$ we have $ \bast x<\bast x'$, evidently a dense order without endpoints. The left-multiplication action 
of $\bast \R_{+}$ on $ \bstar\PR\R$ preserves this order, therefore so does the product: if $\upmu <\upnu$ then for all $\xi\in \bstar\PR\R$, $\xi\cdot \upmu <\xi\cdot \upnu$.  
\end{proof}

We introduce the maximum of a pair of elements in $ \bstar\PR\R$ 
as a formal binary operation:
\[ \upmu+ \upnu:= \max (\upmu, \upnu).\]
The operation $+$
is clearly commutative and associative.
The following Proposition says that $+$ is the quotient of the operation $+$ of $\bast\R^{\times}_{+}$. 
%which is the quotient of addition in $\bast\R_{+}$:

\begin{prop} Let $\upmu = \bast x\cdot  (\bast\R_{\rm fin})_{+}^{\times}$, $\upmu'=\bast x'\cdot  (\bast\R_{\rm fin})_{+}^{\times}$.
Then \[ (\bast x+\bast x')\cdot (\bast\R_{\rm fin})_{+}^{\times}=
\upmu+ \upmu'.\]
\end{prop}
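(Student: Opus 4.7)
The plan is to first reduce to a symmetric case and then unpack the equivalence relation defining $\bstar\PR\R$. By commutativity of $+$ on $\bast\R_{+}$ and of $\max$ on $\bstar\PR\R$, we may assume without loss of generality that $\upmu \geq \upmu'$, so that $\upmu + \upmu' = \upmu = \bast x \cdot (\bast\R_{\rm fin})_+^\times$. Then the assertion becomes the claim that $\bast x + \bast x' \in \bast x \cdot (\bast\R_{\rm fin})_+^\times$, equivalently that the ratio
\[ \frac{\bast x + \bast x'}{\bast x} \;=\; 1 + \frac{\bast x'}{\bast x} \]
lies in $(\bast\R_{\rm fin})_+^\times$, i.e.\ is positive, bounded, and noninfinitesimal.

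The key preliminary step is to characterize the order $\leq$ on $\bstar\PR\R$ in terms of the ratio $\bast x'/\bast x$. I would argue by trichotomy: the element $\bast x'/\bast x \in \bast\R_+$ is either (i) infinite, (ii) a member of $(\bast\R_{\rm fin})_+^\times$, or (iii) infinitesimal. These three possibilities correspond respectively to $\upmu' > \upmu$, $\upmu' = \upmu$, and $\upmu' < \upmu$ — this follows directly from the definition of the order in the proof of Proposition \ref{denselinord}, since multiplication of $\bast x$ by a noninfinite noninfinitesimal positive factor does not change its class, while multiplication by an infinite (resp.\ infinitesimal) factor strictly increases (resp.\ decreases) the class.

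Under the standing assumption $\upmu' \leq \upmu$, only cases (ii) and (iii) occur. In case (ii), $\bast x'/\bast x \in (\bast\R_{\rm fin})_+^\times$, so $1 + \bast x'/\bast x$ is a sum of $1$ and a bounded positive noninfinitesimal; hence it is bounded above, and bounded below by $1$, so it lies in $(\bast\R_{\rm fin})_+^\times$. In case (iii), $\bast x'/\bast x \in \bast\R_\upvarepsilon$, so $1 + \bast x'/\bast x$ is infinitesimally close to $1$ and again lies in $(\bast\R_{\rm fin})_+^\times$. In either case we conclude $(\bast x + \bast x') \cdot (\bast\R_{\rm fin})_+^\times = \upmu = \upmu + \upmu'$, as desired.

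There is no real obstacle here; the whole content is bookkeeping in the quotient $\bstar\PR\R = \bast\R_+/(\bast\R_{\rm fin})_+^\times$. The only point that requires care is the trichotomy in terms of the ratio $\bast x'/\bast x$, which is really where the well-definedness of the order relation (already invoked in Proposition \ref{denselinord}) is being used; one could alternatively quote that result and simply note that the subring $1 + \bast\R_\upvarepsilon$ and the subgroup $(\bast\R_{\rm fin})_+^\times$ are both stable under the perturbation $1 + (\cdot)$ when the argument is either noninfinite and noninfinitesimal or infinitesimal.
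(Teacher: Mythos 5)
Your proof is correct and is essentially the paper's own argument: the paper likewise splits into the cases $\upmu=\upmu'$ and $\upmu\neq\upmu'$, writing $\bast x+\bast x'$ as $\bast x(1+\bast r)$ with $\bast r\in(\bast\R_{\rm fin})_{+}^{\times}$ or as $\bast x'(1+\bast\upvarepsilon)$ with $\bast\upvarepsilon$ infinitesimal, which is exactly your ratio trichotomy phrased with representatives. The only difference is cosmetic (your WLOG and explicit statement of the order--ratio correspondence), so no further comment is needed.
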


\begin{proof} Note that $\bast x+\bast x'\in\bast\R_{+}$ and
$ \bast x+\bast x'\in \max (\upmu, \upmu')$.   Indeed, suppose first that $\upmu\not=\upmu'$, say $\upmu<\upmu'$.  Then there exists $\bast \upvarepsilon$ infinitesimal
for which $\bast x = \bast \upvarepsilon\bast x'$, and we have   
$   \bast x' + \bast x = \bast x' (1+\bast \upvarepsilon)\in \upmu'$.       
If $\upmu=\upmu'$ then  $\bast x' = \bast r\bast x$ for $\bast r\in \bast\R_{+}^{\times}$ and 
$ (\bast x+ \bast x')\cdot  (\bast\R_{\rm fin})_{+}^{\times} = \bast x (1+\bast r)\cdot  (\bast\R_{\rm fin})_{+}^{\times} = \upmu = \upmu +\upmu$.
%and $\bast x$, $\bast x'$ have the same sign.  In general, we have
%$ (\bast x+\bast x')\cdot \bast\R_{\rm fin}^{\times}  \leq \upmu = \max (\upmu,\upmu )$. 
\end{proof}

\begin{prop}\label{troprop}  Let $\bast r,\bast s\in\bast\R_{+}$ and $\upmu,\upnu,\upnu'\in \bstar\PR\R$. Then
\begin{enumerate}
\item[1.]  $\upmu \cdot (\upnu+ \upnu') =  (\upmu \cdot \upnu )+ (\upmu \cdot \upnu' )$.
\item[2.]  $\bast r \cdot (\upnu+ \upnu')= (\bast r\cdot\upnu )+ (\bast r\cdot\upnu' )$.
\item[3.] $(\bast r+\bast s)\cdot\upmu = (\bast r\cdot\upmu)+ (\bast s\cdot\upmu)$.
\end{enumerate}
\end{prop}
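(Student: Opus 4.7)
The plan is to reduce each of the three parts to two results already established: the monotonicity of multiplication in $\bstar\PR\R$ (Proposition~\ref{denselinord}), and the identification $\langle \bast x+\bast x'\rangle=\max(\langle \bast x\rangle,\langle \bast x'\rangle)$ from the preceding proposition. Part (1) is purely order-theoretic: in any totally ordered group whose multiplication is order-preserving, multiplication distributes over the binary maximum. Concretely, assuming without loss of generality that $\upnu\leq\upnu'$, Proposition~\ref{denselinord} gives $\upmu\cdot\upnu\leq\upmu\cdot\upnu'$, whence
\[
\upmu\cdot(\upnu+\upnu')=\upmu\cdot\max(\upnu,\upnu')=\max(\upmu\cdot\upnu,\upmu\cdot\upnu')=(\upmu\cdot\upnu)+(\upmu\cdot\upnu').
\]

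For part (2), I would note that the action $\bast x\cdot(\bast\R_{\rm fin})_{+}^{\times}\mapsto(\bast r\bast x)\cdot(\bast\R_{\rm fin})_{+}^{\times}$ of $\bast r\in\bast\R_{+}$ on $\bstar\PR\R$ depends only on the class $\langle\bast r\rangle\in\bstar\PR\R$, since scaling $\bast r$ by any finite noninfinitesimal unit does not change the output; hence this action coincides with group multiplication by $\langle\bast r\rangle$, and (2) reduces to (1). For part (3), the preceding proposition rewrites $\langle\bast r+\bast s\rangle$ as $\langle\bast r\rangle+\langle\bast s\rangle$ in $\bstar\PR\R$, and then parts (2) and (1) chain together to give
\[
(\bast r+\bast s)\cdot\upmu=\bigl(\langle\bast r\rangle+\langle\bast s\rangle\bigr)\cdot\upmu=(\langle\bast r\rangle\cdot\upmu)+(\langle\bast s\rangle\cdot\upmu)=(\bast r\cdot\upmu)+(\bast s\cdot\upmu).
\]

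The only genuine obstacle here is notational: the symbol $+$ is overloaded, meaning both the ring sum in $\bast\R_{+}$ and the formal maximum in $\bstar\PR\R$, so careful attention is required to track which sense is intended at each step. Once this is disentangled using the preceding proposition, the three distributive identities are immediate consequences of the order-preservation of multiplication on the totally ordered group $\bstar\PR\R$.
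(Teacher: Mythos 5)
Your proof is correct and follows essentially the same route as the paper: part (1) and part (2) via the order-preservation of multiplication established in Proposition~\ref{denselinord} (so that the product distributes over the formal maximum), and part (3) via the preceding proposition identifying $\langle\bast r+\bast s\rangle$ with $\langle\bast r\rangle+\langle\bast s\rangle$, which the paper simply dismisses as trivial. Your explicit remark that the $\bast\R_{+}$-action factors through growth-decay classes is a clean way of packaging what the paper leaves implicit, but it is the same argument.
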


\begin{proof}  1.  It is enough to check the equality in the case $\upnu'>\upnu$.  Then $\upmu \cdot (\upnu+ \upnu')=\upmu\cdot\upnu'$.
But the latter is equal to $ (\upmu \cdot \upnu )+ (\upmu \cdot \upnu' )$ since the product preserves
the order.  The proof of 2. is identical, where we use the fact that the multiplicative action by
$\bast\R_{+}$ preserves the order.  Item 3. is trivial.
%If $\bast r$, $\bast s$ define distinct elements of $\bstar\PR\R$ then 
%$(\bast r+\bast s)\cdot\upmu =(\bast r\cdot\upmu)+ (\bast s\cdot\upmu)$.  Otherwise, as indicated
 %in the paragraph preceding the Proposition, we may get a strict inequality.  This proves 3.
\end{proof}

It will be convenient to add the class $-\infty$ of the element $0\in\bast\R$
to the space $\bstar\PR\R$: in other words, we will
reconsider $\bstar\PR\R$ as the quotient 
$ (\bast\R_{+}\cup\{0\})/(\bast\R_{\rm fin})_{+}^{\times}$. 
%We will not use a distinguished notation for this quotient and will denote 
%it as well by $\bstar\PR\R$. 
Note that
we have for all $\upmu\in\bstar\PR\R$
\[  -\infty+ \upmu = \upmu, \quad  -\infty\cdot \upmu =-\infty .\]
In particular, $-\infty$ is the neutral element for the operation $+$.
Thus, by Proposition \ref{troprop}:

\begin{theo} $\bstar\PR\R$
is an abstract (multiplicative)
 tropical semi-ring: that is, a max-times semi ring.
\end{theo}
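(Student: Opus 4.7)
The plan is to assemble the theorem from the pieces already established. A (max-times) tropical semi-ring is a set equipped with two operations $+$ and $\cdot$ such that $(R,+)$ is a commutative monoid with neutral element $-\infty$, $(R,\cdot)$ is a commutative monoid with neutral element $1$, multiplication distributes over $+$, and $-\infty$ is absorbing under $\cdot$. I would verify each of these in turn for $\bstar\PR\R$.

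First I would note that commutativity and associativity of $+ := \max$ are immediate from the corresponding properties of $\max$ on the totally ordered set $\bstar\PR\R$, which is totally ordered by Proposition \ref{denselinord} (extended by declaring $-\infty$ to be the least element). The identity property $-\infty + \upmu = \upmu$ was recorded explicitly in the discussion right before the theorem, so $(\bstar\PR\R, +)$ is a commutative monoid. Next, $(\bstar\PR\R, \cdot)$ is a commutative monoid: indeed $\bstar\PR\R \setminus \{-\infty\} = \bast\R_{+}/(\bast\R_{\rm fin})_{+}^{\times}$ is already defined as a multiplicative quotient \emph{group} of abelian groups, and the convention $-\infty \cdot \upmu = -\infty$ extends the product to all of $\bstar\PR\R$ while preserving commutativity and associativity (since $-\infty$ then acts as an absorbing element, the only associativity/commutativity identities to check involving $-\infty$ reduce to $-\infty = -\infty$).

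The distributivity axiom $\upmu \cdot (\upnu + \upnu') = (\upmu \cdot \upnu) + (\upmu \cdot \upnu')$ for $\upmu,\upnu,\upnu' \in \bstar\PR\R \setminus \{-\infty\}$ is exactly item 1 of Proposition \ref{troprop}. The remaining cases where one of the arguments equals $-\infty$ follow trivially from the absorbing identity $-\infty \cdot \upmu = -\infty$ combined with $-\infty + \upmu = \upmu$: for instance, if $\upnu = -\infty$ then both sides equal $\upmu \cdot \upnu'$, and if $\upmu = -\infty$ then both sides equal $-\infty$. Putting these observations together yields the theorem.

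There is no real obstacle here, as the theorem is essentially a bookkeeping consolidation of Propositions \ref{denselinord} and \ref{troprop} together with the extension by $-\infty$. The only subtlety worth flagging is the need to check the absorbing/identity behavior of $-\infty$ consistently with both operations, which amounts to the two displayed identities just before the theorem statement; these are formal consequences of the definition of $-\infty$ as the class of $0 \in \bast\R$ under the convention $0 \cdot \bast x = 0$ and $\max(0,\bast x) = \bast x$ for $\bast x \in \bast\R_+$.
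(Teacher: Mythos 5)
Your proposal is correct and follows the paper's own route: the theorem is stated there as an immediate consequence of the discussion preceding it (the max operation, the $-\infty$ conventions) together with Proposition \ref{troprop}, and your argument just spells out that same bookkeeping explicitly. No gap to report.
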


We will refer to $\bstar\PR\R$ as the {\bf growth-decay semi-ring}.  Let $\bstar \PR\R_{\upvarepsilon} \subset\bstar\PR\R$ be the image of the $(\bast\R_{\rm fin})_{+}^{\times}$-invariant multiplicatively closed set $(\bast\R_{\upvarepsilon})_{+}$.  With the operations $\cdot, +$, $\bstar \PR\R_{\upvarepsilon}$ is a sub tropical semi-ring: the {\bf decay semi-ring}.

 If we forget the tropical addition, considering $ \bstar\PR\R$ as a linearly ordered multiplicative group, then
the map 
\[ \langle \cdot \rangle :\bast\R\rightarrow \bstar\PR\R, \quad \langle\bast x\rangle = |\bast x|\cdot(\bast\R_{\rm fin})_{+}^{\times},\] is the Krull valuation associated 
to the local ring $\bast\R_{\rm fin}$
(see for example \cite{ZS}).  The restriction of $\langle \cdot \rangle$ to $\R$ is just the trivial valuation, so
that $\langle \cdot \rangle $ cannot be equivalent to the usual valuation $|\cdot |$ on $\bast\R$ induced from the euclidean norm.
Note also that $\langle \cdot \rangle$
is nonarchimedean.  We refer to $\langle\cdot\rangle$ as the {\bf growth-decay valuation}.

There is a natural ``Frobenius'' action of the multiplicative group $\R^{\times}_{+}$ on $\bstar\PR\R$:
for $\upmu\in \bstar\PR\R_{\upvarepsilon}$ and $\bast x\in\upmu$  
define
\[\Upphi_{r}(\upmu )=\upmu^{r} := \bast x^{r}\cdot (\bast\R_{\rm fin})_{+}^{\times}\]
for each $r\in\R_{+}^{\times}$. 
Note that this action does not depend on the choice of representative $\bast x$.  
We may extend the Frobenius action to $(\bast\R_{\rm fin})_{+}^{\times}$ as follows.
For $\bast r=\bast\{r_{i}\}\in   (\bast\R_{\rm fin})_{+}^{\times}$ and $\upmu\in\bstar\PR\R$ represented by $\bast x =\bast \{x_{i}\}\in\bast\R_{+}$ 
define
\[ \Upphi_{\bast r}(\upmu )=\upmu^{\bast r} :=   \bast \{ x_{i}^{r_{i}} \}\cdot (\bast\R_{\rm fin})_{+}^{\times}, \]
which is again well-defined.  Note that it is {\it not} the case that if $\bast r\simeq r\in\R_{+}$ that $\upmu^{\bast r}=\upmu^{r}$.

\begin{theo}\label{FrobProp}  The map $\Upphi_{\bast r}:\bstar\PR\R\rightarrow \bstar\PR\R$ is a tropical automorphism for each $\bast r\in(\bast\R_{\rm fin})^{\times}_{+}$ 
and defines a faithful representation
\[  \Upphi:(\bast\R_{\rm fin})^{\times}_{+}\longrightarrow {\rm Aut}(\bstar\PR\R).\]
\end{theo}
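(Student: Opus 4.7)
The plan is to verify the stated properties in the order: (i) $\Upphi_{\bast r}$ is multiplicative, (ii) $\Upphi_{\bast r}$ preserves the tropical sum, (iii) $\Upphi_{\bast r}$ is bijective, (iv) $\bast r\mapsto\Upphi_{\bast r}$ is a group homomorphism, (v) this homomorphism is faithful. Steps (i)--(iii) together establish the automorphism assertion, while (iv)--(v) establish that the resulting representation is faithful.

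For (i) and (iv), the verifications reduce to the pointwise identities $(x_{i}y_{i})^{r_{i}}=x_{i}^{r_{i}}y_{i}^{r_{i}}$ and $x_{i}^{r_{i}s_{i}}=(x_{i}^{s_{i}})^{r_{i}}$, which pass to $\bstar\PR\R$ after taking $\mathfrak{u}$-classes modulo $(\bast\R_{\rm fin})_{+}^{\times}$. For (ii), since $+$ is $\max$, it suffices to show $\Upphi_{\bast r}$ preserves the order $<$: given $\upmu<\upnu$ with representatives $\bast x,\bast y$, the ratio $\bast y/\bast x$ is infinite, and since $\bast r$ is a unit of $\bast\R_{\rm fin}$ there is a standard $c>0$ with $r_{i}\geq c$ for $\mathfrak{u}$-a.e.\ $i$. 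Hence $(y_{i}/x_{i})^{r_{i}}\geq(y_{i}/x_{i})^{c}$ whenever $y_{i}/x_{i}\geq 1$, and the right side is still infinite, yielding $\upmu^{\bast r}<\upnu^{\bast r}$. Step (iii) is settled by exhibiting $\Upphi_{\bast r^{-1}}$ as a two-sided inverse via the representative-level identity $\bast\{x_{i}^{r_{i}r_{i}^{-1}}\}=\bast\{x_{i}\}$; this is legitimate because $\bast r$ being a unit guarantees $\bast r^{-1}\in(\bast\R_{\rm fin})^{\times}_{+}$.

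The main obstacle is (v). Suppose $\Upphi_{\bast r}=\mathrm{id}$ and, for contradiction, $\bast r\not=\bast 1$. Then $A=\{i\mid r_{i}\not=1\}\in\mathfrak{u}$; split $A=A_{+}\sqcup A_{-}$ by the sign of $r_{i}-1$ and assume, without loss of generality, $A_{+}\in\mathfrak{u}$. Construct a probe $\bast x\in\bast\R_{+}$ by $x_{i}=\exp\!\bigl(i/(r_{i}-1)\bigr)$ for $i\in A_{+}$ and $x_{i}=1$ otherwise; then $x_{i}^{r_{i}-1}=e^{i}$ on $A_{+}$, so the ratio $\bast x^{\bast r}/\bast x=\bast\{x_{i}^{r_{i}-1}\}$ is infinite on a set in $\mathfrak{u}$ and hence does not lie in $(\bast\R_{\rm fin})_{+}^{\times}$. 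Setting $\upmu:=\bast x\cdot(\bast\R_{\rm fin})^{\times}_{+}$ yields $\Upphi_{\bast r}(\upmu)\not=\upmu$, contradicting $\Upphi_{\bast r}=\mathrm{id}$. The subtle point is that $\bast r-\bast 1$ may itself be infinitesimal, in which case $1/(r_{i}-1)$ is infinite; this poses no difficulty since $\bast x$ is still a valid element of $\bast\R_{+}$, and the argument is unaffected.
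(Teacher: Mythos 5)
Your proposal is correct, and it is considerably more complete than the paper's own proof, which consists of exactly two observations: that $\Upphi_{\bast r}$ is clearly multiplicative, and that $(\upmu+\upnu)^{\bast r}=(\max(\upmu,\upnu))^{\bast r}=\upmu^{\bast r}+\upnu^{\bast r}$ (the latter implicitly using the order preservation you spell out in step (ii)). Bijectivity and, more importantly, faithfulness are asserted in the theorem but not argued in the paper's proof; the closest the paper comes is the later Proposition \ref{FrobConHomeo}, whose injectivity argument fixes a single class $\upmu$ and rules out a \emph{non-infinitesimal} exponent collapsing it to $\bar 1$. That argument alone would not detect $\bast r$ with $\bast r-\bast 1$ a nonzero infinitesimal: for a fixed representative $\bast x=\bast\{x_i\}$ one can have $\bast x^{\bast r-1}\in(\bast\R_{\rm fin})_{+}^{\times}$ even though $\bast r\not=\bast 1$ (e.g.\ $x_i=i$, $r_i=1+1/\log i$), so the test class must be chosen \emph{after} $\bast r$. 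Your probe $x_i=\exp\bigl(i/(r_i-1)\bigr)$ does exactly this and correctly handles the infinitesimal case, which is the genuinely nontrivial content of faithfulness; the sign split $A=A_+\sqcup A_-$ is harmless but unnecessary, since the same formula gives $x_i^{r_i-1}=e^i$ on all of $A$. In short: where your argument overlaps with the paper (multiplicativity, preservation of the tropical sum via monotonicity, inverse $\Upphi_{\bast r^{-1}}$) it proceeds the same way, and the faithfulness step is a sound addition that the paper leaves unproved.
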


\begin{proof} $\Upphi_{\bast r}$ is clearly multiplicative.  
Moreover:  $(\upmu+\upnu )^{\bast r}=(\max (\upmu, \upnu ))^{\bast r}=\upmu^{\bast r}+\upnu^{\bast r}$. 
\end{proof} 

We denote by $\bar{\upmu}$ the orbit of $\upmu$ by $(\bast\R_{\rm fin})^{\times}_{+}$ with respect to $\Upphi$.
Note that by Theorem \ref{FrobProp}:
\begin{enumerate}
\item[-] $\bar{\upmu}$ is a sub tropical semi-ring of $\bstar\PR\R$. 
\item[-] The quotient of $\bstar\PR\R$ by $\Upphi$, denoted $\bstar\overline{\PR\R}$,
is a tropical semi-ring.  
\end{enumerate}

\begin{prop}\label{FrobConHomeo}  For each $\upmu\in\bstar\PR\R$, $\upmu\not=-\infty, 1$, $\bar{\upmu}$ is connected: if $\upmu',\upmu''\in\bar{\upmu}$
and $\upmu'<\upmu''$ then $[\upmu',\upmu'']\in\bar{\upmu}$.  Moreover, the map $\bast r\mapsto \upmu^{\bast r}$ defines a homeomorphism
$(\bast\R_{\rm fin})^{\times}_{+}\approx\bar{\upmu}$ with respect to the order topologies.
\end{prop}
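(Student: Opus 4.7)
The plan is to linearize the Frobenius action on $\bstar\PR\R$ via logarithms and identify $\bar\upmu$ with $(\bast\R_{\rm fin})^{\times}_{+}$ through the parameter $\bast r = (\log \bast y)/\log \bast x$. First I reduce to the case $\upmu > 1$: the involution $\upnu \mapsto \upnu^{-1}$ is an order-reversing automorphism of the ordered multiplicative group $\bstar\PR\R$ that intertwines every $\Upphi_{\bast r}$ (since $(\upmu^{-1})^{\bast r} = (\upmu^{\bast r})^{-1}$), so it carries $\bar\upmu$ bijectively and order-reversibly onto $\bar{\upmu^{-1}}$. Assuming $\upmu > 1$, Proposition \ref{integerrepre} supplies a positive infinite integer representative $\bast x \in \upmu$; write $L := \log \bast x \in \bast\R$, which is positive infinite.

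For connectedness, suppose $\upmu^{\bast r'} < \uplambda < \upmu^{\bast r''}$ in $\bar\upmu$. Pick a positive representative $\bast y \in \uplambda$ and set
\[ \bast r := \frac{\log \bast y}{L}. \]
Then $\bast x^{\bast r} = e^{\bast r L} = \bast y$, hence $\upmu^{\bast r} = \uplambda$, and it only remains to verify $\bast r \in (\bast\R_{\rm fin})^{\times}_{+}$. The strict inequalities translate to $\bast y / \bast x^{\bast r'}$ and $\bast x^{\bast r''}/\bast y$ being positive infinite; taking logs yields $\log \bast y - \bast r' L$ and $\bast r'' L - \log \bast y$ positive infinite, and dividing by the positive infinite $L$ gives $\bast r' < \bast r < \bast r''$ modulo infinitesimals. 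Since $\bast r'$ and $\bast r''$ are positive units of $\bast\R_{\rm fin}$, the sandwich places $\bast r$ in $(\bast\R_{\rm fin})^{\times}_{+}$. Hence $\uplambda \in \bar\upmu$.

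For the homeomorphism, the map $F: \bast r \mapsto \upmu^{\bast r}$ is order-preserving, since $\bast r \mapsto \bast x^{\bast r}$ is strictly increasing on $\bast\R$ (as $\bast x > 1$) and the inequalities descend to the quotient $\bstar\PR\R$. Surjectivity onto $\bar\upmu$ is given by the connectedness step. The inverse is given by $\uplambda \mapsto (\log \bast y)/L$ for any positive $\bast y \in \uplambda$; two choices of $\bast y$ differ by a factor $\bast u \in (\bast\R_{\rm fin})^{\times}_{+}$, so the corresponding $\bast r$'s differ by $(\log \bast u)/L$ — a finite quantity divided by an infinite one, hence infinitesimal. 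Since $F$ and its inverse are order-preserving maps between densely linearly ordered sets, both are continuous for the order topologies, and the pair realizes a homeomorphism.

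The most delicate step is tracking how the infinitesimal discrepancies in $\bast r$ match up precisely with the $(\bast\R_{\rm fin})^{\times}_{+}$-ambiguity in the choice of a positive representative $\bast y \in \uplambda$; this amounts to recognizing that the ``infinitesimal stabilizer'' of the Frobenius orbit — those $\bast r$ for which $\bast r L \in \bast\R_{\rm fin}$ — is exactly what is absorbed into the equivalence defining $\bar\upmu$. Once this matching is in place, everything reduces to the monotonicity of the exponential on $\bast\R$ and routine manipulation of the nonstandard logarithm.
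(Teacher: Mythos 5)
Your connectedness argument is essentially the paper's: both reduce the statement to the intermediate-value property of $\bast t\mapsto \bast x^{\bast t}$ by passing to the nonstandard logarithm, and your sandwich estimate showing $\bast r'<\bast r<\bast r''$, hence $\bast r\in(\bast\R_{\rm fin})^{\times}_{+}$, is precisely the step the paper dismisses as ``immediate''. The preliminary reduction to $\upmu>1$ by inversion is harmless.

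The gap is in the homeomorphism half, at injectivity. The map $F(\bast r)=\upmu^{\bast r}$ is not injective on $(\bast\R_{\rm fin})^{\times}_{+}$: with $\upmu$ the class of an infinite $\bast x$ and $L=\log\bast x$, one has $\upmu^{\bast r}=\upmu^{\bast s}$ whenever $(\bast r-\bast s)L\in\bast\R_{\rm fin}$; for instance $\bast s=1+L^{-1}\in(\bast\R_{\rm fin})^{\times}_{+}$ gives $\bast x^{\bast s}=e\,\bast x$, so $\upmu^{\bast s}=\upmu=\upmu^{1}$ although $\bast s\neq 1$. Your own construction of the ``inverse'' exhibits exactly this: it is well defined only up to an infinitesimal of size comparable to $L^{-1}$, i.e.\ it lands in a quotient of $(\bast\R_{\rm fin})^{\times}_{+}$, not in $(\bast\R_{\rm fin})^{\times}_{+}$ itself. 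The closing claim that this ambiguity is ``absorbed into the equivalence defining $\bar{\upmu}$'' is not correct: the equivalence defining $\bstar\PR\R$ identifies representatives $\bast y\in\bast\R_{+}$, not exponents $\bast r$, and the domain in the Proposition carries no identification; since both order topologies are Hausdorff, a non-injective map cannot be a homeomorphism. For comparison, the paper's own treatment of this half is no stronger: it proves only that $\upmu^{\bast t}=1$ forces $\bast t$ to be infinitesimal, i.e.\ the exponent is determined only up to the stabilizer $\{\bast\updelta\,:\,\bast\updelta L\in\bast\R_{\rm fin}\}$, which is injectivity merely ``up to infinitesimal exponents''. So an honest proof must either pass to the quotient of $(\bast\R_{\rm fin})^{\times}_{+}$ by that stabilizer or state the homeomorphism in that weakened sense; asserting a set-theoretic bijection, as you do, is a genuine gap (one shared, tacitly, by the paper).
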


\begin{proof}  Given $1\not=\bast x\in\bast\R_{+}$ and 
%$\bast x^{\bast r},\bast x^{\bast s}\in\bast\R_{+}$, 
$\bast r,\bast s\in (\bast\R_{\rm fin})^{\times}_{+}$,
it is immediate that for any $\bast y$ in the interval defined by 
$\bast x^{\bast r},\bast x^{\bast s}$,  there exists $\bast t\in (\bast r,\bast s)$ such that $\bast y=\bast x^{\bast t}$.  From this the connectedness
statement follows.  Now suppose that there exists $\bast t\in\bast\R_{+}$ and $\upmu\not=\infty, 1$ such that $\upmu^{\bast t}=1$.  Without loss of generality assume
that $\upmu = \bast n\cdot (\bast\R_{\rm fin})^{\times}_{+}$ is infinite.  Then there exists $\bast a\in (\bast\R_{\rm fin})^{\times}_{+} $ such that $\bast a = \bast n^{\bast t}$.  But such a $\bast t$ would necessarily be infinitesimal, contrary to our hypothesis. \end{proof}

%the set of which defines
%the {\bf {\small Frobenius foliated structure}} on $\bstar\PR\R$.
We denote by $[\bast x]$ the image in $\bstar\overline{\PR\R}$ of $\bast x\in\bast\R$.
For all $ \bar{\upmu}, \bar{\upnu}\in \bstar\overline{\PR\R}$, we write
$ \bar{\upmu}<\bar{\upnu}$ $ \Leftrightarrow$ for all $\upmu\in \bar{\upmu}$, $\upnu\in \bar{\upnu}$,
$\upmu < \upnu $.

\begin{prop}\label{FrobDLO}  $\bstar\overline{\PR\R}$ is a dense linear order.
\end{prop}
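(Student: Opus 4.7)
The plan is to verify the three defining properties of a dense linear order on $\bstar\overline{\PR\R}$ --- well-definedness of $<$ on classes, totality, and density --- by exploiting the convexity of Frobenius orbits established in Proposition \ref{FrobConHomeo}.

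For well-definedness and totality, observe that by Proposition \ref{FrobConHomeo} each orbit $\bar{\upmu}$ is an order-convex subset of the totally ordered group $\bstar\PR\R$ (Proposition \ref{denselinord}). Two distinct orbits are disjoint, being equivalence classes of the Frobenius action. But two disjoint order-convex subsets of a linearly ordered set must be strictly comparable: if an element of one straddled elements of the other, convexity on both sides would force an element in their intersection. Hence for any distinct $\bar{\upmu}, \bar{\upnu}$, either every representative of $\bar{\upmu}$ is less than every representative of $\bar{\upnu}$ or the reverse, so the relation $<$ is both well-defined and total on $\bstar\overline{\PR\R}$.

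For density, given distinct orbits $\bar{\upmu} < \bar{\upnu}$, the principal case is when both $\upmu$ and $\upnu$ are infinite. Pick representatives $\bast x \in \upmu, \bast y \in \upnu$ in $\bast\R_+$ and set $\bast s := \log \bast y / \log \bast x$, so that $\bast y = \bast x^{\bast s}$. Since $\bar{\upnu} \neq \bar{\upmu}$, the exponent $\bast s$ is outside $(\bast\R_{\rm fin})^\times_+$; since $\upmu < \upnu$ with $\upnu$ infinite, $\bast s > 1$ and cannot be infinitesimal, so $\bast s$ is infinite. Setting $\bast t := \sqrt{\bast s}$ (also infinite) and $\upxi := \upmu^{\bast t}$, Proposition \ref{denselinord} gives $\upmu < \upxi < \upnu$ because $1 < \bast t < \bast s$, while $\bar{\upxi}$ is distinct from both $\bar{\upmu}$ and $\bar{\upnu}$ because the Frobenius exponents $\bast t$ and $\bast s/\bast t = \sqrt{\bast s}$ are both infinite and thus outside $(\bast\R_{\rm fin})^\times_+$. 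The remaining configurations reduce to this one: if $\upmu$ is infinitesimal and $\upnu$ infinite, the singleton class $\bar{1}$ itself sits strictly between; if both are infinitesimal, the inversion $\upmu \mapsto \upmu^{-1}$ is an order-reversing automorphism reducing to the infinite case; and adjacency to $\bar{1}$ or $\bar{-\infty}$ is handled by an iterated-logarithm construction (for instance, if $\upmu$ is infinitesimal and $\upnu = 1$, the element $1/\log(1/\bast x)$ represents a strictly coarser infinitesimal orbit lying between $\bar{\upmu}$ and $\bar{1}$, since its Frobenius exponent relative to $\upmu$ is infinitesimal).

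The main obstacle is the orbit-separation check for $\bar{\upxi}$ in the configurations adjacent to $\bar{1}$, where the ``square-root of exponent'' trick no longer visibly escapes $(\bast\R_{\rm fin})^\times_+$; the iterated-log construction is designed precisely to force the relevant Frobenius exponent to be either infinite or infinitesimal, thereby guaranteeing a genuinely intermediate orbit.
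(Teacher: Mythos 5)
Your proposal is correct and follows essentially the same route as the paper: comparability of distinct orbits comes from their order-convexity (which the paper's own proof re-derives inline via the intermediate exponent $\bast s\in[1,\bast r]$, i.e.\ the content of Proposition \ref{FrobConHomeo}), and density comes from the same $\sqrt{\bast s}$-trick, taking the class of $\bast x^{\sqrt{\bast s}}$ between $\bar{\upmu}$ and $\bar{\upnu}$. The only difference is that you spell out the peripheral configurations (two infinitesimal classes via inversion, and classes adjacent to $\bar{1}$ or $-\infty$ via an iterated-logarithm/infinite-exponent choice), which the paper subsumes in its ``without loss of generality'' reduction.
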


\begin{proof}  If $ \bar{\upmu}\not<\bar{\upnu}$ and $ \bar{\upmu}\not>\bar{\upnu}$ then 
it follows that there exist representatives $\upmu\in \bar{\upmu}$, $\upnu\in \bar{\upnu}$
for which $\upmu<\upnu$ and $\upmu^{\bast r}>\upnu$ for $\bast r\in (\bast\R_{\rm fin})^{\times}_{+}$.
We may assume without loss of generality that both $\upmu,\upnu$ represent infinite classes
so that $\bast r>1$.  Representing $\bast x=\bast\{x_{i}\}\in\upmu$ and $\bast y=\bast\{y_{i}\}\in\upnu$, let
$\bast s=\bast\{s_{i}\}$ where $s_{i}$ is the unique positive real satisfying $x_{i}^{s_{i}}=y_{i}$.  Then
$\bast s\in [1,\bast r]\subset(\bast\R_{\rm fin})^{\times}_{+}$, $\upmu^{\bast s}=\upnu$ and therefore $\bar{\upmu}=\bar{\upnu}$.  Thus $\bstar\overline{\PR\R}$ is a linear order.
On the other hand, if $ \bar{\upmu}<\bar{\upnu}$, then choosing representatives $\bast x$, $\bast y$ as above, we have
$\bast x^{\bast s}=\bast y$ for $\bast s$ infinite.  If we let $\upmu'$ be the class of $\bast x^{\sqrt{\bast s}}$, then
$\bar{\upmu}<\bar{\upmu}'<\bar{\upnu}$.
\end{proof}

We call $\bstar\overline{\PR\R}$ the {\bf  Frobenius growth-decay semi-ring}.
%We will write 
%\[ \upmu \blacktriangleleft \upnu \Longleftrightarrow
%\upmu^{\R_{+}} < \upnu \]
 %i.e.\  $\upmu^{r} < \upnu $ for all $r\in\R$; we write $\upmu\blacktriangleleft\blacktriangleright\upnu$
%if neither $\upmu \blacktriangleleft \upnu$ nor $\upmu \blacktriangleright \upnu$.  The latter means that there exist $r,r'\in\R_{+}$
%such that $\upmu^{r}<\upnu$ and $\upmu^{r'}>\upnu$.

\begin{note}\label{infinitesimalpower}  In fact, we may extend $\Upphi$ to an action of $(\bast\R_{\upvarepsilon})_{+}$ = the multiplicative monoid of positive infinitesimals on $\bstar\PR\R$.  That is, infinitesimal powers $\upmu^{\bast \upvarepsilon}$, $\bast\upvarepsilon\in(\bast\R_{\upvarepsilon})_{+}$,
are well-defined, since $\bast r^{\bast \upvarepsilon}\in (\bast\R_{\rm fin})_{+}^{\times}$ for all $\bast r\in (\bast\R_{\rm fin})_{+}^{\times}$.  This induces
an action of $(\bast\R_{\upvarepsilon})_{+}$ on $\bstar\overline{\PR\R}$, $\bar{\upmu}\mapsto \bar{\upmu}^{\bast \upvarepsilon}$ by tropical homomorphisms which
either expand or contract the order according to whether $\bar{\upmu}$ is the class of an infinitesimal or infinite element: e.g. $\bar{\upmu}<\bar{\upmu}^{\bast \upvarepsilon}$ if $\bar{\upmu}$ is an infinitesimal class.  We will say that $\bar{\upmu}\in \bstar\overline{\PR\R}_{\upvarepsilon}$
is a $\bast\boldsymbol\upvarepsilon$ {\bf root of unity} if $\bar{\upmu}^{\bast \upvarepsilon}=\bar{1}$.  The set of $\bast\upvarepsilon$ roots
of unity union $-\infty$ clearly forms a sub tropical semi ring of $\bstar\overline{\PR\R}_{\upvarepsilon}$.
%Infinitesimal powers will be used later in \S \ref{polygdasec}.
\end{note}

Define the {\bf  tropical subtraction} of classes $\upmu,\upnu\in\bstar\PR\R$ by 
\[ \upmu-\upnu=\upnu-\upmu := \min (\upmu,\upnu).\]
Tropical subtraction satisfies $\upmu-\upnu\geq \upmu\cdot\upnu$ in $\bstar\PR\R_{\upvarepsilon}$, and descends to a well defined binary operation in
$\bstar\overline{\PR\R}$.  Let $\bstar \overline{\PR\R}_{\upvarepsilon}$ be the image of the 
decay semi-ring in $\bstar \overline{\PR\R}$.  

\begin{prop}\label{tropsubistropprod}  Tropical subtraction coincides with tropical product in $\bstar\overline{\PR\R}_{\upvarepsilon}$: $\bar{\upmu}\cdot\bar{\upnu} = \bar{\upmu}-\bar{\upnu}$
for all $\bar{\upmu},\bar{\upnu}\in \bstar\overline{\PR\R}_{\upvarepsilon}$.
\end{prop}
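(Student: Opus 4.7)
The plan is to work directly with representatives and show that the product of two positive infinitesimal representatives can always be written as a Frobenius power of the smaller one, thereby collapsing to it in $\bstar\overline{\PR\R}_{\upvarepsilon}$. Since by Proposition \ref{FrobDLO} the space $\bstar\overline{\PR\R}$ is linearly ordered, I may assume without loss of generality that $\bar{\upmu}\leq \bar{\upnu}$, so that $\bar{\upmu}-\bar{\upnu}=\min(\bar{\upmu},\bar{\upnu})=\bar{\upmu}$, and it suffices to prove $\bar{\upmu}\cdot\bar{\upnu}=\bar{\upmu}$.

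First I would pick infinitesimal representatives $\bast x=\bast\{x_{i}\}\in \upmu$ and $\bast y=\bast\{y_{i}\}\in\upnu$ with $0<\bast x\leq\bast y<1$ on an ultrafilter set (possible because $\bar{\upmu},\bar{\upnu}\in\bstar\overline{\PR\R}_{\upvarepsilon}$ and we may rescale by positive units). Mimicking the construction in the proof of Proposition \ref{FrobDLO}, define $\bast s=\bast\{s_i\}$ where $s_i>0$ is the unique real with $x_i^{s_i}=y_i$; since $0<x_i\leq y_i<1$, we get $s_i\in(0,1]$, so $\bast s\in(\bast\R_{\rm fin})_{+}^{\times}$ (specifically $\bast s\in(0,1]$, hence bounded and positive).

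Next I would compute the product of representatives:
\[ \bast x\cdot\bast y=\bast x\cdot \bast x^{\bast s}=\bast x^{1+\bast s}. \]
The exponent $1+\bast s$ lies in the interval $[1,2]$, and in particular $1+\bast s\in(\bast\R_{\rm fin})_{+}^{\times}$. By Theorem \ref{FrobProp} this exponent acts as a Frobenius automorphism, so
\[ \overline{\upmu\cdot\upnu}=\overline{\bast x^{1+\bast s}\cdot(\bast\R_{\rm fin})_{+}^{\times}}=\overline{\bast x\cdot(\bast\R_{\rm fin})_{+}^{\times}}=\bar{\upmu}, \]
which is precisely $\min(\bar{\upmu},\bar{\upnu})=\bar{\upmu}-\bar{\upnu}$, as required.

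The argument is essentially routine once the key observation is made; the only mildly subtle point is to verify that one may always rechoose representatives so that $\bast x\leq\bast y<1$ on an ultrafilter set, so that $\bast s\leq 1$ and therefore $1+\bast s$ stays noninfinite. This is the one place where the infinitesimal hypothesis is used; without it (e.g.\ for infinite classes) the sum $1+\bast s$ could be infinite and the collapse would fail, which is consistent with the fact that tropical subtraction only coincides with tropical product on $\bstar\overline{\PR\R}_{\upvarepsilon}$.
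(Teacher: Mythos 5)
Your proof is correct and is essentially the paper's own argument: both reduce to writing the larger infinitesimal class as a power of the smaller, $\bar{\upnu}=\bar{\upmu}^{\bast s}$ with $0<\bast s\leq 1$, so that the product becomes $\bar{\upmu}^{1+\bast s}$ with $1+\bast s\in(\bast\R_{\rm fin})_{+}^{\times}$, hence Frobenius-equivalent to the minimum. One harmless slip: when $\bar{\upmu}<\bar{\upnu}$ strictly the exponent $\bast s$ is necessarily infinitesimal, so your parenthetical claim that $\bast s\in(\bast\R_{\rm fin})_{+}^{\times}$ is false, but nothing is lost since the argument only uses that $1+\bast s\in[1,2]$ is a positive unit.
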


\begin{proof}  If $\bar{\upmu}=\bar{\upnu}$ then $\bar{\upmu}\cdot\bar{\upmu}=\bar{\upmu}=\bar{\upmu}-\bar{\upmu}$. Now assume that
$\bar{\upmu}>\bar{\upnu}$.  Then there exists $\bast \updelta\in \bast\R_{\upvarepsilon}$ such that $\bar{\upmu}=\bar{\upnu}^{\bast \updelta}$.
Then $\bar{\upmu}\cdot\bar{\upnu}=\bar{\upnu}^{1+\bast \updelta}=\bar{\upnu}=\bar{\upmu}-\bar{\upnu}$.

\end{proof}

\section{Polynomial Approximate Ideals and Rational Maps}\label{polyideologysection}

In this section we begin our study of the approximate ideal structure of polynomial diophantine
approximation.  In order to moderate notation, we will henceforward write 
\[  
%\tv{\mathcal{Z}}:= \Z [X]\subset 
\bast \tv{\mathcal{Z}}:= \bast\Z [X]\subset  \bast \tilde{\tv{\mathcal{Z}}}:= \bast(\Z [X]).  \]
Note that an element $\bast f\in \bast \tv{\mathcal{Z}}$ is a finite degree polynomial in the ring $\bast\Z $
whereas an element $\bast f=\bast\{ f_{i}\}\in \bast \tilde{\tv{\mathcal{Z}}}$ has a well-defined possibly infinite degree
$\deg (\bast f) =\bast \{ \deg (f_{i})\}\in\bast\N$.  For each $\bast d\in\bast \N$ we define the additive subgroup
\[  \bast \tilde{\tv{\mathcal{Z}}}_{\bast d}:= \{\bast f\in  \bast \tilde{\tv{\mathcal{Z}}}|\; \deg (\bast f)\leq \bast d\}. \]
This gives $ \bast \tilde{\tv{\mathcal{Z}}}$ the structure of a filtered ring:
\[ \bast \tilde{\tv{Z}}_{\bast d}\cdot \bast \tilde{\tv{Z}}_{\bast e}\subset \bast \tilde{\tv{Z}}_{\bast d+\bast e} .\]

We will also consider the following coarsening of the degree filtration.  Let
\begin{align*}
 \bast\breve{d} & =  \{\bast d'\in\bast\N |\; \langle \bast d'\rangle =\langle \bast d\rangle \} \\
 & = \{\bast d'\in\bast\N |\;\exists \bast p\in (\bast\Q_{\rm fin})^{\times}_{+} \text{ such that } \bast d'=\bast p\bast d  \}.
 \end{align*}
The set of such classes $\bast\breve{d}$ is ordered; note that
\[   \bast\breve{d}+ \bast\breve{e} = (\bast  d+\bast e)\breve{\;}. \]
Indeed, if say $ \bast\breve{d}< \bast\breve{e}$ and $\bast d'=\bast p\bast d$, $\bast e'=\bast q\bast e$ where $\bast p,\bast q\in (\bast\Q_{\rm fin})^{\times}_{+} $
then $\bast d'+\bast e'= \bast x(\bast d+\bast e)$ where
\[  \bast x = \frac{\bast p\bast d+\bast q\bast e}{\bast d+\bast e} \simeq\bast q\in(\bast\Q_{\rm fin})^{\times}_{+} ,\]
since $\bast d/\bast e$ is infinitesimal.
Then for each $\bast \breve{d}$,
\[\bast \tilde{\tv{\mathcal{Z}}}_{\bast \breve{d}}:=   \bigcup_{\bast d\in\bast \breve{d}}  \bast \tilde{\tv{\mathcal{Z}}}_{\bast d}\]
is a {\it ring} filtered by subgroups and 
\[ \bast \tilde{\tv{Z}}_{\bast \breve{d}}\cdot \bast \tilde{\tv{Z}}_{\bast \breve{e}}\subset \bast \tilde{\tv{Z}}_{\bast \breve{d}+\bast \breve{e}}\]
makes of $\bast \tilde{\tv{Z}}$ a ring filtered by rings.  Note that $\bast \tilde{\tv{Z}}_{\breve{1}} =\bast \tv{Z}$.

For each $\uptheta\in\R$,  the {\bf ring of (infinite-degree) polynomial diophantine approximations} is defined
\[  \bast \tilde{\tv{\mathcal{Z}}}(\uptheta )= \{\bast f\in \bast \tilde{\tv{\mathcal{Z}}}|\; \bast f(\uptheta )\simeq 0\} .\]
The subring $\bast \tv{\mathcal{Z}}(\uptheta )=  \bast \tilde{\tv{\mathcal{Z}}}(\uptheta )\cap \bast \tv{\mathcal{Z}}$
was studied in \S 6 of \cite{Ge4}.  Defining  $ \bast \tilde{\tv{Z}}(\uptheta)_{\bast \breve{d}}=  \bast \tilde{\tv{\mathcal{Z}}}(\uptheta )\cap \bast \tilde{\tv{Z}}_{\bast \breve{d}}$
induces on $ \bast \tilde{\tv{Z}}(\uptheta)$ a filtered ring structure as well.

We would like to measure growth and decay of polynomials in $ \bast \tilde{\tv{Z}}(\uptheta)$.  
Classically, there are a number of ways one measures the size of a polynomial (c.f. the Appendix to \S 3 of \cite{Wa1}).  We will be
interested in two: the height and the Mahler measure.  Scaling by degree makes them tropical equivalent in a sense to made precise below.

For a standard polynomial $f\in\Z[X]$ denote by $\mathfrak{h}(f)$ the {\bf  height}: the maximum of the absolute values of the coefficients
 of $f$.  Note that the height is subadditive. For $\bast f=\bast \{f_{i}\}\in\bast \tilde{\tv{Z}}$ define
 \[ \mathfrak{h}(\bast f) = \bast \{ \mathfrak{h}(f_{i})\}\in\bast \N.\]
 
 On the other hand: recall that if $f(X)=a\prod (X-\upalpha_{i})$ its {\bf  Mahler measure} is 
 \[  \mathfrak{m}(f)= |a|\prod\max(1,|\upalpha_{i}|). \]
 By definition, the Mahler measure is multiplicative.
 The Mahler measure of $\bast f\in \bast\tilde{\tv{Z}}$ is defined in the customary way: 
 \[  \mathfrak{m}(\bast f)= \bast \{ \mathfrak{m}(f_{i})\}\in\bast\R_{+}\cup \{ 0\}.\]
 The relationship between the height and the Mahler measure is given by the following double inequality (see page 113 of \cite{Wa1}):
 \begin{align}\label{heightmahlequiv}  (\deg(\bast f) +1)^{-1/2}\cdot\mathfrak{m}(\bast f)  \leq\mathfrak{h}(\bast f)\leq 2^{\deg(\bast f)}\cdot\mathfrak{m}(\bast f). \end{align}

It will be to our advantage to
scale growth according to degree.  That is, for each $\bast d\in \bast \N$ we will define a $\bast d$-scaled notion of size for 
elements of $ \bast \tilde{\tv{\mathcal{Z}}}_{\bast d}$.  In particular, the size of a fixed polynomial $\bast f$ will be, as such, a relative measure, relative
to the group $ \bast \tilde{\tv{\mathcal{Z}}}_{\bast d}$ in which it is being considered.  For $\bast f\in \bast \tilde{\tv{\mathcal{Z}}}_{\bast d}$ define
\[ \mathfrak{h}_{\bast d}(\bast f) := \mathfrak{h}(\bast f)^{1/\bast d},\quad \mathfrak{m}_{\bast d}(\bast f) := \mathfrak{m}(\bast f)^{1/\bast d}.\]
Notice that by (\ref{heightmahlequiv})
\[   \langle\mathfrak{h}_{\bast d}(\bast f)\rangle = \langle \mathfrak{m}_{\bast d}(\bast f) \rangle \in \bstar\PR\R .\]
Thus, measuring degree-normalized growth in $\bstar\PR\R$ permits us to use either the height or the Mahler measure according to the exigencies
of the situation we find ourselves in, as the paragraphs which follow illustrate.

In fact, for reasons which will soon become clear, it is natural to begin the parametrization of growth and decay in the polynomial setting using the Frobenius tropical semi ring $\bstar\overline{\PR\R}_{\upvarepsilon}$ (introduced at the end of \S \ref{tropical})
rather than $\bstar\PR\R_{\upvarepsilon}$.  For any $\bast d\geq \deg (\bast f)$ define
\[   \bar{\upmu}_{\bast d}(\bast f) := [\mathfrak{h}_{\bast d}(\bast f)^{-1}] = [ \mathfrak{m}_{\bast d}(\bast f)^{-1}]\in\bstar\overline{\PR\R}_{\upvarepsilon},\]
where we recall that $[\bast x]$ is the image in $\bstar\overline{\PR\R}$ of $\bast x\in\bast\R$.
Note that if $\bast d'\in\bast\breve{d}$ then $ \bar{\upmu}_{\bast d'}(\bast f) = \bar{\upmu}_{\bast d}(\bast f) $: thus the $\bast \breve{d}$-normalized
expression
 \[   \bar{\upmu}_{\bast \breve{d}}(\bast f)= \bar{\upmu}_{\bast d}(\bast f) \]
 is well-defined.
In particular, when $\bast d=d\in\N$ we have
$ \bar{\upmu}_{\breve{d}}(\bast f) =  \bar{\upmu}_{\breve{1}}(\bast f)=[\mathfrak{h}(\bast f)^{-1}]$.
Thus we may define
\[  \bast \tilde{\tv{\mathcal{Z}}}_{\bast \breve{d}}^{\bar{\upmu}} = \big\{ \bast f\in  \bast \tilde{\tv{\mathcal{Z}}}_{\bast \breve{d}}\big|\; 
 \bar{\upmu}_{\bast \breve{d}}(\bast f) >\bar{\upmu}\big\}= 
 \big\{ \bast f\in  \bast \tilde{\tv{\mathcal{Z}}}_{\bast \breve{d}}\big|\; 
\mathfrak{h}_{\bast d}(\bast f) \cdot \bar{\upmu}=\mathfrak{m}_{\bast d}(\bast f) \cdot \bar{\upmu}\in \bstar\overline{\PR\R}_{\upvarepsilon}\big\}
  \]
  (where in the last expression above $\bast d\in\bast\breve{d}$).
Due to the subadditivity of the height, we have, assuming, say, $ \mathfrak{h}_{\bast d}(\bast f)\geq  \mathfrak{h}_{\bast d}(\bast g)$:
\begin{align*}
  \mathfrak{h}_{\bast d}(\bast f+\bast g) \cdot \bar{\upmu} & \leq \big(\mathfrak{h}(\bast f) + \mathfrak{h}(\bast g) \big)^{1/\bast d}\cdot \bar{\upmu} \\
& \leq 2^{1/\bast d}\, \mathfrak{h}_{\bast d}(\bast f)\cdot \bar{\upmu} \in  \bstar\overline{\PR\R}_{\upvarepsilon}
\end{align*}
%\[  \bar{\upmu}_{\bast \breve{d}}(\bast f+\bast g)\geq   \bar{\upmu}_{\bast \breve{d}}(\bast f)+ \bar{\upmu}_{\bast \breve{d}}(\bast g)\]
 so that $\bast \tilde{\tv{\mathcal{Z}}}_{\bast \breve{d}}^{\bar{\upmu}}$ is a group.
Moreover, by the multiplicativity of the Mahler measure, we also have:
\begin{align}\label{frobgradedringbyrings} 
\bar{\upmu}_{\bast \breve{d}+\bast \breve{e}}(\bast f\cdot\bast g) & = [(\mathfrak{m}(\bast f)\cdot\mathfrak{m}(\bast g))^{1/(\bast d+\bast e)}]^{-1}\\
& \geq  \bar{\upmu}_{\bast \breve{d}}(\bast f)\cdot \bar{\upmu}_{\bast \breve{e}}(\bast g)  \nonumber
\end{align}
for $\bast f\in  \bast \tilde{\tv{\mathcal{Z}}}_{\bast \breve{d}}$ and $\bast g\in  \bast \tilde{\tv{\mathcal{Z}}}_{\bast \breve{e}}$.  
In particular, $ \bast \tilde{\tv{\mathcal{Z}}}_{\bast \breve{d}}^{\bar{\upmu}}$ is a ring 
(take $\bast\breve{d}=\bast\breve{e}$ in (\ref{frobgradedringbyrings}) and note that $2\bast\breve{d}=\bast\breve{d}$, $\bar{\upmu}^{2}=\bar{\upmu}$), 
\[    \bast \tilde{\tv{\mathcal{Z}}}_{\bast \breve{d}}^{\bar{\upmu}}\cdot  \bast \tilde{\tv{\mathcal{Z}}}_{\bast \breve{e}}^{\bar{\upmu}'}\subset
 \bast \tilde{\tv{\mathcal{Z}}}_{\bast \breve{d}+\bast \breve{e}}^{\bar{\upmu}\cdot\bar{\upmu}'}
  \]
and therefore $\bast\tilde{\tv{\mathcal{Z}}}=\{ \bast\tilde{\tv{\mathcal{Z}}}_{\bast \breve{d}}^{\bar{\upmu}}\}$ has the structure of a ring doubly filtered by rings.
  
  Now given $\uptheta\in\R$ and $\bast f\in \bast \tilde{\tv{\mathcal{Z}}}(\uptheta )_{\bast \breve{d}}$ we define the normalized decay via
  \[\bar{\upnu}_{\bast \breve{d}}(\bast f) := [|\bast f(\uptheta )|^{1/\bast d}],\quad \bast d\in\bast \breve{d}\]
and the subring
\[  \bast \tilde{\tv{Z}}_{\bar{\upnu}}(\uptheta)_{\bast \breve{d}} =\{ \bast f\in \bast \tilde{\tv{Z}}(\uptheta )_{\bast \breve{d}}|\; 
\bar{\upnu}_{\bast \breve{d}} (\bast f)\leq \bar{\upnu}\}. \]
%We point out again that for $\bast d=d\in\N$, normalizing by the degree has no effect so that the decay is just $\bar{\upnu}(\bast f)= [|\bast f(\uptheta )|]$.
This filtration by rings has the expected product law:
\[\bast \tilde{\tv{Z}}_{\bar{\upnu}}(\uptheta)_{\bast \breve{d}}\cdot \bast \tilde{\tv{Z}}_{\bar{\upnu}'}(\uptheta)_{\bast \breve{e}}\subset 
\bast \tilde{\tv{Z}}_{\bar{\upnu}\cdot\bar{\upnu}'}(\uptheta)_{\bast \breve{d}+\bast \breve{e}}.
\]
Write as before 
\[ \bast \tilde{\tv{Z}}^{\bar{\upmu}}_{\bar{\upnu}}(\uptheta)_{\bast \breve{d}} = \bast \tilde{\tv{Z}}_{\bar{\upnu}}(\uptheta)_{\bast \breve{d}}
\cap  \bast \tilde{\tv{\mathcal{Z}}}_{\bast \breve{d}}^{\bar{\upmu}}.\]
Finally, note that for all $\bast \breve{d}$ and $n\in\N$, the power map $\bast f\mapsto \bast f^{n}$ induces a multiplicative inclusion  
\begin{align}\label{powerlawmap} \bast \tilde{\tv{Z}}^{\bar{\upmu}}_{\bar{\upnu}}(\uptheta)_{\bast \breve{d}}\subset \bast \tilde{\tv{Z}}^{\bar{\upmu}}_{\bar{\upnu}}(\uptheta)_{\breve{d}}.
\end{align}

\begin{theo}\label{polyideology}  The triply indexed filtration \[ \bast \tilde{\tv{Z}}(\uptheta) =\{\bast \tilde{\tv{Z}}^{\bar{\upmu}}_{\bar{\upnu}}(\uptheta)_{\bast \breve{d}}\}\] by subrings
makes of $\bast \tilde{\tv{Z}}(\uptheta)$ a ring triply filtered by rings.  Moreover,
\begin{align}\label{polyideologydef}\bast \tilde{\tv{Z}}^{\bar{\upnu}}_{\bast \breve{d}} \cdot \bast \tilde{\tv{Z}}^{\bar{\upmu}}_{\bar{\upnu}}(\uptheta)_{\bast \breve{d}} \subset\bast \tilde{\tv{Z}}^{\bar{\upmu}\cdot\bar{\upnu}}(\uptheta)_{\bast \breve{d}}.
\end{align} 
\end{theo}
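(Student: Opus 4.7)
The proof plan has two parts: (i) to verify that each indexed piece $\bast\tilde{\tv{Z}}^{\bar{\upmu}}_{\bar{\upnu}}(\uptheta)_{\bast\breve{d}}$ is itself a subring (making the triple indexing a filtration by rings), and (ii) to establish the approximate-ideal product (\ref{polyideologydef}). Both will reduce to the three single-parameter product laws already developed: the degree law $\bast\tilde{\tv{Z}}_{\bast\breve{d}}\cdot\bast\tilde{\tv{Z}}_{\bast\breve{e}}\subset\bast\tilde{\tv{Z}}_{\bast\breve{d}+\bast\breve{e}}$, the growth law (\ref{frobgradedringbyrings}), and the decay product law displayed immediately before the theorem.

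For part (i), I would observe that
\[ \bast\tilde{\tv{Z}}^{\bar{\upmu}}_{\bar{\upnu}}(\uptheta)_{\bast\breve{d}} = \bast\tilde{\tv{Z}}_{\bar{\upnu}}(\uptheta)_{\bast\breve{d}} \cap \bast\tilde{\tv{\mathcal{Z}}}_{\bast\breve{d}}^{\bar{\upmu}} \]
is the intersection of two already-established subrings, so closure under multiplication reduces to the three idempotences $2\bast\breve{d}=\bast\breve{d}$ (representatives differ by the factor $2\in(\bast\Q_{\rm fin})^{\times}_{+}$), $\bar{\upmu}^{2}=\bar{\upmu}$, and $\bar{\upnu}^{2}=\bar{\upnu}$ (squaring preserves a Frobenius orbit). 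These collapse the doubled indices produced on the right-hand side of each separate product law back to the original indices.

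For part (ii), given $\bast f\in\bast\tilde{\tv{Z}}^{\bar{\upnu}}_{\bast\breve{d}}$ and $\bast g\in\bast\tilde{\tv{Z}}^{\bar{\upmu}}_{\bar{\upnu}}(\uptheta)_{\bast\breve{d}}$ with a common representative $\bast d\in\bast\breve{d}$, the degree bound $\deg(\bast f\bast g)\leq 2\bast d\in\bast\breve{d}$ is automatic, and the growth estimate $\bar{\upmu}_{\bast\breve{d}}(\bast f\bast g)\geq\bar{\upnu}\cdot\bar{\upmu}$ follows directly from (\ref{frobgradedringbyrings}). The substantive point --- where the approximate-ideal content actually sits --- is to show that $(\bast f\bast g)(\uptheta)\simeq 0$, despite $\bast f$ not being required to lie in $\bast\tilde{\tv{Z}}(\uptheta)$ (so that $\bast f(\uptheta)$ may well be infinite).

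For this infinitesimality step I plan to use the elementary bound $|\bast f(\uptheta)|\leq(\bast d+1)\,\mathfrak{h}(\bast f)\,\max(1,|\uptheta|)^{\bast d}$. Taking $\bast d$-th roots, both $(\bast d+1)^{1/\bast d}$ and $\max(1,|\uptheta|)$ lie in $(\bast\R_{\rm fin})^{\times}_{+}$ (the former by transfer from $n^{1/n}\in[1,e^{1/e}]$), so they collapse to the trivial class in $\bstar\overline{\PR\R}$, leaving $[|\bast f(\uptheta)|^{1/\bast d}]\leq[\mathfrak{h}_{\bast d}(\bast f)]$. Combining the growth hypothesis $[\mathfrak{h}_{\bast d}(\bast f)]<\bar{\upnu}^{-1}$ with the decay hypothesis $[|\bast g(\uptheta)|^{1/\bast d}]\leq\bar{\upnu}$ then yields
\[ [|(\bast f\bast g)(\uptheta)|^{1/\bast d}]<\bar{\upnu}^{-1}\cdot\bar{\upnu}=\bar{1}, \]
so $|(\bast f\bast g)(\uptheta)|^{1/\bast d}$ is infinitesimal, and since $0<c<1$ forces $c\leq c^{1/\bast d}$ for any integer $\bast d\geq 1$, so is $(\bast f\bast g)(\uptheta)$ itself. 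The main obstacle is precisely this last step, and what makes it work is that the Frobenius normalization is engineered to absorb the exponential factor $\max(1,|\uptheta|)^{\bast d}$, so that a growth bound on $\bast f$ at Frobenius scale $\bar{\upnu}^{-1}$ is exactly the control needed to cancel a decay on $\bast g$ at the reciprocal scale $\bar{\upnu}$.
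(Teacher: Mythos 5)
Your proposal is correct and follows essentially the same route as the paper: part (i) is delegated to the single-index product laws and the idempotences $2\bast\breve{d}=\bast\breve{d}$, $\bar{\upmu}^{2}=\bar{\upmu}$, $\bar{\upnu}^{2}=\bar{\upnu}$, and part (ii) hinges on exactly the paper's key estimate $[|\bast f(\uptheta)|^{1/\bast d}]\leq[\mathfrak{h}_{\bast d}(\bast f)]$ (evaluation controlled by degree-normalized height, the factors $\max(1,|\uptheta|)^{\deg/\bast d}$ and $(\deg+1)^{1/\bast d}$ collapsing in the quotient), combined with the decay hypothesis on the other factor. Your write-up is in fact slightly more explicit than the paper's, since you also spell out the growth index of the product via (\ref{frobgradedringbyrings}) and the final step that an infinitesimal class for $|(\bast f\bast g)(\uptheta)|^{1/\bast d}$ forces $(\bast f\bast g)(\uptheta)\simeq 0$, both of which the paper leaves implicit.
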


\begin{proof}  The proof of the first statement is contained in the paragraphs preceding the Theorem.  As for the second statement, 
%assume without loss of generality that $\uptheta>1$ and 
note 
that for all $\bast g\in \bast \tilde{\tv{Z}}_{\bast \breve{d}}$ with $|\bast g(\uptheta)|\geq 1$, $\bast d\in\breve{d}$,
\begin{align*} [|\bast g(\uptheta)|^{1/\bast d}] 
& \leq [ \uptheta^{\deg (\bast g)/\bast d}(\deg (\bast g)\cdot \mathfrak{h}(\bast g))^{1/\bast d} ]\\
& = [\mathfrak{h}_{\bast d}(\bast g)]  .
\end{align*}
If $|\bast g(\uptheta)|\leq 1$ then it is trivial that $ [|\bast g(\uptheta)|^{1/\bast d}]\leq \bar{1}\leq  [\mathfrak{h}_{\bast d}(\bast g)]$.
Then for any $\bast f\in \bast \tilde{\tv{Z}}^{\bar{\upmu}}_{\bar{\upnu}}(\uptheta)_{\bast \breve{d}}$ and 
$\bast g\in  \bast \tilde{\tv{Z}}^{\bar{\upnu}}_{\bast \breve{d}}$ 
\begin{align*} 
\bar{\upnu}_{\bast \breve{d}}(\bast g\cdot\bast f) & = [| \bast g(\uptheta)\cdot\bast f(\uptheta)|^{1/\bast \breve{d}}] \\
&\leq  [\mathfrak{h}_{\bast \breve{d}}(\bast g)]\cdot \bar{\upnu} \in \bstar\overline{\PR\R}_{\upvarepsilon} .
 \end{align*}
% since $\bast g\in  \bast \tilde{\tv{Z}}^{\bar{\upnu}}_{\bast \breve{d}}$.
\end{proof}

We may thus speak of $\bast \tilde{\tv{Z}}(\uptheta)$ as being a {\bf  polynomial approximate ideal} (the latter concept defined by equation
(\ref{polyideologydef}) of Theorem \ref{polyideology}).  Since it is indexed by the Frobenius tropical semi-ring, we refer to it as the {\bf  Frobenius polynomial approximate ideal}.
% Note that in the special case of $\bast \breve{d}=\bast \breve{e}$, the ideological product (\ref{polyideologydef}) reads
%\[\bast \tilde{\tv{Z}}^{\bar{\upnu}}_{\bast \breve{d}} \cdot \bast \tilde{\tv{Z}}^{\bar{\upmu}}_{\bar{\upnu}}(\uptheta)_{\bast \breve{d}} 
%\subset\bast \tilde{\tv{Z}}^{\bar{\upmu}\cdot\bar{\upnu}}(\uptheta)_{\bast \breve{d}}\]
We may view $\bast \tilde{\tv{Z}}^{\bar{\upmu}}_{\bar{\upnu}}(\uptheta)_{\bast \breve{d}}$ as an approximate ideal within the ring $\bast \tilde{\tv{Z}}_{\bast \breve{d}}$.  In particular, the Frobenius approximate ideal can be regarded as providing a filtration of $\bast \tilde{\tv{Z}}(\uptheta)$ by approximate ideals,
indexed by the degree class $\bast\breve{d}$.

In order to bring the type of a Mahler class into play, we introduce as well
the {\bf ordinary polynomial approximate ideal} 
\[  \bast \tilde{\tv{Z}}(\uptheta ) = \{ \bast \tilde{\tv{Z}}^{\upmu}_{\upnu}(\uptheta )_{\bast d}\} \]
 which is defined exactly as the Frobenius approximate ideal, using $\bstar\PR\R_{\upvarepsilon}$ in place of $\bstar\overline{\PR\R}_{\upvarepsilon}$ to index 
 degree normalized growth and decay.  In particular, the ordinary approximate ideal makes of $\bast \tilde{\tv{Z}}(\uptheta )$ a ring tri-filtered by groups, 
 which satisfies the following
 analogue of Theorem \ref{polyideology}:
 
 \begin{theo}  For all $\bast d\leq \bast e\in\bast \N$ and $\upmu,\upnu\in\bstar\PR\R_{\upvarepsilon}$,
\[\bast \tilde{\tv{Z}}^{\upnu}_{\bast d} \cdot \bast \tilde{\tv{Z}}^{\upmu}_{\upnu}(\uptheta)_{\bast e} \subset\bast \tilde{\tv{Z}}^{\upmu\cdot\upnu}(\uptheta)_{\bast d+\bast e}.
 \]
 \end{theo}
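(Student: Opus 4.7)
The plan is to verify that $\bast h := \bast g \cdot \bast f$ meets the three requirements of membership in $\bast \tilde{\tv{Z}}^{\upmu \cdot \upnu}(\uptheta)_{\bast d + \bast e}$: the trivial degree bound $\deg(\bast h) \leq \bast d + \bast e$, the growth estimate $\upmu_{\bast d + \bast e}(\bast h) > \upmu \cdot \upnu$ in $\bstar\PR\R$, and infinitesimal decay $\bast h(\uptheta) \simeq 0$.  The nontrivial case is $\bast g, \bast f \neq 0$.

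For the growth, the cleanest route is through the Mahler measure.  Its multiplicativity $\mathfrak{m}(\bast h) = \mathfrak{m}(\bast g)\mathfrak{m}(\bast f)$, together with $\mathfrak{m}\geq 1$ on nonzero integer polynomials (transfer), yields $\mathfrak{m}(\bast g)^{1/(\bast d+\bast e)} \leq \mathfrak{m}_{\bast d}(\bast g)$ and analogously for $\bast f$, hence $\mathfrak{m}_{\bast d+\bast e}(\bast h) \leq \mathfrak{m}_{\bast d}(\bast g)\mathfrak{m}_{\bast e}(\bast f)$.  Inverting and invoking the height-Mahler tropical equivalence (\ref{heightmahlequiv}) gives $\upmu_{\bast d+\bast e}(\bast h) \geq \upmu_{\bast d}(\bast g)\cdot \upmu_{\bast e}(\bast f) > \upnu\cdot\upmu$, as required.

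For the decay, I would begin with the elementary triangle inequality $|\bast g(\uptheta)| \leq (\bast d+1)\max(1,|\uptheta|)^{\bast d}\,\mathfrak{h}(\bast g)$, take $(\bast d+\bast e)$-th roots, and multiply by $|\bast f(\uptheta)|^{1/(\bast d+\bast e)}$.  The prefactors $(\bast d+1)^{1/(\bast d+\bast e)}$ and $\max(1,|\uptheta|)^{\bast d/(\bast d+\bast e)}$ both lie in $(\bast\R_{\rm fin})^{\times}_{+}$, so they drop out of $\langle\cdot\rangle$.  Setting $s = \bast d/(\bast d+\bast e)$ and $t = \bast e/(\bast d+\bast e)$ and using the Frobenius action of Theorem \ref{FrobProp}, this produces $\langle |\bast h(\uptheta)|^{1/(\bast d+\bast e)}\rangle \leq \langle \mathfrak{h}_{\bast d}(\bast g)\rangle^{s}\cdot \upnu^{t} = \bigl(\langle \mathfrak{h}_{\bast d}(\bast g)\rangle\cdot\upnu\bigr)^{s}\cdot\upnu^{t-s}$.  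The strict growth inequality $\upmu_{\bast d}(\bast g) > \upnu$ is exactly the assertion that $\langle \mathfrak{h}_{\bast d}(\bast g)\rangle\cdot\upnu\in\bstar\PR\R_\upvarepsilon$.

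The main obstacle is confirming that this bound is in fact infinitesimal for every $\bast d \leq \bast e$.  In the Frobenius version of Theorem \ref{polyideology} the sum $\bast\breve{d}+\bast\breve{e}$ collapses to the larger of the two classes and renormalization costs nothing, whereas the ordinary semi-ring is finer and distinguishes $1/\bast d$, $1/\bast e$, and $1/(\bast d+\bast e)$.  I would dispatch this by splitting on whether $s$ is infinitesimal: if $s\in(\bast\R_{\rm fin})^{\times}_{+}$, the $s$-th power of an infinitesimal is infinitesimal and $\upnu^{t-s}\leq 1$ because $t\geq s$; if $s$ is itself infinitesimal, then $t-s$ is infinitesimally close to $1$, so by Proposition \ref{FrobConHomeo} $\upnu^{t-s}$ shares a Frobenius orbit with $\upnu$ and is still infinitesimal, absorbing the near-unit first factor.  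In either case the bound lies in $\bstar\PR\R_\upvarepsilon$, which lifts to $\bast h(\uptheta)\simeq 0$ in $\bast\R$, completing the verification.
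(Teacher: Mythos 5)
Your proposal is correct and follows essentially the same route as the paper: bound $|\bast g(\uptheta)|$ by its height, write the $(\bast d+\bast e)$-normalized decay as $\langle\mathfrak{h}_{\bast d}(\bast g)\rangle^{s}\cdot\upnu^{t}$, and use the growth hypothesis (that $\mathfrak{h}_{\bast d}(\bast g)\cdot\upnu$ is an infinitesimal class) together with $\bast d\leq\bast e$ to conclude. The only differences are cosmetic: the paper absorbs everything into the single bound $(\mathfrak{h}_{\bast d}(\bast g)\cdot\upnu)^{\bast e/(\bast d+\bast e)}$ with exponent at least $1/2$, where you instead split on whether $s=\bast d/(\bast d+\bast e)$ is infinitesimal, and you spell out the growth estimate via multiplicativity of the Mahler measure, which the paper leaves implicit.
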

 
 \begin{proof}  Fix $\bast g\in \bast \tilde{\tv{Z}}^{\upnu}_{\bast d}$ and $\bast f\in \bast \tilde{\tv{Z}}^{\upmu}_{\upnu}(\uptheta)_{\bast e}$. By the same argument employed in the proof of Theorem \ref{polyideology}, $\langle |\bast g(\uptheta)|^{1/\bast d}\rangle\leq  \langle\mathfrak{h}_{\bast d}(\bast g)\rangle$. Moreover, 
\begin{align*} 
\upnu_{\bast d+\bast e}(\bast g\cdot\bast f) & = \langle | \bast g(\uptheta)\cdot\bast f(\uptheta)|^{1/\bast d+\bast e}\rangle \\
&\leq  \mathfrak{h}_{\bast d}(\bast g)^{\bast d/(\bast d+\bast e)} \cdot \upnu^{\bast e/(\bast d+\bast e) } \\
&\leq (\mathfrak{h}_{\bast d}(\bast g)\cdot \upnu)^{\bast e/(\bast d+\bast e) }
\in \bstar\PR\R_{\upvarepsilon} .
 \end{align*}
 \end{proof}
 
The ordinary approximate ideal gives a refinement of the Frobenius approximate ideal:
\[  \bast \tilde{\tv{Z}}^{\bar{\upmu},\upmu}_{\bar{\upnu},\upnu}(\uptheta )_{\bast \breve{d},\bast d}:=\bast \tilde{\tv{Z}}^{\upmu}_{\upnu}(\uptheta )_{\bast d}\cap  \bast \tilde{\tv{Z}}^{\bar{\upmu}}_{\bar{\upnu}}(\uptheta )_{\bast \breve{d}}  \]
whenever $\upmu\in \bar{\upmu}$, $\upnu\in \bar{\upnu}$ and $\bast d\in\bast \breve{d}$.  Since $\bar{\upmu}^{2}=\bar{\upmu}$, $\bar{\upnu}^{2}=\bar{\upnu}$
and $2\bast \breve{d}=\bast \breve{d}$,
\[ \bast \tilde{\tv{Z}}^{\bar{\upmu},\upmu}_{\bar{\upnu},\upnu}(\uptheta )_{\bast \breve{d},\bast d}\cdot  \bast \tilde{\tv{Z}}^{\bar{\upmu},\upmu'}_{\bar{\upnu},\upnu'}(\uptheta )_{\bast \breve{d},\bast d'}\subset  \bast \tilde{\tv{Z}}^{\bar{\upmu},\upmu\cdot\upmu'}_{\bar{\upnu},\upnu\cdot\upnu'}(\uptheta )_{\bast \breve{d},\bast d+\bast d'}.\] 
Therefore, the ordinary approximate ideal endows each ring $\bast \tilde{\tv{Z}}^{\bar{\upmu}}_{\bar{\upnu}}(\uptheta )_{\bast \breve{d}}$ with the structure
of a ring tri-filtered by groups.  

A ring homomorphism
\[ \Phi :  \bast \tilde{\tv{Z}}(\uptheta )\longrightarrow \bast \tilde{\tv{Z}}(\upeta )\]
preserving the Frobenius approximate ideal structure (and the ordinary approximate ideal structure) will be called a(n) {\bf  (ordinary) Frobenius approximation homomorphism}, and if $\Phi$
is invertible, a(n) {\bf  (ordinary) Frobenius approximation isomorphism}.  This latter notion of equivalence is too strong for our purposes
so we weaken it in two essential ways.

\begin{enumerate}
\item[1.] First, we will need to allow that $\Phi$ be merely multiplicative (not necessarily preserving the additive structure), that is, we
will consider the approximate ideals $ \bast \tilde{\tv{Z}}(\uptheta )$ as simply semigroups, referring to them as {\bf semigroup approximate ideals}, and use the terminology of the previous paragraph to qualify semigroup homomorphisms.  It will be clear in this section and those that follow that in the polynomial theory, the multiplicative structure
plays the role of the passive operation (``external sum'');  the role of active operations (``multiplication, internal sum/difference'') will be played by the resultant product, sum and difference see \S \ref{resultantarithmetic}.
\item[2.]  A pair $\Phi= (\Phi_{1},\Phi_{2})$ of injective  Frobenius semigroup homomorphisms 
\[ \Phi_{1} :  \bast \tilde{\tv{Z}}(\uptheta )\longrightarrow \bast \tilde{\tv{Z}}(\upeta ), \quad 
\Phi_{2} :  \bast \tilde{\tv{Z}}(\upeta )\longrightarrow \bast \tilde{\tv{Z}}(\uptheta )\]
for which there exists $m, n\in\N$ such that
\[ \Phi_{1} (\bast \tilde{\tv{Z}}^{\upmu^{m}}_{\upnu^{m}}(\uptheta )_{\bast d})\subset \bast \tilde{\tv{Z}}^{\upmu}_{\upnu}(\upeta )_{\bast d\cdot m},
\quad
\Phi_{2} (\bast \tilde{\tv{Z}}^{\upmu^{n}}_{\upnu^{n}}(\upeta )_{\bast d})\subset \bast \tilde{\tv{Z}}^{\upmu}_{\upnu}(\upeta )_{\bast d\cdot n}
\]
will be called an {\bf  approximation consensus}, and we denote the equivalence relation
defined by an approximation  consensus by 
\[ \bast \tilde{\tv{Z}}(\uptheta )\rightleftharpoons \bast \tilde{\tv{Z}}(\upeta ).\]
Note that on the level of the Frobenius structure, an approximation  consensus gives a pair of injective semigroup homomorphisms respecting
the Frobenius tri-filtration: 
\[ \Phi_{1} (\bast \tilde{\tv{Z}}^{\bar{\upmu}}_{\bar{\upnu}}(\uptheta )_{\bast \breve{d}})\subset \bast \tilde{\tv{Z}}^{\bar{\upmu}}_{\bar{\upnu}}(\upeta )_{\bast \breve{d}},
\quad
\Phi_{2} (\bast \tilde{\tv{Z}}^{\bar{\upmu}}_{\bar{\upnu}}(\upeta )_{\bast \breve{d}})\subset \bast \tilde{\tv{Z}}^{\bar{\upmu}}_{\bar{\upnu}}(\upeta )_{\bast \breve{d}}.
\]
\end{enumerate}

We now turn our attention to the search for a notion of equivalence of real numbers which is adapted to the dictates of polynomial diophantine
approximation.  
We will insist that such a notion of equivalence implies an approximation consensus between corresponding approximate ideals. 
%Projective linear equivalence implies equality of linear nonvanishing spectra and applied to polynomial spectra, implies finite
%power law equivalence with $m=n=1$.  
Since the equivalence appropriate to linear diophantine approximation is projective linear (see Paragraph 3 of the Introduction), it stands to reason  
that higher degree rational maps will provide a relevant notion of equivalence in the polynomial milieu.

Thus: let $\text{\sf Rat}_{\Z}$ be the set of rational maps that may be written in the ``relatively prime form'' 
\[ R(X)=\frac{p(X)}{q(X)}\] where $p(X),q(X)\in\Z [X]$ are relatively prime and $q(X)$ is not the zero polynomial.
We equip $\text{\sf Rat}_{\Z}$ with the operation $\circ$ of composition; its field operations will play no role in what follows.

Let us describe the relatively prime form of the composition $R\circ S$ of $R,S\in\text{\sf Rat}_{\Z}$.
First, recall that the degree of $R$ is defined $\deg (R)=\max (\deg p, \deg q)$, and that \cite{Bea}
\[ \deg (R\circ S) = \deg(R)\deg(S).\]  
If we write $p(X)=\sum_{i=0}^{\deg p}p_{i}X^{i}$, $q(X)=\sum_{j=0}^{\deg q}q_{j}X^{j}$ and $S(X)=t(X)/u(X)$
with $t(X),u(X)$ relatively prime, then clearing denominators gives
\begin{align}\label{compofratmap}   (R\circ S)(X)= u(X)^{\deg q-\deg p}\cdot\frac{\sum_{i=0}^{\deg p}p_{i}t(X)^{i}u(X)^{\deg p-i}}{\sum_{j=0}^{\deg q}q_{j}t(X)^{j}u(X)^{\deg q-j}}.  \end{align}
Suppose that $\deg p\geq \deg q$ and $\deg t\geq \deg u$.  Then the degree of the numerator of (\ref{compofratmap}) is $\deg p\deg t$ and the
degree of the denominator is 
\[ \deg q\deg t + (\deg p-\deg q)\deg u\leq \deg p\deg t.\]  This implies that the numerator and denominator of (\ref{compofratmap}) are relatively prime: for otherwise, we would obtain 
$\deg (R\circ S)<\deg(R)\deg(S)$.  The other three cases ($\deg p\leq \deg q$ and $\deg t\geq \deg u$, etc.) produce the same result, namely,
that (\ref{compofratmap}) is the relatively prime form of $R\circ S$.

Write 
\[ \uptheta  \gtrdot \upeta\] if there exists $R\in\text{\sf Rat}_{\Z}$ such that $R(\uptheta )=\upeta$.  If $\upeta  \gtrdot \uptheta$ as well,
we will write \[\uptheta \doteq \upeta\] and say that $\uptheta ,\upeta\in\R$ are $\text{\sf Rat}_{\Z}$-{\bf equivalent}.
%Write $\upeta \lessdot \uptheta$ if there exists $R\in\text{\sf Rat}_{\Z}$ such that $\upeta=R(\uptheta )$; 
%We will say that $\upeta ,\uptheta\in\R$ are $\text{\sf Rat}_{\Z}$-{\bf {\small equivalent}} and write
%\[\uptheta\approx\upeta\]
%if there exists $R,S\in \text{\sf Rat}_{\Z}$
%such that $R(\uptheta)=\upeta$ and $S(\upeta)=\uptheta$.
%$\upeta \lessdot \uptheta$ and $\upeta \gtrdot \uptheta$ we will write $\uptheta\Doteq\upeta$ and say that  $\uptheta$ and $\upeta$ are 
%$\text{\sf Rat}_{\Z}$-{\bf {\small equivalent}}.  
Note that $\uptheta\doteq\upeta$ $\Leftrightarrow$ $\Q (\uptheta )= \Q (\upeta )$, and that
%On the other hand, if there exists $\upgamma\in\R$ such that $\uptheta,\upeta\lessdot\upgamma$  
%we will write $\uptheta\doteq\upeta$.  Note that $\uptheta\doteq\upeta$ $\Leftrightarrow$ $\Q (\uptheta,\upeta ) =\Q (\upgamma )$ is totally transcendental %of transcendence degree $1$, and in particular, $\uptheta$ and $\upeta$ are algebraically dependent.  When
%$\uptheta\doteq\upeta$ we will say that $\uptheta$ and $\upeta$ are 
%$\text{\sf Rat}_{\Z}$-{\bf {\small equivalent}}.  This is an equivalence relation, for if $\uptheta\doteq\upvartheta$ and $\upvartheta\doteq\upeta$
%then $\Q (\uptheta,\upvartheta )$ and $\Q (\upvartheta,\upeta ) $ are totally transcendental, hence so is their compositum, having transcendence
%degree $1$.
$ \uptheta\doteq\upeta$ $\Rightarrow$ $\uptheta$ and $\upeta$ are algebraically dependent.  The converse is 
false e.g.\ if $\Q (\uptheta,\upeta)$ is not purely transcendental.
%We will take the view that $\text{\sf Rat}_{\Z}$-equivalence is the right analogue of ${\rm PGL}_{2}(\Z)$-equivalence
%in the setting of polynomial diophantine approximation.

%\begin{note} If $\uptheta,\upeta$ are algebraically
%dependent, it {\it need not} be the case that $\uptheta\doteq\upeta$.   Rather, there exists a finite extension $K/\Q$ such
%that $K (\uptheta) =K (\upeta )$.  In this case, we can find elements $R, S\in \text{\sf Rat}_{\mathcal{O}}$,
%$\mathcal{O}$ the $K$-integers, such that $R(\uptheta )=\upeta$ and $S(\upeta)=\uptheta$.  We say then that 
%$\uptheta$ and $\upeta$ are $\text{\sf Rat}_{\mathcal{O}}$-equivalent.  Thus, $\uptheta,\upeta$ are algebraically dependent $\Leftrightarrow$
%$\uptheta,\upeta$ are $\text{\sf Rat}_{\mathcal{O}}$-equivalent for some $K/\Q$ algebraic.
%\end{note}

Given $R(X)=p(X)/q(X)\in\text{\sf Rat}_{\Z}$ and $f(X)=\sum_{i=0}^{d} a_{i}X^{i}\in \Z [X]$, define
\begin{align}\label{squareoper}  ( f \boxcircle R)(X) := \sum_{i=0}^{d}a_{i}p(X)^{i}q(X)^{n-i} = q(X)^{d}\cdot (f\circ R)(X). \end{align} 
This definition extends in the usual way to any $\bast f\in\bast\tilde{\tv{Z}}$.  
Note that
\[ \deg (\bast f \boxcircle R) = \deg (\bast f)\cdot \deg (R). \]
 In particular, 
 \[  \bast\tilde{\tv{Z}}_{\bast d} \boxcircle R\subset  \bast\tilde{\tv{Z}}_{\bast d\cdot \deg(R)}\quad\text{and}\quad 
  \bast\tilde{\tv{Z}}_{\bast \breve{d}} \boxcircle R\subset  \bast\tilde{\tv{Z}}_{\bast \breve{d}}\]
 %since
%the elements of $\bast \tv{Z}\subset \bast\tilde{\tv{Z}}$ as those having finite degree,  
 %$ \bast f \boxcircle R\in \bast \tv{Z}$ for all $\bast f\in \bast \tv{Z}$.

%The statements which follow apply to the monoid structure $( \bast\tilde{\tv{Z}},\cdot )$ only.

\begin{prop} The map 
\[ (\bast\tilde{\tv{Z}},\cdot )\times\text{\sf Rat}_{\Z}\longrightarrow (\bast\tilde{\tv{Z}},\cdot )\]
described by (\ref{squareoper}) defines an action of $\text{\sf Rat}_{\Z}$ by multiplicative monomorphisms.  That is, 
for all $\bast f,\bast g\in\bast\tilde{\tv{Z}}$ and $R,S\in \text{\sf Rat}_{\Z}$,
\begin{enumerate}
\item  $(\bast f\cdot \bast g) \boxcircle R =   (\bast f \boxcircle R)\cdot (\bast g \boxcircle R)$.
\item $( \bast f \boxcircle R) \boxcircle S= 
  \bast f \boxcircle (R\circ S)$.
  \end{enumerate}
  \end{prop}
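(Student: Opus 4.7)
The plan is to reduce everything to the factored form
\[
(\bast f\boxcircle R)(X) = q(X)^{\deg(\bast f)}\cdot\bast f(R(X))
\]
appearing at the end of (\ref{squareoper}), extended componentwise to $\bast f\in\bast\tilde{\tv{Z}}$ via \L o\'s's theorem. Combined with the degree-additive property $\deg(\bast f\cdot\bast g)=\deg(\bast f)+\deg(\bast g)$ and the relatively prime form of a composition established in (\ref{compofratmap}), both (1) and (2) reduce to direct symbolic manipulation.

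For (1), set $d=\deg(\bast f)$ and $e=\deg(\bast g)$; then
\[
\bigl((\bast f\cdot\bast g)\boxcircle R\bigr)(X) = q(X)^{d+e}\,\bast f(R(X))\,\bast g(R(X)) = q(X)^{d}\bast f(R(X))\cdot q(X)^{e}\bast g(R(X)),
\]
which equals $(\bast f\boxcircle R)(X)\cdot(\bast g\boxcircle R)(X)$. The splitting $q^{d+e}=q^{d}\cdot q^{e}$ is precisely what degree additivity permits.

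For (2), write $R=p/q$ and $S=t/u$ in relatively prime form, set $d=\deg(\bast f)$, and recall $\deg(\bast f\boxcircle R)=d\cdot\deg R$. Two applications of the defining identity give
\[
\bigl((\bast f\boxcircle R)\boxcircle S\bigr)(X) = u(X)^{d\deg R}\,q(S(X))^{d}\,\bast f\bigl(R(S(X))\bigr) = \bigl[u(X)^{\deg R}q(S(X))\bigr]^{d}\bast f\bigl((R\circ S)(X)\bigr).
\]
Writing $R\circ S = P^{*}/Q^{*}$ in relatively prime form, the right-hand side of (2) is $Q^{*}(X)^{d}\bast f((R\circ S)(X))$, so (2) collapses to the single polynomial identity
\[
Q^{*}(X) = u(X)^{\deg R}\,q(S(X)).
\]
This identity is the main obstacle. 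To verify it, expand $q(S(X)) = u(X)^{-\deg q}\sum_{j}q_{j}t(X)^{j}u(X)^{\deg q-j}$ so that $u(X)^{\deg R}q(S(X)) = u(X)^{\deg R-\deg q}\sum_{j}q_{j}t(X)^{j}u(X)^{\deg q-j}$; comparison with the derivation of (\ref{compofratmap}) shows this is exactly the denominator in relatively prime form, separately in the cases $\deg p\geq\deg q$ (where $\deg R-\deg q=\deg p-\deg q$) and $\deg p<\deg q$ (where $\deg R-\deg q=0$). The primality assertion proven in the paragraph following (\ref{compofratmap}) guarantees that this really is the relatively prime denominator $Q^{*}$, up to the unavoidable sign-of-normalization convention (which is the same on both sides and so cancels).

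The ``monomorphism'' assertion is separate. Assuming $R\in\text{\sf Rat}_{\Z}$ is nonconstant, if $\bast f\boxcircle R=0$ then componentwise $q(X)^{\deg f_{i}}f_{i}(R(X))\equiv 0$ in $\Z[X]$; since $q\not\equiv 0$ and $R$ attains infinitely many values, $f_{i}\equiv 0$, and hence $\bast f=0$ by \L o\'s. Combined with (1), this yields the full ``action by multiplicative monomorphisms'' claim.
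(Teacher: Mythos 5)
Your argument is correct and follows essentially the same route as the paper: reduce to the factored form $q(X)^{\deg \bast f}\cdot(\bast f\circ R)(X)$, apply it twice, and identify $u(X)^{\deg R}q(S(X))$ with the relatively prime denominator of $R\circ S$ supplied by (\ref{compofratmap}) (the paper does the case $\deg R=\deg p$ explicitly and notes the other case is identical, just as your two-case check does). The only additions are your explicit verification of item (1), which the paper dismisses as clear, and the injectivity remark for nonconstant $R$, which the paper leaves implicit.
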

  
  \begin{proof}  Item (1) is clear.  It is enough to prove (2) for a standard polynomial $f$ of degree $d$.  Suppose that $R=p(X)/q(X)$, $S(X)=t(X)/u(X)$
  are parametrized as before, and that $\deg R=\deg p$.  Then
  \begin{align*}  
  ( f \boxcircle R) \boxcircle S & = \big(q(X)^{d}\cdot (f\circ R)(X)\big) \boxcircle S \\
   & = q(t(X)/u(X))^{d}\cdot (f\circ (R\circ S))(X)\cdot u(X)^{d\deg p} \\
   & = \big(   u(X)^{\deg p-\deg q}\cdot\sum q_{j}t(X)^{j}u(X)^{\deg q -j}  \big)^{d}\cdot (f\circ (R\circ S))(X) \\
   & =   f \boxcircle (R\circ S)
   \end{align*}
   where the last equality follows from the ``relatively prime form'' of $R\circ S$ given in (\ref{compofratmap}).  The case where $\deg R=\deg q$
   produces the same result.
  \end{proof}

\begin{note} If we restrict to ${\rm PGL}_{2}(\Z )\subset\text{\sf Rat}_{\Z}$ acting on $\bast \tv{Z}_{1}$
we essentially recover the action of ${\rm PGL}_{2}(\Z)$ (by inverses of matrices) on projective classes of vectors in $\bast\Z^{2}$.
\end{note}

We now consider the effect of the action of $\text{\sf Rat}_{\Z}$ on both the ordinary and the Frobenius approximate ideal structure of $\bast \tilde{\tv{Z}}$.
Given $R\in \text{\sf Rat}_{\Z}$ define $\mathfrak{h}(R)=\max (\mathfrak{h}(p), \mathfrak{h}(q))$.  
Fix $\bast f\in \bast \tilde{\tv{Z}}^{\upmu^{\deg (R)}}_{\bast d}$.  Then
\[ \langle \mathfrak{h}(\bast f \boxcircle R) \rangle \leq \langle\mathfrak{h}(\bast f)\mathfrak{h}(R)^{\bast d}\rangle. \]
Since $\bast f \boxcircle R\in  \bast \tilde{\tv{Z}}_{\bast d\cdot\deg (R)}$, we will calculate its growth index with respect
to normalization by the degree $\bast d\cdot\deg (R)$:
\begin{align*} \upmu_{\bast d\cdot\deg (R)} (\bast f \boxcircle R) &= \langle \mathfrak{h} (\bast f \boxcircle R)^{-1/(\bast d\cdot\deg (R))} \rangle \\
& \geq  \mathfrak{h} (R)^{-1/\deg (R)}\cdot\upmu_{\bast d}(\bast f)^{1/\deg (R)}  \\
& > \upmu.
\end{align*}
Therefore, 
\[  \bast \tilde{\tv{Z}}^{\upmu^{\deg (R)}}_{\bast d} \boxcircle R\subset  \bast \tilde{\tv{Z}}^{\upmu}_{\bast d\cdot\deg (R)}.\]

%We now discuss the effect of $\text{\sf Rat}_{\Z}$ on diophantine approximation groups.  
%First note that for $\bast f\in\bast (\Z \{\!\{X\}\!\} )^{\filleddiamond} $, we define the degree to be the minimum of degrees of representatives 
%in $\bast (\Z[X])^{\filleddiamond}$. For example, the polynomial $X^{2}-\bast p$,
%$\bast p=\{ p_{i}\}$ a sequence of primes, has degree $2$, whereas $X^{2}-(\bast p+1) X+\bast p=(X-\bast p)(X-1)$ has degree $1$.  Then
%the action by $\text{\sf Rat}_{\Z}$ has the property that 
%\[ \deg (\bast f \boxcircle R)\leq \deg (\bast f)\deg(R)\] where $\deg R=\max (\deg p,\deg q)$.
%The inequality can be strict: for example, if $\bast f=X-\bast p^{2}$ and $R(X)=X^{2}$ then $(\bast f \boxcircle R)(X)=X^{2}-\bast p^{2}$ has degree
%$2$ but if $\bast g=X-\bast 4p^{2}$...

Now suppose that $\bast f\in \bast\tilde{\tv{Z}}_{\upnu^{\deg (R)}}(\uptheta )_{\bast d}$
% has decay $|\bast f(\uptheta )|$.
%Note that the decay of any element in the class of $\bast f$ in $\bast (\Z \{\!\{X\}\!\} )$ shares the decay $\upnu (\bast f)\bstar\PR\R_{\upvarepsilon}$.
and $R(\upeta ) =\uptheta $.  Then we claim that $\bast f \boxcircle R \in \bast\tilde{\tv{Z}}_{\upnu}(\upeta )_{\bast d\cdot\deg (R)}$.  Indeed, calculating the decay
$\upnu (\bast f \boxcircle R)$ (with respect to $\upeta$)
 we have
  \begin{align*} \upnu_{\bast d\cdot\deg (R)} (\bast f \boxcircle R) & =\langle  |(\bast f \boxcircle R)(\upeta )|^{1/\bast d\cdot\deg (R)}\rangle \\
  & =  \langle  |\bast f (\uptheta) \cdot q(\upeta)|^{1/\bast d\cdot\deg (R)} \rangle \\
  &\leq \upnu.
    \end{align*}
Therefore,
\[  \bast \tilde{\tv{Z}}_{\upnu^{\deg (R)}}(\uptheta )_{\bast d} \boxcircle R\subset  \bast \tilde{\tv{Z}}_{\upnu}(\upeta )_{\bast d\cdot\deg (R)}.\]
We have shown:

\begin{theo}\label{ideoconsensusthm}  Suppose that  $\uptheta  \lessdot \upeta$.  Then 
\[  \bast \tilde{\tv{Z}}_{\upnu^{\deg (R)}}^{\upmu^{\deg (R)}}(\uptheta )_{\bast d} \boxcircle R\subset  
\bast \tilde{\tv{Z}}_{\upnu}^{\upmu}(\upeta )_{\bast d\cdot\deg (R)}\quad\text{and}\quad
 \bast \tilde{\tv{Z}}_{\bar{\upnu}}^{\bar{\upmu}}(\uptheta )_{\bast \breve{d}} \boxcircle R\subset  
 \bast \tilde{\tv{Z}}_{\bar{\upnu}}^{\bar{\upmu}}(\upeta )_{\bast \breve{d}}.
\]
In particular, if $\uptheta  \doteq \upeta$ then the associated pair of rational maps $(R,S)$ defines an approximation  consensus $ \bast \tilde{\tv{Z}}(\uptheta )\rightleftharpoons \bast \tilde{\tv{Z}}(\upeta )$.
\end{theo}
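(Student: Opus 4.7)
The plan is to assemble the two containment calculations carried out explicitly in the paragraphs preceding the theorem, then record the consequences for Frobenius indices and for the consensus pair.

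First I would handle the ordinary approximate ideal statement by simple intersection. The display just before the theorem establishes the growth containment
\[ \bast \tilde{\tv{Z}}^{\upmu^{\deg (R)}}_{\bast d} \boxcircle R\subset \bast \tilde{\tv{Z}}^{\upmu}_{\bast d\cdot\deg (R)}, \]
and the subsequent paragraph, using $R(\upeta)=\uptheta$ together with the identity $(\bast f\boxcircle R)(\upeta)=\bast f(\uptheta)\cdot q(\upeta)^{\bast d}$ coming from (\ref{squareoper}), establishes the decay containment
\[ \bast \tilde{\tv{Z}}_{\upnu^{\deg (R)}}(\uptheta )_{\bast d} \boxcircle R\subset \bast \tilde{\tv{Z}}_{\upnu}(\upeta )_{\bast d\cdot\deg (R)}. \]
Intersecting these two inclusions gives the first half of the theorem at once.

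Next I would deduce the Frobenius statement by quotienting. The key observation is that $\deg(R)\in\N\subset\R_{+}^{\times}\subset (\bast\R_{\rm fin})_{+}^{\times}$ acts trivially on orbits in the Frobenius semi-ring, so $\bar{\upmu}^{\deg(R)}=\bar{\upmu}$ and $\bar{\upnu}^{\deg(R)}=\bar{\upnu}$; likewise $\deg(R)\in (\bast\Q_{\rm fin})_{+}^{\times}$ so $\bast d\cdot \deg(R)\in \bast\breve{d}$. Projecting the ordinary containment to the Frobenius quotient therefore yields
\[ \bast \tilde{\tv{Z}}^{\bar{\upmu}}_{\bar{\upnu}}(\uptheta )_{\bast \breve{d}}\boxcircle R \subset \bast \tilde{\tv{Z}}^{\bar{\upmu}}_{\bar{\upnu}}(\upeta )_{\bast \breve{d}} \]
with no exponent adjustment required.

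Finally, for the consensus statement, assume $\uptheta\doteq\upeta$ and pick $R,S\in\text{\sf Rat}_{\Z}$ realizing $R(\upeta)=\uptheta$ and $S(\uptheta)=\upeta$. Setting $\Phi_{1}=\,\boxcircle\, R$ and $\Phi_{2}=\,\boxcircle\, S$, the preceding proposition provides multiplicativity, and taking $m=\deg(R)$, $n=\deg(S)$ the two inclusions already proved are exactly the consensus inequalities. Injectivity of $\Phi_{1},\Phi_{2}$ follows since $\bast f\boxcircle R = q(X)^{\bast d}\cdot(\bast f\circ R)$ and composition with the nonconstant rational map $R$ is injective on nonzero polynomials (multiplication by a nonzero $q(X)^{\bast d}$ then preserves nonvanishing in $\bast\tilde{\tv{Z}}$).

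The only genuinely delicate point is the bookkeeping of degree normalization: one has to be sure that the $1/(\bast d\cdot \deg(R))$ in the normalization on the target is exactly compensated by the $\deg(R)$-th power on the source side, which is why the exponents $\upmu^{\deg(R)},\upnu^{\deg(R)}$ appear. In the Frobenius setting this compensation is automatic because $\deg(R)$ is a standard positive integer and hence acts trivially on Frobenius orbits; in the ordinary setting it is exactly what the two preliminary computations achieve.
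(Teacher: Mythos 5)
Your proposal is correct and follows the paper's own route exactly: the theorem is stated as the summary ("We have shown:") of the two growth and decay computations for $\boxcircle R$ carried out in the preceding paragraphs, with the Frobenius version following because $\deg(R)$ and $\mathfrak{h}(R)$ are standard and hence act trivially on Frobenius orbits and on $\bast\breve{d}$, and the consensus clause being just the definition applied to the pair $(R,S)$ with $m=\deg(R)$, $n=\deg(S)$. Your added injectivity check (which the paper leaves implicit) is a welcome extra; just phrase the Frobenius step as rerunning the two estimates at the level of orbit-valued indices rather than literally "projecting" ordinary memberships, since by the paper's own Note on ordinary versus Frobenius spectra the two indexings are not interchangeable elementwise.
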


\section{Polynomial Nonvanishing Spectra and Mahler's Classification}\label{polyspectramahlersection}

For each degree class $\bast \breve{d}$, define the {\bf  $\bast \breve{\boldsymbol d}$ Frobenius nonvanishing spectrum}  by
\[  \overline{\rm Spec}[X](\uptheta )_{\bast \breve{d}} =\big\{ (\bar{\upmu}, \bar{\upnu} )\big|\;  \bast \tilde{\tv{Z}}^{\bar{\upmu}}_{\bar{\upnu}}(\uptheta)_{\bast \breve{d}}\not=0 \big\}.
\] 
The Frobenius spectrum refines as follows:
for each $\bast d\in \bast \breve{d}$, define
\[ \overline{\rm Spec}[X](\uptheta )_{\bast d} =\big\{ (\bar{\upmu}, \bar{\upnu} )\big|\;  \bast \tilde{\tv{Z}}^{\bar{\upmu}}_{\bar{\upnu}}(\uptheta)_{\bast d}\not=0 \big\} ,\]
then for $ \bast d'\in \bast\breve{d}$, $\bast d'>\bast d$, we have an inclusion
$ \overline{\rm Spec}[X](\uptheta )_{\bast d}\subset  \overline{\rm Spec}[X](\uptheta )_{\bast d'}$
and thus we obtain the filtration
\[  \overline{\rm Spec}[X](\uptheta )_{\bast \breve{d}}= \bigcup_{\bast d\in\bast\breve{d}} \overline{\rm Spec}[X](\uptheta )_{\bast d}. \]
Furthermore, if we formally add the ``northeast perimeter'' 
\begin{align}\label{northeast}\bstar\PR\R_{\leq 1}^{2} -\bstar\PR\R_{\upvarepsilon}^{2}= (\{ \bar{1}\}\times \bstar\overline{\PR\R}_{\upvarepsilon})\cup (\bstar\overline{\PR\R}_{\upvarepsilon}\times\{ \bar{1}\})\cup (\bar{1},\bar{1}) 
\end{align} to 
%$\bstar\overline{\PR\R}_{\upvarepsilon}^{2}$ and to 
$\overline{\rm Spec}[X](\uptheta )_{\bast \breve{d}}$,
%denoting the result of augmenting the latter by $\widehat{\rm Spec}[X](\uptheta )_{\bast \breve{d}}$ 
then for all $\bast\breve{d}<\bast \breve{e}$ there is a well-defined filtration preserving inclusion
\begin{align}\label{powerlawinclu}  \overline{\rm Spec}[X](\uptheta )_{\bast \breve{d}}\hookrightarrow \overline{\rm Spec}[X](\uptheta )_{\bast \breve{e}},\quad 
(\bar{\upmu},\bar{\upnu})\mapsto
 (\bar{\upmu}^{\bast \breve{d}/\bast \breve{e}},\bar{\upnu}^{\bast \breve{d}/\bast \breve{e}}).
 \end{align}
 The $\bast \breve{d}/\bast \breve{e}$-powers are well-defined since $\bast \breve{d}/\bast \breve{e}$ is the class of an infinitesimal {\it c.f.}\
 {\it Note} \ref{infinitesimalpower} of \S \ref{tropical}; if either of the coordinates of $(\bar{\upmu},\bar{\upnu})$ is a $\bast \breve{d}/\bast \breve{e}$-root of unity, the
 image of the pair will belong to the perimeter (\ref{northeast}).
 The {\bf Frobenius nonvanishing spectrum} is the collection
\[  \overline{\rm Spec}[X](\uptheta ) 
%=\big\{ (\bar{\upmu}, \bar{\upnu})\big|\; (\bar{\upmu}, \bar{\upnu} )\in  \overline{\rm Spec}[X](\uptheta )_{\bast d}
%\text{ for some } \bast d\in\bast \N \big\} 
= \{ \overline{\rm Spec}[X](\uptheta )_{\bast \breve{d}}\}
\]
viewed as a directed system of filtered sets, where the inclusions are the power laws (\ref{powerlawinclu}).

Likewise we may define the {\bf ordinary nonvanishing spectrum} 
\[{\rm Spec}[X](\uptheta )= \{{\rm Spec}[X](\uptheta )_{\bast d}\},\quad 
{\rm Spec}[X](\uptheta )_{\bast d}= \big\{ (\upmu, \upnu )\big|\;  \bast \tilde{\tv{Z}}^{\upmu}_{\upnu}(\uptheta)_{\bast d}\not=0 \big\}.\] 

\begin{note}\label{ordinarytofrobenspectra} Observe that $(\bar{\upmu}, \bar{\upnu}) \in \overline{\rm Spec}[X](\uptheta )_{\bast d}$ implies
that for all $\upmu \in \bar{\upmu}, \upnu \in \bar{\upnu}$, $(\upmu,\upnu)\in {\rm Spec}[X](\uptheta )_{\bast d}$.  The converse
may not be true however: if $0\not=\bast f\in\bast\tilde{\tv{Z}}^{\upmu}_{\upnu}(\uptheta )_{\bast d}$ with $\bar{\upmu}_{\bast d}(\bast f)=\bar{\upmu}$
then $\bast f\not\in\bast \tilde{\tv{Z}}_{\bar{\upnu}}^{\bar{\upmu}}(\uptheta )_{\bast d}$.  So the ordinary nonvanishing spectrum is a
finer invariant.
%finer information not contained in the Frobenius nonvanishing spectrum.
\end{note}

%Note that for all $\bast d$ and $n\in\N$ we have
%\[  \overline{\rm Spec}[X](\uptheta )_{\bast d}\subset  \overline{\rm Spec}[X](\uptheta )_{\bast d\cdot n} \]
%induced by the power law inclusion (\ref{powerlawmap}).  
%If we write 
%\[ \bast\breve{d}= \{\bast e\in\bast\N |\;\exists \bast q\in (\bast\Q^{\rm fin})^{\times}_{+} \text{ such that } \bast e=\bast d\cdot \bast q  \}\]
%then for $ \bast e\in \bast\breve{d}$, $\bast e>\bast d$, we have an inclusion
%\[ \overline{\rm Spec}[X](\uptheta )_{\bast d}\subset  \overline{\rm Spec}[X](\uptheta )_{\bast e}\]
%this because the normalizations given by $\bast d$ and $\bast e$ differ in quotient by a bounded, noninfinitesimal nonstandard rational number, and the %spectrum belongs
%to the Frobenius tropical semi-ring.  In particular, for any $\bast f\in \bast \tilde{\tv{Z}}^{\bar{\upmu}}_{\bar{\upnu}}(\uptheta)_{\bast d}$,
%\[ \bar{\upmu}_{\bast d}(\bast f)=\bar{\upmu}_{\bast e}(\bast f)\quad \text{and}\quad \bar{\upnu}_{\bast d}(\bast f)=\bar{\upnu}_{\bast e}(\bast f) .\]
%We call $\bast\breve{d}$ the {\bf {\small Frobenius class}} of $\bast d$; the union

%is called the {\bf {\small $\bast \bar{ \boldsymbol d}$ polynomial nonvanishing spectrum}}.  It is clearly a set filtered
%by the components of its defining union.  

We say that $\overline{\rm Spec}[X](\uptheta )$ and $\overline{\rm Spec}[X](\upeta )$
are {\bf finite power law equivalent} and write 
\[ \overline{\rm Spec}[X](\uptheta )\circeq\overline{\rm Spec}[X](\upeta )\] if there exist $m,n\in\N$ such that for all $\bast d$,
\[  \overline{\rm Spec}[X](\uptheta )_{\bast d}\subset  \overline{\rm Spec}[X](\upeta )_{\bast d\cdot m}\quad\text{and}\quad
 \overline{\rm Spec}[X](\upeta )_{\bast d}\subset  \overline{\rm Spec}[X](\uptheta )_{\bast d\cdot n}.\]  Clearly $\overline{\rm Spec}[X](\uptheta )\circeq\overline{\rm Spec}[X](\upeta )$ implies that $\overline{\rm Spec}[X](\uptheta )_{\bast \breve{d}} =  \overline{\rm Spec}[X](\upeta )_{\bast \breve{d}} $ for every class $\bast\breve{d}$ but the converse need not be true.  
 
 We have the following Corollary to Theorem \ref{ideoconsensusthm}:
\begin{coro}\label{ratspeccoro}  Suppose that  $\uptheta  \lessdot \upeta$.   Then
\[  \overline{\rm Spec}[X](\uptheta )_{\bast d}\subset  \overline{\rm Spec}[X](\upeta )_{\bast d\cdot\deg (R)} \;\;\text{and}\;\;  
\overline{\rm Spec}[X](\uptheta )_{\bast \breve{d}} \subset  \overline{\rm Spec}[X](\upeta )_{\bast \breve{d}} .\]
If $\uptheta\doteq\upeta$ then 
\[ \overline{\rm Spec}[X](\uptheta ) \circeq  \overline{\rm Spec}[X](\upeta ).\]
\end{coro}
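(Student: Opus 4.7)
The plan is to read the Corollary directly off Theorem \ref{ideoconsensusthm}. The hypothesis $\uptheta \lessdot \upeta$ unwinds to the existence of a rational map $R \in \text{\sf Rat}_{\Z}$ with $R(\upeta) = \uptheta$, so Theorem \ref{ideoconsensusthm} supplies the two inclusions
\[ \bast\tilde{\tv{Z}}^{\upmu^{\deg R}}_{\upnu^{\deg R}}(\uptheta)_{\bast d} \boxcircle R \subset \bast\tilde{\tv{Z}}^{\upmu}_{\upnu}(\upeta)_{\bast d\cdot\deg R}, \qquad \bast\tilde{\tv{Z}}^{\bar\upmu}_{\bar\upnu}(\uptheta)_{\bast\breve d} \boxcircle R \subset \bast\tilde{\tv{Z}}^{\bar\upmu}_{\bar\upnu}(\upeta)_{\bast\breve d}. \]
The crux I would emphasize is that $\boxcircle R$ acts as a multiplicative \emph{monomorphism} of $\bast\tilde{\tv{Z}}$ (this is the Proposition immediately preceding Theorem \ref{ideoconsensusthm}), so nonzero polynomials are sent to nonzero polynomials. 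Consequently the approximate ideal inclusions above translate pointwise into inclusions of nonvanishing loci, which is the mechanism that converts Theorem \ref{ideoconsensusthm} into a statement about spectra.

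For the degree class inclusion $\overline{\rm Spec}[X](\uptheta)_{\bast\breve d} \subset \overline{\rm Spec}[X](\upeta)_{\bast\breve d}$ I would simply transport the Frobenius inclusion at each $\bast\breve d$, using that the degree class is stable under multiplication by the standard integer $\deg R$. For the sharper inclusion $\overline{\rm Spec}[X](\uptheta)_{\bast d} \subset \overline{\rm Spec}[X](\upeta)_{\bast d\cdot\deg R}$, I would take a witness $0 \neq \bast f \in \bast\tilde{\tv{Z}}^{\bar\upmu}_{\bar\upnu}(\uptheta)_{\bast d}$ of a given spectral point and apply $\boxcircle R$ to produce a nonzero element of $\bast\tilde{\tv{Z}}^{\bar\upmu}_{\bar\upnu}(\upeta)_{\bast d \cdot \deg R}$; no shift of the Frobenius coordinates is required because $\bar\upmu^{\deg R} = \bar\upmu$ and $\bar\upnu^{\deg R} = \bar\upnu$, the standard positive integer $\deg R$ lying in $(\bast\R_{\rm fin})^{\times}_{+}$ and hence acting trivially on Frobenius classes.

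Finally, when $\uptheta \doteq \upeta$ I would invoke the symmetric pair of rational maps $R$ and $S$ produced by the definition of $\text{\sf Rat}_{\Z}$-equivalence, apply the first inclusion in each direction, and read off $m := \deg R$ and $n := \deg S$ as the integer constants witnessing $\overline{\rm Spec}[X](\uptheta) \circeq \overline{\rm Spec}[X](\upeta)$. The only mild technicality is the bookkeeping of exact degrees against degree classes, but I do not anticipate a serious obstacle: the substantive work has already been carried out in Theorem \ref{ideoconsensusthm}, and the present Corollary is essentially its formal spectral shadow.
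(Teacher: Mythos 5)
Your argument is correct and is exactly the route the paper intends: the Corollary is stated as an immediate consequence of Theorem \ref{ideoconsensusthm}, and your details (injectivity of $\boxcircle R$ carrying nonzero witnesses to nonzero witnesses, triviality of the finite power $\deg R$ on Frobenius classes and on the degree class $\bast\breve{d}$, and reading off $m=\deg R$, $n=\deg S$ for the $\circeq$ statement) are precisely the bridging steps the paper leaves implicit. No substantive difference from the paper's approach.
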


Since finite degrees $d<e$ define the same Frobenius class, we have: 
\begin{align}\label{incprop}\overline{\rm Spec}[X](\uptheta )_{d}\subset \overline{\rm Spec}[X](\uptheta )_{e}.
\end{align}  From this we may derive the following nonvanishing
result:

\begin{prop}\label{polyslowsector}  For all $\bast d\in\bast\N$, $\{(\bar{\upmu},\bar{\upnu})|\; \bar{\upmu}<\bar{\upnu}\}\subset \overline{\rm Spec}[X](\uptheta )_{\bast d}$.
\end{prop}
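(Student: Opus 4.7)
The plan is to apply a nonstandard Dirichlet-type pigeonhole argument to produce, for any $(\bar\upmu,\bar\upnu)\in\bstar\overline{\PR\R}_{\upvarepsilon}^{2}$ with $\bar\upmu<\bar\upnu$, a nonzero polynomial witnessing $(\bar\upmu,\bar\upnu)\in\overline{\rm Spec}[X](\uptheta)_{\bast d}$.  The degenerate case $\bast d=0$ is vacuous, since a constant integer polynomial with $\bast f(\uptheta)\simeq 0$ must vanish; so throughout one assumes $\bast d\geq 1$.

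First I would recall the classical estimate: for $H,d\in\N_{+}$, pigeonhole applied to the $(H+1)^{d+1}$ integer polynomials of degree $\leq d$ with coefficients in $\{0,\dots,H\}$ produces a nonzero $g\in\Z[X]$ with $\deg g\leq d$, $\mathfrak{h}(g)\leq H$ and $|g(\uptheta)|\leq (d+1)M^{d}(H+1)^{-d}$, where $M=\max(1,|\uptheta|)$.  By \L o\'{s}'s theorem this transfers verbatim to any $\bast H,\bast d\in\bast\N_{+}$, yielding a nonzero $\bast g\in\bast\tilde{\tv{Z}}_{\bast d}$ with $\mathfrak{h}(\bast g)\leq\bast H$ and $|\bast g(\uptheta)|\leq(\bast d+1)M^{\bast d}(\bast H+1)^{-\bast d}$.

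Next I would calibrate $\bast H$ so that $[\bast H^{-1}]=\bar\upnu$, by choosing $\bast y\in\bar\upnu$ and setting $\bast H:=\lfloor\bast y^{-1}\rfloor$.  A direct computation---in which the auxiliary factors $(\bast d+1)^{1/\bast d}$, $M$, and $\bast H/(\bast H+1)$ all collapse into $(\bast\R_{\rm fin})_{+}^{\times}$ (using the \L o\'s transfer of $n^{1/n}\to 1$ when $\bast d$ is infinite)---then shows that the transferred Dirichlet polynomial $\bast g$ satisfies
\[ \bar\upnu_{\bast\breve{d}}(\bast g)\leq [\bast H^{-1}]=\bar\upnu,\qquad \bar\upmu_{\bast\breve{d}}(\bast g)\geq[\bast H^{-1/\bast d}]\geq[\bast H^{-1}]=\bar\upnu>\bar\upmu, \]
placing $\bast g$ in $\bast\tilde{\tv{Z}}^{\bar{\upmu}}_{\bar{\upnu}}(\uptheta)_{\bast d}\setminus\{0\}$, as required.

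The main subtlety will be the Frobenius comparison $[\bast H^{-1/\bast d}]\geq[\bast H^{-1}]$ when $\bast d$ is infinite, since then $1/\bast d$ is a positive infinitesimal and the two classes are genuinely distinct under the infinitesimal Frobenius action of \emph{Note} \ref{infinitesimalpower}.  The observation that rescues the argument is that the hypothesis $\bar\upmu<\bar\upnu$ leaves precisely the slack needed: the Dirichlet polynomial realizes growth and decay classes both concentrated near $[\bast H^{-1/\bast d}]$, and the strict gap $\bar\upmu<\bar\upnu$ ensures the target $(\bar\upmu,\bar\upnu)$ lies ``below and to the left'' of this diagonal point, hence inside the spectrum.
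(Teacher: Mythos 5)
Your proposal is correct, but it proves the proposition by a different route than the paper. The paper reduces everything to degree one: given $\bar{\upmu}<\bar{\upnu}$ it uses density of the Frobenius order (Proposition \ref{FrobDLO}) to insert $\bar{\upmu}<\bar{\upmu}'<\bar{\upnu}$, takes a nonzero element $\bast n\in\bast\Z^{\upmu'}_{\upnu}(\uptheta)$ from the \emph{linear} theory of \cite{Ge0}, forms the linear witness $\bast nX-\bast n^{\perp}$, and then propagates this through the finite-degree inclusions (\ref{incprop}) and, for infinite $\bast d$, through the power-law inclusion (\ref{powerlawinclu}) of spectra. You instead construct a witness directly in degree $\bast d$ by transferring the classical pigeonhole bound $|g(\uptheta)|\leq (d+1)M^{d}(H+1)^{-d}$ and calibrating $\bast H$ so that $[\bast H^{-1}]=\bar{\upnu}$; your chain $\bar{\upmu}_{\bast\breve{d}}(\bast g)\geq[\bast H^{-1/\bast d}]\geq[\bast H^{-1}]=\bar{\upnu}>\bar{\upmu}$ together with $\bar{\upnu}_{\bast\breve{d}}(\bast g)\leq[\bast H^{-1}]$ is sound (the auxiliary factors $(\bast d+1)^{1/\bast d}$, $M$, $\bast H/(\bast H+1)$ do lie in $(\bast\R_{\rm fin})^{\times}_{+}$, and the bound is infinitesimal since $\bast H$ is infinite, so $\bast g\in\bast\tilde{\tv{Z}}(\uptheta)_{\bast d}$). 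What the paper's route buys is economy and coherence with its spectral formalism: it reuses the linear nonvanishing spectrum and the structural maps between the $\overline{\rm Spec}[X](\uptheta)_{\bast d}$, at the cost of the roots-of-unity caveat in (\ref{powerlawinclu}) and reliance on \cite{Ge0}. What your route buys is self-containedness (no appeal to the linear spectrum, to density, or to the power-law maps), witnesses of genuine degree $\bast d$ rather than linear ones, and in fact slightly more than is claimed: since the growth of your witness is at least $\bar{\upnu}^{1/\bast d}$, the same construction shows nonvanishing on the larger region $\bar{\upmu}<\bar{\upnu}^{1/\bast d}$ when $\bast d$ is infinite, consistent with the $A$- and $S$-number portraits of Theorems \ref{Atheo} and \ref{S}. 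Two cosmetic points: the case $\bast d=0$ is better excluded by convention (degree normalization presupposes $\bast d\geq 1$) than called vacuous, since there the spectrum is actually empty; and your closing description of growth and decay as both concentrated near $[\bast H^{-1/\bast d}]$ is loose (the decay sits near $[\bast H^{-1}]$), though the displayed inequalities, which are the real argument, are correct.
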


\begin{proof}  We first show that $\overline{\rm Spec}[X](\uptheta )_{1}$ contains the image of ${\rm Spec}(\uptheta )$ in 
$\bstar\overline{\PR\R}_{\upvarepsilon}$. Indeed, given $\bar{\upmu}<\bar{\upnu}$, since $\bstar\overline{\PR\R}_{\upvarepsilon}$
is a dense linear order (Proposition \ref{FrobDLO}), there exists $\bar{\upmu}'$ with $\bar{\upmu}<\bar{\upmu}' <\bar{\upnu}$.
Choose $\upmu'\in\bar{\upmu}',\upnu\in\bar{\upnu}$ and $0\not=\bast n\in\bast\Z^{\upmu'}_{\upnu}(\uptheta )$. Then $\bast f_{\bast n}(X)=\bast nX-\bast n^{\perp}$
satisfies $\bar{\upmu}(\bast f_{\bast n})=[\upmu (\bast n)]\geq \bar{\upmu}'>\bar{\upmu}$ and $\bar{\upnu}(\bast f_{\bast n})=[\upnu (\bast n)]\leq \bar{\upnu}$.  
By (\ref{incprop}), this 
implies that $(\bar{\upmu},\bar{\upnu})\in\overline{\rm Spec}[X](\uptheta )_{d}$ for $d\in\N$.   For infinite degree $\bast d$, the power law inclusion
(\ref{powerlawinclu}) maps the set of pairs $\bar{\upmu}<\bar{\upnu}$ in $\overline{\rm Spec}[X](\uptheta )_{1}$ of elements which are not 
$(\bast d)^{-1}$ roots of unity onto $\{(\bar{\upmu},\bar{\upnu})|\; \bar{\upmu}<\bar{\upnu}\}$.
%we use $ \bast f_{\bast n}(X)^{\bast d}$ to assert that $(\bar{\upmu},\bar{\upnu})\in \overline{\rm Spec}[X](\uptheta )_{\bast d}$.
\end{proof}

We examine the nonvanishing spectra in terms of Mahler's classification, whose definition we now recall.
For each pair of positive integers $H$, $d$, consider the polynomial $f_{H,d}(X)$ which minimizes $|f(\uptheta )|$
amongst those $f(X)$ of degree at most $d$ 
with $f(\uptheta)\not=0$ and satisfying $\mathfrak{h}(f)\leq H$.
Define 
%$\mathfrak{e} (d,N)$ by 
\[  
\mathfrak{e} (d,H) =  \frac{\log |f_{H,d}(\uptheta )|}{\log H^{-d}},
 \]
 so that  
 \[ |f_{H,d}(\uptheta )|^{1/d} = H^{-\mathfrak{e} (d,H) }.\]
Consider
the limits
\[  \mathfrak{e} (d) =  \lim_{H\rightarrow\infty}\sup\mathfrak{e} (d, H) ,\quad \mathfrak{e}= 
\lim_{d\rightarrow\infty}\sup \mathfrak{e} (d).  \]
%Note that if $\uptheta\in\mathfrak{W}_{> 1}$ then $\mathfrak{e} (1)=\upkappa$ = exponent of $\uptheta$ (the supremum
%of $\uplambda>1$ for which $\uptheta\in\mathfrak{W}_{\uplambda}$).

For $d$ fixed, let $\{ g_{i}=f_{H_{i},d}\}$ be a subsequence of $\{f_{H,d}  \}_{H\geq 1}$ for which
$\mathfrak{h}(f_{H_{i},d})=H_{i}$ and $\mathfrak{e}_{i} (d) := \mathfrak{e}(d,H_{i})$
converges to $\mathfrak{e}(d)$.  The class 
\[ \bast  \widehat{g}_{d}\in \bast\tv{Z}(\uptheta )_{d}\] corresponding to
$\{ g_{i}\}$ is called a {\bf degree {\em d} best polynomial class}.  When $d=1$, then $\bast  \widehat{g}_{1}$ is identified
with a best denominator class $\bast  \widehat{q}$ via the isomorphism $\bast\Z (\uptheta )\cong \bast\tv{Z}(\uptheta )_{1} $.
Denote by $ \widehat{\bar{\upmu}}_{d}=\bar{\upmu}(\bast  \widehat{g}_{d})$ and
$ \widehat{\bar{\upnu}}_{d}=\bar{\upnu}(\bast  \widehat{g}_{d})$: the associated {\bf degree {\em d}} {\bf   best polynomial growth} and {\bf decay classes}.  
%If $\bast g_{\uptheta,d}$ arises from a subsequence $\{ f_{H_{i},d}\}$ for which
%$\mathfrak{h}(f_{H_{i},d})=H_{i}$ and $\deg (f_{H_{i},d})=d$ we call it as well as its growth and decay classes {\bf {\small efficient}}.

Similarly, let $\{ g_{i}= f_{H_{i},d_{i}}\}$ be a subsequence of $\{f_{H,d}  \}_{H,d\geq 1}$  
with $H_{i},d_{i}\rightarrow\infty$ for which $\mathfrak{h}(f_{H_{i},d_{i}})=H_{i}$, $\deg (f_{H_{i},d_{i}})=d_{i}$
and $\mathfrak{e}_{i} := \mathfrak{e}(d_{i},H_{i})$
converges to $\mathfrak{e}$.  Then the
class 
\[ \bast  \widehat{g}_{\bast d}\in  \bast\tilde{\tv{Z}}(\uptheta )_{\bast d} \] corresponding to $\{ g_{i}\}$
is called a {\bf degree $\boldsymbol\bast${\em d}} {\bf best polynomial
class}, with associated  {\bf degree $\boldsymbol\bast${\em d}} {\bf  best polynomial growth}  
$ \widehat{\bar{\upmu}}_{\bast d}=\bar{\upmu}_{\bast d}(\bast g)$ and 
{\bf  decay}  $ \widehat{\bar{\upnu}}_{\bast d}=\bar{\upnu}_{\bast d}(\bast g)$.   The infinite degree $\bast d=\bast\{d_{i}\}$
is called the corresponding {\bf best degree}.
%The notion of an efficient class is defined in this context just as in the case of the degree $d$ case.  
%Notice that for any degree $d$ polynomial best growth-decay pair 
%$ \widehat{\bar{\upmu}}_{d},  \widehat{\bar{\upnu}}_{d}$ and $\bast  \widehat{d}$ 
%an infinite best degree,
%$ \widehat{\bar{\upmu}}_{\bast d}\leq  \widehat{\bar{\upmu}}_{d}$ and 
%$ \widehat{\bar{\upnu}}_{\bast d}\leq  \widehat{\bar{\upnu}}_{d}$.

\begin{note}  When $\uptheta$ is algebraic, a best polynomial class is only best amongst those polynomials which do not have 
$\uptheta$ as a root.  This contrasts with the process of producing linear best approximations for $\uptheta\in\Q$, which terminates 
in the root of the defining linear polynomial.
\end{note}

\begin{prop}\label{bestapproxpoly}  Suppose that $\uptheta$ is not algebraic of degree $\leq d$ (that $\uptheta$ is transcendental).  
Let $ ( \widehat{\bar{\upmu}}_{d},  \widehat{\bar{\upnu}}_{d})$
be a finite degree $d$ best growth-decay pair (let $( \widehat{\bar{\upmu}}_{\bast d},  \widehat{\bar{\upnu}}_{\bast d})$ be an infinite degree best growth-decay pair).
For all $\bar{\upmu} \geq  \widehat{\bar{\upmu}}_{d}$ and $\bar{\upnu} <  \widehat{\bar{\upnu}}_{d}$ 
(for all $\bar{\upmu} \geq \widehat{\bar{\upmu}}_{\bast d}$ and $\bar{\upnu} <  \widehat{\bar{\upnu}}_{\bast d}$),
\[ \bast \tv{Z}_{\bar{\upnu}}^{\bar{\upmu}}(\uptheta )_{d}= 0\quad \left( \bast \tilde{\tv{Z}}_{\bar{\upnu}}^{\bar{\upmu}}(\uptheta )_{\bast d}= 0 \right).  \]  In particular, 
\[  \overline{\rm Spec}[X](\uptheta )_{d}\subsetneq \bstar\PR\R_{\upvarepsilon}^{2}\quad \big( \overline{\rm Spec}[X](\uptheta )_{\bast d}\subsetneq \bstar\PR\R_{\upvarepsilon}^{2}\big).\]
%and 
%\[   \overline{\rm Spec}[X](\uptheta )\subsetneq \bstar\PR\R_{\upvarepsilon}^{2}\]
\end{prop}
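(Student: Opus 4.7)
My plan is to argue by contradiction: suppose $0\neq \bast f = \bast\{f_{i}\}\in \bast\tv{Z}^{\bar{\upmu}}_{\bar{\upnu}}(\uptheta )_{d}$ with $\bar{\upmu}\geq \widehat{\bar{\upmu}}_{d}$ and $\bar{\upnu}<\widehat{\bar{\upnu}}_{d}$, and derive the contradictory conclusion $\bar{\upnu}(\bast f)\geq \widehat{\bar{\upnu}}_{d}$. Since $\uptheta$ is not algebraic of degree $\leq d$, I may refine the representative on an ultrafilter set so that $\deg(f_{i})\leq d$ and $f_{i}(\uptheta )\neq 0$ for every $i$. A transcendence/pigeonhole step then shows that the height sequence $\bast M := \bast\{\mathfrak{h}(f_{i})\}$ must be infinite in $\bast\N$: otherwise $\bast f$ would, by \L{}o\'{s}, coincide with a standard polynomial of degree $\leq d$ vanishing at $\uptheta$, contradicting the hypothesis.

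Next I would exploit the extremality defining $f_{H,d}$: for each $i$ the polynomial $f_{i}$ has degree $\leq d$ and height $\leq M_{i}$, so by minimality
\[
|f_{i}(\uptheta )|\;\geq\; |f_{M_{i},d}(\uptheta)|\;=\;M_{i}^{-d\,\mathfrak{e}(d,M_{i})}.
\]
Taking $d$-th roots, passing to the ultrapower, and projecting to the Frobenius tropical semi-ring gives
\[
\bar{\upnu}(\bast f)\;\geq\;\bigl[\bast M^{-\mathfrak{e}(d,\bast M)}\bigr]\quad\text{in }\bstar\overline{\PR\R}_{\upvarepsilon},
\]
where $\mathfrak{e}(d,\bast M)=\bast\{\mathfrak{e}(d,M_{i})\}\in\bast\R$. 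It remains to compare this class with $\widehat{\bar{\upnu}}_{d}=[\bast H^{-\mathfrak{e}(d,\bast H)}]$ where $\bast H=\bast\{H_{i}\}$ indexes the best subsequence.

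The growth hypothesis $\bar{\upmu}(\bast f)\geq \widehat{\bar{\upmu}}_{d}$ translates, via the dense linear order of Proposition \ref{FrobDLO}, to $[\bast M]\leq [\bast H]$, i.e.\ $\bast M\leq \bast H^{\bast r}$ up to a unit for some $\bast r\in(\bast\R_{\rm fin})^{\times}_{+}$. The defining limsup $\mathfrak{e}(d)=\limsup_{H\to\infty}\mathfrak{e}(d,H)$, together with $\bast M$ being infinite and \L{}o\'{s}, yields $\mathfrak{e}(d,\bast M)\leq \mathfrak{e}(d)+\bast\upalpha$ with $\bast\upalpha$ a positive infinitesimal; the best-heights subsequence is chosen so that $\mathfrak{e}(d,\bast H)\simeq \mathfrak{e}(d)$. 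When $\mathfrak{e}(d)$ is finite (the $S$- and $T$-number cases), the product $\bast r\bigl(\mathfrak{e}(d)+\bast\upalpha\bigr)$ lies in $(\bast\R_{\rm fin})^{\times}_{+}$, so the Frobenius action absorbs the discrepancy and
\[
\bigl[\bast M^{-\mathfrak{e}(d,\bast M)}\bigr]\;\geq\;\bigl[\bast H^{-\bast r(\mathfrak{e}(d)+\bast\upalpha)}\bigr]\;=\;\bigl[\bast H^{-\mathfrak{e}(d,\bast H)}\bigr]\;=\;\widehat{\bar{\upnu}}_{d},
\]
contradicting $\bar{\upnu}(\bast f)<\widehat{\bar{\upnu}}_{d}$. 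The infinite-degree statement is proved by the same scheme with $\bast d$ in place of $d$, and the ``in particular'' clause follows at once, since for every $\bar{\upnu}<\widehat{\bar{\upnu}}_{d}$ the pair $(\widehat{\bar{\upmu}}_{d},\bar{\upnu})$ lies outside $\overline{\rm Spec}[X](\uptheta)_{d}$.

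The main obstacle I anticipate is the $U$-number case $\mathfrak{e}(d)=\infty$, where $\bast r\cdot(\mathfrak{e}(d)+\bast\upalpha)$ escapes $(\bast\R_{\rm fin})^{\times}_{+}$ and Frobenius absorption is no longer automatic. I would resolve this by bifurcating on whether $\mathfrak{e}(d,\bast M)$ is finite or infinite in $\bast\R$: in the finite subcase, $[\bast M^{-\mathfrak{e}(d,\bast M)}]$ lies in the Frobenius orbit $[\bast M^{-1}]\geq [\bast H^{-1}]$, which already strictly dominates the ``super-polynomial'' class $\widehat{\bar{\upnu}}_{d}=[\bast H^{-\mathfrak{e}(d,\bast H)}]$ (infinite exponent); in the infinite subcase, the freedom to refine the best subsequence $H_{i}$ along the ultrafilter -- since $\{H_{i}\}$ is only required to realise the limsup $\mathfrak{e}(d)=\infty$ -- lets me arrange $\mathfrak{e}(d,H_{i})\geq \mathfrak{e}(d,M_{i})$ componentwise, recovering the comparison.
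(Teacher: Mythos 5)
Your overall mechanism -- the growth hypothesis pushes the height of $\bast f$ below the best height, and minimality of $f_{H,d}$ then forces the decay of $\bast f$ up to at least $\widehat{\bar{\upnu}}_{d}$ -- is the same as the paper's, but your implementation through the exponent $\mathfrak{e}(d,H)$ opens a genuine gap in the $U$-number branch. In the subcase where $\mathfrak{e}(d)=\infty$ and $\mathfrak{e}(d,\bast M)$ is infinite you propose to ``refine the best subsequence $H_{i}$'' so as to arrange $\mathfrak{e}(d,H_{i})\geq\mathfrak{e}(d,M_{i})$. That move is not available: the best growth--decay pair $(\widehat{\bar{\upmu}}_{d},\widehat{\bar{\upnu}}_{d})$ is \emph{given} in the statement, and altering $\{H_{i}\}$ on an ultrafilter-large set changes $\bast H=\mathfrak{h}(\bast\widehat{g}_{d})$ and hence changes $\widehat{\bar{\upnu}}_{d}$; you would be proving vanishing relative to some other best pair, not the one fixed in the hypothesis. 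A smaller unstated point: your Frobenius-absorption steps tacitly assume that $\mathfrak{e}(d)$ (resp.\ $\mathfrak{e}(d,\bast M)$ in your subcase (a)) is noninfinitesimal; this is true, since Dirichlet's box principle gives $\mathfrak{e}(d,H)\geq 1-o(1)$ when $\uptheta$ is not algebraic of degree $\leq d$, but it is nowhere verified in your argument.

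The detour through $\mathfrak{e}(d,\bast M)$ is also unnecessary, and removing it repairs the gap: $|f_{H,d}(\uptheta)|$ is nonincreasing in $H$, because enlarging the height bound enlarges the set over which one minimizes. Your own growth analysis already gives $\mathfrak{h}(\bast f)<\bast H$ (strictly, since $\bar{\upmu}_{d}(\bast f)>\bar{\upmu}\geq\widehat{\bar{\upmu}}_{d}$), so on an ultrafilter-large set $\mathfrak{h}(f_{i})\leq H_{i}$ and therefore $|f_{i}(\uptheta)|\geq |f_{M_{i},d}(\uptheta)|\geq |f_{H_{i},d}(\uptheta)|$, whence $\bar{\upnu}_{d}(\bast f)\geq\widehat{\bar{\upnu}}_{d}>\bar{\upnu}$ with no case analysis on the Mahler class at all. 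This one-step comparison with $\bast\widehat{g}_{d}$ is exactly the paper's proof, and it carries over verbatim to the infinite-degree statement.
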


\begin{proof} We begin with the finite degree $d$ statements. Suppose otherwise: let $\bar{\upmu} \geq  \widehat{\bar{\upmu}}_{d}$ and $\bar{\upnu} <  \widehat{\bar{\upnu}}_{d}$ yet
$0\not=\bast f\in \bast \tv{Z}_{\bar{\upnu}}^{\bar{\upmu}}(\uptheta )_{ d}$.  
Then 
%(since the height function is positive) 
$\mathfrak{h}(\bast f)\cdot  \widehat{\bar{\upmu}}_{d} \leq \mathfrak{h}(\bast f)\cdot \bar{\upmu}\in\bstar\PR\R_{\upvarepsilon}$ implies
that $\mathfrak{h}(\bast f)<\mathfrak{h}(\bast  \widehat{g}_{d})$.  However, by definition of $\bast  \widehat{g}_{d}$,
we must have $\bar{\upnu} (\bast f )\geq  \widehat{\bar{\upnu}}_{d}>\bar{\upnu}$ (since by hypothesis $\bast f(\uptheta )\not=0$), contradiction. This argument applies {\it mutadis mutandis} to the infinite degree case.  
%This leaves
%us with the last statement: $ \overline{\rm Spec}[X](\uptheta )\subsetneq \bstar\PR\R_{\upvarepsilon}^{2}$.  Let $\bast d = \bast \{1,2,3,\dots \}$.  Then %every best polynomial $f_{H,d}$ occurs in the sequence defining some degree $\bast d$ class 
%$\bast  \widehat{g}_{\bast d}$.  Now any $\bast e>\bast d$ is defined by a subsequence of $ \{1,2,3,\dots \}$, and a best growth decay pair 
%$( \widehat{\bar{\upmu}}_{\bast e},  \widehat{\bar{\upnu}}_{\bast e})\in \overline{\rm Spec}[X](\uptheta )_{\bast e}$ is the class of a subsequence of
%$\{(\updelta_{i},\upvarepsilon_{i})\}$ defining some growth decay pair 
%$( \widehat{\bar{\upmu}}_{\bast d},  \widehat{\bar{\upnu}}_{\bast d})\in\overline{\rm Spec}[X](\uptheta )_{\bast d}$.  By the first set
%of statements in the Proposition, the pair
%$( \widehat{\bar{\upmu}}_{\bast d},  \widehat{\bar{\upnu}}_{\bast d})$ defines in $\overline{\rm Spec}[X](\uptheta )_{\bast d}$
%a vanishing strip consisting of points with coordinates $(\bar{\upmu}, \bar{\upnu})$ with $\bar{\upmu}\geq \widehat{\bar{\upmu}}_{\bast d}$
%and $\bar{\upnu}< \widehat{\bar{\upnu}}_{\bast d}$.  Now any subsequence of $\{(\updelta_{i},\upvarepsilon_{i})\}$ defines
%a pair $(\bar{\upmu}', \bar{\upnu}')\in \bstar\PR\R_{\upvarepsilon}$ 
%which lies to the left of the vanishing strip defined by $( \widehat{\bar{\upmu}}_{\bast d},  \widehat{\bar{\upnu}}_{\bast d})$.   It cannot
%belong to any other vanishing strip in $\overline{\rm Spec}[X](\uptheta )_{\bast d}$ as the vanishing strips descend 
%It follows that the strip persists in $\overline{\rm Spec}[X](\uptheta )$.
\end{proof}

Thus, we may attach to any degree $\bast d$ best polynomial class $\bast  \widehat{g}_{\bast d}$ 
a horizontal strip in the $(\bar{\upmu},\bar{\upnu})$ plane:
\[ V(\bast  \widehat{g}_{\bast d}) = \big\{ (\bar{\upmu},\bar{\upnu}) \big|\; \bar{\upmu} \geq  \widehat{\bar{\upmu}}_{d}\text{ and }\bar{\upnu} <  \widehat{\bar{\upnu}}_{d}  \big\} \]
parametrizing
a region where $\bast\tilde{\tv Z}^{\bar{\upmu}}_{\bar{\upnu}}(\uptheta)_{\bast d} $ vanishes.  
The extent to which $V(\bast  \widehat{g}_{\bast d})$ persists in $\bast\tilde{\tv Z}^{\bar{\upmu}}_{\bar{\upnu}}(\uptheta)_{\bast \breve{d}} $
is a function of the Mahler type of $\uptheta$, a point which will be discussed in detail at the end of this section.
%Notice that if $\bast d<\bast e$ 
%define the same Frobenius class
%then
%$ V_{\bast e}(\uptheta )\subset V_d(\uptheta )$ .  
%Denote by 
%\[  V_{\boldsymbol\infty} (\uptheta )= \bigcap  V_d(\uptheta ). \]  
%\marginpar{clean up, decide what to do with}

The Frobenius polynomial spectrum $ \overline{\rm Spec}[X](\uptheta )$ picks out the real algebraic
numbers in the same way that the ordinary linear spectrum ${\rm Spec}(\uptheta )$ picks out the rationals:

\begin{coro} The following statements are equivalent.
\begin{enumerate}
\item[i.]  $\uptheta\in\R$ is algebraic.
\item[ii.] $ \overline{\rm Spec}[X](\uptheta )_{d}=\bstar\PR\R_{\upvarepsilon}^{2}$ for some $d\in\N$.
\item[iii.] $ \overline{\rm Spec}[X](\uptheta )_{\bast d}=\bstar\PR\R_{\upvarepsilon}^{2}$ for every $\bast d\in\bast\N -\N$.
\end{enumerate}
\end{coro}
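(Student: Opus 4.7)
The plan is to establish (i) $\Rightarrow$ (ii) and (i) $\Rightarrow$ (iii) by exhibiting, in each case, a single polynomial witness whose growth class equals $\bar 1$ and whose decay class equals $-\infty$, thereby saturating every pair in $\bstar\PR\R_\upvarepsilon^2$. The reverse implications will follow by contraposition directly from Proposition \ref{bestapproxpoly}.

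For (i) $\Rightarrow$ (ii), let $p(X)\in\Z[X]$ be the minimal polynomial of $\uptheta$, of degree $d$. Viewing $p$ as an element of $\bast\tv{Z}_{d}$, we have $p(\uptheta)=0$ exactly, so its normalized decay is $\bar{\upnu}_{d}(p)=[0]=-\infty$, strictly below every $\bar\upnu$. Its height $\mathfrak{h}(p)$ is a standard positive integer, hence $\mathfrak{h}(p)^{-1/d}\in(\bast\R_{\rm fin})_+^\times$, so $\bar\upmu_{d}(p)=\bar 1>\bar\upmu$ for every $\bar\upmu\in\bstar\overline{\PR\R}_\upvarepsilon$. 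This shows $p\in\bast\tilde{\tv{Z}}_{\bar\upnu}^{\bar\upmu}(\uptheta)_{d}$ for every $(\bar\upmu,\bar\upnu)$, yielding (ii). For (i) $\Rightarrow$ (iii), fix any $\bast d\in\bast\N-\N$ and use instead $\bast f(X)=X^{\bast d-d}\cdot p(X)\in\bast\tilde{\tv Z}_{\bast d}$; since multiplication by a monomial merely shifts coefficients, $\mathfrak{h}(\bast f)=\mathfrak{h}(p)$ remains standard, while $\mathfrak{h}(p)^{1/\bast d}$ lies in $(\bast\R_{\rm fin})_+^\times$ (its logarithm $\log\mathfrak{h}(p)/\bast d$ is infinitesimal), so $\bar\upmu_{\bast d}(\bast f)=\bar 1$; and $\bast f(\uptheta)=0$ again gives decay $-\infty$, so every pair is witnessed in $\overline{\rm Spec}[X](\uptheta)_{\bast d}$.

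For the reverse directions I would argue by contraposition using Proposition \ref{bestapproxpoly}. If $\uptheta$ is not algebraic, then for each standard $d\in\N$ it fails to be algebraic of degree $\leq d$, so the finite-degree part of the proposition produces a best polynomial class $\bast\widehat g_{d}$ and a nonempty vanishing strip, forcing $\overline{\rm Spec}[X](\uptheta)_{d}\subsetneq \bstar\PR\R_\upvarepsilon^2$ for every $d\in\N$; this rules out (ii). Likewise, a transcendental $\uptheta$ admits a best degree class $\bast d\in\bast\N-\N$ and a best polynomial class $\bast\widehat g_{\bast d}$ to which the infinite-degree form of the proposition applies, producing a $\bast d\in\bast\N-\N$ with $\overline{\rm Spec}[X](\uptheta)_{\bast d}\subsetneq \bstar\PR\R_\upvarepsilon^2$, which rules out (iii). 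No serious obstacle arises; the only potentially delicate point is verifying that $\mathfrak{h}(p)^{1/\bast d}$ is a unit in $\bast\R_{\rm fin}$ for infinite $\bast d$, which is immediate from the infinitesimality of $\log\mathfrak{h}(p)/\bast d$ and the monotonicity of the exponential.
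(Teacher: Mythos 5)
Your proposal is correct and is essentially the paper's own argument: the minimal polynomial gives decay $-\infty$ and growth class $\bar{1}$, witnessing every pair, while the converse follows from Proposition \ref{bestapproxpoly} by contraposition. The only cosmetic difference is your padding by $X^{\bast d-d}$ for infinite $\bast d$, which is unnecessary since $\bast\tilde{\tv{Z}}_{\bast d}$ is defined by $\deg\leq\bast d$ and the growth is normalized by the index $\bast d$ rather than the actual degree, so the standard minimal polynomial itself already serves as the witness for all $\bast d\geq\deg(f)$, exactly as in the paper.
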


\begin{proof}  i.\ $\Rightarrow$ ii., iii.\ If there exists $f(X)\in\Z[X]$ (a standard polynomial) with $f(\uptheta )=0$ then 
for all $\bast d\geq \deg (f)$, $\mathfrak{h}_{\bast d}(f)\cdot \upmu\in\bstar\PR\R_{\upvarepsilon}$ for all $\upmu\in \bstar\PR\R_{\upvarepsilon}$, and $\upnu_{\bast d} (f)=-\infty\leq\upnu$ for all $\upnu\in \bstar\PR\R_{\upvarepsilon}$.  Then for such $\bast d$, $ \overline{\rm Spec}[X](\uptheta )_{\bast d}=\bstar\PR\R_{\upvarepsilon}^{2}$.
ii., iii.\ $\Rightarrow$ i.\ Follows from Proposition \ref{bestapproxpoly}.
\end{proof}

Denote by $\mathfrak{d}$ the smallest degree $d$ for which $\mathfrak{e}  (d)$ is infinite; if
$\mathfrak{e}  (d)$ is finite for all $d$ write $\mathfrak{d}  =\infty$. 
% We would now characterize the Mahler classes \cite{Ba} in terms of polynomial growth-decay spectra.  First we recall their definition: 
Recall that Mahler's classification \cite{Ba} consists of the identification of
the following non-empty, mutually exclusive classes of numbers whose union is $\R$:  $\uptheta$ is an

\begin{itemize}
\item[-]  {\bf $\boldsymbol A$-number} if $\mathfrak{e}  =0$ and $\mathfrak{d}  = \infty$.  The 
$A$-numbers are exactly the real algebraic numbers.
\item[-]   {\bf $\boldsymbol S$-number}if $0<\mathfrak{e} <\infty$ and $\mathfrak{d}  = \infty$.
Lebesgue-almost all real numbers are $S$-numbers.
\item[-]   {\bf $\boldsymbol T$-number} if $\mathfrak{e}  =\infty$ and $\mathfrak{d}  = \infty$.
\item[-]  {\bf $\boldsymbol U$-number} if $\mathfrak{e}  =\infty$ and $\mathfrak{d}  < \infty$.
\end{itemize}
Note that if $\uptheta$ and $\upeta$ are algebraically dependent, they are of the same Mahler class \cite{Bu}: in particular,
the relation $ \uptheta\doteq\upeta$ preserves Mahler class.

We now interpret Mahler's classification in terms of the images of pairs of best polynomial growth and best polynomial
decay in the Frobenius growth-decay semi-ring $\bstar\overline{\PR\R}_{\upvarepsilon}$.

\begin{theo}\label{Atheo}  $\uptheta$ is an $A$-number $\Leftrightarrow$ for every infinite degree polynomial best growth decay pair $( \widehat{\bar{\upmu}}_{\bast d},  \widehat{\bar{\upnu}}_{\bast d})$,
\[  \widehat{\bar{\upmu}}_{\bast d}< ( \widehat{\bar{\upnu}}_{\bast d})^{1/\bast d}.\]
%we have $ \widehat{\upmu}^{\R^{\times}_{+}}=\hat{\upnu}_{\uptheta}^{\R^{\times}_{+}}$.  
\end{theo}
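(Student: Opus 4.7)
The plan is to reduce the Frobenius inequality to an elementary comparison of ultrapower exponents and then split cases according to the value of the Mahler type $\mathfrak{e}$. For a best pair $\bast\widehat{g}_{\bast d}=\bast\{f_{H_i,d_i}\}$, I would unwind the definitions to obtain the representatives
$\widehat{\bar{\upmu}}_{\bast d}=[\bast\{H_i^{-1/d_i}\}]$ and, via the infinitesimal Frobenius power action of \emph{Note}~\ref{infinitesimalpower}, $(\widehat{\bar{\upnu}}_{\bast d})^{1/\bast d}=[\bast\{H_i^{-\mathfrak{e}_i/d_i}\}]$, where $\mathfrak{e}_i:=\mathfrak{e}(d_i,H_i)\to\mathfrak{e}$. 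The strict inequality is then equivalent to the class $\bast\{H_i^{(\mathfrak{e}_i-1)/d_i}\}$ being simultaneously (i) infinitesimal in $\bast\R_+$ and (ii) Frobenius inequivalent to $1$, i.e., no $\bast s\in(\bast\R_{\rm fin})^\times_+$ makes $(\mathfrak{e}_i-s_i)\log H_i/d_i$ bounded in $\bast\R_{\rm fin}$.

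For the reverse direction (contrapositive), suppose $\uptheta$ is transcendental. A Dirichlet/pigeonhole argument applied to lattice polynomials of degree $\leq d$ with coefficients in $[0,H-1]$ produces $f\in\Z[X]$ with $|f(\uptheta)|\leq (c_\uptheta/H)^d$ for any fixed $d$ and $H$ sufficiently large, giving $\mathfrak{e}(d,H)\geq 1-\log M/\log H\to 1$ as $H\to\infty$ (with $M=\max(1,|\uptheta|)$), and hence $\mathfrak{e}\geq 1$. Therefore $\mathfrak{e}_i\to\mathfrak{e}\geq 1$ along any best pair, and $(\mathfrak{e}_i-1)\log H_i/d_i$ is bounded below by $-\log M/d_i\to 0$; the ratio class fails to be infinitesimal, so condition~(i) is violated and the strict inequality fails.

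For the forward direction, suppose $\uptheta$ is an $A$-number, i.e., algebraic of minimal degree $d_0$ with minimal polynomial $p$. Liouville's inequality $|f(\uptheta)|\geq c_\uptheta/H^{d_0-1}$ (obtained from $N_{\Q(\uptheta)/\Q}(f(\uptheta))\in\Z-\{0\}$ when $p\nmid f$) gives $\mathfrak{e}_i d_i\leq d_0-1+o(1)$, confirming $\mathfrak{e}_i\to 0$, and the ratio exponent simplifies to $(-\log H_i/d_i)(1+o(1))$. Both conditions (i) and (ii) therefore reduce to the single requirement that $\log H_i/d_i\to\infty$. The main obstacle lies precisely in establishing this divergence for every best pair: one must show that the saturation conditions $\mathfrak{h}(f_{H_i,d_i})=H_i$ and $\deg f_{H_i,d_i}=d_i$ with $d_i\to\infty$ force the heights $H_i$ onto a sparse ``convergent-like'' sequence whose growth outpaces any polynomial in $d_i$, a manifestation of the rigidity of best approximations to algebraic numbers.
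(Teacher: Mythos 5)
Your first paragraph's reduction (comparing $[\bast H^{-1/\bast d}]$ with $[\bast H^{-\bast\mathfrak{e}/\bast d}]$, where $\bast H=\mathfrak{h}(\bast\widehat{g}_{\bast d})$ and $\bast\mathfrak{e}=\bast\{\mathfrak{e}_i\}$) is correct, and your transcendence (contrapositive) direction is sound, though heavier than the paper needs: if the strict inequality held for some best pair while $\bast\mathfrak{e}$ were not infinitesimal, then $\bast\mathfrak{e}\geq c$ for a real $c>0$ and $[\bast H^{-\bast\mathfrak{e}/\bast d}]\leq[\bast H^{-c/\bast d}]=[\bast H^{-1/\bast d}]$, a contradiction; since $\mathfrak{e}_i\to\mathfrak{e}$ by definition of a best class, this already forces $\mathfrak{e}=0$, with no pigeonhole input. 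Your Dirichlet argument proves the stronger statement that the inequality fails for \emph{every} best pair of a transcendental number, which is fine (both routes tacitly use that at least one infinite-degree best pair exists).

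The genuine gap is the forward direction, and you have flagged it yourself: after your reduction you need $\log H_i/d_i\to\infty$ along every infinite-degree best pair of an algebraic number (equivalently $\mathfrak{h}(\bast\widehat{g}_{\bast d})^{1/\bast d}$ infinite, i.e.\ $\widehat{\bar{\upmu}}_{\bast d}\neq\bar{1}$), and you give no argument for it. This is not a routine omission: nothing in the definition of a best class visibly forces the height to grow superexponentially in the degree; for instance, for algebraic $\uptheta$ with $0<|\uptheta|<1$, polynomials of the shape $(qX-p)X^{d-1}$ with $p/q$ a good rational approximation indicate that the minimizer at parameters $(H,d)$ can have full degree while the height stays modest, so best pairs with $\log H_i/d_i$ bounded are at least plausible and your ``rigidity'' heuristic would then fail. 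Note also that your Liouville step is partly circular: the bound $\mathfrak{e}_i d_i\leq d_0-1+o(1)$ already presupposes $d_i/\log H_i\to 0$, although all you actually use is $\mathfrak{e}_i\to 0$, which is definitional for an $A$-number. The paper's own proof takes a shorter route: it reads off $|\bast\widehat{g}(\uptheta)|^{1/\bast d}=\mathfrak{h}(\bast\widehat{g})^{-\bast\mathfrak{e}}$ with $\bast\mathfrak{e}$ infinitesimal, compares against $\mathfrak{h}(\bast\widehat{g})^{-e}$ for real $e>0$, and passes to Frobenius classes, so the height-versus-degree question never appears explicitly there (strict inequality of the Frobenius classes still presupposes that the normalized growth class is a genuinely infinitesimal class, which is exactly the point where you got stuck). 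As it stands, your proposal establishes one implication and only a conditional form of the other, so it is incomplete.
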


\begin{proof} $\uptheta$ is an $A$-number $\Leftrightarrow$ for every infinite degree best polynomial class $\bast  \widehat{g}_{\bast d}$,
\[ \big|\bast  \widehat{g}(\uptheta )\big|^{1/\bast d} = \big(\mathfrak{h}(\bast  \widehat{g})^{-1}\big)^{\bast \mathfrak{e}} \]
where $\bast\mathfrak{e} = \bast \{\mathfrak{e}_{i}\}$ is infinitesimal.  Then 
$\big(\mathfrak{h}(\bast  \widehat{g})^{-1}\big)^{\bast \mathfrak{e}}>\big(\mathfrak{h}(\bast  \widehat{g})^{-1}\big)^{e} $ for any $e\in\R_{+}$,
and the result follows on passing to $\bstar\overline{\PR\R}_{\upvarepsilon}$ classes of degree $\bast d$ normalized growth and decay.
\end{proof}

%The following characterization
%says that in the (normalized) limit, and $S$-number has best approximation $0$-region $ \surd {\rm Spec}(\uptheta )$
%of definite size: i.e.\ whose coordinates are root commensurate.

\begin{theo}\label{S}  $\uptheta$ is an $S$-number $\Leftrightarrow$ for every infinite degree polynomial best growth decay pair  
$( \widehat{\bar{\upmu}}_{\bast d},  \widehat{\bar{\upnu}}_{\bast d})$,
\[  \widehat{\bar{\upmu}}_{\bast d}= ( \widehat{\bar{\upnu}}_{\bast d})^{1/\bast d}.\]
In this case, the associated vanishing strip $V(\bast  \widehat{g}_{\bast d})$ persists
in $  \overline{\rm Spec}[X](\uptheta )_{\bast \breve{d}}$: 
\[    \overline{\rm Spec}[X](\uptheta )_{\bast \breve{d}}\cap V(\bast  \widehat{g}_{\bast d})=\emptyset .\]
In particular, for any infinite best degree $\bast d$,
\[ \overline{\rm Spec}[X](\uptheta )_{\bast \breve{d}}\not= \bstar\overline{\PR\R}_{\upvarepsilon}^{2}.\]
%there exist exponents $e,e'\in\R_{+}$ with $ \widehat{\upmu}^{e}>\hat{\upnu}_{\uptheta}$ and $ \widehat{\upmu}^{e'}<\hat{\upnu}_{\uptheta}$.
% $( \widehat{\upmu}, \widehat{\upnu})$ 
%with $\hat{\upnu}_{\uptheta}< \widehat{\upmu}<\hat{\upnu}_{\uptheta}^{e}$ for some $e>1$.
%and $e\in \R_{+}$ with $ \widehat{\upmu}^{e}=\hat{\upnu}_{\uptheta}$.
%we have $ \widehat{\upmu}^{\R^{\times}_{+}}=\hat{\upnu}_{\uptheta}^{\R^{\times}_{+}}$.  
\end{theo}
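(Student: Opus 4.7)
The plan is to prove the stated equivalence $\uptheta$ is an $S$-number $\Leftrightarrow$ $\widehat{\bar{\upmu}}_{\bast d} = (\widehat{\bar{\upnu}}_{\bast d})^{1/\bast d}$ first, and then deduce the persistence of the vanishing strip $V(\bast\widehat g_{\bast d})$ in $\overline{\rm Spec}[X](\uptheta)_{\bast \breve{d}}$; the ``in particular'' clause is an immediate corollary, since $V(\bast\widehat g_{\bast d})$ is nonempty. For the equivalence, I start from the defining relation $|\bast\widehat g(\uptheta)|^{1/\bast d} = \mathfrak{h}(\bast\widehat g)^{-\bast\mathfrak{e}}$, where $\bast\mathfrak{e} = \bast\{\mathfrak{e}_i\}$ has standard part $\mathfrak{e}$. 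Thus
\[
\widehat{\bar{\upmu}}_{\bast d} = \bigl[\mathfrak{h}(\bast\widehat g)^{-1/\bast d}\bigr], \qquad (\widehat{\bar{\upnu}}_{\bast d})^{1/\bast d} = \bigl[\mathfrak{h}(\bast\widehat g)^{-\bast\mathfrak{e}/\bast d}\bigr],
\]
and equality of these Frobenius classes is equivalent to the exponent ratio $\bast\mathfrak{e}$ lying in $(\bast\R_{\rm fin})^{\times}_{+}$. Since $\bast\mathfrak{e}$ is infinitesimal (resp.\ finite noninfinitesimal, resp.\ infinite) precisely in the $A$- (resp.\ $S$-, resp.\ $T$- or $U$-) case, this isolates exactly the $S$-numbers.

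For persistence, suppose for contradiction that $0 \ne \bast f \in \bast\tilde{\tv{Z}}^{\bar{\upmu}}_{\bar{\upnu}}(\uptheta)_{\bast d'}$ with $\bast d' \in \bast\breve{d}$ and $(\bar{\upmu},\bar{\upnu}) \in V(\bast\widehat g_{\bast d})$. Proposition~\ref{bestapproxpoly} disposes of $\bast d' = \bast d$, so $\bast d' = \bast p \bast d$ with $\bast p \in (\bast\Q_{\rm fin})^{\times}_{+}$, $\bast p \neq 1$. Since the Frobenius classes $[\mathfrak{h}(\cdot)^{1/\bast e}]$ and $[|\cdot(\uptheta)|^{1/\bast e}]$ are invariant as $\bast e$ ranges over $\bast\breve{d}$, the hypotheses rewrite at the common normalization $\bast d'$ as strict Frobenius inequalities
\[
\bigl[\mathfrak{h}(\bast f)^{-1/\bast d'}\bigr] > \bigl[\mathfrak{h}(\bast\widehat g)^{-1/\bast d'}\bigr] \quad\text{and}\quad \bigl[|\bast f(\uptheta)|^{1/\bast d'}\bigr] < \bigl[|\bast\widehat g(\uptheta)|^{1/\bast d'}\bigr],
\]
which produce infinite $\bast s, \bast t$ with $\mathfrak{h}(\bast\widehat g) = \mathfrak{h}(\bast f)^{\bast t}$ and $|\bast f(\uptheta)| = |\bast\widehat g(\uptheta)|^{\bast s}$. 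Substituting $|\bast\widehat g(\uptheta)| = \mathfrak{h}(\bast\widehat g)^{-\bast d \bast\mathfrak{e}}$ then gives $|\bast f(\uptheta)| = \mathfrak{h}(\bast f)^{-\bast s \bast t \bast d \bast\mathfrak{e}}$.

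The main obstacle is to convert this relation into a contradiction with the $S$-condition. Since $\bast f$ is a competitor for $f_{\mathfrak{h}(\bast f), \bast d'}$, Mahler extremality forces the associated exponent to satisfy $\bast\mathfrak{e}(\bast d', \mathfrak{h}(\bast f)) \geq \bast s \bast t \bast\mathfrak{e}/\bast p$, which is infinite because $\bast s, \bast t$ are infinite and $\bast p, \bast\mathfrak{e}$ are finite noninfinitesimal. But for an $S$-number, $\mathfrak{e}(d) < \infty$ for every $d$ and $\limsup_d \mathfrak{e}(d) = \mathfrak{e} < \infty$; transferring these bounds through the ultrafilter (using that $\bast d'$ lies in the best degree class and $\mathfrak{h}(\bast f)$ is infinite) forces $\bast\mathfrak{e}(\bast d', \mathfrak{h}(\bast f))$ to be finite. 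The delicate step is to guarantee that the $\limsup$-over-$H$ bound $\mathfrak{e}(d, H) \leq \mathfrak{e}(d) + o_H(1)$ lifts faithfully to the ultrafilter class $\bast\mathfrak{e}(\bast d', \mathfrak{h}(\bast f))$; I expect this to follow by choosing a representing sequence for $\bast f$ on which the extremality exponent stays cofinally bounded, invoking the joint asymptotic finiteness of $\mathfrak{e}(d, H)$ characteristic of the $S$-class.
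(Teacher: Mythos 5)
Your first half (the characterization of $S$-numbers by $\widehat{\bar{\upmu}}_{\bast d}=(\widehat{\bar{\upnu}}_{\bast d})^{1/\bast d}$) is exactly the paper's argument: pass the defining relation $|\bast\widehat{g}(\uptheta)|^{1/\bast d}=\mathfrak{h}(\bast\widehat{g})^{-\bast\mathfrak{e}}$ to Frobenius classes and observe that equality of classes amounts to $\bast\mathfrak{e}$ being bounded and noninfinitesimal. That part is fine.

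The persistence argument, however, has a genuine gap at precisely the step you flag as ``delicate,'' and it does not close. The Mahler $S$-condition only says that for each fixed $d$ the quantity $\mathfrak{e}(d,H)$ is eventually (as $H\to\infty$) bounded by $\mathfrak{e}(d)+o(1)$, and that $\sup_d\mathfrak{e}(d)<\infty$; it gives no uniform bound on $\mathfrak{e}(d,H)$ in the joint regime where $H$ is small relative to $d$, because the implicit constant in $|P(\uptheta)|\geq c(d)\,H^{-d\,\mathfrak{e}(d)}$ degenerates with $d$. Your own construction puts you exactly in that regime: from the strict Frobenius inequality on growth you derived $\mathfrak{h}(\bast\widehat{g})=\mathfrak{h}(\bast f)^{\bast t}$ with $\bast t$ infinite, so $\mathfrak{h}(\bast f)$ is infinitely smaller than the best height, and nothing forces it to be large relative to $\bast d'$. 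Hence the finiteness of $\bast\mathfrak{e}(\bast d',\mathfrak{h}(\bast f))$ is not a consequence of the $S$-hypothesis, and it cannot be rescued by ``choosing a representing sequence for $\bast f$'': the value $\bast\mathfrak{e}(\bast d',\mathfrak{h}(\bast f))\in\bast\R$ depends only on the pair $(\bast d',\mathfrak{h}(\bast f))$ and the ultrafilter, not on any choice of representative. The paper avoids this entirely: it never estimates $\mathfrak{e}(d,H)$ off the best sequences, but instead supposes a best growth-decay pair $(\widehat{\bar{\upmu}}_{\bast e},\widehat{\bar{\upnu}}_{\bast e})$ with $\bast e\in\bast\breve{d}$ lay in $V(\bast\widehat{g}_{\bast d})$ and applies the already-proved relation $\widehat{\bar{\upmu}}=\widehat{\bar{\upnu}}^{1/\deg}$ at both degrees, using that $\bast e/\bast d$ is bounded so that $(\widehat{\bar{\upnu}}_{\bast d})^{\bast e/\bast d}=\widehat{\bar{\upnu}}_{\bast d}$, to get $\widehat{\bar{\upnu}}_{\bast d}\leq\widehat{\bar{\upnu}}_{\bast e}$, contradicting membership in the strip; the spectral statement then follows via Proposition~\ref{bestapproxpoly}. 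If you want to keep your ``direct competitor'' strategy you would need an additional input controlling $\mathfrak{e}(d,H)$ for heights far below the best heights, which the $S$-classification does not supply; otherwise, revert to the comparison of best pairs.
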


\begin{proof} $\uptheta$ is an $S$-number $\Leftrightarrow$ for every 
infinite degree best polynomial class $\bast  \widehat{g}$ of degree $\bast d$
\[ |\bast  \widehat{g}(\uptheta )|^{1/\bast d} = (\mathfrak{h}(\bast  \widehat{g})^{-1})^{\bast \mathfrak{e}} \]
where $\bast \mathfrak{e}\simeq\mathfrak{e}\in\R_{+}$.      
Then the first statement follows upon passage to $\bstar\overline{\PR\R}_{\upvarepsilon}$ classes of degree $\bast d$ normalized growth and decay.
Now suppose that $\bast d<\bast e$ both belong to the class $\bast \breve{d}$ i.e.\ $\bast e/\bast d$ is bounded.  Suppose further that there
exists a best growth decay pair $( \widehat{\bar{\upmu}}_{\bast e},  \widehat{\bar{\upnu}}_{\bast e})\in V(\bast  \widehat{g}_{\bast d})$.  Then $ \widehat{\bar{\upmu}}_{\bast e}\geq  \widehat{\bar{\upmu}}_{\bast d}$
and $ \widehat{\bar{\upnu}}_{\bast e}< \widehat{\bar{\upnu}}_{\bast d}$, however:
\[  ( \widehat{\bar{\upnu}}_{\bast d})^{1/\bast d}=   \widehat{\bar{\upmu}}_{\bast d}\leq  \widehat{\bar{\upmu}}_{\bast e}=( \widehat{\bar{\upnu}}_{\bast e})^{1/\bast e} \]
that is
\[   ( \widehat{\bar{\upnu}}_{\bast d})^{\bast e/\bast d}= \widehat{\bar{\upnu}}_{\bast d}\leq   \widehat{\bar{\upnu}}_{\bast e}, \]
contradiction.

\end{proof}

%To describe the spectra for $T$ and $U$ numbers, we consider the degree $d$ versions of the normalized exponents.
%Thus, we take the subset of best approximations with degree bounded by $d$ and denote by $\bast f_{\uptheta, \bf d}$
%the associated class in $\bast\Z_d[X](\uptheta )$, and d

\begin{theo}  $\uptheta$ is a $T$-number $\Leftrightarrow$ for every 
\begin{itemize} 
\item[-] infinite degree best growth decay pair $( \widehat{\bar{\upmu}}_{\bast d},  \widehat{\bar{\upnu}}_{\bast d})$,
\[  \widehat{\bar{\upmu}}_{\bast d}> ( \widehat{\bar{\upnu}}_{\bast d})^{1/\bast d}.\]
\item[-] finite degree best growth decay pair $( \widehat{\bar{\upmu}}_{d},  \widehat{\bar{\upnu}}_{d})$, 
\[   \widehat{\bar{\upmu}}_{d}=  \widehat{\bar{\upnu}}_{d}.\]
\end{itemize}
In the latter case, the associated vanishing strip $V(\bast  \widehat{g}_{d})$ persists
in $  \overline{\rm Spec}[X](\uptheta )_{\breve{1}}=\overline{\rm Spec}[X](\uptheta )_{\breve{d}}$: 
\[    \overline{\rm Spec}[X](\uptheta )_{\breve{1}}\cap V(\bast  \widehat{g}_{d})=\emptyset .\]
In particular, 
\[ \overline{\rm Spec}[X](\uptheta )_{ \breve{1}}\not= \bstar\overline{\PR\R}_{\upvarepsilon}^{2}.\]

%there exists $e,e'\in\R_{+}$ with $\upmu_{\uptheta,\bf d}^{e}>\hat{\upnu}_{\uptheta,\bf d}$ and $\upmu_{\uptheta,\bf d}^{e'}<\hat{\upnu}_{\uptheta,\bf d}$.
%There exists a best approximation
%growth decay pair $ \widehat{\upmu}, \widehat{\upnu}$ such that  $ \widehat{\upmu}^{\R_{+}}<\hat{\upnu}_{\uptheta}^{\R_{+}}$ and for every
%degree $d$ growth decay pair $\upmu_{\uptheta,\bf d}, \upnu_{\uptheta, \bf d}$ we have
%and $\upmu_{\uptheta,\bf d}^{\R_{+}}=\hat{\upnu}_{\uptheta, \bf d}^{\R_{+}}$ for all $d$.
\end{theo}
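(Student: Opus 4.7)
The plan is to mirror the Frobenius semi-ring arguments from the proofs of Theorems \ref{Atheo} and \ref{S}, translating the Mahler conditions $\mathfrak{e}=\infty$ and $\mathfrak{d}=\infty$ into inequalities among best polynomial growth and decay classes, and then to bootstrap the finite-degree Frobenius equality into the persistence of the vanishing strip.

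For the forward direction, I would start with an infinite degree best polynomial class $\bast\widehat{g}_{\bast d}$ with associated Mahler exponent sequence $\bast\mathfrak{e}=\bast\{\mathfrak{e}_{i}\}\to\mathfrak{e}$, so that by construction
\[|\bast\widehat{g}_{\bast d}(\uptheta)|^{1/\bast d}=\mathfrak{h}(\bast\widehat{g}_{\bast d})^{-\bast\mathfrak{e}}.\]
If $\uptheta$ is a $T$-number, then $\mathfrak{e}=\infty$ makes $\bast\mathfrak{e}$ an infinite positive hyperreal, so $\bast\mathfrak{e}\notin(\bast\R_{\rm fin})^{\times}_{+}$: the exponents $1/\bast d$ and $\bast\mathfrak{e}/\bast d$ sit in distinct Frobenius classes with the first strictly larger, whence $\widehat{\bar{\upmu}}_{\bast d}>(\widehat{\bar{\upnu}}_{\bast d})^{1/\bast d}$. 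For a finite degree $d$ best polynomial, $\bast\mathfrak{e}_{d}\simeq\mathfrak{e}(d)$; since $\mathfrak{d}=\infty$ and $\uptheta$ is transcendental, $\mathfrak{e}(d)\in\R_{+}$ and hence $\bast\mathfrak{e}_{d}\in(\bast\R_{\rm fin})^{\times}_{+}$, placing $1/d$ and $\bast\mathfrak{e}_{d}$ in the same Frobenius class and yielding $\widehat{\bar{\upmu}}_{d}=\widehat{\bar{\upnu}}_{d}$. Each of these translations is reversible: the finite-degree equality forces $\bast\mathfrak{e}_{d}$ to be bounded noninfinitesimal for every $d$ (hence $\mathfrak{d}=\infty$), while the infinite-degree strict inequality forces $\bast\mathfrak{e}$ to be infinite (hence $\mathfrak{e}=\infty$); combined, $\uptheta$ is a $T$-number.

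For persistence, I would suppose for contradiction that some $0\neq\bast f\in\bast\tilde{\tv{Z}}^{\bar{\upmu}}_{\bar{\upnu}}(\uptheta)_{e}$ with $e\in\N$ lies in $V(\bast\widehat{g}_{d})$. The just-proved Frobenius equality $\widehat{\bar{\upmu}}_{d}=\widehat{\bar{\upnu}}_{d}$ then chains
\[\bar{\upmu}_{e}(\bast f)>\bar{\upmu}\geq\widehat{\bar{\upmu}}_{d}=\widehat{\bar{\upnu}}_{d}>\bar{\upnu}\geq\bar{\upnu}_{e}(\bast f),\]
giving a strict Frobenius inequality $\bar{\upmu}_{e}(\bast f)>\bar{\upnu}_{e}(\bast f)$; unpacking strictness in $\bstar\overline{\PR\R}$ yields $|\bast f(\uptheta)|<\mathfrak{h}(\bast f)^{-s}$ for every $s\in\R_{+}$ on an ultrafilter set of indices. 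Since $\bar{\upmu}_{e}(\bast f)$ is an infinitesimal class with $e$ finite, the heights $\mathfrak{h}(f_{i})$ tend to infinity along this set, so $\mathfrak{e}(e,\mathfrak{h}(f_{i}))\geq s/e$ for arbitrarily large $\mathfrak{h}(f_{i})$ and every $s>0$, forcing $\mathfrak{e}(e)=\infty$ and contradicting $\mathfrak{d}=\infty$. Nonemptiness of $V(\bast\widehat{g}_{d})$ then delivers $\overline{\rm Spec}[X](\uptheta)_{\breve{1}}\neq\bstar\overline{\PR\R}_{\upvarepsilon}^{2}$.

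The main obstacle is the persistence step: the passage from a strict Frobenius inequality $\bar{\upmu}_{e}(\bast f)>\bar{\upnu}_{e}(\bast f)$ to a height-decay bound $|\bast f(\uptheta)|<\mathfrak{h}(\bast f)^{-s}$ uniform over all real $s>0$ requires correctly unfolding strict order in $\bstar\overline{\PR\R}$ — two classes are distinct iff no bounded positive noninfinitesimal exponent links their representatives — together with the observation that infinitesimality of $\bar{\upmu}_{e}(\bast f)$ at finite $e$ forces the represented heights to diverge. With that in hand, the remainder is essentially the same Frobenius bookkeeping as in the proofs of Theorems \ref{Atheo} and \ref{S}.
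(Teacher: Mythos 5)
Your translation of the Mahler data into Frobenius inequalities is the paper's own computation for the infinite--degree best pairs, and your reversal of the translations for the converse also matches. The one genuine gap is in the finite--degree forward step: you assert $\mathfrak{e}(d)\in\R_{+}$ ``since $\mathfrak{d}=\infty$ and $\uptheta$ is transcendental'', but the $T$-number hypothesis only gives $0\leq\mathfrak{e}(d)<\infty$; strict positivity is not part of the definition, and if $\mathfrak{e}(d)=0$ the class $\bast\mathfrak{e}_{d}$ could be infinitesimal, in which case your computation yields only $\widehat{\bar{\upmu}}_{d}\leq\widehat{\bar{\upnu}}_{d}$, not equality. You must either import the classical Dirichlet-type lower bound $\mathfrak{e}(d)\geq 1$ for transcendental $\uptheta$, or argue as the paper does: obtain $\widehat{\bar{\upmu}}_{d}\leq\widehat{\bar{\upnu}}_{d}$ and exclude strict inequality by Proposition \ref{polyslowsector}, since strictness would place pairs of the slow sector $\bar{\upmu}<\bar{\upnu}$ inside the degree-$d$ vanishing strip furnished by Proposition \ref{bestapproxpoly}.

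Your persistence argument takes a different route from the paper, which simply repeats the best-pair comparison of Theorem \ref{S} (a best pair of another finite degree lying in $V(\bast\widehat{g}_{d})$ contradicts the two Frobenius equalities); you instead unwind the strict inequality $\bar{\upmu}_{e}(\bast f)>\bar{\upnu}_{e}(\bast f)$ to the bound $|\bast f(\uptheta)|<\mathfrak{h}(\bast f)^{-s}$ for every real $s>0$ and deduce $\mathfrak{e}(e)=\infty$, which is a legitimate and more explicit argument. It does, however, rest on the unproved claim that $\bar{\upmu}_{e}(\bast f)$ is an infinitesimal class, equivalently that the heights $\mathfrak{h}(f_{i})$ diverge along the ultrafilter; note that membership in $\bast\tilde{\tv{Z}}^{\bar{\upmu}}_{\bar{\upnu}}(\uptheta)_{e}$ does not by itself force this. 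Close the gap by observing that a nonzero class of degree $\leq e$ and bounded height is, by the ultrafilter and the finiteness of the set of such integer polynomials, a standard polynomial $f$, and then $f(\uptheta)\simeq 0$ forces $f(\uptheta)=0$, contradicting the transcendence of a $T$-number; with the heights unbounded, your estimate $\mathfrak{e}(e,\mathfrak{h}(f_{i}))\geq s/e$ for all $s$ indeed contradicts $\mathfrak{d}=\infty$, and the rest of your bookkeeping is correct.
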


\begin{proof}  $\uptheta$ is a $T$-number $\Leftrightarrow$ $\mathfrak{e}(d)<\infty$ for all $d$, the set of which is unbounded.  
Let $\bast  \widehat{g}_{\bast d}$ be an infinite degree $\bast d$ polynomial best class defined by the sequence $\{g_{i}= f_{H_{i},d_{i}}\}$.   
Let $\bast d=\bast \{ d_{i}\}$ and $\bast\mathfrak{e}=\bast\{ \mathfrak{e}(H_{i},d_{i})\}$, the latter infinite by hypothesis. Then we have 
$|\bast  \widehat{g}_{\bast d}(\uptheta )|^{1/\bast d} = (\mathfrak{h}(\bast  \widehat{g}_{\bast d})^{-1})^{\bast\mathfrak{e}}$ which implies
after passing to Frobenius classes that
\[   \widehat{\bar{\upmu}}_{\bast d}=( \widehat{\bar{\upnu}}_{\bast d})^{1/(\bast d\bast \mathfrak{e})}>( \widehat{\bar{\upnu}}_{\bast d})^{1/\bast d}. \]
If $\bast  \widehat{g}_{d}$ is a finite degree $d$ polynomial best class, then since $0\leq \mathfrak{e}(d)<\infty$, we have
\[   \widehat{\bar{\upmu}}_{d} \leq   \widehat{\bar{\upnu}}_{d} . \]
But if the inequality were strict, it would contradict Proposition \ref{polyslowsector}.
The statement concerning vanishing strips,
is argued exactly as in Theorem \ref{S}.
\end{proof}

We leave the proof of the last Theorem in this series to the reader.

\begin{theo}\label{Utheo}  $\uptheta$ is a $U$-number $\Leftrightarrow$  for every 
\begin{itemize} 
\item[-] infinite degree best growth decay pair $( \widehat{\bar{\upmu}}_{\bast d},  \widehat{\bar{\upnu}}_{\bast d})$,
\[  \widehat{\bar{\upmu}}_{\bast d}> ( \widehat{\bar{\upnu}}_{\bast d})^{1/\bast d}.\]
\item[-] There exists $\mathfrak{d}\in\N$ such that for every finite degree $d\geq\mathfrak{d}$ polynomial best growth decay pair 
$( \widehat{\bar{\upmu}}_{d},  \widehat{\bar{\upnu}}_{d})$, 
\[   \widehat{\bar{\upmu}}_{d}>  \widehat{\bar{\upnu}}_{d}.\]
\end{itemize}
%$ \widehat{\upmu}^{\R}< \widehat{\upnu}^{\R}$
%and $\upmu_{\uptheta,\bf d}^{\R}<\upnu_{\uptheta, \bf d}^{\R}$ for all $d$ sufficiently large.
\end{theo}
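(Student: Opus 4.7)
The plan is to follow the schema established in the proofs of Theorems \ref{Atheo}, \ref{S} and the preceding $T$-number theorem, isolating the conditions $\mathfrak{e}=\infty$ and $\mathfrak{d}<\infty$ that characterize the $U$-numbers. The key computation, common to all four Mahler-class theorems, is that for any best polynomial class $\bast\widehat{g}_{\bast d}$ arising from a defining sequence $\{f_{H_i,d_i}\}$ with Mahler exponent $\bast\mathfrak{e}:=\bast\{\mathfrak{e}(d_i,H_i)\}$, one has $|\bast\widehat{g}_{\bast d}(\uptheta)|^{1/\bast d}=(\mathfrak{h}(\bast\widehat{g}_{\bast d})^{-1})^{\bast\mathfrak{e}}$, which on passage to Frobenius classes yields $\widehat{\bar\upnu}_{\bast d}=\widehat{\bar\upmu}_{\bast d}^{\bast d\bast\mathfrak{e}}$, with the analogous formula holding for finite degree $d$. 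I will use the fact that a strict Frobenius comparison $\widehat{\bar\upmu}^{s}<\widehat{\bar\upmu}^{r}$ between powers of a fixed infinitesimal class is equivalent to the ratio $s/r$ being infinite, i.e.\ lying outside $(\bast\R_{\rm fin})^{\times}_{+}$.

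For the direction $U$-number $\Rightarrow$ inequalities: since $\mathfrak{e}=\infty$, any infinite degree best sequence yields $\bast\mathfrak{e}$ infinite, and the computation from the $T$-number proof carries over verbatim to give $\widehat{\bar\upmu}_{\bast d}=(\widehat{\bar\upnu}_{\bast d})^{1/(\bast d\bast\mathfrak{e})}>(\widehat{\bar\upnu}_{\bast d})^{1/\bast d}$ in $\bstar\overline{\PR\R}_{\upvarepsilon}$. For the finite degree statement, I would first recall the monotonicity of $d\cdot\mathfrak{e}(d,H)$ in $d$, which implies that for a $U$-number one has $\mathfrak{e}(d)=\infty$ for every $d\geq\mathfrak{d}$; hence any best class of such degree has $\bast\mathfrak{e}$ infinite, and therefore $\widehat{\bar\upnu}_d=\widehat{\bar\upmu}_d^{d\bast\mathfrak{e}}<\widehat{\bar\upmu}_d$ strictly in Frobenius classes, since $d\bast\mathfrak{e}$ is infinite.

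Conversely, the infinite degree strict inequality forces $\bast\mathfrak{e}$ to be infinite for every best sequence of degree $\bast d$, and hence $\mathfrak{e}=\infty$; likewise, the strict inequality $\widehat{\bar\upmu}_d>\widehat{\bar\upnu}_d$ at a fixed finite $d$ forces $d\bast\mathfrak{e}$, and thus $\bast\mathfrak{e}$, to be infinite, yielding $\mathfrak{e}(d)=\infty$. If such an inequality holds for every $d\geq\mathfrak{d}$ with $\mathfrak{d}\in\N$, then Mahler's $\mathfrak{d}$ is finite, and combined with $\mathfrak{e}=\infty$ we conclude that $\uptheta$ is a $U$-number.

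The principal obstacle I anticipate is the careful bookkeeping required to distinguish strict Frobenius inequalities (where the exponent ratio must be genuinely infinite) from the weak inequalities or equalities that arise when ratios lie in $(\bast\R_{\rm fin})^{\times}_{+}$; this is precisely what makes strict Frobenius strictness equivalent to $\bast\mathfrak{e}$ being infinite rather than merely bounded below by a positive standard real, and is what allows one to pull the Mahler conditions $\mathfrak{e}=\infty$ and $\mathfrak{e}(d)=\infty$ back from the spectral data.
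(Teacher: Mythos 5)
Your proposal is correct and is essentially the proof the paper intends: it leaves Theorem \ref{Utheo} to the reader precisely because it is the same computation as in the $T$-number theorem ($\bast\mathfrak{e}$ infinite forces the strict infinite-degree inequality, and conversely), supplemented by the standard monotonicity of $d\mapsto d\,\mathfrak{e}(d,H)$ to get $\mathfrak{e}(d)=\infty$, hence $\bast\mathfrak{e}$ infinite and $\widehat{\bar{\upmu}}_{d}>\widehat{\bar{\upnu}}_{d}$, for every finite $d\geq\mathfrak{d}$. Only cosmetic caution: write the key relation as $\widehat{\bar{\upmu}}_{\bast d}=(\widehat{\bar{\upnu}}_{\bast d})^{1/(\bast d\bast\mathfrak{e})}$ (infinitesimal Frobenius powers are the ones that are well defined, cf.\ Note \ref{infinitesimalpower}), rather than as $\widehat{\bar{\upnu}}_{\bast d}=\widehat{\bar{\upmu}}_{\bast d}^{\bast d\bast\mathfrak{e}}$, exactly as in the paper's $T$-number proof.
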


%The above results are summarized in Figure \ref{Mahlerportraits}.

%\begin{figure}[htbp]
%\centering
%\includegraphics[width=5in]{}
%\caption{Best Growth (Degree Normalized) Decay Pairs by Mahler Classification}\label{Mahlerportraits}
%\end{figure}

%To use the above results to visualize polynomial nonvanishing spectra according to Mahler type, it is convenient alter
%the definition slightly by bundling
%the decay and degree index together.  First, define
%\[\bast(\Z[X])_{\upnu (\bast d)} (\uptheta ) = \bigg\{ \bast f\in\bast(\Z [X])(\uptheta )\bigg|\; \deg (\bast f)\leq \bast d ,\; \upnu (\bast f)<\upnu^{\bast d} \bigg\},\]
%which is clearly a subgroup of $\bast(\Z [X])(\uptheta )$.  This collection no longer produces a filtration of $\bast(\Z [X])(\uptheta )$ but rather a directed %system
%of subgroups.  Define
%$\bast(\Z [X])^{\upmu}_{\upnu (\bast d)}(\uptheta )=\bast(\Z [X])_{\upnu (\bast d)}(\uptheta )\cap\bast(\Z  [X])^{\upmu}$ and
%\[ {\rm Spec}_{\rm norm}[X](\uptheta ) =\bigg\{ (\upmu,\upnu,\bast d)\bigg|\; 
%\bast\Z^{\upmu}_{\upnu (\bast d)} [X](\uptheta )\not=0\bigg\}.\] 
%Finally, let $[{\rm Spec}_{\rm norm}[X](\uptheta )]$ be the image of ${\rm Spec}_{\rm norm}[X](\uptheta )$ in 
%$\bstar\overline{\PR\R}_{\upvarepsilon}^{2}\times\bast \N$ ; call this the {\bf {\small normalized Frobenius spectrum}}.  Then the following diagram illustrates %the aspect of the resulting space.

\begin{figure}[htbp]
\centering
\includegraphics[width=5in]{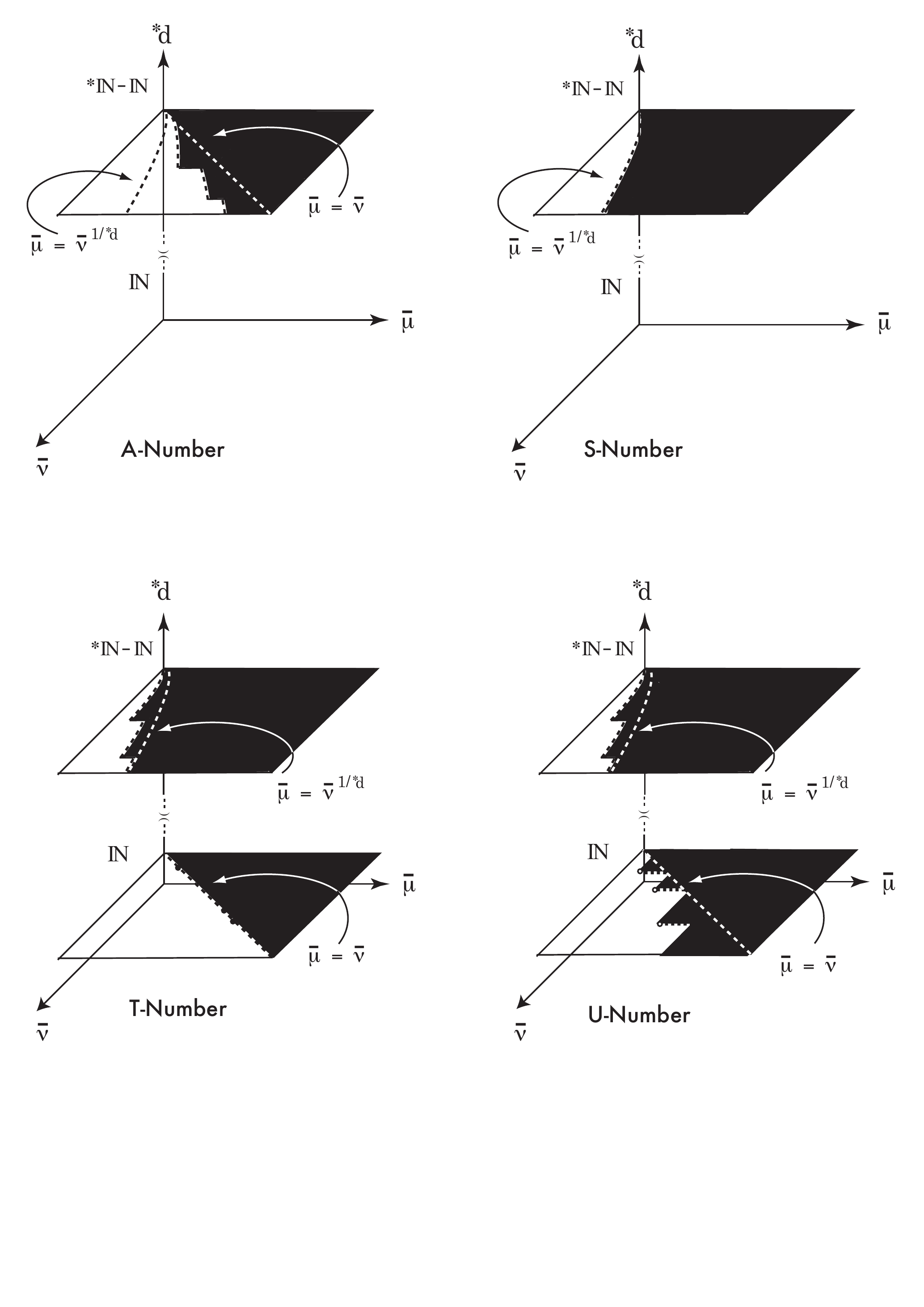}
\caption{Frobenius Nonvanishing Spectra According to Mahler Class}\label{Mahlerportraits}
\end{figure}

In Figure \ref{Mahlerportraits} we have displayed the sheets of the filtered Frobenius nonvanishing spectra for each of the four Mahler classes, (in the $A$-number
portrait we have removed spectral elements corresponding to groups all of whose elements are multiples of the minimal polynomial).

Recall \cite{Bu} that each of the transcendental Mahler classes can be further partitioned according to the {\bf Mahler type}.  To observe the type,
we must work with the ordinary nonvanishing spectra.

For $S$-numbers,
the type $\mathfrak{t}$ is equal to the exponent $\mathfrak{e}\in [1,\infty)$. 

\begin{theo}\label{SType}  Let $\uptheta$ be an $S$-number of type $\mathfrak{t}\in [1,\infty)$.
Then for every infinite degree $\bast d$ best polynomial class 
$\bast \widehat{g}_{\bast d}$,
the associated ordinary growth-decay pair satisfies
\[   \widehat{\upmu}_{\bast d}=   ( \widehat{\upnu}_{\bast d})^{1/(\bast \mathfrak{t}\bast d)}>
( \widehat{\upnu}_{\bast d})^{1/(\mathfrak{l}\bast d)} \]
where $\bast \mathfrak{t}=\bast \{\mathfrak{e}(d_{i})\}$ and where $\mathfrak{l}<\mathfrak{t}$.
\end{theo}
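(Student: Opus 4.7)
The plan is to mirror the Frobenius-level argument of Theorem \ref{S}, but to carry out the computation inside the ordinary semi-ring $\bstar\PR\R_{\upvarepsilon}$ so that the precise type exponent $\bast\mathfrak{t}$ is retained rather than collapsed to $\bar{1}$. Let $\{g_{i} = f_{H_{i},d_{i}}\}$ realize the best class $\bast\widehat{g}_{\bast d}$, with $\mathfrak{h}(g_{i})=H_{i}$, $\deg g_{i}=d_{i}$, and $\mathfrak{e}_{i}:=\mathfrak{e}(d_{i},H_{i})\to\mathfrak{e}=\mathfrak{t}$. The pointwise defining identity $|g_{i}(\uptheta)|^{1/d_{i}}=H_{i}^{-\mathfrak{e}_{i}}$ lifts to the ultraproduct as
\[ |\bast\widehat{g}(\uptheta)|^{1/\bast d} = \mathfrak{h}(\bast\widehat{g})^{-\bast\mathfrak{t}} \]
in $\bast\R_{+}$, where $\bast\mathfrak{t}:=\bast\{\mathfrak{e}_{i}\}$ is noninfinitesimal and noninfinite, with $\bast\mathfrak{t}\simeq\mathfrak{t}\in(\bast\R_{\rm fin})^{\times}_{+}$.

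For the equality I would pass to $\bstar\PR\R$ classes. The displayed identity exhibits $\mathfrak{h}(\bast\widehat{g})^{-\bast\mathfrak{t}}$ as a representative of $\widehat{\upnu}_{\bast d}$; applying the infinitesimal-power extension of the Frobenius action (\emph{Note} \ref{infinitesimalpower}) with exponent $1/(\bast\mathfrak{t}\bast d)$ produces the representative $\mathfrak{h}(\bast\widehat{g})^{-1/\bast d}$ of $\widehat{\upmu}_{\bast d}$, whence $\widehat{\upmu}_{\bast d}=\widehat{\upnu}_{\bast d}^{1/(\bast\mathfrak{t}\bast d)}$ in $\bstar\PR\R$.

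For the strict inequality, the ratio $\widehat{\upmu}_{\bast d}\cdot\widehat{\upnu}_{\bast d}^{-1/(\mathfrak{l}\bast d)}$ admits the representative $\mathfrak{h}(\bast\widehat{g})^{(\bast\mathfrak{t}-\mathfrak{l})/(\mathfrak{l}\bast d)}$. Since $\mathfrak{l}<\mathfrak{t}$ is a standard real, $\bast\mathfrak{t}-\mathfrak{l}$ is a noninfinitesimal positive element of $(\bast\R_{\rm fin})^{\times}_{+}$, so modulo the finite-positive scaling action the ratio reduces to $\mathfrak{h}(\bast\widehat{g})^{c/\bast d}$ for some standard $c>0$. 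Strictness in $\bstar\PR\R$ amounts to this element representing an infinite class, equivalently to $H_{i}^{1/d_{i}}$ being $\mathfrak{u}$-unbounded. The main obstacle is this last step: it is not implied by $H_{i},d_{i}\to\infty$ alone, and I would extract it from the best-polynomial character of $\bast\widehat{g}_{\bast d}$ against a prescribed $S$-type $\mathfrak{t}>0$, exploiting the quantitative form of Mahler's classification to show that a sequence of best polynomials realizing normalized decay exponent $\to\mathfrak{t}$ must deploy heights growing super-polynomially in the degree, forcing $\log H_{i}/d_{i}\to\infty$ along $\mathfrak{u}$.
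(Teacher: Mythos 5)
Your first two steps are sound and are in substance what the paper intends: its own proof of Theorem \ref{SType} is the single sentence ``clear from the definition'', and the $T$-number theorem that follows it is proved by exactly the manipulation you describe --- lift $|g_{i}(\uptheta)|^{1/d_{i}}=H_{i}^{-\mathfrak{e}_{i}}$ to $|\bast\widehat{g}(\uptheta)|^{1/\bast d}=\mathfrak{h}(\bast\widehat{g})^{-\bast\mathfrak{e}}$ and apply the infinitesimal power $1/(\bast\mathfrak{e}\bast d)$ in $\bstar\PR\R$. (Two small remarks: the theorem's $\bast\mathfrak{t}=\bast\{\mathfrak{e}(d_{i})\}$ is not literally your $\bast\{\mathfrak{e}(d_{i},H_{i})\}$, and with the former even the equality requires $\mathfrak{h}(\bast\widehat{g})^{(\bast\mathfrak{e}-\bast\mathfrak{t})/\bast d}$ to be finite; also the growth you need for strictness, $\log H_{i}/d_{i}\to\infty$, is superexponential height growth in the degree, not ``super-polynomial''.) You have moreover correctly isolated what strictness amounts to: $\mathfrak{h}(\bast\widehat{g})^{1/\bast d}$ must be infinite, i.e.\ $\widehat{\upmu}_{\bast d}$ must be a nontrivial infinitesimal class.

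The gap is your final step, and it cannot be closed in the form you propose. The paper's definition of an infinite-degree best class asks only that $H_{i},d_{i}\to\infty$ and $\mathfrak{e}(d_{i},H_{i})\to\mathfrak{e}$; nothing couples the growth of $H_{i}$ to that of $d_{i}$, and the $S$-number hypothesis does not force such a coupling. For Lebesgue-almost every $\uptheta$ (an $S$-number of type $1$ in this normalization) a pigeonhole bound gives $\mathfrak{e}(d,2^{d})\geq 1-o(1)$, while a routine Borel--Cantelli count over the roughly $2^{d^{2}+O(d)}$ integer polynomials of degree $\leq d$ and height $\leq 2^{d}$ gives $\mathfrak{e}(d,2^{d})\leq 1+o(1)$; so height budgets of size about $2^{d_{i}}$ already realize $\mathfrak{e}_{i}\to\mathfrak{e}$, and best classes with $\mathfrak{h}(\bast\widehat{g})^{1/\bast d}$ finite are to be expected. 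For such a class $\widehat{\upmu}_{\bast d}=1$ and both sides of the claimed strict inequality collapse to the trivial class, so strictness cannot be ``extracted from the best-polynomial character'' as your last sentence hopes. Since the paper supplies no argument here either (and elsewhere, e.g.\ Theorem \ref{Anumarith}, it explicitly adds ``not a root of unity'' hypotheses to exclude precisely this kind of collapse), the honest completion of your proof is to add the nondegeneracy hypothesis that $\mathfrak{h}(\bast\widehat{g})^{1/\bast d}$ is infinite (equivalently $\widehat{\upmu}_{\bast d}\neq 1$); under that hypothesis your first two steps already prove the theorem.
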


\begin{proof}   Clear from the definition of best polynomial class and the type $\mathfrak{t}$.
\end{proof}

 For $T$ numbers, one writes for each finite degree $d$
\[  \mathfrak{e}(d) = d^{\mathfrak{t}_{d}-1} \]
and then the type is defined
\[ \mathfrak{t} = \lim\sup \mathfrak{t}_{d}\in [1,\infty].\]

\begin{theo}  Let $\uptheta$ be a $T$-number of type $\mathfrak{t}\in [1,\infty]$.  Then for every infinite degree $\bast d$ best polynomial class $\bast \widehat{g}_{\bast d}$,
the associated ordinary growth-decay pair satisfies
\[    \widehat{\upmu}_{\bast d}=  (  \widehat{\upnu}_{\bast d})^{1/\bast d^{\bast \mathfrak{t}}} >(  \widehat{\upnu}_{\bast d})^{1/\bast d^{\mathfrak{l}}}\]
where $\bast \mathfrak{t}=\bast \{\mathfrak{t}_{d_{i}}\}$ and where $\mathfrak{l}<\mathfrak{t}$.
\end{theo}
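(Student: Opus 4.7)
The plan is to adapt the strategy of Theorem \ref{SType}, accounting for the fact that for $T$-numbers the exponent $\bast\mathfrak{e}$ grows polynomially in $\bast d$ rather than being bounded. By definition, an infinite-degree best polynomial class $\bast\widehat{g}_{\bast d}$ is represented by a sequence $\{g_i = f_{H_i,d_i}\}$ with $d_i, H_i \to \infty$, $\mathfrak{h}(g_i) = H_i$, $\deg(g_i) = d_i$, and $\mathfrak{e}_i := \mathfrak{e}(d_i, H_i) \to \mathfrak{e} = \infty$. Rearranging the defining identity $|g_i(\uptheta)|^{1/d_i} = H_i^{-\mathfrak{e}_i}$ and passing to the ultrapower yields, in $\bast\R_+$,
\[ |\bast\widehat{g}_{\bast d}(\uptheta)| = \mathfrak{h}(\bast\widehat{g}_{\bast d})^{-\bast\mathfrak{e}\cdot\bast d}, \qquad \bast\mathfrak{e} := \bast\{\mathfrak{e}_i\}. \]

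The crucial step, and in my view the main obstacle, is to identify $\bast\mathfrak{e}$ with $\bast d^{\bast\mathfrak{t}-1}$ as elements of $\bstar\PR\R$. Using $\mathfrak{e}(d) = d^{\mathfrak{t}_d - 1}$ together with $\mathfrak{e}(d) = \limsup_{H}\mathfrak{e}(d,H)$, one refines the defining sequence so that $\mathfrak{t}_{d_i} \to \mathfrak{t}$ realizes the $\limsup$ and $H_i$ is chosen large enough at each stage that $\mathfrak{e}(d_i, H_i)/\mathfrak{e}(d_i) \to 1$. Then the quotient $\bast\mathfrak{e}/\bast d^{\bast\mathfrak{t}-1} = \bast\{\mathfrak{e}_i/d_i^{\mathfrak{t}_{d_i}-1}\}$ lies in $(\bast\R_{\rm fin})_+^\times$, so that $\bast\mathfrak{e}$ and $\bast d^{\bast\mathfrak{t}-1}$ represent the same class in $\bstar\PR\R$. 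The subtle point is that this identification is only valid modulo units of $\bast\R_{\rm fin}$, and one must verify that the flexibility in selecting the best class is enough to make both the ratio $\mathfrak{e}_i/\mathfrak{e}(d_i)$ and the approach $\mathfrak{t}_{d_i} \to \mathfrak{t}$ simultaneously compatible.

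The claimed equality now follows by direct computation. Raising the boxed identity to the infinitesimal power $1/\bast d^{\bast\mathfrak{t}+1}$ gives
\[ |\bast\widehat{g}(\uptheta)|^{1/\bast d^{\bast\mathfrak{t}+1}} = \mathfrak{h}^{-\bast\mathfrak{e}/\bast d^{\bast\mathfrak{t}}}, \]
and since $\bast\mathfrak{e}/\bast d^{\bast\mathfrak{t}}$ coincides with $1/\bast d$ as a $\bstar\PR\R$-class, passing to $\bstar\PR\R$-classes produces
\[ (\widehat{\upnu}_{\bast d})^{1/\bast d^{\bast\mathfrak{t}}} = \bigl\langle |\bast\widehat{g}(\uptheta)|^{1/\bast d^{\bast\mathfrak{t}+1}}\bigr\rangle = \langle \mathfrak{h}^{-1/\bast d}\rangle = \widehat{\upmu}_{\bast d}. \]
The infinitesimal powers are well-defined on $\bstar\PR\R$ by Note \ref{infinitesimalpower}, since $\bast d^{\bast\mathfrak{t}}$ is infinite (this is so even in the case $\mathfrak{t}=\infty$, where $\bast\mathfrak{t}$ itself is an infinite nonstandard real).

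Finally, the strict inequality with standard $\mathfrak{l} < \mathfrak{t}$ is immediate: since $\mathfrak{t}_{d_i} \to \mathfrak{t}$ one has $\bast\mathfrak{t} > \mathfrak{l}$ in $\bast\R$, whence $\bast d^{\bast\mathfrak{t}} > \bast d^{\mathfrak{l}}$ and $1/\bast d^{\bast\mathfrak{t}} < 1/\bast d^{\mathfrak{l}}$ in $\bast\R_+$. As $\widehat{\upnu}_{\bast d}$ is an infinitesimal class (from the transcendence of $\uptheta$), raising it to the smaller of two positive infinitesimal exponents produces the strictly larger element in $\bstar\PR\R_{\upvarepsilon}$.
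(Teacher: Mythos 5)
Your route is the paper's: from the defining relation of the best class one writes $|\bast \widehat{g}_{\bast d}(\uptheta )|^{1/\bast d}=(\mathfrak{h}(\bast \widehat{g}_{\bast d})^{-1})^{\bast d^{\bast\mathfrak{t}-1}}=(\mathfrak{h}(\bast \widehat{g}_{\bast d})^{-1/\bast d})^{\bast d^{\bast\mathfrak{t}}}$, takes $\bast d^{\bast\mathfrak{t}}$-th roots, passes to classes, and gets the strict inequality from $\bast\mathfrak{t}>\mathfrak{l}$; that two-line computation is the paper's entire proof, the first equality being read off ``from the definitions'', i.e.\ with $\bast\mathfrak{t}$ determined exactly along the best sequence by $\mathfrak{e}_{i}=d_{i}^{\mathfrak{t}_{d_{i}}-1}$.

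The step where you deviate is the one you yourself flag as the main obstacle, and your resolution of it does not work. Knowing that $\bast r:=\bast\mathfrak{e}/\bast d^{\bast\mathfrak{t}-1}$ lies in $(\bast\R_{\rm fin})_{+}^{\times}$ (or even that $\bast r\simeq 1$) identifies the two \emph{exponents} as elements of $\bstar\PR\R$, but this does not transfer to the classes of the corresponding powers of $\mathfrak{h}$: one has $\mathfrak{h}^{-\bast\mathfrak{e}/\bast d^{\bast\mathfrak{t}}}=(\mathfrak{h}^{-1/\bast d})^{\bast r}$, and $\langle(\mathfrak{h}^{-1/\bast d})^{\bast r}\rangle$ differs in general from $\langle\mathfrak{h}^{-1/\bast d}\rangle=\widehat{\upmu}_{\bast d}$, because the Frobenius action $\Upphi_{\bast r}$ is nontrivial on $\bstar\PR\R$ --- that is precisely why the paper introduces the quotient $\bstar\overline{\PR\R}$. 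Even $\bast r=1+\bast\upvarepsilon$ with $\bast\upvarepsilon$ infinitesimal is not enough, since the correction factor $\mathfrak{h}^{-\bast\upvarepsilon/\bast d}$ need not be a bounded unit unless $\bast\upvarepsilon$ is controlled against $\log\mathfrak{h}(\bast\widehat{g}_{\bast d})^{1/\bast d}$. So as written your argument only yields the asserted equality in the Frobenius semi-ring $\bstar\overline{\PR\R}_{\upvarepsilon}$, which is exactly the information that cannot detect the type (the paper notes before Theorem \ref{SType} that one must use the ordinary spectra to observe type); moreover, your device of refining the sequence so that $\mathfrak{e}_{i}/\mathfrak{e}(d_{i})\to 1$ constructs a particular convenient best class, whereas the theorem quantifies over every best class. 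The gap disappears under the paper's reading, in which the exponent identity holds on the nose and no identification modulo units is required; the rest of your computation --- the $\bast d^{\bast\mathfrak{t}}$-th root, passage to classes, and the strict inequality from $\bast\mathfrak{t}>\mathfrak{l}$ together with the infinitesimality of the decay class --- then coincides with the paper's proof.
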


\begin{proof}   From the definitions:
\[ |\bast \widehat{g}_{\bast d}(\uptheta ) |^{1/\bast d}
=(\mathfrak{h}(\bast \widehat{g}_{\bast d})^{-1})^{\bast d^{\bast\mathfrak{t}-1} } =(\mathfrak{h}(\bast \widehat{g}_{\bast d})^{-1/\bast d})^{\bast d^{\bast\mathfrak{t}}}.
 \]
 Taking $\bast d^{\bast\mathfrak{t}}$-roots of both sides and passing to $\bstar\PR\R_{\upvarepsilon}$ classes gives the result, since $\bast\mathfrak{t}>\mathfrak{l}$.
\end{proof}

Finally, the type of a $U$ number is the first integer $\mathfrak{t}$ for which $\mathfrak{e}(\mathfrak{t})=\infty$.  We leave the proof of the following to the reader:

\begin{theo}\label{UType}  Let $\uptheta$ be an $U$-number of type $\mathfrak{t}\in \N$.  Then for every $\bast\mathfrak{t}$ and $\bast d\geq \mathfrak{t}$, there exists a best polynomial class $\bast \widehat{g}_{\bast d}$ such that
the associated ordinary growth-decay pair satisfies
\[    \widehat{\upmu}_{\bast d}>  (  \widehat{\upnu}_{\bast d})^{1/\bast d^{\bast \mathfrak{t}}} .\]
\end{theo}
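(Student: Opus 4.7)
The plan is to adapt the $T$-number argument, exploiting the extra freedom afforded by the $U$-number hypothesis. Recall that $\uptheta$ being a $U$-number of type $\mathfrak{t}$ forces $\mathfrak{e}(d) = \infty$ for every integer $d \geq \mathfrak{t}$, so for any such $d$ and any prescribed bound $M$ there exists a height $H$ with $\mathfrak{e}(d, H) > M$. Consequently, no matter what nonstandard exponent $\bast\mathfrak{t}$ is prescribed, one may tailor a best polynomial class of degree $\bast d$ whose effective exponent $\bast\mathfrak{e}$ dominates $\bast d^{\bast\mathfrak{t}}$; this overshoot is what upgrades the equality of the $T$-case into the strict inequality claimed here.

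Represent $\bast\mathfrak{t} = \bast\{\mathfrak{t}_{i}\}$ and $\bast d = \bast\{d_{i}\}$, with $d_{i} \geq \mathfrak{t}$ on a $\mathfrak{u}$-large set. Using $\mathfrak{e}(d_{i}) = \infty$, select $H_{i}$ with $\mathfrak{e}(d_{i}, H_{i}) \geq d_{i}^{\mathfrak{t}_{i}}$, arranging $H_{i} \to \infty$ and, when $\bast d$ is infinite, $d_{i} \to \infty$. The class $\bast\widehat{g}_{\bast d} := \bast\{f_{H_{i}, d_{i}}\}$ then qualifies as a best polynomial class of degree $\bast d$, since $\mathfrak{e}_{i} = \mathfrak{e}(d_{i}, H_{i}) \to \infty = \mathfrak{e}$. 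By the defining identity $|f_{H, d}(\uptheta)|^{1/d} = H^{-\mathfrak{e}(d, H)}$ and \L o\'{s}'s theorem, the nonstandard class satisfies $|\bast\widehat{g}_{\bast d}(\uptheta)|^{1/\bast d} = \mathfrak{h}(\bast\widehat{g}_{\bast d})^{-\bast\mathfrak{e}}$ with $\bast\mathfrak{e} := \bast\{\mathfrak{e}(d_{i}, H_{i})\} \geq \bast d^{\bast\mathfrak{t}}$; passing to classes in $\bstar\PR\R_{\upvarepsilon}$ yields $\widehat{\upnu}_{\bast d} = \widehat{\upmu}_{\bast d}^{\bast d\, \bast\mathfrak{e}}$ and hence
\[
(\widehat{\upnu}_{\bast d})^{1/\bast d^{\bast\mathfrak{t}}} = \widehat{\upmu}_{\bast d}^{\bast\mathfrak{e}/\bast d^{\bast\mathfrak{t} - 1}}.
\]

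The main obstacle is verifying that the algebraic inequality $\bast\mathfrak{e}/\bast d^{\bast\mathfrak{t} - 1} > 1$ in $\bast\R$ translates into a genuine strict comparison $\widehat{\upmu}_{\bast d}^{\bast\mathfrak{e}/\bast d^{\bast\mathfrak{t} - 1}} < \widehat{\upmu}_{\bast d}$ in the quotient semi-ring $\bstar\PR\R_{\upvarepsilon}$. By the description of the order, this comparison is strict iff the ratio has an infinite $(\bast\R_{\rm fin})_{+}^{\times}$-class, equivalently iff $(\bast\mathfrak{e}/\bast d^{\bast\mathfrak{t} - 1} - 1)\log\mathfrak{h}(\bast\widehat{g}_{\bast d})/\bast d$ is positive infinite in $\bast\R$. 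The remedy is precisely the overshoot built into the construction: with $\bast\mathfrak{e} \geq \bast d^{\bast\mathfrak{t}}$, the quantity above is at least $((\bast d - 1)/\bast d)\log H_{i} \sim \log H_{i}$, which is infinite because $H_{i} \to \infty$. The overshoot costs nothing, since the $U$-property places no ceiling on $\mathfrak{e}(d_{i}, H_{i})$. A minor secondary technicality is that the minimizer $f_{H_{i}, d_{i}}$ may have height or degree strictly below the nominal $(H_{i}, d_{i})$; this is absorbed by replacing the nominal parameters with the actual $(\mathfrak{h}(f_{H_{i}, d_{i}}), \deg f_{H_{i}, d_{i}})$, which only sharpens every estimate above.
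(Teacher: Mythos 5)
Your proposal is correct and follows essentially the route the paper intends (the proof is left to the reader): it mirrors the $S$/$T$-type arguments via the identity $|f_{H,d}(\uptheta)|^{1/d}=H^{-\mathfrak{e}(d,H)}$, exploiting the $U$-number freedom to overshoot the exponent, which is the same mechanism the paper uses in proving Theorem \ref{Uproducts}. The only tightening needed is cosmetic: choose $\mathfrak{e}(d_i,H_i)\geq\max\big(d_i^{\mathfrak{t}_i},\,i\big)$ so that $\bast\mathfrak{e}$ is genuinely infinite, which both makes $\bast\{f_{H_i,d_i}\}$ a best class even when $d_i^{\mathfrak{t}_i}$ stays bounded (finite $\bast d$ and finite $\bast\mathfrak{t}$) and covers the degenerate case $\bast d=1$, where your lower bound $\big((\bast d-1)/\bast d\big)\log H_i$ vanishes and one must instead invoke the infinitude of $\bast\mathfrak{e}$ directly.
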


Thus, we can say that the $S$, $T$ and $U$ numbers are those for which the decay is related to growth linearly, polynomially and exponentially, respectively, as a function of degree.

\section{Resultant Arithmetic}\label{resultantarithmetic}

In this section we present polynomial diophantine approximation as a natural nonlinear extension of classical diophantine approximation 
(approximation by linear polynomials). 
We are faced first with the problem of finding the right notions of sum and product in the polynomial ring $\Z[X]$.
By ``right'', we mean that
\begin{itemize}
\item[I.] They should be compatible with fractional sum and product in $\Q$ i.e. 
the map $a/b\mapsto bX-a $ should be a monomorphism modulo multiplication by non zero integer constants.  As a consequence, this will ensure that
the map,
\[ \bast\Z (\uptheta )\rightarrow\bast\tv{Z}(\uptheta )_{1}
\subset \bast\tv{Z}(\uptheta ), \quad \bast n\mapsto \bast n X-\bast n^{\perp},\] will respect fractional sum and product of numerator denominator pairs of diophantine approximations.
\item[II.] If $\upalpha,\upbeta$ are algebraic and $f(\upalpha )=0$, $g(\upbeta )=0$ then $\upalpha+\upbeta$
should be a root of the sum of $f$ and $g$, and $\upalpha\upeta$ should be a root
of the product of $f$ and $g$.
\end{itemize}

Regrettably, neither the Cauchy nor the Dirichlet products satisfy the above criteria.  

We begin by defining the sought after product.  In what follows, denote by 
\[ \tv{Z} :=\{  f\in\Z [X]|\; \deg f\geq 1\} \cup \{ 1\},\] 
a monoid with respect to the Cauchy product.    Consider elements $f,g\in \tv{Z}$:
\[  f(X)=a_{m}X^{m}+\cdots + a_{0} = a_{m}\prod_{i=1}^{m} (X-\upalpha_{i}),\quad
g(X)=b_{n}X^{n}+\cdots + b_{0} = b_{n}\prod_{j=1}^{n} (X-\upbeta_{j}) .\]  If $\deg (f),\deg(g)\geq 1$, their {\bf resultant product}
is defined
\[    (f  \boxtimes g)(X) := a_{m}^{n}b_{n}^{m}\prod  (X-\upalpha_{i}\upbeta_{j}) ;\]
otherwise, if either $f$ or $g$ is $1$, it is defined to be $1$.  The resultant product was first defined for monic polynomials \cite{BC} and 
later for general polynomials \cite{Gl}, where it is referred to as the {\it tensor product} of polynomials.
Note that $\deg (f  \boxtimes g)=mn=\deg(f)\deg(g)$.
The resultant product is clearly commutative and associative, and the polynomial $1_{ \boxtimes}(X):=X-1$ acts as the identity.
The proof of the Theorem which follows was suggested to us by Gregor Weingart.

\begin{theo}\label{resulininteral}  If $f,g\in\tv{Z}$ then $f\boxtimes g\in\tv{Z}$. 
\end{theo}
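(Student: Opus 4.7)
The plan is to represent $f \boxtimes g$ as a resultant of two polynomials lying in $\Z[X,Y]$, thereby reducing integrality of $f \boxtimes g$ to the well-known fact that the Sylvester determinant of polynomials with $\Z[X]$-coefficients (viewed as polynomials in $Y$) lies in $\Z[X]$.

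Write $n=\deg g$ and introduce the auxiliary polynomial
\[
P_{g}(X,Y) := Y^{n} g(X/Y) = \sum_{j=0}^{n} b_{j}\, X^{j}\, Y^{n-j} \in \Z[X,Y],
\]
which, as a polynomial in $Y$, has degree at most $n$ with coefficients in $\Z[X]$. My claim is that
\[
(f \boxtimes g)(X) \;=\; \mathrm{Res}_{Y}\bigl(f(Y),\, P_{g}(X,Y)\bigr).
\]
To verify this I would expand the right-hand side via the product-over-roots formula in the algebraic closure: assuming first that no $\alpha_{i}$ vanishes,
\[
\mathrm{Res}_{Y}\bigl(f, P_{g}\bigr) = a_{m}^{n}\prod_{i=1}^{m} P_{g}(X,\alpha_{i}) = a_{m}^{n} \prod_{i=1}^{m}\alpha_{i}^{n}\, g(X/\alpha_{i}),
\]
and then substituting $g(X/\alpha_{i}) = b_{n}\prod_{j}(X/\alpha_{i} - \beta_{j})$ and clearing the $\alpha_{i}^{-n}$ factors yields exactly
\[
a_{m}^{n} b_{n}^{m} \prod_{i,j}(X - \alpha_{i}\beta_{j}) = (f \boxtimes g)(X).
\]

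The main subtlety is the degenerate case where some root $\alpha_{i}$ or $\beta_{j}$ vanishes, which makes the manipulation $g(X/\alpha_{i})$ formally illegal. I would handle this by the standard specialization argument: treat the coefficients $a_{0},\dots,a_{m},b_{0},\dots,b_{n}$ as independent indeterminates over $\Z$, in which case the computation above is valid over the fraction field, yielding a polynomial identity in $\Z[a_{\bullet},b_{\bullet}][X]$; the general case then follows by specialization. This is the technical pinch point, but the universal-polynomial viewpoint disposes of it uniformly.

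To conclude, the Sylvester matrix of $f(Y)$ and $P_{g}(X,Y)$ with respect to $Y$ has entries in $\Z[X]$, hence its determinant lies in $\Z[X]$; by the identity above, so does $f\boxtimes g$. Finally, if $f=1$ or $g=1$ the statement is immediate by definition; otherwise $\deg f, \deg g \ge 1$ and $\deg(f\boxtimes g) = mn \ge 1$ with leading coefficient $a_{m}^{n}b_{n}^{m}\in\Z\setminus\{0\}$. Hence $f\boxtimes g \in \tv{Z}$.
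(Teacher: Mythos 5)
Your proof is correct and is essentially the paper's own argument: both express $f\boxtimes g$ as a resultant with respect to the auxiliary variable $Y$ of one polynomial against the homogenization $Y^{\deg}(\cdot)(X/Y)$ of the other, and then conclude integrality from the Sylvester determinant having entries in $\Z[X]$ (you homogenize $g$ and pair it with $f(Y)$, whereas the paper homogenizes $f$ and pairs it with $g(Y)$ -- the same idea up to the symmetry of the resultant, which is why your identity carries no sign while the paper's has $(-1)^{mn}$). Your explicit handling of the degenerate case of a vanishing root via universal coefficients and specialization is a point of care that the paper's root-product manipulation (which divides by the $\upalpha_{i}$) passes over in silence.
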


\begin{proof} We assume that $f,g\not=1$. Consider the homogenization of $f$ with respect to the new variable $Y$:
\[  f_{X}(Y)=Y^{m}f(X/Y)= 
\sum_{i=0}^{m} \big(a_{m-i}X^{m-i}\big)Y^{i},
\]
viewed as a polynomial in $\big(\Z[X]\big)[Y]$ i.e.\ in the single variable $Y$.
Then 
\[  (f\boxtimes g)(X) = (-1)^{mn} a_{0}^{n}b_{n}^{m}\prod \big((X/\upalpha_{i})-\upbeta_{j}\big)= (-1)^{mn}{\rm res}\big(f_{X}(Y), g(Y)\big) ,  \]
where ${\rm res}(\cdot ,\cdot)$ denotes the resultant.  But the resultant is the determinant of a corresponding Sylvester matrix \cite{Ak},
whose non zero entries consist of the coefficients of $f_{X}(Y)$ and $g(Y)$.  Thus $ f\boxtimes g\in\tv{Z}$ as claimed.

\end{proof}

The {\bf resultant sum} of $f,g\in\tv{Z}$, is defined
\[(f  \boxplus g)(X) := a_{m}^{n}b_{n}^{m}\prod  \big(X-(\upalpha_{i}+\upbeta_{j})\big)\]
when $f,g\not=1$; otherwise it is defined to be $1$.
The resultant sum defines an element of $\tv{Z}$ by an argument similar to that found in Theorem \ref{resulininteral}.
As in the case of the resultant product, $\deg (f  \boxplus g)=\deg(f)\deg(g)$.
The identity element for $\boxplus$ is the polynomial $1_{\boxplus}(X) :=X$.
There is also a {\bf resultant difference} 
\[ f \boxminus g  := a_{m}^{n}b_{n}^{m}\prod  \big(X-(\upalpha_{i}-\upbeta_{j})\big).\]
%which may be used to formulate the counterpart of Theorem \ref{productformula}. 

\begin{prop}\label{distribute} Both the resultant product and the resultant sum distribute over the Cauchy product:
\[  f\boxtimes (g\cdot h) =   (f\boxtimes g)\cdot (f\boxtimes h), \quad
f\boxplus (g\cdot h) =   (f\boxplus g)\cdot (f\boxplus h)
.\]
In particular, $\tv{Z}$ has the structure of a double semiring (a semiring with respect to
each of $\boxplus,\boxtimes$ separately).
\end{prop}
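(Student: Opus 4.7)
The plan is to verify both distributive laws by comparing root multisets and leading coefficients on the two sides, since $\boxtimes$ and $\boxplus$ are defined precisely by this data. Since the edge cases where one of $f,g,h$ equals $1$ reduce to the defining convention $1\boxtimes(\cdot)=1$, $1\boxplus(\cdot)=1$, and $f\boxtimes 1=1$, $f\boxplus 1=1$ (noting that Cauchy multiplication by $1$ is the identity), I would dispatch those first, and then assume $f,g,h$ all have degree at least $1$.

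Write
\[ f(X)=a_m\prod_{i=1}^m (X-\upalpha_i),\quad g(X)=b_n\prod_{j=1}^n(X-\upbeta_j),\quad h(X)=c_p\prod_{k=1}^p(X-\upgamma_k). \]
Then the Cauchy product $g\cdot h$ has leading coefficient $b_n c_p$, degree $n+p$, and root multiset $\{\upbeta_j\}\sqcup\{\upgamma_k\}$. Applying the definition of $\boxtimes$ gives
\[ (f\boxtimes(g\cdot h))(X) = a_m^{\,n+p}(b_n c_p)^m\prod_{i,j}(X-\upalpha_i\upbeta_j)\prod_{i,k}(X-\upalpha_i\upgamma_k). \]
On the other hand, directly from the definition,
\[ (f\boxtimes g)(X)\cdot(f\boxtimes h)(X) = \bigl(a_m^{\,n}b_n^{\,m}\bigr)\bigl(a_m^{\,p}c_p^{\,m}\bigr)\prod_{i,j}(X-\upalpha_i\upbeta_j)\prod_{i,k}(X-\upalpha_i\upgamma_k), \]
and the leading coefficients $a_m^{n+p}(b_n c_p)^m$ and $a_m^{n+p}b_n^m c_p^m$ agree, establishing $f\boxtimes(g\cdot h)=(f\boxtimes g)\cdot(f\boxtimes h)$. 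The identical computation with $\upalpha_i+\upbeta_j$ and $\upalpha_i+\upgamma_k$ in place of the products of roots yields the distributivity of $\boxplus$ over $\cdot$.

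The resulting structure on $\tv{Z}$: since $(\tv{Z},\cdot)$ is a commutative monoid with identity $1$, and $(\tv{Z},\boxtimes)$, $(\tv{Z},\boxplus)$ are commutative monoids with identities $1_\boxtimes=X-1$, $1_\boxplus=X$ respectively (each well inside $\tv{Z}$ by Theorem \ref{resulininteral} and its resultant-sum analogue), the distributive laws just proved upgrade $\tv{Z}$ to a semiring with respect to each pairing $(\cdot,\boxtimes)$ and $(\cdot,\boxplus)$ separately. The main obstacle is essentially notational bookkeeping of the exponents on leading coefficients; there is no genuine technical difficulty since everything follows from the root-and-leading-coefficient description of the resultant operations.
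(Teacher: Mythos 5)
Your proof is correct and follows essentially the same route as the paper: a direct comparison of root multisets and leading coefficients on both sides, with the $\boxplus$ case handled by the identical computation. The only difference is that you explicitly dispatch the degenerate cases involving the polynomial $1$, which the paper leaves implicit; this is a harmless (and mildly clarifying) addition.
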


\begin{proof}  Consider the polynomials $f(X)=a_{m}\prod_{i=1}^{m} (X-\upalpha_{i})$,
 $g(X)=b_{n}\prod_{j=1}^{n} (X-\upbeta_{j})$ and $h(X) = c_{p}\prod_{k=1}^{p} (X-\upgamma_{k})$.  Then
 \begin{align*}  
 f\boxtimes (g\cdot h) & = \bigg\{ a_{m}\prod_{i=1}^{m}(X-\upalpha_{i})\bigg\} \boxtimes \bigg\{ b_{n}\prod_{j=1}^{n} (X-\upbeta_{j})\cdot c_{p}\prod_{k=1}^{p} (X-\upgamma_{k})\bigg\} \\
  & = a_{m}^{n+p}(b_{n}c_{p})^{m}\prod ( X-\upalpha_{i}\upbeta_{j})\cdot \prod ( X-\upalpha_{i}\upgamma_{k}) \\
  & \\
  & = (f\boxtimes g)\cdot (f\boxtimes h).
 \end{align*}
 An identical argument shows that the resultant sum distributes over the Cauchy product.  
\end{proof}

For the simple reason of degree, the resultant product does not distribute over the resultant sum: 
\begin{align*} \deg(f\boxtimes (g\boxplus h)) & =\deg (f)\deg(g)\deg(h) \\ 
 & \leq  \deg (f)^{2}\deg(g)\deg(h)  \\
&= \deg((f\boxtimes g)\boxplus (f\boxtimes h)).
\end{align*}
  Nevertheless, 
the subset of linear polynomials $\tv{Z}_{1}$ is closed with respect to $\boxplus$, $\boxtimes$, and one may verify
that for elements of  $\tv{Z}_{1}$, $f\boxtimes (g\boxplus h)=(f\boxtimes g)\boxplus (f\boxtimes h)$.  Thus $\tv{Z}_{1}$ 
is a semiring with respect to the operations $\boxplus, \boxtimes$.  We regard $\tv{Z}$ as a double monoid with
respect to $\boxplus, \boxtimes$. 

Let 
\[ \check{\tv{Z}} := \tv{Z}/\sim\]
where $f\sim f'$ $\Leftrightarrow$ there exists $0\not= n\in\Z$ such that $f=nf'$.  Then the operations $\boxplus,\boxtimes$
respect $\sim$ and define operations in $\check{\tv{Z}}$ making the latter a double monoid as well.  Denote by  $\check{\tv{Z}}_{1}$
the classes of linear polynomials,  which is by the above remarks a semiring with respect to $\boxplus,\boxtimes$. 

\begin{prop}\label{isowithQ} Consider the semiring $\tilde{\Q}=\{(a,b)\in\Z^{2}|\; b\not=0\}$ equipped with the usual fractional laws of addition and multiplication.
 The  bijective map
\[ \tilde{\Q}\hookrightarrow\tv{Z}_{1},\quad a/b\mapsto bX-a\]
induces an isomorphism of fields $\Q\cong\check{\tv{Z}}_{1}$ in which $\times,+,-$ are taken to $\boxtimes,\boxplus, \boxminus$.
\end{prop}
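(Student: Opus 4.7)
The plan is to verify, in order, three things: (i) the map $\tilde{\Q}\to\tv{Z}_1$, $(a,b)\mapsto bX-a$, descends to a well-defined bijection $\Q\to\check{\tv{Z}}_1$; (ii) this induced bijection intertwines the three pairs of operations; and (iii) the resulting structure makes $\check{\tv{Z}}_1$ a field isomorphic to $\Q$.

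For (i), note first that every class in $\tv{Z}_1$ has a unique representative of the form $bX-a$ with $b\neq 0$, so the source-level map is a set-theoretic bijection $\tilde{\Q}\to\tv{Z}_1$. Reading the equivalence $\sim$ on $\tv{Z}$ as the equivalence relation it generates (i.e., $f\sim f'$ iff $mf=nf'$ for nonzero integers $m,n$, equivalently $f=rf'$ for some $r\in\Q^\times$), one sees that $bX-a\sim b'X-a'$ in $\check{\tv{Z}}_1$ iff $(a,b)$ and $(a',b')$ are $\Q^\times$-proportional, which is exactly the relation $ab'=a'b$ defining equality in $\Q$. Hence the map descends to a bijection $\Q\to\check{\tv{Z}}_1$.

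For (ii), I would unfold the definitions using the explicit product formula. Given $f(X)=bX-a$ and $g(X)=dX-c$, these are linear of degree $m=n=1$ with leading coefficients $b,d$ and single roots $a/b,c/d$. Applying the definition of $\boxtimes$ yields
\[
(f\boxtimes g)(X)=b^{1}d^{1}\bigl(X-(a/b)(c/d)\bigr)=bd\,X-ac,
\]
which is the image of $(a/b)\cdot(c/d)=ac/bd$. An identical computation using $\boxplus$ gives
\[
(f\boxplus g)(X)=bd\bigl(X-(a/b+c/d)\bigr)=bd\,X-(ad+bc),
\]
matching $(a/b)+(c/d)=(ad+bc)/bd$. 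The argument for $\boxminus$ is the same, with the root sum replaced by the root difference.

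For (iii), having established bijectivity and compatibility with $(+,\times,-)$ versus $(\boxplus,\boxtimes,\boxminus)$, the field axioms on $\check{\tv{Z}}_1$ are automatically transported from $\Q$. The additive identity corresponds to $0=0/1\mapsto X=1_{\boxplus}$ and the multiplicative identity to $1=1/1\mapsto X-1=1_{\boxtimes}$, agreeing with the identities identified in \S\ref{resultantarithmetic}. I expect no real obstacle here: the only subtle point is checking that the equivalence $\sim$ really matches equality of fractions under the $(a,b)\mapsto bX-a$ correspondence, and that step hinges only on the fact that two linear integer polynomials with the same root are proportional over $\Q$.
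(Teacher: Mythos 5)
Your proof is correct and follows essentially the same route as the paper, which simply observes that $(a,b)\mapsto bX-a$ identifies $\tilde{\Q}$ with $\tv{Z}_{1}$ compatibly with the operations and then quotients both sides by the multiplicative action of $\Z-\{0\}$. You merely make explicit the two points the paper leaves as ``clear'' -- the computation of $\boxtimes,\boxplus,\boxminus$ on linear polynomials and the matching of $\sim$ with equality of fractions -- which is fine.
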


\begin{proof} The map $(a,b)\mapsto bX-a$ clearly identifies isomorphically $\tilde{\Q}=\{(a,b)|\; b\not=0\}$ with $\tv{Z}_{1}$.
Quotienting by the multiplicative
action of $\Z-\{ 0\}$ on either side gives the result.
\end{proof}

\begin{prop}  The map 
\[\tv{Z}_{1}\times\tv{Z}\longrightarrow\tv{Z},\quad (l(X),f(X))\mapsto l(X)\boxtimes f(X)\] defines an action of $\Q\cong\check{\tv{Z}}_{1}$ by $\boxplus$-isomorphisms of $ \check{\tv{Z}}$.
\end{prop}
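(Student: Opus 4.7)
The plan is to split the verification into three independent pieces: (a) the map descends to a well-defined map $\check{\tv{Z}}_{1}\times\check{\tv{Z}}\to\check{\tv{Z}}$; (b) this map is a monoid action of $(\check{\tv{Z}}_{1},\boxtimes)$; (c) each $l\in\tv{Z}_{1}$ acts on $\check{\tv{Z}}$ by a $\boxplus$-isomorphism (i.e.\ by a $\boxplus$-endomorphism which is invertible when the corresponding rational is non-zero).

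For (a), I would work from the root-product formula: if $l(X)=bX-a$ with $b\neq 0$ and $f(X)=c\prod_{i=1}^{m}(X-\upalpha_{i})$, then $l\boxtimes f=b^{m}c\prod_{i}(X-(a/b)\upalpha_{i})$. Replacing $f$ by $nf$ or $l$ by $kl$ multiplies $l\boxtimes f$ by $n$ or by $k^{m}$ respectively, both of which are nonzero integers; hence the product descends to $\sim$-classes. Part (b) is then immediate: the $\boxtimes$-identity in $\check{\tv{Z}}_{1}$ is the class of $1_{\boxtimes}(X)=X-1$, whose unique root is $1$, and the root formula shows directly that $1_{\boxtimes}\boxtimes f=f$; the associativity $(l_{1}\boxtimes l_{2})\boxtimes f=l_{1}\boxtimes(l_{2}\boxtimes f)$ is the associativity of $\boxtimes$ already recorded just after its definition.

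The substance of the argument lies in (c). For the $\boxplus$-homomorphism property I would verify $l\boxtimes(f\boxplus g)\sim(l\boxtimes f)\boxplus(l\boxtimes g)$ by noting that both sides have the same multiset of roots, namely $\{(a/b)(\upalpha_{i}+\upbeta_{j})\}$, so the check reduces to comparing leading coefficients. A brief tally shows the left-hand side carries leading coefficient $b^{mn}c^{n}d^{m}$ and the right-hand side $b^{2mn}c^{n}d^{m}$, where $m=\deg f$, $n=\deg g$; they differ by the nonzero integer $b^{mn}$, whence equivalence. For invertibility, I would use Proposition \ref{isowithQ}: the $\boxtimes$-inverse of a nonzero class $[bX-a]$ is $[aX-b]$, and a single root computation gives $(bX-a)\boxtimes(aX-b)=ab(X-1)\sim 1_{\boxtimes}$. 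Combined with (b), this shows that for every nonzero $q=a/b\in\Q$ the map $f\mapsto l_{q}\boxtimes f$ has inverse $f\mapsto l_{q^{-1}}\boxtimes f$, so it is a $\boxplus$-isomorphism of $\check{\tv{Z}}$.

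I anticipate no genuine obstacle; the only delicate point is the bookkeeping of leading coefficients in the distributivity step of (c), where it is tempting to lose a factor of $b^{mn}$. A minor clarification concerns the zero class $[X]\in\check{\tv{Z}}_{1}$ corresponding to $0\in\Q$: here $X\boxtimes f\sim X^{\deg f}$ and the action is a $\boxplus$-endomorphism but not invertible, so the statement is to be read as asserting isomorphisms on the multiplicative group $\Q^{\times}\cong\check{\tv{Z}}_{1}\smallsetminus\{[X]\}$, with the zero class contributing only an endomorphism. This distinction does not affect any of the computations above.
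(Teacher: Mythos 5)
Your proposal is correct and follows essentially the same route as the paper, whose proof is a one-liner appealing to well-definedness and to the $\boxtimes$-invertibility of the classes in $\check{\tv{Z}}_{1}$ (via the identification with $\Q$); you simply make explicit the root/leading-coefficient bookkeeping (including the harmless $b^{mn}$ discrepancy absorbed by $\sim$) that the paper treats as clear. Your remark that the zero class $[X]$ acts only as a $\boxplus$-endomorphism is a reasonable reading of the intended statement, which the paper glosses over.
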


\begin{proof}  The action is clearly well-defined, by isomorphisms since the elements of $\check{\tv{Z}}_{1}$
are $\boxtimes$-invertible.
\end{proof}

%Thus $(\check{\tv{Z}},\boxplus,\boxtimes)$ is a $\Q$-{\it semialgebra}:  a semiring equipped
%with an action by $\Q$ commuting with $\boxtimes$.

\begin{note} We may regard $\check{\tv{Z}}$ as a generalized
field extension of $\Q$.   Since $\boxtimes$ does not distribute over $\boxplus$, it is closely related to the notion of a {\bf nonlinear number field}:  an extension of $\Q$
defined using a projectivization of the field algebra of $\Q$ with the operations of Cauchy and Dirichlet products.  See \cite{GeVe}.
\end{note}

The (classes of) monic
polynomials in $\tv{Z}$ play the role of integers, the {\bf resultant integers}, which
are evidently closed w.r.t.  $\boxplus$, $ \boxminus$, $\boxtimes$ and Cauchy product.  We denote them $\tv{O}$, and note
that $\Z$ is identified via the isomorphism of Proposition \ref{isowithQ} with $\check{\tv{O}}_{1} := 
\tv{O}_{1}/\sim$.
%If we perform the Grothendieck
%group construction on the monoid $(\Z[X], \boxplus )$, we obtain a ring whose field of fractions extends $\Q$.

The resultant sum, difference and product extend in the obvious way to 
$ \bast\tilde{\tv{Z}}$ e.g.\
$\bast f\boxplus\bast g= \bast \{f_{i}\boxplus g_{i}\}$.  
The next result shows that the normalized growth is supermultiplicative with respect to resultant sums, differences and products.

\begin{prop}\label{Mahlersubresultmult} Let $\bast f\in\bast\tilde{\tv{Z}}_{\bast d}$,$\bast g\in\bast\tilde{\tv{Z}}_{\bast e}$.   Then
\begin{align}\label{MahlSupMult} \upmu_{\bast d}(\bast f)\cdot\upmu_{\bast e}(\bast g)\;\;\leq \;\; \upmu_{\bast d\cdot \bast e}(\bast f\boxplus\bast g),\;\upmu_{\bast d\cdot \bast e}(\bast f \boxminus\bast g),\; \upmu_{\bast d\cdot \bast e}(\bast f\boxtimes\bast g).
\end{align}
%\[\mathfrak{m}(\bast f\boxtimes\bast g)\leq \mathfrak{m}(\bast f)^{\deg (\bast g)}\cdot \mathfrak{m}(\bast g)^{\deg (\bast f)}. \]
%$\mathfrak{m}(\bast f)^{-1}$ is submultiplicative with respect to the resultant product:
%\[\mathfrak{m}(\bast f\boxtimes\bast g)\geq \mathfrak{m}(\bast f)\cdot \mathfrak{m}(\bast g). \]
\end{prop}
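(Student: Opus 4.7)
The plan is to reduce everything to a Mahler measure estimate: since $\langle\mathfrak{h}_{\bast d}(\bast f)\rangle=\langle\mathfrak{m}_{\bast d}(\bast f)\rangle$ in $\bstar\PR\R$ by the double inequality (\ref{heightmahlequiv}), and $\upmu_{\bast d}(\bast f)$ is defined as the class of $\mathfrak{h}_{\bast d}(\bast f)^{-1}$, it suffices to prove the supermultiplicativity statement after expressing $\upmu_{\bast d}(\bast f)$ as $\langle \mathfrak{m}(\bast f)^{-1/\bast d}\rangle$. In that form the inequality we want is exactly
\[ \mathfrak{m}(\bast f\boxast\bast g)^{1/(\bast d\cdot\bast e)} \leq \mathfrak{m}(\bast f)^{1/\bast d}\cdot\mathfrak{m}(\bast g)^{1/\bast e}\quad(\bmod (\bast\R_{\rm fin})_{+}^{\times})\]
for each of the three operations $\boxast\in\{\boxplus,\boxminus,\boxtimes\}$, which upon applying $\langle\cdot\rangle$ and inverting yields (\ref{MahlSupMult}).

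First I would do the product $\boxtimes$, which is the cleanest case. For standard $f,g$ of degrees $m,n$ with roots $\upalpha_i$, $\upbeta_j$ and leading coefficients $a_m$, $b_n$, the factorization defining $f\boxtimes g=a_m^n b_n^m\prod(X-\upalpha_i\upbeta_j)$ combined with the elementary inequality $\max(1,|xy|)\leq\max(1,|x|)\max(1,|y|)$ gives
\[ \mathfrak{m}(f\boxtimes g) \;\leq\; |a_m|^n|b_n|^m\prod_{i,j}\max(1,|\upalpha_i|)\max(1,|\upbeta_j|) \;=\; \mathfrak{m}(f)^n\mathfrak{m}(g)^m.\]
Transferring to ultrapowers by \L o\'s gives $\mathfrak{m}(\bast f\boxtimes\bast g)\leq\mathfrak{m}(\bast f)^{\bast e}\mathfrak{m}(\bast g)^{\bast d}$, and taking $(\bast d\cdot\bast e)$-th roots yields the clean inequality above with no additional constant.

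Next I would do $\boxplus$ and $\boxminus$ simultaneously. Here the triangle inequality gives $\max(1,|x\pm y|)\leq 2\max(1,|x|)\max(1,|y|)$, so the same argument produces a factor of $2^{\bast d\cdot\bast e}$:
\[ \mathfrak{m}(\bast f\boxplus\bast g),\,\mathfrak{m}(\bast f\boxminus\bast g) \;\leq\; 2^{\bast d\cdot\bast e}\,\mathfrak{m}(\bast f)^{\bast e}\mathfrak{m}(\bast g)^{\bast d}.\]
Taking $(\bast d\cdot\bast e)$-th roots produces an extra factor of $2$, but $2\in(\bast\R_{\rm fin})_{+}^{\times}$, so it is killed by $\langle\cdot\rangle$ and the inequality survives in $\bstar\PR\R$.

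The only subtlety — and the main thing to check carefully — is that these submultiplicative bounds for standard polynomials transfer to the ultrapower with the possibly infinite degrees $\bast d,\bast e$. This is routine via \L o\'s's theorem: the first-order statement ``for all $m,n\geq 1$ and all $f$ of degree $m$, $g$ of degree $n$, $\mathfrak{m}(f\boxtimes g)\leq \mathfrak{m}(f)^n\mathfrak{m}(g)^m$'' transfers verbatim, and likewise with the factor $2^{mn}$ for $\boxplus,\boxminus$. The swallowing of the constant $2$ happens in $\bstar\PR\R$ at the end, not at the ultrapower step, so no delicacy is needed there.
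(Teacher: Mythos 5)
Your proposal is correct and follows essentially the same route as the paper: reduce to standard polynomials, bound $\mathfrak{m}(f\boxtimes g)\leq \mathfrak{m}(f)^{n}\mathfrak{m}(g)^{m}$ via $\max(1,|xy|)\leq\max(1,|x|)\max(1,|y|)$ and $\mathfrak{m}(f\boxplus g),\mathfrak{m}(f\boxminus g)\leq 2^{mn}\mathfrak{m}(f)^{n}\mathfrak{m}(g)^{m}$ via the triangle inequality, transfer to the ultrapower, take $(\bast d\cdot\bast e)$-th roots, and absorb the constant $2\in(\bast\R_{\rm fin})_{+}^{\times}$ on passage to growth classes. The use of the tropical equality $\langle\mathfrak{h}_{\bast d}(\bast f)\rangle=\langle\mathfrak{m}_{\bast d}(\bast f)\rangle$ to phrase growth in terms of the Mahler measure is exactly the paper's own convention, so there is no gap.
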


\begin{proof} We begin with $\boxtimes$ and prove
$ \mathfrak{m}_{\bast d\cdot\bast e}(\bast f\boxtimes\bast g) \leq \mathfrak{m}_{\bast d}(\bast f)\cdot \mathfrak{m}_{\bast e}(\bast g)$.
It suffices to show that for the non normalized Mahler measure,
\[\mathfrak{m}(\bast f\boxtimes\bast g)\leq \mathfrak{m}(\bast f)^{\deg (\bast g)}\cdot \mathfrak{m}(\bast g)^{\deg (\bast f)}, \]
and moreover, it is enough to prove this for standard polynomials $f,g$ of degree $m$, $n$ respectively: 
\begin{align*} \mathfrak{m}(f\boxtimes g) & = a_{m}^{n}b_{n}^{m}\prod \max (1,|\upalpha_{i}\upbeta_{j}|) \\
& \leq a_{m}^{n}\prod_{i=1}^{m}\bigg(\max (1,|\upalpha_{i}|)^{n} \big(b_{n}\prod_{j=1}^{n}  \max (1,|\upbeta_{j}|)\big)\bigg)  \\
& = \mathfrak{m}(f)^{n}\cdot \mathfrak{m}(g)^{m}.
\end{align*}
As for $\boxplus$:
\begin{align*} \mathfrak{m}(f\boxplus g) & =a_{m}^{n}b_{n}^{m}\prod \max (1,|\upalpha_{i}+\upbeta_{j}|) \\
& \leq a_{m}^{n}\prod_{i=1}^{m} \left(b_{n}  \prod_{j=1}^{n}\left(  \max (1,|\upalpha_{i}|) + \max (1,|\upbeta_{j}|) \right)\right) \\
& =  a_{m}^{n}\prod_{i=1}^{m} \left( \max (1,|\upalpha_{i}|)^{n}\left(b_{n}  \prod_{j=1}^{n}\bigg( 1 + \max (1,|\upbeta_{j}|)\cdot \max (1,|\upalpha_{i}|)^{-1} \bigg)\right)\right)\\
& \leq a_{m}^{n}\prod_{i=1}^{m}\left(  \max (1,|\upalpha_{i}|)^{n}\left(b_{n}  \prod_{j=1}^{n}\left( 1 + \max (1,|\upbeta_{j}|)  \right)\right)\right)\\
& \leq a_{m}^{n}\prod_{i=1}^{m} \left( \max (1,|\upalpha_{i}|)^{n}\left(b_{n}  \prod_{j=1}^{n}2 \max (1,|\upbeta_{j}|)  \right)\right) \\
& = 2^{mn}\mathfrak{m}(f)^{n}\cdot\mathfrak{m}(g)^{m}.
\end{align*}
Applying the above calculation to $\bast f\in\bast\tilde{\tv{Z}}_{\bast d}$,$\bast g\in\bast\tilde{\tv{Z}}_{\bast e}$ gives
\[ \mathfrak{m}_{\bast d\bast e}(\bast f\boxplus \bast g)\leq  2\mathfrak{m}_{\bast d}(\bast f)\cdot\mathfrak{m}_{\bast e}(\bast g).\]
The inequality (\ref{MahlSupMult}) for $\boxplus$ follows immediately on passage to growth classes.  The corresponding $ \boxminus$ inequality is proved in much the same way.
% If all the roots of $f$, $g$ have absolute value $\leq 1$ the result
%is certainly true; it is also true if each of $f$, $g$ possesses a root of absolute value $\geq 1$: for then
%\[  \mathfrak{m}( f)\cdot \mathfrak{m}(g) = a_{m}b_{n}\prod_{|\upalpha_{i}|\geq 1,|\upbeta_{j}|\geq 1} |\upalpha_{i}\upbeta_{j}|
%\leq \prod_{i,j}\max (|\upalpha_{i}\upbeta_{j}|,1)=  \mathfrak{m}(\bast f\boxtimes\bast g)\]
%Now suppose that $f$ satisfies $|\upalpha_{i}|<1$ for all $i$ and
%there exists a root $\upbeta_{j}$ of $g$ with $|\upbeta_{j}|>1$.  In this case, $\mathfrak{m}(f)\cdot\mathfrak{m}(g)=a_{m}\mathfrak{m}(g)$.
\end{proof}

%It is convenient to consider a degree normalized version of Mahler measure.  If $\deg (\bast f)\leq\bast d$ define 
%\[ \mathfrak{m}_{\bast d}(\bast f):=
%\mathfrak{m}(\bast f)^{1/\bast d}.\]
%Then if $\deg (\bast g)\leq \bast e$, $\deg (\bast f\boxtimes\bast g)\leq \bast d\cdot\bast e$ and we have

\section{Polynomial Approximate Ideal Arithmetic I: Finite Degree}\label{polygdasec}

%Note that since non zero polynomial diophantine approximations are never constants, 
%\[ \bast\tv{Z}(\uptheta):=\bast\Z[X](\uptheta )-\{0\} \subset \bast\tv{Z},\]
%with a similar statement for $\bast(\Z[X])(\uptheta )$.
 Given
$\bast f\in \bast\tilde{\tv{Z}}(\uptheta )$ and $\bast g\in\bast\tilde{\tv{Z}}(\upeta )$, in this section we look for conditions which ensure that 
\[  \bast f\boxtimes\bast g\in  \bast\tilde{\tv{Z}}(\uptheta\upeta ),\quad \bast f\boxplus\bast g\in\bast\tilde{\tv{Z}} [X](\uptheta+\upeta ),\quad  \bast f \boxminus\bast g\in\bast\tilde{\tv{Z}} [X](\uptheta-\upeta ).\]
In other words, we seek to formulate polynomial approximate ideal composition laws that are compatible with the linear approximate ideal arithmetic viz. (\ref{gdprodintro}).

It is worth recording the following ``ideal-theoretic'' arithmetic for elements of $\bar{\Q}\cap\R$.
Given $\upalpha\in\bar{\Q}\cap\R$, let $\llparenthesis\upalpha \rrparenthesis$ denote the Cauchy semigroup ideal of $(\bast\tilde{\tv{Z}},\cdot) $
generated by the minimal polynomial
$m_{\upalpha}(X)$ of $\upalpha$: that is, 
\[ \llparenthesis\upalpha \rrparenthesis:=\{ \bast f\in\bast\tilde{\tv{Z}}|\; \bast f(\upalpha )=0\}.\]   Note that $\llparenthesis\upalpha\rrparenthesis\subsetneqq \bast\tilde{\tv{Z}}(\upalpha )$.

\begin{prop}\label{simplegda}  For all $\upalpha,\upbeta\in\bar{\Q}\cap\R$, 
\[\llparenthesis\upalpha\rrparenthesis\boxtimes\llparenthesis\upbeta\rrparenthesis \subset \llparenthesis\upalpha\upbeta\rrparenthesis,\quad
 \llparenthesis\upalpha \rrparenthesis\boxplus\llparenthesis\upbeta \rrparenthesis \subset \llparenthesis\upalpha+\upbeta\rrparenthesis, \quad
\llparenthesis\upalpha \rrparenthesis \boxminus\llparenthesis\upbeta \rrparenthesis \subset \llparenthesis\upalpha-\upbeta\rrparenthesis .\]
\end{prop}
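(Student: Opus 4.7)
\smallskip

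\noindent\textbf{Proof proposal.} The plan is to reduce each of the three inclusions to a pointwise (index-by-index) statement about standard polynomials and then invoke \L o\'s's Theorem to transfer the conclusion back to $\bast\tilde{\tv{Z}}$.

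First I would unpack the hypothesis. By definition, $\bast f\in \llparenthesis\upalpha \rrparenthesis$ means $\bast f(\upalpha)=0$ in $\bast\R$, which by \L o\'s's Theorem is equivalent to
\[ A_{\upalpha}:=\{i\in\N\mid f_{i}(\upalpha)=0\}\in\mathfrak{u}. \]
Likewise $\bast g\in\llparenthesis\upbeta\rrparenthesis$ is equivalent to $A_{\upbeta}:=\{i\mid g_{i}(\upbeta)=0\}\in\mathfrak{u}$. Since $\mathfrak{u}$ is closed under finite intersection, $A_{\upalpha}\cap A_{\upbeta}\in\mathfrak{u}$.

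Next I would verify the pointwise claim for standard polynomials: if $f,g\in\tv{Z}$ and $f(\upalpha)=g(\upbeta)=0$, then $(f\boxtimes g)(\upalpha\upbeta)=0$, $(f\boxplus g)(\upalpha+\upbeta)=0$, and $(f\boxminus g)(\upalpha-\upbeta)=0$. This is immediate from the very definitions
\[ (f\boxtimes g)(X)=a_{m}^{n}b_{n}^{m}\prod_{i,j}(X-\upalpha_{i}\upbeta_{j}),\quad (f\boxplus g)(X)=a_{m}^{n}b_{n}^{m}\prod_{i,j}\bigl(X-(\upalpha_{i}+\upbeta_{j})\bigr), \]
and the analogous product for $\boxminus$: since $\upalpha\in\bar{\Q}\cap\R\subset\C$ is a root of $f$ it coincides with some $\upalpha_{i_{0}}$ in the factorization of $f$ over $\C$, and similarly $\upbeta=\upbeta_{j_{0}}$, so the factor with indices $(i_{0},j_{0})$ vanishes at $\upalpha\upbeta$, $\upalpha+\upbeta$, $\upalpha-\upbeta$ respectively.

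Combining these two steps, for every $i\in A_{\upalpha}\cap A_{\upbeta}$ one has $(f_{i}\boxtimes g_{i})(\upalpha\upbeta)=0$, and analogously for $\boxplus$ and $\boxminus$. Since $A_{\upalpha}\cap A_{\upbeta}\in\mathfrak{u}$, \L o\'s again yields $(\bast f\boxtimes \bast g)(\upalpha\upbeta)=0$, $(\bast f\boxplus \bast g)(\upalpha+\upbeta)=0$, and $(\bast f\boxminus \bast g)(\upalpha-\upbeta)=0$ in $\bast\R$, which are precisely the three claimed memberships.

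There is essentially no obstacle here: the proof is purely formal, using only the defining root-set property of the resultant operations together with the standard ultrafilter/\L o\'s transfer. The only point worth flagging is that the factorizations used to characterize $\boxtimes$, $\boxplus$, $\boxminus$ take place over $\C$, but this causes no trouble because $\upalpha,\upbeta\in\bar{\Q}\cap\R\subset\C$ so each genuinely appears among the complex roots of any $f_{i}$, $g_{i}$ vanishing on it.
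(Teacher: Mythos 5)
Your proposal is correct and is essentially the argument the paper intends: the paper dismisses the proposition as trivial precisely because the root set of $f\boxtimes g$ (resp.\ $f\boxplus g$, $f\boxminus g$) consists of the products (resp.\ sums, differences) of roots, so componentwise vanishing at $\upalpha$ and $\upbeta$ forces vanishing at $\upalpha\upbeta$, $\upalpha+\upbeta$, $\upalpha-\upbeta$, and this transfers through the ultrapower exactly as you describe. The only remark worth making is that the full strength of \L o\'s's Theorem is not needed: that $\bast f(\upalpha)=0$ holds iff $f_{i}(\upalpha)=0$ on a $\mathfrak{u}$-large set of indices is already immediate from the definition of the ultrapower and of the componentwise operations $\bast f\boxtimes\bast g=\bast\{f_{i}\boxtimes g_{i}\}$.
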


\begin{proof}  Trivial.
\end{proof}

Consider the Frobenius growth approximate ideal on $\llparenthesis\upalpha \rrparenthesis$
\[ \llparenthesis\upalpha \rrparenthesis^{\bar{\upmu}}_{\bast d}= 
 \llparenthesis\upalpha \rrparenthesis\cap \bast\tilde{\tv{Z}}^{\bar{\upmu}}(\upalpha )_{\bast d}.
\]
There is no interesting decay filtration on $ \llparenthesis\upalpha \rrparenthesis$ since for all $\bast f\in \llparenthesis\upalpha \rrparenthesis$, $\upnu (\bast f)=-\infty$.

\begin{prop} Let $\upalpha\in\bar{\Q}\cap\R$. If $\bar{\upmu}\geq \bar{\upnu}^{1/\bast d}$ then 
\begin{align}\label{exactalgequal}
 \bast\tilde{\tv{Z}}^{\bar{\upmu}}_{\bar{\upnu}}(\upalpha )_{\bast d}=\llparenthesis\upalpha \rrparenthesis^{\bar{\upmu}}_{\bast d}.
 \end{align}
\end{prop}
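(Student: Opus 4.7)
The easy inclusion $\llparenthesis\upalpha\rrparenthesis^{\bar{\upmu}}_{\bast d}\subseteq \bast\tilde{\tv{Z}}^{\bar{\upmu}}_{\bar{\upnu}}(\upalpha)_{\bast d}$ holds without any hypothesis on $(\bar{\upmu},\bar{\upnu})$: if $\bast f(\upalpha)=0$ then $\bar{\upnu}_{\bast d}(\bast f)=-\infty\leq\bar{\upnu}$, so only the growth condition $\bar{\upmu}_{\bast d}(\bast f)>\bar{\upmu}$ remains, and that is how both sides are defined. The substance of the statement is the reverse inclusion, which I plan to prove by contradiction via a Liouville-type inequality for algebraic numbers.

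Suppose $\bast f\in \bast\tilde{\tv{Z}}^{\bar{\upmu}}_{\bar{\upnu}}(\upalpha)_{\bast d}$ with $\bast f(\upalpha)\ne 0$. Let $m_{\upalpha}(X)\in\Z[X]$ be the minimal polynomial of $\upalpha$, of degree $s\geq 1$, leading coefficient $a_{s}$, with conjugates $\upalpha=\upalpha_{1},\dots,\upalpha_{s}$. On the ultrafilter-set of indices where $f_{i}(\upalpha)\ne 0$, irreducibility of $m_{\upalpha}$ forces $m_{\upalpha}\nmid f_{i}$, so ${\rm Res}(f_{i},m_{\upalpha})\in\Z\setminus\{0\}$; passing to the ultraproduct, $|{\rm Res}(\bast f,m_{\upalpha})|\geq 1$ in $\bast\Z$. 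Expanding the resultant as $a_{s}^{\deg\bast f}\prod_{i}\bast f(\upalpha_{i})$ and applying the trivial bound $|\bast f(\upalpha_{i})|\leq (\deg\bast f+1)\,\mathfrak{h}(\bast f)\max(1,|\upalpha_{i}|)^{\deg\bast f}$ for $i\geq 2$ yields the Liouville-type inequality
\[|\bast f(\upalpha)|\;\geq\; B^{-\deg\bast f}\,(\deg\bast f+1)^{-(s-1)}\,\mathfrak{h}(\bast f)^{-(s-1)},\]
where $B=|a_{s}|\prod_{i\geq 2}\max(1,|\upalpha_{i}|)$ is a standard positive real depending only on $\upalpha$.

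Taking $\bast d$-th roots and passing to Frobenius classes in $\bstar\overline{\PR\R}_{\upvarepsilon}$, the three factors on the right collapse drastically: since $\deg\bast f/\bast d\leq 1$, the factor $B^{-\deg\bast f/\bast d}$ lies in $(\bast\R_{\rm fin})_{+}^{\times}$ and has Frobenius class $\bar{1}$; by transfer of $n^{1/n}\to 1$, the factor $(\deg\bast f+1)^{-(s-1)/\bast d}$ is infinitesimally close to $1$ and also has class $\bar{1}$; and since $s-1\in\N\subset(\bast\R_{\rm fin})_{+}^{\times}$, the Frobenius action identifies $[\mathfrak{h}(\bast f)^{-(s-1)/\bast d}]=[\mathfrak{h}(\bast f)^{-1/\bast d}]=\bar{\upmu}_{\bast d}(\bast f)$ whenever $s\geq 2$ (when $s=1$ this last factor is $\bar{1}$, giving an even stronger bound). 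In either case
\[\bar{\upnu}_{\bast d}(\bast f)\;=\;[|\bast f(\upalpha)|^{1/\bast d}]\;\geq\;\bar{\upmu}_{\bast d}(\bast f).\]

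Combining with the defining inequalities $\bar{\upmu}_{\bast d}(\bast f)>\bar{\upmu}$ and $\bar{\upnu}_{\bast d}(\bast f)\leq\bar{\upnu}$ gives $\bar{\upnu}>\bar{\upmu}$. However, by \emph{Note} \ref{infinitesimalpower} the hypothesis $\bar{\upmu}\geq\bar{\upnu}^{1/\bast d}$ implies $\bar{\upmu}\geq\bar{\upnu}$ (with equality $\bar{\upnu}^{1/\bast d}=\bar{\upnu}$ when $\bast d$ is finite, and strict inequality $\bar{\upnu}^{1/\bast d}>\bar{\upnu}$ when $\bast d$ is infinite, as infinitesimal powers expand the infinitesimal order), contradicting $\bar{\upnu}>\bar{\upmu}$. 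Hence $\bast f(\upalpha)=0$. The main delicate point is the passage from the Liouville bound—which at face value appears to lose a factor of $\mathfrak{h}(\bast f)^{s-2}$ relative to what is needed—to the sharp Frobenius-class inequality $\bar{\upnu}_{\bast d}(\bast f)\geq\bar{\upmu}_{\bast d}(\bast f)$; this sharpening depends crucially on the Frobenius semi-ring collapsing all standard-integer exponents, which is precisely why working modulo the $(\bast\R_{\rm fin})_{+}^{\times}$-action is the right setting for this theorem.
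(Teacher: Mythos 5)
Your proof is correct, but it takes a genuinely different route from the paper's. The paper disposes of the reverse inclusion in three lines: it takes $\bast f\in \bast\tilde{\tv{Z}}^{\bar{\upmu}}_{\bar{\upnu}}(\upalpha )_{\bast d}-\llparenthesis\upalpha \rrparenthesis^{\bar{\upmu}}_{\bast d}$, reduces without loss of generality to a best polynomial class, and reads off the chain $\bar{\upmu}_{\bast d}(\bast f)>\bar{\upmu}\geq \bar{\upnu}^{1/\bast d}\geq \bar{\upnu}_{\bast d}(\bast f)^{1/\bast d}$, which contradicts Theorem \ref{Atheo} (the spectral characterization of $A$-numbers, itself resting on Mahler's fact that algebraic numbers have exponent $\mathfrak{e}=0$). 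You instead prove the key inequality $\bar{\upnu}_{\bast d}(\bast f)\geq \bar{\upmu}_{\bast d}(\bast f)$ directly for \emph{every} $\bast f$ with $\bast f(\upalpha)\neq 0$, via the resultant with the minimal polynomial $m_{\upalpha}$ and the transferred bound $|{\rm Res}(\bast f,m_{\upalpha})|\geq 1$; this is essentially a nonstandard rerun of the classical Liouville-type estimate that makes algebraic numbers $A$-numbers, so at bottom both arguments rest on the same arithmetic input, but yours imports it explicitly rather than through the Mahler-classification machinery of \S\ref{polyspectramahlersection}. What your route buys: it is self-contained, avoids the ``WLOG a best class'' reduction (which in the paper silently invokes the structure of best classes of the given degree), treats finite and infinite $\bast d$ uniformly, and makes transparent exactly where the Frobenius quotient is needed — the loss of the standard exponent $s-1$ in $\mathfrak{h}(\bast f)^{-(s-1)}$, and of the bounded factors $B^{-\deg\bast f/\bast d}$ and $(\deg\bast f+1)^{-(s-1)/\bast d}$, is absorbed precisely by the $(\bast\R_{\rm fin})_{+}^{\times}$-action. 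What the paper's route buys is brevity, given that Theorem \ref{Atheo} and Proposition \ref{bestapproxpoly} are already in place. Your final step ($\bar{\upnu}>\bar{\upmu}$ versus $\bar{\upmu}\geq\bar{\upnu}^{1/\bast d}\geq\bar{\upnu}$) only needs the weak inequality $\bar{\upnu}^{1/\bast d}\geq\bar{\upnu}$, so the parenthetical claim of strictness for infinite $\bast d$, while true, is not load-bearing; similarly your appeal to $n^{1/n}\to 1$ covers the infinite-degree case, the finite-degree case being trivial since the factor is then a standard unit — neither point is a gap.
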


\begin{proof}  Suppose that there exists $\bast f\in \bast\tilde{\tv{Z}}^{\bar{\upmu}}_{\bar{\upnu}}(\upalpha )_{\bast d}-\llparenthesis\upalpha \rrparenthesis^{\bar{\upmu}}_{\bast d}$, which we may assume, without loss of generality, is a best class.   Then
\[ \bar{\upmu}_{\bast d}(\bast f) >\bar{\upmu}\geq \bar{\upnu}^{1/\bast d} \geq \bar{\upnu}_{\bast d}(\bast f)^{1/\bast d}\]
which contradicts the result of Theorem \ref{Atheo}.
\end{proof}

We will call the triple $(\bar{\upmu},\bar{\upnu},\bast d)$ {\bf {\em A}-exact} if (\ref{exactalgequal}) holds: otherwise we call it
{\bf {\em A}-approximative}.
%; otherwise we call 
%$(\bar{\upmu},\bar{\upnu})$ an {\bf {\small  {\em A}-approximation pair}}. 
%For $\uptheta,\upeta\in\R$, polynomial ideologial arithmetic gives a weakened version of Proposition \ref{simplegda} involving grading conditions which %assure that 
%$\bast f\boxplus \bast g\in
%\bast\tilde{\tv{Z}}(\uptheta+\upeta )$, 
%$\bast f\boxtimes \bast g\in
%\bast\tilde{\tv{Z}}(\uptheta\upeta )$. 

We will now describe $\boxtimes$-approximate ideal arithmetic for arbitrary real numbers, a discussion which will comprise the rest of this paper.  All of the statements proved here have counterparts for
$\boxplus, \boxminus$ whose formulations and proofs can be produced, {\it mutatis mutandis}.
In this section, we restrict ourselves to finite degree polynomials i.e.\ elements of $\bast \tv{Z}$.  The infinite degree case presents
additional complications which will be deferred to \S \ref{polygdasec2}.  We develop first the ordinary approximate ideal arithmetic, from which the Frobenius approximate ideal arithmetic follows as an immediate corollary.

Let $\uptheta\in\R$ and suppose that $\bast f\in\bast\tv{Z}(\uptheta )_{d}$.  
Denote
by $\mathbbm{r}(\bast f)$ the set of $\deg \bast f\leq d$ roots (repeated according to multiplicity).  
Then there exist $\bast \upalpha_{i_{k}}\in \mathbbm{r}(\bast f)$, $k=1,\dots l$, with 
\[\uptheta \simeq \bast\upalpha_{i_{k}}\]
for all $k=1,\dots l\leq \deg \bast f$.  If $l=1$ we will say that $\bast f\in\bast\tv{Z}(\uptheta)$
is a {\bf simple} polynomial diophantine approximation of $\uptheta$.
Write
$\bast\updelta_{i_{k}} =\uptheta - \bast\upalpha_{i_{k}} \in\bast\C_{\upvarepsilon} $, $k=1,\dots l$, and define
\[\upzeta_{d} (\bast f):= \langle \prod_{k=1}^{l}\bast\updelta_{i_{k}} \rangle^{1/d} .\]

%In this case
%$\upalpha\in\bast\C_{\rm fin}$;
%write $\upzeta_{\uptheta}(\bast f):= \langle \bast\updelta \rangle $ if $\uptheta = \bast\upalpha +\bast\updelta$.  If
%$\bast f$ is of finite degree but not simple, and if $\uptheta = \bast\upalpha_{i_{k}} +\bast\updelta_{i_{k}}$ where $\bast \upalpha_{i_{k}}\in\mathbb{r}(\bast f)%$ are the roots asymptotic to $\uptheta$, $k=1,\dots l$, we define
%$\upzeta_{\uptheta}(\bast f):= \langle \prod_{i=1}^{l}\bast\updelta_{i_{k}} \rangle $.

\begin{lemm}\label{simpleerror}  Let $\bast f\in\bast\tv{Z}(\uptheta)_{d}$.
Then
\[\upnu_{d} (\bast f) = \mathfrak{m}_{d} (\bast f)\cdot \upzeta_{d} (\bast f). \]
\end{lemm}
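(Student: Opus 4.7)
The plan is to compute $\langle |\bast f(\uptheta)|\rangle$ directly from the factorization of $\bast f$ and compare it to the growth class $\langle \mathfrak{m}(\bast f)\rangle$, showing that everything except the close-to-$\uptheta$ factors cancels. Write $\bast f(X) = \bast a\prod_{j=1}^{\deg\bast f} (X-\bast\upalpha_j)$ with $\bast a\in\bast\Z$ the leading coefficient and $\bast\upalpha_j\in\bast\C$ the (multiset of) roots. By definition of $\mathfrak{m}(\bast f)$ and of $\upzeta_d(\bast f)$, the two sides of the claimed identity will both follow from
\[ \langle |\bast f(\uptheta )|\rangle \;=\; \langle |\bast a|\rangle \cdot \prod_{j} \langle |\uptheta - \bast\upalpha_j|\rangle \;=\; \langle \mathfrak{m}(\bast f)\rangle \cdot \Big\langle \prod_{k=1}^{l}\bast\updelta_{i_k}\Big\rangle \quad\text{in } \bstar\PR\R. \]
After establishing this, I will raise to the $1/d$ power (well-defined on $\bstar\PR\R$ for the standard integer $d$) to conclude $\upnu_d(\bast f) = \mathfrak{m}_d(\bast f)\cdot\upzeta_d(\bast f)$.

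The main content is therefore a root-by-root comparison of $\langle |\uptheta-\bast\upalpha_j|\rangle$ with $\langle \max(1,|\bast\upalpha_j|)\rangle$, carried out case-by-case according to the ``size'' of $\bast\upalpha_j$. If $\bast\upalpha_j \simeq \uptheta$ (i.e.\ $j=i_k$ for some $k$), then $|\bast\upalpha_j|$ is bounded since $\uptheta$ is standard, so $\max(1,|\bast\upalpha_j|)\in(\bast\R_{\rm fin})^{\times}_{+}$ has trivial growth class, and the factor contributes precisely $\langle \bast\updelta_{i_k}\rangle$. If $\bast\upalpha_j\not\simeq\uptheta$ and $|\bast\upalpha_j|$ is infinite, then $|\uptheta-\bast\upalpha_j| = |\bast\upalpha_j|\cdot|1-\uptheta/\bast\upalpha_j|$ with $|1-\uptheta/\bast\upalpha_j|\in(\bast\R_{\rm fin})^{\times}_{+}$, so $\langle|\uptheta-\bast\upalpha_j|\rangle = \langle|\bast\upalpha_j|\rangle = \langle\max(1,|\bast\upalpha_j|)\rangle$. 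If $\bast\upalpha_j\not\simeq\uptheta$ and $|\bast\upalpha_j|$ is finite, then both $|\uptheta-\bast\upalpha_j|$ (finite and, crucially, non-infinitesimal by the non-proximity hypothesis) and $\max(1,|\bast\upalpha_j|)$ lie in $(\bast\R_{\rm fin})^{\times}_{+}$ and have trivial growth class. Multiplying the contributions over all $j$ gives the displayed equality above.

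The only real delicacy is the middle case: one must be sure that the non-infinitesimal perturbation by $\uptheta$ does not shift the growth class of $|\bast\upalpha_j|$ when $|\bast\upalpha_j|$ is infinite. This is handled uniformly by writing the difference multiplicatively as $|\bast\upalpha_j|\cdot|1-\uptheta/\bast\upalpha_j|$ and invoking nonarchimedean-ness of the growth valuation, so no genuine obstacle arises; the rest of the argument is a bookkeeping exercise using multiplicativity of $\langle\cdot\rangle$ on $\bast\R_+$.
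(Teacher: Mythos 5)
Your proof is correct and takes essentially the same route as the paper's: both expand $|\bast f(\uptheta )|$ over the roots and compare each factor $|\uptheta -\bast\upalpha_{j}|$ with $\max (1,|\bast\upalpha_{j}|)$ according to whether $\bast\upalpha_{j}$ is infinitesimally close to $\uptheta$, bounded away from it, or infinite, using that bounded noninfinitesimal factors have trivial growth class. The differences are only organizational: you prove the unnormalized identity first and then apply the $1/d$-power, which also absorbs the paper's reduction to $\deg (\bast f)=d$ and its separate remark for $\uptheta =0$ via the uniform use of $\max (1,\cdot )$.
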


\begin{proof} It is enough prove the claim when $\deg (\bast f)=d$.  Assume that $\bast f$ is simple and write $\bast f(X) =\bast a_{d}\prod_{i=1}^{d} (X-\bast\upalpha_{i})$. 
Suppose that $\uptheta\simeq\bast\upalpha =\bast\upalpha_{1}$. Then
\begin{align}\label{nucalcforsimplef}
\upnu_{d} (\bast f) = \langle | \bast a_{d}\prod_{i=1}^{d} (\uptheta -\bast\upalpha_{i})|\rangle^{1/d}= 
\left(|\bast a_{d}|\prod_{i=2}^{d} |\uptheta -\bast\upalpha_{i}|\right)^{1/d}\cdot \upzeta_{d}(\bast f).
\end{align}
The only factors on the right hand side of (\ref{nucalcforsimplef}) which act nontrivially by multiplication on $\upzeta_{d}(\bast f)\in\bstar\PR\R_{\upvarepsilon}$
are $|\bast a_{d}|^{1/d}$ (if the latter is infinite) and those of the form  $|\uptheta-\bast\upalpha_{i}|^{1/d}$ where $\bast \upalpha_{i}\in\bast\C-\bast\C_{\rm fin}$,
in which case 
$ |\uptheta -\bast\upalpha_{i}|^{1/d}\cdot \upzeta_{d}(\bast f) = |\bast\upalpha_{i}|^{1/d}\cdot \upzeta_{d}(\bast f)$.   It follows immediately that 
\[ \left( |\bast a_{d}|\prod_{i=2}^{d} |\uptheta -\bast\upalpha_{i}|\right)^{1/d}\cdot \upzeta_{d}(\bast f)=   
\mathfrak{m}_{d}(\bast f)\cdot \upzeta_{d}(\bast f),\] where we note that 
\begin{enumerate}
\item[-] For $\uptheta\not=0$, $|\bast \upalpha|^{1/d}\in\bast\R_{\rm fin}-\bast\R_{\upvarepsilon}$ and so acts trivially
on $\upzeta_{d}(\bast f)$).  
\item[-] For $\uptheta=0$, $|\bast \upalpha|^{1/d}\in \bast\R_{\upvarepsilon}$ does not occur in $\mathfrak{m}_{d}(\bast f)$.
\end{enumerate}
The argument in the non simple case is identical.
\end{proof}

%Now consider $\uptheta, \upeta\in\R$.
%; for simplicity we assume that both positive 
%and $>1$.  Let $\bast f\in\bast\tv{Z}(\uptheta )_{d}$,
%$\bast g\in\bast\tv{Z}(\upeta )_{d'}$ be a pair of finite degree, simple polynomial
%diophantine approximations with roots $\bast\upalpha$,$\bast\upbeta$ infinitesimal to $\uptheta,\upeta$.  Thus we have
% If $\deg(\bast f), \deg (\bast g)<\infty$,
%then there exist roots $\bast \upalpha\in \mathbbm{r}(\bast f)$,  $\bast \upbeta\in \mathbbm{r}(\bast g)$
%such that $\uptheta= \bast\upalpha+\bast\updelta$, $\upeta= \bast\upbeta+\bast\upvarepsilon$, where
%$\bast\updelta, \bast\upvarepsilon\in\bast\C_{\upvarepsilon}$ are complex infinitesimals.  Of course, there may be other
%roots.  On the other hand, if the degrees are infinite, it may be that there is no element of $\mathbbm{r}(\bast f)$ which is infinitesimal
%to $\uptheta$.  For example, if the roots of the $f_{i}$ concentrate within an annulus of radii $r<R$ centered at $\uptheta$ with 
%$\uptheta-1<r<R<1$, then there will be no root infinitesimal to $\uptheta$.
%Let us say that $\bast f$, $\bast g$ are 
%{\small {\bf simple}}
%if these are the only such roots.  
%$\upnu (\bast f) = \mathfrak{m}(\bast f)\cdot \upzeta_{\uptheta}(\bast f)$,
%$\upnu (\bast g) = \mathfrak{m}(\bast g)\cdot\upzeta_{\upeta}(\bast g)$.

\begin{lemm}\label{diamonderror} Let $\uptheta, \upeta\in\R$
%, both $>1$ 
and let $\bast f\in\bast\tv{Z}(\uptheta )_{d}$,
$\bast g\in\bast\tv{Z}(\upeta )_{e}$.   Then
\begin{align}\label{diamonderrorinequality}
 \upnu_{de} (\bast f\boxtimes\bast g) \leq \left(\mathfrak{m}_{d}(\bast f)^{1-e^{-1}} \mathfrak{m}_{e}(\bast g)\cdot \upnu_{d}(\bast f)^{e^{-1}}\right)+ \left( \mathfrak{m}_{d}(\bast f)\mathfrak{m}_{e}(\bast g)^{1-d^{-1}}\cdot\upnu_{e}(\bast g)^{d^{-1}}\right). 
 \end{align}
\end{lemm}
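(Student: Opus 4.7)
The plan is to reduce the estimate (\ref{diamonderrorinequality}) to the purely ``error-theoretic'' inequality
\[
\upzeta_{de}(\bast f\boxtimes\bast g)\;\leq\;\upzeta_d(\bast f)^{1/e}+\upzeta_e(\bast g)^{1/d}
\]
(with $+$ the tropical max), and then to prove this bound by a direct combinatorial expansion. First I would invoke Lemma \ref{simpleerror} for all three polynomials $\bast f, \bast g$ and $\bast f\boxtimes\bast g$, writing each decay as $\mathfrak{m}\cdot\upzeta$, and combine with the Mahler submultiplicativity $\mathfrak{m}_{de}(\bast f\boxtimes\bast g)\leq\mathfrak{m}_d(\bast f)\mathfrak{m}_e(\bast g)$ supplied by Proposition \ref{Mahlersubresultmult}. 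A routine algebraic rewrite identifies the two terms on the right of (\ref{diamonderrorinequality}) with $\mathfrak{m}_d(\bast f)\mathfrak{m}_e(\bast g)\cdot\upzeta_d(\bast f)^{1/e}$ and $\mathfrak{m}_d(\bast f)\mathfrak{m}_e(\bast g)\cdot\upzeta_e(\bast g)^{1/d}$ respectively, so the claim reduces exactly to the displayed $\upzeta$-estimate.

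Next I would identify the roots of $\bast f\boxtimes\bast g$ close to $\uptheta\upeta$. Setting $I = \{i:\bast\upalpha_i\simeq\uptheta\}$ and $J = \{j:\bast\upbeta_j\simeq\upeta\}$, both of which are nonempty since $\bast f(\uptheta)\simeq 0$ and $\bast g(\upeta)\simeq 0$, every product $\bast\upalpha_i\bast\upbeta_j$ with $(i,j)\in I\times J$ is close to $\uptheta\upeta$, and any additional accidental close pair only makes $\upzeta_{de}(\bast f\boxtimes\bast g)$ smaller. Thus it is enough to bound the contribution from $I\times J$. Writing $\bast\updelta_i=\uptheta-\bast\upalpha_i$ and $\bast\upvarepsilon_j=\upeta-\bast\upbeta_j$, the identity
\[
\uptheta\upeta-\bast\upalpha_i\bast\upbeta_j \;=\; \upeta\bast\updelta_i+\bast\upalpha_i\bast\upvarepsilon_j,
\]
combined with $\langle\bast\upalpha_i\rangle=\langle\upeta\rangle=1$ in $\bstar\PR\R$ for $(i,j)\in I\times J$ (under the generic assumption $\uptheta,\upeta\neq 0$; the vanishing cases reduce to trivial estimates), produces the tropical per-factor bound $\langle\uptheta\upeta-\bast\upalpha_i\bast\upbeta_j\rangle\leq\langle\bast\updelta_i\rangle+\langle\bast\upvarepsilon_j\rangle$.

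The heart of the proof, which I expect to be the main obstacle, is the combinatorial expansion of the product $\prod_{(i,j)\in I\times J}(\langle\bast\updelta_i\rangle+\langle\bast\upvarepsilon_j\rangle)$ into the tropical sum $\sum_{S\subseteq I\times J}\prod_i\langle\bast\updelta_i\rangle^{s_i}\prod_j\langle\bast\upvarepsilon_j\rangle^{t_j}$, where $s_i=|\{j:(i,j)\in S\}|$ and $t_j=|\{i:(i,j)\notin S\}|$. The crucial observation is that for any subset $S$, either $s_i\geq 1$ for every $i\in I$, or else some $s_i=0$, in which case necessarily $t_j\geq 1$ for every $j\in J$. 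Since all $\langle\bast\updelta_i\rangle,\langle\bast\upvarepsilon_j\rangle\leq 1$ in $\bstar\PR\R_\upvarepsilon$, the first alternative bounds the $S$-term by $\langle\prod_{i\in I}\bast\updelta_i\rangle$ and the second by $\langle\prod_{j\in J}\bast\upvarepsilon_j\rangle$. Taking the tropical max over $S$ and then $(de)$-th roots, and recognising $\upzeta_d(\bast f)^{1/e}=\langle\prod_{i\in I}\bast\updelta_i\rangle^{1/(de)}$ together with the symmetric identity for $\bast g$, closes the argument.
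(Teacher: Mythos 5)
Your proposal is correct, and its core estimate is the same as the paper's: the tropical expansion of $\prod_{(i,j)\in I\times J}\big(\langle\bast\updelta_i\rangle+\langle\bast\upvarepsilon_j\rangle\big)$ together with the observation that every monomial is divisible by $\prod_{i\in I}\langle\bast\updelta_i\rangle$ or by $\prod_{j\in J}\langle\bast\upvarepsilon_j\rangle$ is exactly the paper's bound (\ref{sumproductoferr}) in the non-simple case. Where you genuinely diverge is in handling the non-close factors: the paper never factors $\upnu_{de}(\bast f\boxtimes\bast g)$ as $\mathfrak{m}\cdot\upzeta$ for the product polynomial, but instead runs an explicit three-case analysis on the size of each root $\bast\upalpha_i$ (bounded non-infinitesimal, infinitesimal, infinite) to replace $\prod_{j}|\uptheta\upeta-\bast\upalpha_i\bast\upbeta_j|$ by $\max(1,|\bast\upalpha_i|)^{e}\,\mathfrak{m}(\bast g)$, arriving directly at $\mathfrak{m}(\bast f)^{e}\mathfrak{m}(\bast g)^{d}\cdot(\upzeta(\bast f)+\upzeta(\bast g))$; you package the same content as Mahler submultiplicativity under $\boxtimes$ (Proposition \ref{Mahlersubresultmult}) plus Lemma \ref{simpleerror} applied to $\bast f\boxtimes\bast g$. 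Your organization is more modular and avoids repeating the case analysis, but it has one point needing care that the paper's route sidesteps: Lemma \ref{simpleerror} is stated for elements of $\bast\tv{Z}(\uptheta)_{d}$, and $\bast f\boxtimes\bast g$ is \emph{not} known to lie in $\bast\tv{Z}(\uptheta\upeta)_{de}$ --- that membership is essentially what Theorem \ref{OrdFinDegIdeoArith} is trying to establish. This is not fatal: the proof of Lemma \ref{simpleerror} uses only that the evaluation point is bounded and that there are finitely many factors, so the identity $\upnu_{de}(\bast f\boxtimes\bast g)=\mathfrak{m}_{de}(\bast f\boxtimes\bast g)\cdot\upzeta_{de}(\bast f\boxtimes\bast g)$ does hold for the product (whose close-root set is nonempty, containing the $\bast\upalpha_i\bast\upbeta_j$ with $(i,j)\in I\times J$), but you must state this extension rather than cite the lemma as given. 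Similarly, Proposition \ref{Mahlersubresultmult} is phrased as an inequality of growth classes, whereas you use $\mathfrak{m}_{de}(\bast f\boxtimes\bast g)\leq\mathfrak{m}_{d}(\bast f)\mathfrak{m}_{e}(\bast g)$ acting on $\upzeta$; either quote its proof, which gives exactly this, or note that the class inequality suffices because the $\bast\R_{+}$-action on $\bstar\PR\R$ preserves order. The remaining steps (discarding accidental close roots, the per-factor bound $\langle\uptheta\upeta-\bast\upalpha_i\bast\upbeta_j\rangle\leq\langle\bast\updelta_i\rangle+\langle\bast\upvarepsilon_j\rangle$ including the degenerate cases $\uptheta=0$ or $\upeta=0$, and the final extraction of $(de)$-th roots) are sound.
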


\begin{proof}  We will show, equivalently, that the unnormalized decay 
\[ \upnu (\bast f\boxtimes\bast g)=\upnu_{de} (\bast f\boxtimes\bast g)^{de}\]
satisfies
\begin{align*}
  \upnu (\bast f\boxtimes\bast g)&\leq \left(\mathfrak{m}(\bast f)^{e-1} \mathfrak{m}(\bast g)^{d}\cdot \upnu(\bast f)\right)+ \left( \mathfrak{m}(\bast f)^{e}\mathfrak{m}(\bast g)^{d-1}\cdot\upnu(\bast g)\right). 
 \end{align*}
 In what follows we will denote $\upzeta(\bast f)=\upzeta_{d}(\bast f)^{d}$, $\upzeta(\bast g)=\upzeta_{e}(\bast g)^{e}$. 
Without loss of generality we may assume that $\deg (\bast f)=d$ and $\deg (\bast g)=e$.  Assume first that $\bast f$, $\bast g$ are simple
and $\not=0$.  Then there exist unique roots, say $\bast\upalpha=\bast\upalpha_{1}$
and $\bast\upbeta=\bast\upbeta_{1}$, so that $\uptheta = \bast\upalpha+\bast\updelta$,  $\upeta = \bast\upbeta+\bast\upvarepsilon$.
By Lemma \ref{simpleerror}, $\upnu (\bast f) = \mathfrak{m}(\bast f)\cdot\upzeta(\bast f)$,
$\upnu (\bast g) = \mathfrak{m}(\bast g)\cdot \upzeta(\bast g)$.
Now
\begin{align}\label{diamonderror1} |(\bast f\boxtimes\bast g)(\uptheta\upeta )| &=a_{d}^{e}b_{e}^{d}
\bigg(\prod_{i=2}^{d}\big(\prod_{j=1}^{e}|\uptheta\upeta -\bast\upalpha_{i}\bast\upbeta_{j}|\big)\bigg)\times 
\prod_{j=1}^{e}(\uptheta\upeta-\bast\upalpha\bast\upbeta_{j}).
\end{align}
The last factor in (\ref{diamonderror1}) can be written 
\[ \prod_{j=1}^{e}|\uptheta\upeta-\bast\upalpha\bast\upbeta_{j}| = |\bast\upalpha\bast\upvarepsilon+\bast\upbeta\bast\updelta+
\bast\updelta\bast\upvarepsilon| \cdot\prod_{j=2}^{e}|\uptheta\upeta-\bast\upalpha\bast\upbeta_{j}|.\] 
Projecting $|\bast\upalpha\bast\upvarepsilon+\bast\upbeta\bast\updelta+
\bast\updelta\bast\upvarepsilon|$  to $\bstar\PR\R_{\upvarepsilon}$ gives 
\[ \langle|\bast\upalpha\bast\upvarepsilon+\bast\upbeta\bast\updelta+
\bast\updelta\bast\upvarepsilon|\rangle\leq \upzeta(\bast f)+ \upzeta(\bast g)\] and therefore
\begin{align*}
\big\langle\prod_{j=1}^{e}|\uptheta\upeta-\bast\upalpha\bast\upbeta_{j}|\big\rangle & \leq
 \bigg( \prod_{j=2}^{e}|\uptheta\upeta-\bast\upalpha\bast\upbeta_{j}|\bigg)\cdot \big( \upzeta(\bast f)+ \upzeta(\bast g)\big) \\
  & = \bigg(|\bast\upalpha|^{e-1}\prod_{j=2}^{e}\big|\big(1+ (\bast\updelta/\bast\upalpha)\big)\upeta-\bast\upbeta_{j}\big|\bigg)\cdot
  \big(\upzeta(\bast f)+ \upzeta(\bast g)\big) \\
  & = \bigg(|\bast\upalpha|^{e-1}\prod_{j=2}^{e}\big|\upeta-\bast\upbeta_{j}\big|\bigg)\cdot
  \big(\upzeta(\bast f)+ \upzeta(\bast g) \big)\\
  & \leq |\bast\upalpha|^{e}\cdot\mathfrak{m}(\bast g)\cdot\big(\upzeta(\bast f)+ \upzeta(\bast g)\big).
\end{align*}
In the above, we were able to insert an extra factor $|\bast\upalpha|$, and replace $|\upeta-\bast\upbeta_{j}|$ by $|\bast\upbeta_{j}|$, since both 
$|\bast\upalpha|$ and $|\upeta|$ belong to $(\bast\R_{\rm fin})^{\times}_{+}$
and hence act trivially by multiplication in $\bstar\PR\R_{\upvarepsilon}$.  The remaining
factors $\prod_{j=1}^{e}|\uptheta\upeta -\bast\upalpha_{i}\bast\upbeta_{j}|$, $i=2,\dots d$, appearing in (\ref{diamonderror1}) can be treated similarly, as follows:  

\vspace{3mm}

\noindent \fbox{\small {\it Case} 1} $\bast\upalpha_{i}$ is bounded, non infinitesimal.  Write $|\uptheta\upeta -\bast\upalpha_{i}\bast\upbeta_{j}|=|\bast\upalpha_{i}||(\uptheta/\bast\upalpha_{i})\upeta-\bast\upbeta_{j}|$.
We may replace $|(\uptheta/\bast\upalpha_{i})\upeta-\bast\upbeta_{j}|$ by $|\bast\upbeta_{j}|$ unless the latter is infinitesimal.  If 
$|\bast\upbeta_{j}|$ is infinitesimal,
$\mathfrak{m}(\bast g)$ does not include the factor $|\bast\upbeta_{j}|$, and moreover, $|(\uptheta/\bast\upalpha)\upeta-\bast\upbeta_{j}|$ is bounded and so may be omitted anyway.  Thus the factor $\prod_{j=1}^{e}|\uptheta\upeta -\bast\upalpha_{i}\bast\upbeta_{j}|$ may be replaced by $|\bast \upalpha_{i}|^{e}\mathfrak{m}(\bast g)\leq \max (|\bast \upalpha_{i}|,1)^{e}\mathfrak{m}(\bast g)$.

\vspace{3mm}

\noindent \fbox{\small {\it Case} 2} $\bast\upalpha_{i}$ is infinitesimal.   There is no need to factor out $|\bast\upalpha_{i}|$ since it will not appear in $\mathfrak{m}(\bast f)$.
The factor $|\uptheta\upeta -\bast\upalpha_{i}\bast\upbeta_{j}|$ will either be 
\begin{itemize}
\item[-] infinite: in which case $\bast\upbeta_{j}$
is infinite, and we can replace $|\uptheta\upeta -\bast\upalpha_{i}\bast\upbeta_{j}|$ by $|\bast\upbeta_{j}|$, with the possible cost of an inequality $\leq$.
\item[-]  bounded: in which case $\bast\upbeta_{j}$ may be either finite or infinite.  We can in either case replace $|\uptheta\upeta -\bast\upalpha_{i}\bast\upbeta_{j}|$ by $|\bast\upbeta_{j}|$, with the possible cost of an $\leq$ inequality in the event that $\bast\upbeta_{j}$ is infinite. 
\end{itemize}
In this case, it follows then that the factor $\prod_{j=1}^{e}|\uptheta\upeta -\bast\upalpha_{i}\bast\upbeta_{j}|$ may be replaced by $\mathfrak{m}(\bast g)= \max (|\bast \upalpha_{i}|,1)^{e}\mathfrak{m}(\bast g)$, with the possible cost of the inequality $\leq$.

\vspace{3mm}

\noindent \fbox{\small {\it Case} 3}  $\bast\upalpha_{i}$ is infinite.  Factor out $|\bast\upalpha_{i}|$ as in {\footnotesize \fbox{{\it Case} 1}}.
Since $(\uptheta/\bast\upalpha)\upeta$ is infinitesimal, we can replace $|(\uptheta/\bast\upalpha)\upeta-\bast\upbeta_{j}|$ by $|\bast\upbeta_{j}|$, except when the latter is infinitesimal, but then $|\bast\upbeta_{j}|$ does not appear in $\mathfrak{m}(\bast g)$ and we can omit it, again with a possible cost
of an $\leq$ inequality.  Once again, the associated factor can be replaced by $\max (|\bast \upalpha_{i}|,1)^{e}\mathfrak{m}(\bast g)$.

\vspace{3mm}

\noindent From the analysis provided above, we conclude
\begin{align*}
\upnu (\bast f\boxtimes\bast g) & \leq\mathfrak{m}(\bast f)^{e}\mathfrak{m}(\bast g)^{d}\cdot 
\big(\upzeta(\bast f)+ \upzeta(\bast g)\big) \\
 & = \left(\mathfrak{m}(\bast f)^{e-1} \mathfrak{m}(\bast g)^{d}\cdot\upnu(\bast f)\right)+ \left( \mathfrak{m}(\bast f)^{e} \mathfrak{m}(\bast g)^{d-1}\cdot \upnu(\bast g)\right)
\end{align*}
as claimed.  Now consider the case of nonsimple polynomial diophantine approximations $\bast f$, $\bast g$
in which $\bast\upalpha_{i_{1}}\simeq \uptheta,\dots ,\bast\upalpha_{i_{r}}\simeq \uptheta$ and 
$\bast \upbeta_{j_{1}}\simeq\upeta ,\dots \bast \upbeta_{j_{s}}\simeq\upeta $.  Arguing as we did in the simple case we obtain the inequality
\[ \upnu (\bast f\boxtimes\bast g)\leq \mathfrak{m}(\bast f)^{e}\cdot  \mathfrak{m}(\bast g)^{d}
\prod_{k=1}^{r}\prod_{l=1}^{s} \langle|\bast\updelta_{i_{k}}+\bast\upvarepsilon_{j_{l}}|\rangle  \]
where $\uptheta = \bast \upalpha_{i_{k}}+\bast \updelta_{i_{k}}$,  $\upeta = \bast \upbeta_{j_{l}}+\bast \upvarepsilon_{j_{l}}$.
But
\begin{align}   \prod_{k=1}^{r}\prod_{l=1}^{s} \langle|\bast\updelta_{i_{k}}+\bast\upvarepsilon_{j_{l}}|\rangle &  \leq
\prod_{k=1}^{r}\prod_{l=1}^{s}\big( \langle|\bast\updelta_{i_{k}}|\rangle+ \langle|\bast\upvarepsilon_{j_{l}}|\rangle\big) \nonumber \\
& = \sum \langle |\bast\updelta_{i_{1}}^{u_{1}}\cdots \bast\updelta_{i_{r}}^{u_{s}}\cdot \bast \upvarepsilon_{j_{1}}^{v_{1}}\cdots\bast \upvarepsilon_{j_{s}}^{v_{s}}|\rangle
\label{sumproductoferr}\end{align}
where the sum is over exponents $(u_{1},\dots u_{s}; v_{1},\dots ,v_{s})$ whose sum is $rs$ and for which $0\leq u_{1},\dots , u_{r}\leq s$
and $0\leq v_{1},\dots , v_{s}\leq r$.  But each summand in (\ref{sumproductoferr}) contains a factor of either $\langle\prod_{k=1}^{r}|\bast\updelta_{i_{k}}|\rangle$
or $\langle\prod_{l=1}^{s}|\bast\upvarepsilon_{j_{l}}|\rangle$, and hence is bounded by one or the other.  It follows that
\[  \prod_{k=1}^{r}\prod_{l=1}^{s} \langle|\bast\updelta_{i_{k}}+\bast\upvarepsilon_{j_{l}}|\rangle\leq 
\langle\prod_{k=1}^{r}|\bast\updelta_{i_{k}}|\rangle+ \langle\prod_{l=1}^{s}|\bast\upvarepsilon_{j_{l}}|\rangle
=\upzeta(\bast f)+ \upzeta(\bast g) ,\] 
%\leq 
%\prod_{k}\bast\updelta_{i_{k}}+ \prod_{l}\bast\upvarepsilon_{j_{l}}
%=\upzeta(\bast f)+ \upzeta(\bast g) 
from which we derive the desired conclusion in this case.
%Moreover, 
%\begin{align*}  \upnu (\bast f\boxtimes\bast g) & = \mathfrak{m}(\bast f)^{d'}\mathfrak{m}(\bast g)^{d}\cdot\langle |\bast\updelta|+
%|\bast\upvarepsilon|\rangle \\
%& = \left(\mathfrak{m}(\bast f)^{d'-1} \mathfrak{m}(\bast g)^{d}\cdot \upnu (\bast f)\right)+ \left( \mathfrak{m}(\bast f)^{d'} \mathfrak{m}(\bast g)^{d-1}\cdot %\upnu (\bast g)\right).
%\end{align*}
\end{proof}
\begin{theo}[Ordinary Approximate Ideal Arithmetic--Finite Degree]\label{OrdFinDegIdeoArith} There is a well-defined Cauchy bilinear map given by the $\boxtimes$-product
\[   \bast\tv{Z}^{\upmu}_{\upmu^{e-1}\cdot\upnu^{e}}(\uptheta )_{d} \times \bast\tv{Z}^{\upnu}_{\upmu^{d}\cdot\upnu^{d-1}}(\upeta )_{e}\stackrel{\boxtimes}{\longrightarrow}  \bast\tv{Z}^{\upmu\cdot\upnu}(\uptheta\upeta )_{de}\]
which agrees with the linear approximate ideal product when $d=e=1$.
\end{theo}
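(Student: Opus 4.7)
The plan is to show that for $\bast f\in\bast\tv{Z}^{\upmu}_{\upmu^{e-1}\upnu^{e}}(\uptheta)_{d}$ and $\bast g\in\bast\tv{Z}^{\upnu}_{\upmu^{d}\upnu^{d-1}}(\upeta)_{e}$, the product $\bast f\boxtimes\bast g$ belongs to $\bast\tv{Z}^{\upmu\cdot\upnu}(\uptheta\upeta)_{de}$. This breaks into three verifications: degree, growth, and approximate vanishing. The degree bound $\deg(\bast f\boxtimes\bast g)=\deg\bast f\cdot\deg\bast g\leq de$ is the defining multiplicativity property of $\boxtimes$ recorded just after Theorem \ref{resulininteral}, and the growth bound $\upmu_{de}(\bast f\boxtimes\bast g)>\upmu\cdot\upnu$ is immediate from the super-multiplicativity of normalized growth proved in Proposition \ref{Mahlersubresultmult}.

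The main step is to show $(\bast f\boxtimes \bast g)(\uptheta\upeta)\simeq 0$, or equivalently that $\upnu_{de}(\bast f\boxtimes\bast g)$ is an infinitesimal class. I would apply Lemma \ref{diamonderror} to each of its two summands separately. From the strict growth inequalities $\upmu_{d}(\bast f)>\upmu$, $\upmu_{e}(\bast g)>\upnu$ in $\bstar\PR\R$ and the height--Mahler-measure equivalence (\ref{heightmahlequiv}), I may write $\mathfrak{m}_{d}(\bast f)=\upmu^{-1}\bast\upiota_{1}$ and $\mathfrak{m}_{e}(\bast g)=\upnu^{-1}\bast\upiota_{2}$ with $\bast\upiota_{1},\bast\upiota_{2}\in\bstar\PR\R_{\upvarepsilon}$. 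Substituting these together with the decay hypotheses $\upnu_{d}(\bast f)\leq \upmu^{e-1}\upnu^{e}$ and $\upnu_{e}(\bast g)\leq \upmu^{d}\upnu^{d-1}$ into the first summand of Lemma \ref{diamonderror} yields
\[
\mathfrak{m}_{d}(\bast f)^{1-e^{-1}}\mathfrak{m}_{e}(\bast g)\upnu_{d}(\bast f)^{e^{-1}}\leq \upmu^{-(1-e^{-1})}\bast\upiota_{1}^{1-e^{-1}}\cdot \upnu^{-1}\bast\upiota_{2}\cdot \upmu^{1-e^{-1}}\upnu=\bast\upiota_{1}^{1-e^{-1}}\bast\upiota_{2},
\]
which is infinitesimal (the Frobenius action by $1-e^{-1}$ preserves $\bstar\PR\R_{\upvarepsilon}$ by Theorem \ref{FrobProp}, and the case $e=1$ is trivial). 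The second summand reduces, symmetrically, to $\bast\upiota_{1}\bast\upiota_{2}^{1-d^{-1}}$, also infinitesimal. Their tropical sum is infinitesimal, completing the main step.

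Cauchy bilinearity of the map is immediate from Proposition \ref{distribute}. For the agreement with the linear product (\ref{gdprodintro}) at $d=e=1$, the exponents collapse to $\upmu^{0}\upnu=\upnu$ and $\upmu\upnu^{0}=\upmu$, so the source groups become $\bast\tv{Z}^{\upmu}_{\upnu}(\uptheta)_{1}$ and $\bast\tv{Z}^{\upnu}_{\upmu}(\upeta)_{1}$; a direct calculation gives $(\bast nX-\bast n^{\perp})\boxtimes(\bast mX-\bast m^{\perp})=\bast n\bast m\,X-\bast n^{\perp}\bast m^{\perp}$, which is the image of the linear product $\bast n\cdot \bast m$ under the inclusion $\bast n\mapsto\bast nX-\bast n^{\perp}$ of Paragraph 2. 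I expect the main obstacle to be the tropical bookkeeping in the main step: the asymmetric exponents $e-1,e,d,d-1$ in the decay hypotheses were engineered precisely so that every power of $\upmu$ and $\upnu$ cancels in each summand of Lemma \ref{diamonderror}, leaving only a product of positive powers of the infinitesimals $\bast\upiota_{1},\bast\upiota_{2}$.
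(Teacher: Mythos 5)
Your proposal is correct and follows essentially the same route as the paper: the decay bound comes from Lemma \ref{diamonderror} with the exponents in the decay hypotheses cancelling the powers of $\upmu$ and $\upnu$, and the growth bound comes from Proposition \ref{Mahlersubresultmult}. You merely make explicit the tropical cancellation, the Cauchy bilinearity via Proposition \ref{distribute}, and the $d=e=1$ computation $(\bast nX-\bast n^{\perp})\boxtimes(\bast mX-\bast m^{\perp})=\bast n\bast m\,X-\bast n^{\perp}\bast m^{\perp}$, all of which the paper leaves implicit.
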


\begin{proof}  This is a fairly straightforward consequence of Lemma \ref{diamonderror}.  If $\bast f\in  \bast\tv{Z}^{\upmu}_{\upmu^{e-1}\cdot\upnu^{e}}(\uptheta )_{d}$,
$\bast g\in\bast\tv{Z}^{\upnu}_{\upmu^{d}\cdot\upnu^{d-1}}(\upeta )_{e}$ then 
$\upnu_{de}(\bast f\boxtimes\bast g)\in\bstar\PR\R_{\upvarepsilon}$ since $\upnu_{d}(\bast f)\leq \upmu^{e-1}\cdot\upnu^{e}$ and
$\upnu_{e}(\bast g)\leq \upmu^{d}\cdot\upnu^{d-1}$ imply that the right hand side of (\ref{diamonderrorinequality}) is infinitesimal.
Moreover, by Proposition \ref{Mahlersubresultmult},
\[\upmu\cdot\upnu<\upmu_{de}(\bast f\boxtimes\bast g).\]
%Let $\bast f\in \bast\tv{Z}^{\upmu}_{\upmu^{e-1}\cdot\upnu^{d}}(\uptheta )_{d},\bast $.  
%Thus, we can guarantee that $\bast f\boxtimes \bast g\in\bast\tv{Z}(\uptheta\upeta )$
%by taking the choices
%\[    \upnu = \upmu^{d'-1}\cdot(\upmu')^{d},\quad \upnu' = \upmu^{d'}\cdot(\upmu')^{d-1}.   \]
%Notice that for $d=d'=1$, we get the usual cross-term index choice for linear growth-decay arithmetic.  We arrive at the following
\end{proof}

\begin{coro}[Frobenius Approximate Ideal Arithmetic--Finite Degree]\label{FrobFinDegreeCor}
Let $d,e\in\N$ and suppose that $\bar{\upmu}\geq\bar{\upnu}$.  Then
\begin{enumerate}
\item If $d,e\not=1$, there is a well-defined Cauchy bilinear pairing
\[   \bast\tv{Z}^{\bar{\upmu}}_{\bar{\upnu}}(\uptheta )_{d} \times\bast\tv{Z}^{\bar{\upnu}}_{\bar{\upnu}}(\upeta )_{e}\stackrel{\boxtimes}{\longrightarrow}  \bast\tv{Z}^{\bar{\upnu}}(\uptheta\upeta )_{de}.\]  
\item If $d=1$ and $e\not=1$ there is a well-defined Cauchy bilinear pairing
\[   \bast\tv{Z}^{\bar{\upmu}}_{\bar{\upnu}}(\uptheta )_{1} \times\bast\tv{Z}^{\bar{\upnu}}_{\bar{\upmu}}(\upeta )_{e}\stackrel{\boxtimes}{\longrightarrow}  \bast\tv{Z}^{\bar{\upnu}}(\uptheta\upeta )_{e}\quad  \]
and a similar pairing in the case $d\not=1$ and $e=1$.
\item If $d=1=e$, there is a well-defined Cauchy bilinear pairing
\[   \bast\tv{Z}^{\bar{\upmu}}_{\bar{\upnu}}(\uptheta )_{1} \times\bast\tv{Z}^{\bar{\upnu}}_{\bar{\upmu}}(\upeta )_{1}\stackrel{\boxtimes}{\longrightarrow}  \bast\tv{Z}^{\bar{\upnu}}(\uptheta\upeta )_{1} \]
which is just the Frobenius image of the linear (multiplicative) approximate ideal product.
\end{enumerate}
\end{coro}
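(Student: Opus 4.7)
The plan is to establish all three parts through a unified calculation, combining Lemma \ref{diamonderror} (decay bound for $\boxtimes$) and Proposition \ref{Mahlersubresultmult} (growth bound for $\boxtimes$), then passing to Frobenius classes with the aid of Proposition \ref{tropsubistropprod}, which identifies tropical product with minimum on $\bstar\overline{\PR\R}_\upvarepsilon$. The essential simplification is that raising a Frobenius class to any standard positive rational exponent is trivial on $\bstar\overline{\PR\R}$, so the awkward exponents $1-e^{-1}, e^{-1}, 1-d^{-1}, d^{-1}$ appearing in the ordinary decay bound of Theorem \ref{OrdFinDegIdeoArith} disappear upon passage to Frobenius.

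First I would handle the output growth, which is identical in all three cases. Proposition \ref{Mahlersubresultmult} gives
\[ \bar{\upmu}_{de}(\bast f\boxtimes\bast g) \;\geq\; \bar{\upmu}_d(\bast f)\cdot\bar{\upmu}_e(\bast g); \]
since both factors lie in $\bstar\overline{\PR\R}_\upvarepsilon$, Proposition \ref{tropsubistropprod} equates this product with $\min(\bar{\upmu}_d(\bast f), \bar{\upmu}_e(\bast g))$, which by the input hypotheses and $\bar{\upmu}\geq\bar{\upnu}$ strictly exceeds $\min(\bar{\upmu},\bar{\upnu}) = \bar{\upnu}$. Hence $\bar{\upmu}_{de}(\bast f\boxtimes\bast g) > \bar{\upnu}$, as required for the output filtration.

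For the decay, and hence for the assertion $\bast f\boxtimes\bast g\in\bast\tv{Z}(\uptheta\upeta)_{de}$, I would apply Lemma \ref{diamonderror} to obtain $\upnu_{de}(\bast f\boxtimes\bast g)\leq T_1+T_2$ and then compute Frobenius classes. Using $[\mathfrak{m}_k(\bast h)] = \bar{\upmu}_k(\bast h)^{-1}$ together with $[x^r]=[x]$ for standard $r>0$, the two terms reduce to
\[ [T_1] = \bar{\upmu}_d(\bast f)^{-1}\bar{\upmu}_e(\bast g)^{-1}\bar{\upnu}_d(\bast f), \qquad [T_2] = \bar{\upmu}_d(\bast f)^{-1}\bar{\upmu}_e(\bast g)^{-1}\bar{\upnu}_e(\bast g), \]
with the convention $\bar{x}^0 = \bar{1}$ in the cases where $d$ or $e$ equals $1$ (causing the corresponding Mahler-measure factor to drop out). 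For case (1) the requirement $[T_i]<\bar{1}$ reduces to $\bar{\upnu}_d(\bast f), \bar{\upnu}_e(\bast g) < \bar{\upmu}_d(\bast f)\bar{\upmu}_e(\bast g)$, each a consequence of the hypotheses $\bar{\upnu}_d(\bast f),\bar{\upnu}_e(\bast g)\leq\bar{\upnu}$ combined with the growth estimate just performed. In case (2), where $d=1$, $[T_2]$ simplifies to $\bar{\upmu}_1(\bast f)^{-1}\bar{\upnu}_e(\bast g)$ and the swapped hypothesis $\bar{\upnu}_e(\bast g)\leq\bar{\upmu}<\bar{\upmu}_1(\bast f)$ immediately yields $[T_2]<\bar{1}$; the $T_1$ analysis parallels case (1). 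Case (3) either degenerates by the same method (both $T_1$ and $T_2$ pick up $\mathfrak{m}_1(\cdot)^0 = 1$ factors) or, as indicated in the statement, is visibly the Frobenius image of the linear product (\ref{gdprodintro}) of \cite{Ge0} under the embedding $\bast n\mapsto \bast n X - \bast n^\perp$.

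Cauchy bilinearity in each case is immediate from the distributivity of $\boxtimes$ over the Cauchy product established in Proposition \ref{distribute}. The step I expect to require the most care is the bookkeeping of Frobenius class arithmetic for $T_1$ and $T_2$: inverses such as $\bar{\upmu}_d(\bast f)^{-1}$ must be manipulated in the multiplicative group $\bstar\overline{\PR\R}$, while Proposition \ref{tropsubistropprod} may be invoked to convert products into minima only once the relevant expression has been arranged into a product of elements of $\bstar\overline{\PR\R}_\upvarepsilon$. It is precisely this collapse, together with the triviality of standard positive powers on Frobenius classes, that accounts for the uniform output class $\bar{\upnu}$, independent of the particular exponents appearing in the ordinary decay filtration indices.
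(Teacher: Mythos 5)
Your proposal is correct and follows essentially the same route as the paper: the paper simply cites Theorem \ref{OrdFinDegIdeoArith} and uses Proposition \ref{tropsubistropprod} to collapse $\bar{\upmu}\cdot\bar{\upnu}$ to $\bar{\upnu}$, while you unwind that theorem into its ingredients (Lemma \ref{diamonderror} and Proposition \ref{Mahlersubresultmult}) and perform the same collapse, noting that standard positive powers act trivially on Frobenius classes so the exponents $1-e^{-1}$, $e^{-1}$, etc.\ disappear. This is the identical argument, just carried out one level lower in the deduction chain.
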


\begin{proof}  This is immediate from Theorem \ref{OrdFinDegIdeoArith}, where we use Proposition \ref{tropsubistropprod}
to conclude that $\bar{\upmu}\cdot\bar{\upnu}=\bar{\upnu}$.
\end{proof}

\section{Polynomial Approximate Ideal Arithmetic II: Infinite Degree}\label{polygdasec2}

We now consider growth-decay arithmetic for infinite degree polynomial approximations, which takes place within
$\bast\tilde{\tv{Z}}$.   Given $\bast f=\bast\{ f_{i}\}\in\bast\tilde{\tv{Z}}$, by a {\bf root} of $\bast f$
we mean $\bast \upalpha = \bast\{\upalpha_{i}\}\in\bast\bar{\Q}$ such that $\bast \{f_{i}(\upalpha_{i})\}=0$ in 
$\bast\bar{\Q}$.  
%If we denote by $\mathbbm{r}(f)$ the set of roots of the standard polynomial $f\in\Z [X]$ counted with multiplicity, then 
The set of roots of $\bast f$ (in which roots are repeated according to multiplicity) can be identified with the ultraproduct 
\[ \mathbbm{r}(\bast f):=\prod_{\mathfrak{u}} \mathbbm{r}(f_{i})\]  where $\mathfrak{u}$
is the ultrafilter used throughout this paper to define the ultrapowers $\bast\Z,\bast\R$, etc.  In particular, if
$\bast f\in\bast\tv{Z}$, $|\mathbbm{r}(\bast f)|<\infty$; otherwise, for $\bast f\in  \bast\tilde{\tv{Z}}-\bast\tv{Z}$,
 $|\mathbbm{r}(\bast f)|>\aleph_{0}$.
%in fact, if $r(f)=\{\upalpha|f(\upalpha )=0\}$ (counted with multiplicity) and $\bast f = \bast \{f_{i}\}$ then $r(\bast f)$
%is the ultraproduct of the $r(f_{i})$.
The infinite degree case is complicated by the existence of diophantine approximations $\bast f\in\bast\tilde{\tv{Z}}(\uptheta )$
having infinitely many roots for which 
$\uptheta\not\simeq \bast \upalpha$. 
Consider $\uptheta\not=0$ and $f=a_{d}\prod (X-\upalpha_{i})\in\Z [X]$ standard.  Define the  $\boldsymbol\uptheta${\bf -Mahler measure}
and the $\boldsymbol\uptheta${\bf -disk measure} by
\[  \mathfrak{m}(f)_{\uptheta} :=a_{d}\prod \max (|\uptheta -\upalpha_{i}|,1),\quad \mathfrak{z}(f)_{\uptheta} :=
a_{d}\prod_{|\uptheta -\upalpha_{i}|\leq 1} |\uptheta -\upalpha_{i}|=
|f(\uptheta )|/ \mathfrak{m}_{\uptheta}(f)  .\]
Notice that $\mathfrak{m}(f)_{\uptheta}=\mathfrak{m}(f_{\uptheta})$ and $\mathfrak{z}(f)_{\uptheta}=\mathfrak{z}(f_{\uptheta})$,
 where $f_{\uptheta}(X):=f(X-\uptheta)$.
These definitions extend in the usual way to $\bast f\in\bast \tilde{\tv{Z}}$.   If $\bast f\in\bast \tilde{\tv{Z}}(\uptheta )_{\bast d}$,
the analogue of Lemma \ref{simpleerror}
\[  \upnu_{\bast d} (\bast f) = \mathfrak{m}_{\bast d}(\bast f)_{\uptheta}\cdot \upzeta_{\bast d}(\bast f)_{\uptheta}  \]
(where $ \upzeta(\bast f)_{\uptheta}:=\langle \mathfrak{z}(f)_{\uptheta} \rangle$ and the sub index $\bast d$ indicates the degree normalized versions) follows by definition.

Given $f\in\Z[X]$, a $\boldsymbol\uptheta$-{\bf disk root} is a root $\upalpha$ of $f$ for which $|\uptheta -\upalpha|<1$.
We may extend this definition to $\bast f\in\bast\tilde{\tv{Z}}(\uptheta )$, declaring that a root  $\bast\upalpha $ of $\bast f$ 
is a $\uptheta$-disk root if $|\uptheta -\bast \upalpha|<1$. 
Let $\Updelta_{\uptheta}(\bast f)$ denote the set of $\uptheta$-disk roots of $\bast f$; note that we may identify
$\Updelta_{\uptheta}(\bast f)$ as an ultraproduct:
\[  \Updelta_{\uptheta}(\bast f) = \prod_{\mathfrak{u}}\Updelta_{\uptheta}( f_{i}). \]
It follows from the definitions that
\[ \mathfrak{z}(\bast f)_{\uptheta} =\prod_{ \Updelta_{\uptheta}(\bast f)} |\uptheta-\bast\upalpha|  :=
\bast\big\{  \prod_{\upalpha\in \Updelta_{\uptheta}( f_{i})} |\uptheta-\upalpha|  \big\}.
\]

%If $\Updelta_{\uptheta}(\bast f)\not=\emptyset$, the $\uptheta$-{\bf {\small root radius}} of $\bast f$ is defined to be 
%\[ {\rm rad}_{\uptheta}(\bast f):= \sup_{\bast \upalpha\in \Updelta_{\uptheta}(\bast f)}  {\rm std}(|\uptheta-\bast \upalpha|) ,\]
%where ${\rm std}(\bast x)$ is the standard part of $\bast x\in\bast\R_{\rm fin}$. 

We will call a root $\bast\upalpha$ of $\bast f\in\bast\tilde{\tv{Z}}(\uptheta )$ which satisfies $\uptheta\simeq\bast\upalpha$ a $\boldsymbol\uptheta$-{\bf root},
the set of which is denoted 
\[\mathbb{r}_{\uptheta}(\bast f)\subset\Delta_{\uptheta}(\bast f).\]
Note that $\mathbb{r}_{\uptheta}(\bast f)$ is not in general an ultraproduct: rather, it may be expressed as a union of ultraproducts
\[ \mathbb{r}_{\uptheta}(\bast f) = \bigcup_{\bast \upvarepsilon}\prod_{\mathfrak{u}}  \mathbb{r}_{\upvarepsilon_{i}}( f_{i}) \]
where $\bast \upvarepsilon = \bast \{ \upvarepsilon_{i}\}\in(\bast\R_{\upvarepsilon})_{+}$ and  $\mathbb{r}_{\upvarepsilon_{i}}( f_{i})$ is
the set of roots of $f_{i}$ with absolute value less than $\upvarepsilon_{i}$.
%where the subset $\{\upalpha_{i_{k}}\}\subset \mathbb{r}(f_{i})$ (= the root set of $f_{i}$) satisfies $|\uptheta -\upalpha_{i_{k}}|<1/i$.

We distinguish two classes of polynomial diophantine approximations of possibly infinite degree.
We say that $\bast f=\bast\{ f_{i}\}\in\bast\tilde{\tv{Z}}(\uptheta )$ is
\begin{itemize}
%\item {\bf {\small pure}} if  ${\rm rad}_{\uptheta}(\bast f)=0$, or equivalently, if $\mathbb{r}_{\uptheta}(\bast f)=\Delta_{\uptheta}(\bast f)$.  Thus
%$\upzeta_{\uptheta}(\bast f)$ is defined
%exclusively using $\uptheta$-roots.  The set of pure approximations defines a Cauchy sub semigroup
%\[ \bast\tv{Z}(\uptheta )\subset\bast\tilde{\tv{Z}}(\uptheta )^{\text{\tt p}}\subset \bast\tilde{\tv{Z}}(\uptheta ). \] 
\item {\bf pure} if  
%${\rm rad}_{\uptheta}(\bast f)\not=0$ but 
$\Updelta_{\uptheta}(\bast f)-\mathbb{r}_{\uptheta}(\bast f)$
is finite or equivalently, 
\begin{align}\label{ultraqualofroots} \mathbb{r}_{\uptheta}(\bast f) = \prod_{\mathfrak{u}}  \mathbb{r}_{\upvarepsilon_{i}}( f_{i})
\end{align}
for some $\bast \upvarepsilon\in (\bast\R_{\upvarepsilon})_{+}$.   In this case, the contributions
from $\Updelta_{\uptheta}(\bast f)-\mathbb{r}_{\uptheta}(\bast f)$ to $\upzeta(\bast f)_{\uptheta}$ can be omitted.
The set of pure approximations defines a Cauchy sub semigroup:
\[ \bast\tv{Z}(\uptheta )\subset  \bast\tilde{\tv{Z}}(\uptheta )^{\text{\tt p}}\subset\bast\tilde{\tv{Z}}(\uptheta ).\] 
\item {\bf mixed} if 
%$\mathbb{r}_{\uptheta}(\bast f)\not=\emptyset$ but
$\Updelta_{\uptheta}(\bast f) -\mathbb{r}_{\uptheta}(\bast f)$ is infinite.  Here the contributions to $\upzeta(f)_{\uptheta}$ 
coming from $\Updelta_{\uptheta}(\bast f)-\mathbb{r}_{\uptheta}(\bast f)$ cannot be ignored. The set of mixed approximations defines a Cauchy sub semigroup
\[ \bast\tilde{\tv{Z}}(\uptheta )^{\text{\tt m}} \subset \bast\tilde{\tv{Z}}(\uptheta ).  \] 
%\item  {\bf {\small virtual}} if $\mathbb{r}_{\uptheta}(\bast f)=\emptyset$.  The set of virtual approximations is a Cauchy  sub semigroup of 
%$\bast\tilde{\tv{Z}}(\uptheta )^{\text{\tt m}}$ e.g.
%\[\bast\tilde{\tv{Z}}(\uptheta )^{\text{\tt v}}\subset\bast\tilde{\tv{Z}}(\uptheta )^{\text{\tt m}} . \]
\end{itemize}
Notice that
\[ \bast\tilde{\tv{Z}}(\uptheta )= \bast\tilde{\tv{Z}}(\uptheta )^{\text{\tt p}} \sqcup\bast\tilde{\tv{Z}}(\uptheta )^{\text{\tt m}} \]
as Cauchy semigroups and $\bast\tilde{\tv{Z}}(\uptheta )^{\text{\tt p}}\cdot \bast\tilde{\tv{Z}}(\uptheta )^{\text{\tt m}}\subset\bast\tilde{\tv{Z}}(\uptheta )^{\text{\tt m}}$.

 In order to state the main theorems regarding infinite degree approximate ideal arithmetic, we need to introduce
 the following notation: 
\[ \bast\tilde{\tv{Z}}_{\upnu^{\bast e}}^{\upmu }(\uptheta )_{\bast d}:=\{  \bast f\in \bast\tilde{\tv{Z}}(\uptheta )_{\bast d}|\;
\upmu_{\bast d}(\bast f)>\upmu\in \bstar\PR\R_{\upvarepsilon}, \upnu_{\bast d}(\bast f)^{1/\bast e}\leq \upnu
 \}\]
 where $\bast e\in\bast\Z$.  The exponential expression  $\upnu^{\bast e}$ is not necessarily defined since $\bast e$ may be infinite,
 which is why $\bast\tilde{\tv{Z}}_{\upnu^{\bast e}}^{\upmu }(\uptheta )_{\bast d}$ requires definition.

\begin{theo}[Ordinary Polynomial Approximate Ideal Arithmetic--Pure]\label{ordininfdegsemipure}  Let $\bast f\in \bast\tilde{\tv{Z}}(\uptheta )^{\text{\tt p}}_{\bast d}$, $\bast g\in \bast\tilde{\tv{Z}}(\upeta )^{\text{\tt p}}_{\bast e}$.  Then 
\begin{align}\label{semipureineq} \upnu_{\bast d\bast e} (\bast f\boxtimes\bast g) \leq \left(\mathfrak{m}_{\bast d}(\bast f)^{1-\bast e^{-1}} 
\mathfrak{m}_{\bast e}(\bast g)\cdot \upnu_{\bast d}(\bast f)^{\bast e^{-1}}\right)+ \left( \mathfrak{m}_{\bast d}(\bast f) \mathfrak{m}_{\bast e}(\bast g)^{1-\bast d^{-1}}\cdot \upnu_{\bast e}(\bast g)^{\bast d^{-1}}\right). 
\end{align}
In particular, there is a well-defined map
\[   \bast\tilde{\tv{Z}}^{\upmu}_{(\upmu^{1-\bast e^{-1}}\cdot\upnu)^{\bast e}}(\uptheta )^{\text{\tt p}}_{\bast d} \times\bast\tilde{\tv{Z}}^{\upnu}_{(\upmu\cdot\upnu^{1-\bast d^{-1}})^{\bast d}}(\upeta )^{\text{\tt p}}_{\bast e}\stackrel{\boxtimes}{\longrightarrow}  \bast\tilde{\tv{Z}}^{\upmu\cdot\upnu}(\uptheta\upeta )_{\bast d\bast e}.\]
\end{theo}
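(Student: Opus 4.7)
The main content is the inequality (\ref{semipureineq}); once it is in hand, the Cauchy bilinear product map follows by direct calculation. Indeed, writing $\mathfrak{m}_{\bast d}(\bast f)\cdot\upmu=:\upxi_{1}$ and $\mathfrak{m}_{\bast e}(\bast g)\cdot\upnu=:\upxi_{2}$ (both infinitesimal by the growth hypotheses $\upmu_{\bast d}(\bast f)>\upmu$, $\upmu_{\bast e}(\bast g)>\upnu$) and using $\upnu_{\bast d}(\bast f)^{\bast e^{-1}}\leq\upmu^{1-\bast e^{-1}}\upnu$, substitution bounds the first summand on the right of (\ref{semipureineq}) by $\upxi_{1}^{1-\bast e^{-1}}\upxi_{2}\in\bstar\PR\R_{\upvarepsilon}$ and, symmetrically, the second. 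Thus $\upnu_{\bast d\bast e}(\bast f\boxtimes\bast g)$ is infinitesimal; combined with $\upmu_{\bast d\bast e}(\bast f\boxtimes\bast g)\geq\upmu\cdot\upnu$ from Proposition \ref{Mahlersubresultmult}, this yields $\bast f\boxtimes\bast g\in\bast\tilde{\tv{Z}}^{\upmu\upnu}(\uptheta\upeta)_{\bast d\bast e}$.

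To prove (\ref{semipureineq}), the strategy is to adapt the proof of Lemma \ref{diamonderror} to the infinite-degree pure setting. First I would record the infinite-degree analogue of Lemma \ref{simpleerror}, namely $\upnu_{\bast d}(\bast f)=\mathfrak{m}_{\bast d}(\bast f)_{\uptheta}\cdot\upzeta_{\bast d}(\bast f)_{\uptheta}$, which holds by definition. The purity condition (\ref{ultraqualofroots}) makes $\Updelta_{\uptheta}(\bast f)-\mathbb{r}_{\uptheta}(\bast f)$ finite, so the contribution to $\mathfrak{z}(\bast f)_{\uptheta}$ from disk roots that are not $\uptheta$-roots is a finite product of bounded noninfinitesimal reals which, after $\bast d$-normalization, lies in $(\bast\R_{\rm fin})_{+}^{\times}$ and is trivial in $\bstar\PR\R_{\upvarepsilon}$; similarly $\mathfrak{m}(\bast f)_{\uptheta}/\mathfrak{m}(\bast f)$ is bounded by $\max(|\uptheta|,1)^{\deg\bast f}$ and normalizes trivially. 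Hence in $\bstar\PR\R_{\upvarepsilon}$ one may freely interchange $\mathfrak{m}$ with $\mathfrak{m}_{\uptheta}$ and reduce $\upzeta(\bast f)_{\uptheta}$ to the product over $\mathbb{r}_{\uptheta}(\bast f)$.

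The heart of the argument is to run Lemma \ref{diamonderror}'s case analysis coordinate by coordinate. Writing $\bast f=\bast\{f_{i}\}$ and $\bast g=\bast\{g_{i}\}$, each $f_{i},g_{i}$ has finite degrees $d_{i},e_{i}$, and purity provides witnesses $\bast\upvarepsilon,\bast\updelta\in(\bast\R_{\upvarepsilon})_{+}$ that partition the roots uniformly into those $\upvarepsilon_{i}$-close to $\uptheta$ (respectively $\updelta_{i}$-close to $\upeta$) and the rest. The finite-degree argument of Lemma \ref{diamonderror} then applies verbatim at each coordinate to produce
\[|(f_{i}\boxtimes g_{i})(\uptheta\upeta)|\leq 2^{d_{i}e_{i}}\mathfrak{m}(f_{i})^{e_{i}}\mathfrak{m}(g_{i})^{d_{i}}\bigl(\mathfrak{z}(f_{i})_{\uptheta}+\mathfrak{z}(g_{i})_{\upeta}\bigr),\]
the factor $2^{d_{i}e_{i}}$ absorbing the bounded corrections from the sub-case analysis. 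Lifting via \L o\'{s}'s theorem and taking $\bast d\bast e$-th roots --- so that $2^{d_{i}e_{i}/(\bast d\bast e)}\leq 2$ becomes trivial in $\bstar\PR\R_{\upvarepsilon}$ and the ordinary sum passes to its tropical maximum --- and re-expressing the $\mathfrak{z}$-terms as $\upnu/\mathfrak{m}_{\uptheta}$ via the identity of the previous paragraph produces (\ref{semipureineq}).

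The main obstacle is the bookkeeping of these bounded corrections. Lemma \ref{diamonderror}'s case analysis classifies each root as bounded, infinitesimal, or infinite --- a global trichotomy on $\bast\R$ which cannot be applied to infinitely many roots simultaneously. Working at each coordinate resolves this locally, but then one must verify that the aggregate correction lies in $(\bast\R_{\rm fin})_{+}^{\times}$ after $\bast d\bast e$-normalization. The purity condition is indispensable precisely here: it guarantees a clean ultraproduct structure on the $\uptheta$-roots (witnessed by $\bast\upvarepsilon$) with only a uniformly finite non-$\uptheta$ residue to track. In the mixed case this residue can be unbounded, which is why a separate treatment is needed to reach the fully general product of Theorem \ref{semivirtualarith}.
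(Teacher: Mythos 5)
Your proposal is correct and takes essentially the same route as the paper's proof: purity is used to present the $\uptheta$-roots as an ultraproduct so that the splitting of the factors $|\uptheta\upeta-\upalpha_{j_i}\upbeta_{k_i}|$ into close pairs versus the residual product can be carried out coordinatewise, the residual factors are absorbed into the Mahler measures via a uniform bound $C\max(1,|\upalpha_{j_i}|)\max(1,|\upbeta_{k_i}|)$ (the paper's inequality (\ref{theCbound})), and all constants of the form $C^{d_ie_i}$ are killed by the $1/(\bast d\bast e)$-normalization, after which one converts $\upzeta$ back to $\upnu$ and trades $\mathfrak{m}_{\uptheta}$ for $\mathfrak{m}$ exactly as you indicate. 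The only cosmetic difference is that you package the estimate as a standard coordinatewise inequality with explicit constants before passing to classes, whereas the paper manipulates the representative sequences and growth-decay classes directly; these are the same argument.
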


\begin{proof}  The proof involves the adaptation of the proof of Lemma \ref{diamonderror} to the infinite degree case.
Let $\bast f=\bast \{ f_{i}\}\in \bast\tilde{\tv{Z}}(\uptheta )^{\text{\tt p}}_{\bast d}$, $\bast g=\bast \{ g_{i}\}\in \bast\tilde{\tv{Z}}(\upeta )^{\text{\tt p}}_{\bast e}$.
%, which is not difficult to implement, once 
%a few observations have been made.
First, by purity, only $\uptheta$-roots, $\upeta$-roots are required to define 
$\upzeta_{\bast d}(\bast f)_{\uptheta}$, $\upzeta_{\bast e}(\bast g)_{\upeta}$: that is,
\[ \upzeta_{\bast d}(\bast f)_{\uptheta} = 
\left\langle\bast\big\{ \prod_{\upalpha_{j_{i}}\in \mathbb{r}_{x_{i}}( f_{i})}|\uptheta -\upalpha_{j_{i}}|\big\}\right\rangle = 
\left\langle\bast\big\{ \prod_{\upalpha_{j_{i}}\in \mathbb{r}_{x_{i}}( f_{i})}|\updelta_{j_{i}}|\big\}\right\rangle
\]
%$|\uptheta -\upalpha_{i_{k}}|<\upvarepsilon_{i}$, 
where $\bast x =\bast \{x_{i}\}\in\bast\R_{\upvarepsilon}$ and we have used the fact that $\mathbb{r}_{\uptheta}(\bast f)$
is an ultraproduct, see (\ref{ultraqualofroots})).  Similarly,
\[ \upzeta_{\bast e}(\bast g)_{\upeta} = 
\left\langle\bast\big\{ \prod_{\upbeta_{k_{i}}\in \mathbb{r}_{y_{i}}( g_{i})}|\upeta -\upbeta_{k_{i}}|\big\}\right\rangle = 
\left\langle\bast\big\{ \prod_{\upbeta_{k_{i}}\in \mathbb{r}_{y_{i}}( g_{i})}|\upvarepsilon_{k_{i}}|\big\}\right\rangle .
\]
Now $|(\bast f\boxtimes \bast g)(\uptheta\upeta )|^{1/\bast d\bast e}$ is the class of the sequence
%the normalization is implemented before passing to $\bstar\PR\R_{\upvarepsilon}$ i.e.\ by taking the 
\begin{align}\label{fdiamondgcalc} 
%|(f_{i}\boxtimes g_{i})(\uptheta\upeta )|^{1/d_{i}e_{i}} & =
\left\{ |a_{d_{i}}|^{1/d_{i}}|b_{e_{i}}|^{1/e_{i}}
\bigg( \prod_{j_{i}=1}^{d_{i}}\prod_{k_{i}=1}^{e_{i}} |\uptheta\upeta - \upalpha_{j_{i}}\upbeta_{k_{i}}| \bigg)^{1/d_{i}e_{i}}\right\} =& \nonumber \\
\left\{ P_{i}^{1/d_{i}e_{i}}\times |a_{d_{i}}|^{1/d_{i}}|b_{e_{i}}|^{1/e_{i}}
  \bigg(\prod_{\upalpha_{j_{i}}\in \mathbb{r}_{x_{i}}( f_{i})}\prod_{\upbeta_{k_{i}}\in \mathbb{r}_{y_{i}}( g_{i})}| \upalpha_{j_{i}}\upvarepsilon_{k_{i}} +\upbeta_{k_{i}}\updelta_{j_{i}} + \updelta_{j_{i}}\upvarepsilon_{k_{i}}|\bigg)^{1/d_{i}e_{i}} \right\}
\end{align}
where $P_{i}$ is the product of the $|\uptheta\upeta - \upalpha_{j_{i}}\upbeta_{k_{i}}|$ where either $\upalpha_{k_{i}}\not\in \mathbb{r}_{x_{i}}(f_{i})$
or $\upbeta_{k_{i}}\not\in \mathbb{r}_{y_{i}}(g_{i})$.  Upon passage to $\bstar\PR\R_{\upvarepsilon}$, the expression in the parenthesis in  (\ref{fdiamondgcalc}) is bounded by either 
$\langle \bast \{ d_{i}e_{i}\prod |\updelta_{j_{i}}| \}\rangle$ or $\langle\bast \{ d_{i}e_{i}\prod |\upvarepsilon_{k_{i}} |\}\rangle$.  In particular, we obtain
\[ \upnu_{\bast d\bast e} (\bast f\boxtimes\bast g) \leq \bast P^{1/\bast d\bast e} |\bast a|^{1/\bast d}|\bast b|^{1/\bast e}\cdot 
\left(\upzeta_{\bast d}(\bast f)_{\uptheta}^{1/\bast e}+ \upzeta_{\bast e}(\bast g)_{\upeta}^{1/\bast d}\right) .\]
We are left with analyzing the remaining factor  $\bast P^{1/\bast d\bast e} |\bast a|^{1/\bast d}|\bast b|^{1/\bast e}$, and to this end, we return to a representative sequence
\begin{align}\label{whatsleftaftergothz}
|a_{d_{i}}|^{1/d_{i}}|b_{e_{i}}|^{1/e_{i}}
\bigg( \prod |\uptheta\upeta - \upalpha_{j_{i}}\upbeta_{k_{i}}| \bigg)^{1/d_{i}e_{i}}.
\end{align}
 The first remark is that we can insert a factor
of the form
\begin{align*}
  \bigg( \prod_{\upalpha_{j_{i}}\in \mathbb{r}_{x_{i}}( f_{i})}\prod_{\upbeta_{k_{i}}\in \mathbb{r}_{y_{i}}( g_{i})} |\upalpha_{j_{i}}\upbeta_{k_{i}}| \bigg)^{1/d_{i}e_{i}} & \leq  |\uptheta\upeta| \bigg( \prod_{\upalpha_{j_{i}}\in \mathbb{r}_{x_{i}}( f_{i})}\prod_{\upbeta_{k_{i}}\in \mathbb{r}_{y_{i}}( g_{i})}
   \big|1+|\updelta_{j_{i}}\uptheta^{-1}|\big|\big|1+|\upvarepsilon_{k_{i}}\upeta^{-1}|\big| \bigg)^{1/d_{i}e_{i}} \\
   &  \longrightarrow 4|\uptheta\upeta| .
  \end{align*}
  We can do this, since we will be multiplying the result with $(\upzeta_{\bast d}(\bast f)_{\uptheta}^{1/\bast e}+ \upzeta_{\bast e}(\bast g)_{\upeta}^{1/\bast d})\in\bstar\PR\R_{\upvarepsilon}$, and thus we are free to modify (\ref{whatsleftaftergothz}) 
by any uniformly bounded factor without changing the bound we have so far on $ \upnu_{\bast d\bast e} (\bast f\boxtimes\bast g)$.
Now we claim that for each factor $ |\uptheta\upeta - \upalpha_{j_{i}}\upbeta_{k_{i}}|$ in (\ref{whatsleftaftergothz}),
\begin{align}\label{theCbound}
|\uptheta\upeta-\upalpha_{j_{i}}\upbeta_{k_{i}}|=|\upalpha_{j_{i}}\upbeta_{k_{i}}||(\uptheta\upeta)/(\upalpha_{j_{i}}\upbeta_{k_{i}}) -  1| \leq C\max (1,|\upalpha_{j_{i}}|)\cdot \max (1,|\upbeta_{k_{i}}|)
\end{align}
for some constant $C>0$ independent of $i$ and depending only on $\uptheta,\upeta$.  If both $|\upalpha_{j_{i}}|,|\upbeta_{k_{i}}|\geq 1$, this is clear
since $|(\uptheta\upeta)/(\upalpha_{j_{i}}\upbeta_{k_{i}}) -  1|$ has a uniform upper bound for such roots.  This is true more generally whenever
$|\upalpha_{j_{i}}\upbeta_{k_{i}}|\geq 1$.  If $|\upalpha_{j_{i}}\upbeta_{k_{i}}|< 1$, then $ |\uptheta\upeta-\upalpha_{j_{i}}\upbeta_{k_{i}}|$
is uniformly bounded by some constant $C$, which {\it a fortiori} satisfies (\ref{theCbound}).  Therefore we may replace each factor of
$|\uptheta\upeta-\upalpha_{j_{i}}\upbeta_{k_{i}}|$ occurring in $P_{i}$ with $C\max (1,|\upalpha_{j_{i}}|)\cdot \max (1,|\upbeta_{k_{i}}|)$, at the cost
of an inequality.  It follows that
\begin{align*} \upnu_{\bast d\bast e} (\bast f\boxtimes\bast g)\leq &  \mathfrak{m}_{\bast d}(\bast f)
\mathfrak{m}_{\bast e}(\bast g)\cdot (\upzeta_{\bast d}(\bast f)_{\uptheta}^{\bast e^{-1}}+ \upzeta_{\bast e}(\bast g)_{\upeta}^{\bast d^{-1}})\\
= &  \mathfrak{m}_{\bast d}(\bast f)
\mathfrak{m}_{\bast e}(\bast g)\mathfrak{m}_{\bast d}(\bast f)_{\uptheta}^{-\bast e^{-1}}\cdot \upnu_{\bast d} (\bast f)^{\bast e^{-1}} +\\
& \mathfrak{m}_{\bast d}(\bast f)\mathfrak{m}_{\bast e}(\bast g)\mathfrak{m}_{\bast e}(\bast g)_{\upeta}^{-\bast d^{-1}}\cdot \upnu_{\bast e} (\bast g)^{\bast d^{-1}}
\end{align*}
What remains is to show is that we may replace the $\uptheta$ and $\upeta$ (normalized) Mahler measures by the usual ones.  This follows
from an argument similar to that used to produce the inequality (\ref{theCbound}).
%or 
%3) The deduction of the inequality (\ref{semipureineq}) follows essentially the same argument as that appearing in the proof of Lemma \ref{diamonderror};
 %when manipulating roots of sums, the Frobenius quality of taking roots in $\bstar\PR\R$ is essential:
%\[  \langle | \bast\updelta+\bast\upvarepsilon  |^{\bast r}\rangle = \langle | \bast\updelta+\bast\upvarepsilon  |\rangle^{\bast r} \leq
%(\langle \bast\updelta\rangle+ \langle\bast\upvarepsilon\rangle )^{\bast r} = \langle \bast\updelta\rangle^{\bast r}+ \langle\bast\upvarepsilon\rangle^{\bast r} =%\langle \bast\updelta^{\bast r}\rangle+ \langle\bast\upvarepsilon^{\bast r}\rangle.
 %\]
\end{proof}

We now extend the approximate ideal product to include the mixed classes.
% is defined in terms of the virtual radii.  To define such a product we must ensure that
%the product of disk roots of $\uptheta$ and $\upeta$ form disk roots of $\uptheta\upeta$.  This can be accomplished by restricting
%the product with respect to the indices of the semigroup filtration (\ref{semivirtualfilt}).
For $\uprho\in (0,1]$ let 
\[ \Updelta^{\uprho}_{\uptheta}(\bast f) = \{ \bast \upalpha\in \Updelta_{\uptheta}(\bast f)|\;|\uptheta -\bast \upalpha|<\uprho  \}
\subset \Updelta_{\uptheta}(\bast f).\]
Using $\Updelta^{\uprho}_{\uptheta}(\bast f)$ we define $\mathfrak{z}^{\uprho}_{\bast d}(f)_{\uptheta}$ and 
$\upzeta^{\uprho}_{\bast d}(\bast f)_{\uptheta}$ in the obvious way.

\begin{lemm}\label{Wirsing}  For all $\uprho\in (0,1]$, $\upzeta^{\uprho}_{\bast d}(\bast f)_{\uptheta}=\upzeta_{\bast d}(\bast f)_{\uptheta}$.
\end{lemm}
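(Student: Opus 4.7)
The plan is to bound the ratio $\mathfrak{z}(\bast f)_{\uptheta}/\mathfrak{z}^{\uprho}(\bast f)_{\uptheta}$ and show that after $\bast d$-normalization it becomes a unit for the growth-decay valuation, whereupon descent to $\bstar\PR\R_{\upvarepsilon}$ forces the two classes to coincide.

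First I would write out the ratio explicitly. Since $\Updelta^{\uprho}_{\uptheta}(\bast f)\subset \Updelta_{\uptheta}(\bast f)$, we have
\[ \frac{\mathfrak{z}(\bast f)_{\uptheta}}{\mathfrak{z}^{\uprho}(\bast f)_{\uptheta}} \;=\; \prod_{\bast\upalpha \in \Updelta_{\uptheta}(\bast f)\setminus \Updelta^{\uprho}_{\uptheta}(\bast f)} |\uptheta-\bast\upalpha|, \]
the product being taken over those disk roots in the annulus $\uprho\leq |\uptheta-\bast\upalpha|<1$. By construction each factor lies in the standard interval $[\uprho,1)$, and the total number of factors is bounded above by $\deg(\bast f)\leq \bast d$. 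By \L o\'{s}'s theorem applied componentwise, the ratio satisfies
\[ \uprho^{\bast d} \;\leq\; \frac{\mathfrak{z}(\bast f)_{\uptheta}}{\mathfrak{z}^{\uprho}(\bast f)_{\uptheta}} \;\leq\; 1. \]

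The decisive second step is to take $\bast d$-th roots, which compresses the lower bound to a standard real:
\[ \uprho \;\leq\; \left(\frac{\mathfrak{z}(\bast f)_{\uptheta}}{\mathfrak{z}^{\uprho}(\bast f)_{\uptheta}}\right)^{1/\bast d} \;\leq\; 1. \]
Since $\uprho\in(0,1]$ is a standard positive real, both bounds belong to $(\bast\R_{\rm fin})^{\times}_{+}$, hence so does the ratio $\mathfrak{z}_{\bast d}(\bast f)_{\uptheta}/\mathfrak{z}^{\uprho}_{\bast d}(\bast f)_{\uptheta}$. Passing to classes in the quotient $\bstar\PR\R_{\upvarepsilon}=\bast\R_{+}/(\bast\R_{\rm fin})^{\times}_{+}$ erases this discrepancy and yields $\upzeta^{\uprho}_{\bast d}(\bast f)_{\uptheta}=\upzeta_{\bast d}(\bast f)_{\uptheta}$.

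There is no serious obstacle here: the whole content of the lemma is the observation that a product of potentially nonstandardly many factors in $[\uprho,1)$ behaves, after taking the $\bast d$-th root, exactly like a single factor in $[\uprho,1)$, precisely because the number of roots is controlled by $\bast d$. The standardness of $\uprho$ (as opposed to an infinitesimal) is what guarantees the ratio is a growth-decay unit.
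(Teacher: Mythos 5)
Your proof is correct, and it takes a genuinely different route from the paper. You argue directly: the discrepancy $\mathfrak{z}(\bast f)_{\uptheta}/\mathfrak{z}^{\uprho}(\bast f)_{\uptheta}$ is an internal product of at most $\deg(\bast f)\leq\bast d$ factors, each lying in the interval between $\uprho$ and $1$, so it is squeezed between $\uprho^{\bast d}$ and $1$; after the $1/\bast d$ normalization it lies between the standard reals $\uprho$ and $1$, hence in $(\bast\R_{\rm fin})^{\times}_{+}$, and so vanishes upon passage to $\bstar\PR\R_{\upvarepsilon}$. (The leading coefficient appears in both $\mathfrak{z}$ and $\mathfrak{z}^{\uprho}$ and cancels, so it causes no trouble.) The paper instead invokes Wirsing's estimate, in the form of Corollary A.1 of Bugeaud, which sandwiches $\mathfrak{z}^{\uprho}_{\bast d}(\bast f)_{\uptheta}$ between explicit multiples of $\big(|\bast f(\uptheta)|/\mathfrak{h}(\bast f)\big)^{1/\bast d}$ whose extraneous factors are growth--decay units; taking $\langle\cdot\rangle$ of both bounds gives the equality. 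Your argument shows the stated equality is elementary and needs nothing from approximation by algebraic numbers of bounded degree; what the Wirsing route buys in addition is the identification of the common value, $\upzeta^{\uprho}_{\bast d}(\bast f)_{\uptheta}=\upzeta_{\bast d}(\bast f)_{\uptheta}=\upnu_{\bast d}(\bast f)\cdot\upmu_{\bast d}(\bast f)$, i.e.\ a direct expression of the disk measure in terms of decay and height, which the paper records in the last line of its proof (though only the bare equality is used later, so your proof would suffice for the applications in Theorem \ref{semivirtualarith}).
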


\begin{proof}  By Wirsing's estimate \cite{Wi} -- using the formulation given in Corollary A.1 in Appendix A.1 of \cite{Bu} -- we have:
\[2^{-(\bast d+1)/\bast d}\cdot (\bast d+1)^{-1/(2\bast d)}\cdot   \max (1,|\uptheta|)^{-1} \cdot \left(\frac{|\bast f(\uptheta)|}{\mathfrak{h}(\bast f)}\right)^{1/\bast d} \leq \mathfrak{z}^{\uprho}_{\bast d}(\bast f)_{\uptheta}\]
and 
\[ \mathfrak{z}^{\uprho}_{\bast d}(\bast f)_{\uptheta}\leq 2^{(\bast d+1)/\bast d}\cdot   \binom{\bast d}{[\bast d/2]}^{1/\bast d}\cdot \uprho^{-1}\max (1,|\uptheta|) \cdot \left(\frac{|\bast f(\uptheta)|}{\mathfrak{h}(\bast f)}\right)^{1/\bast d}. \]
Taking $\langle\cdot\rangle$ of either bound gives immediately
\[  \upzeta^{\uprho}_{\bast d}(\bast f)_{\uptheta}=\upzeta_{\bast d}(\bast f)_{\uptheta} =\upnu_{\bast d} (\bast f)\cdot\upmu_{\bast d} (\bast f). \]
\end{proof}

\begin{theo}[Ordinary Polynomial Approximate Ideal Arithmetic--General]\label{semivirtualarith}   
%Assume that $\uptheta,\upeta>1$. 
Let 
$\bast f\in \bast\tilde{\tv{Z}}(\uptheta )_{\bast d}, \bast g\in \bast\tilde{\tv{Z}}(\upeta )_{\bast e}$.  Then 
\[ \upnu_{\bast d\bast e} (\bast f\boxtimes\bast g) \leq \left(\mathfrak{m}_{\bast d}(\bast f)^{1-\bast e^{-1}} \mathfrak{m}_{\bast e}(\bast g)\cdot \upnu_{\bast d}(\bast f)^{\bast e^{-1}}\right)+ \left( \mathfrak{m}_{\bast d}(\bast f)\mathfrak{m}(\bast g)^{1-\bast d^{-1}}\cdot\upnu_{\bast e}(\bast g)^{\bast d^{-1}}\right). \]
In particular, there is a well-defined map
\[   \bast\tilde{\tv{Z}}^{\upmu}_{(\upmu^{1-\bast e^{-1}}\cdot\upnu)^{\bast e}}(\uptheta )_{\bast d} \times\bast\tilde{\tv{Z}}^{\upnu}_{(\upmu\cdot\upnu^{1-\bast d^{-1}})^{\bast d}}(\upeta )_{\bast e}\stackrel{\boxtimes}{\longrightarrow}  \bast\tilde{\tv{Z}}^{\upmu\cdot\upnu}(\uptheta\upeta )_{\bast d\bast e}.\]
\end{theo}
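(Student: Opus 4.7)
The plan is to extend Theorem \ref{ordininfdegsemipure} from pure to arbitrary approximations using Lemma \ref{Wirsing} to replace the mixed (non-ultraproduct) $\uptheta$-root and $\upeta$-root sets with the standard-radius $\uprho$-disk thickenings $\Updelta^{\uprho}_{\uptheta}(\bast f)=\prod_{\mathfrak{u}}\Updelta^{\uprho}_{\uptheta}(f_i)$ and $\Updelta^{\uprho'}_{\upeta}(\bast g)=\prod_{\mathfrak{u}}\Updelta^{\uprho'}_{\upeta}(g_i)$, which are always genuine ultraproducts of finite sets. For any standard $\uprho,\uprho'\in(0,1]$, Lemma \ref{Wirsing} gives $\upzeta^{\uprho}_{\bast d}(\bast f)_{\uptheta}=\upzeta_{\bast d}(\bast f)_{\uptheta}$ and $\upzeta^{\uprho'}_{\bast e}(\bast g)_{\upeta}=\upzeta_{\bast e}(\bast g)_{\upeta}$, so these $\uprho$-disk sets can replace $\mathbb{r}_{\uptheta}(\bast f)$ and $\mathbb{r}_{\upeta}(\bast g)$ wherever the latter appear in the proof of the pure case.

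With $\uprho,\uprho'$ fixed, I would expand $|(\bast f\boxtimes\bast g)(\uptheta\upeta)|^{1/\bast d\bast e}$ as a product over root pairs and split it into pairs (P) with $\bast\upalpha_i\in\Updelta^{\uprho}_{\uptheta}(\bast f)$ and $\bast\upbeta_j\in\Updelta^{\uprho'}_{\upeta}(\bast g)$, and the complementary set (Q). For (Q), the three-case analysis from the proof of Lemma \ref{diamonderror}, reproduced inside the proof of Theorem \ref{ordininfdegsemipure}, dominates each factor by $C\max(1,|\bast\upalpha_i|)\max(1,|\bast\upbeta_j|)$ for a uniform constant $C$ depending only on $\uptheta,\upeta,\uprho,\uprho'$; multiplication and normalization produces the $\mathfrak{m}_{\bast d}(\bast f)\cdot\mathfrak{m}_{\bast e}(\bast g)$ prefactors. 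For (P), I would write $\uptheta\upeta-\bast\upalpha_i\bast\upbeta_j=\bast\upalpha_i\bast\upvarepsilon_j+\bast\upbeta_j\bast\updelta_i+\bast\updelta_i\bast\upvarepsilon_j$ and then run verbatim the combinatorial bound surrounding (\ref{sumproductoferr}) together with the same three-case pass employed in the pure case: after extraction of the Mahler prefactors $\mathfrak{m}_{\bast d}(\bast f)_{\uptheta},\mathfrak{m}_{\bast e}(\bast g)_{\upeta}$, every term of the expanded $\bstar\PR\R_{\upvarepsilon}$-bound contains a factor of either $\prod_i|\bast\updelta_i|$ or $\prod_j|\bast\upvarepsilon_j|$, which project via Lemma \ref{Wirsing} to $\upzeta_{\bast d}(\bast f)_{\uptheta}^{1/\bast e}$ or $\upzeta_{\bast e}(\bast g)_{\upeta}^{1/\bast d}$, matching the two summands of (\ref{semipureineq}) after the identification $\upnu=\mathfrak{m}\cdot\upzeta$.

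The main obstacle is that in (P) the differences $|\bast\updelta_i|,|\bast\upvarepsilon_j|$ are only bounded by the standard reals $\uprho,\uprho'$ rather than being infinitesimal as in the pure setting, so the expansion of $\bast\upalpha_i\bast\upvarepsilon_j+\bast\upbeta_j\bast\updelta_i+\bast\updelta_i\bast\upvarepsilon_j$ introduces bounded slack that could threaten the infinitesimal character of the final bound. This is overcome by noting that each such bounded slack factor is multiplied by a $\max(1,|\bast\upalpha_i|)$ or $\max(1,|\bast\upbeta_j|)$ of the matching root and is absorbed into a Mahler prefactor once the $1/\bast d\bast e$-normalization collapses the uniform standard constant in $\bstar\PR\R_{\upvarepsilon}$, exactly as in the $\mathfrak{m}_{\uptheta}$-to-$\mathfrak{m}$ replacement at the end of the proof of Theorem \ref{ordininfdegsemipure}. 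Combining the (P) and (Q) estimates yields inequality (\ref{semipureineq}) for arbitrary $\bast f,\bast g$, from which the product-map statement follows by the same derivation as in Theorem \ref{OrdFinDegIdeoArith}.
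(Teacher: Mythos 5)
Your proposal is correct and follows essentially the same route as the paper: fix standard radii, use Lemma \ref{Wirsing} to identify $\upzeta^{\uprho}_{\bast d}(\bast f)_{\uptheta}$ with $\upzeta_{\bast d}(\bast f)_{\uptheta}$, and reduce everything to the estimates already carried out in the proof of Theorem \ref{ordininfdegsemipure} (the expansion bound only needs the disk-root differences to be $\leq 1$, not infinitesimal, so your ``bounded slack'' worry is harmless). The only difference is bookkeeping: the paper writes $\upnu_{\bast d\bast e}(\bast f\boxtimes\bast g)=\mathfrak{m}_{\bast d\bast e}(\bast f\boxtimes\bast g)_{\uptheta\upeta}\cdot\upzeta_{\bast d\bast e}(\bast f\boxtimes\bast g)_{\uptheta\upeta}$ and discards the complementary root pairs by monotonicity of a product of factors $\leq 1$, using $\Updelta^{\uprho}_{\uptheta}(\bast f)\cdot\Updelta^{\upsigma}_{\upeta}(\bast g)\subset\Updelta_{\uptheta\upeta}(\bast f\boxtimes\bast g)$, whereas you split the evaluation product into your (P)/(Q) pairs explicitly, with the same bounds in each part.
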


\begin{proof} Choose, $\uprho,\upsigma\in (0,1)$ such that
\[ \Updelta_{\uptheta}^{\uprho}(\bast f)\cdot \Updelta^{\upsigma}_{\upeta}(\bast g)\subset \Updelta_{\uptheta\upeta}(\bast f\boxtimes \bast g ).\]
%Indeed, in virtue of the virtual radius bounds we have for $\bast\upalpha\in \Updelta^{1/3\upeta}_{\uptheta}(\bast f)$, 
 %$\bast\upbeta\in \Updelta^{1/3\uptheta}_{\upeta}(\bast g)$:
%\begin{align*}  |\uptheta\upeta- \bast \upalpha\bast \upbeta| & = |\uptheta(\bast\upbeta+\bast \upvarepsilon)  -\bast \upalpha\bast\upbeta|\\
%& \leq |\bast\upbeta||\uptheta-\bast \upalpha|+ \frac{1}{3} \\
%& \leq \frac{|\bast\upbeta|}{3\upeta}+ \frac{1}{3} \\
%& \leq \frac{|\upeta -\bast \upvarepsilon|}{3\upeta}+ \frac{1}{3} <1.
%\end{align*}
Now $\upnu_{\bast d\bast e}(\bast f\boxtimes \bast g) = 
\mathfrak{m}_{\bast d\bast e}(\bast f\boxtimes \bast g)_{\uptheta\upeta}\cdot \upzeta_{\bast d\bast e}(\bast f\boxtimes \bast g)_{\uptheta\upeta}$.
By an argument similar to that found in the proof of Theorem \ref{ordininfdegsemipure} by Lemma \ref{Wirsing}, we may bound 
\[  \upzeta_{\bast d\bast e}(\bast f\boxtimes \bast g)_{\uptheta\upeta} \leq   \upzeta^{\uprho}_{\bast d}(\bast f)_{\uptheta} +
 \upzeta^{\upsigma}_{\bast e}( \bast g)_{\upeta} =  \upzeta_{\bast d}(\bast f)_{\uptheta} +
 \upzeta_{\bast e}( \bast g)_{\upeta} :\]
since $ \upzeta_{\bast d\bast e}(\bast f\boxtimes \bast g)_{\uptheta\upeta}$ is calculated using elements of 
$\Updelta_{\uptheta\upeta}(\bast f\boxtimes \bast g )\supset \Updelta^{\uprho}_{\uptheta}(\bast f)\cdot \Updelta^{\upsigma}_{\upeta}(\bast g) $, and being
a product of quantities $\leq 1$, omitting the elements of $\Updelta_{\uptheta\upeta}(\bast f\boxtimes \bast g )$ not belonging to $\Updelta_{\uptheta}(\bast f)\cdot \Updelta_{\upeta}(\bast g) $ makes the result larger. 
The rest of the proof follows that of Theorem  \ref{ordininfdegsemipure}.
\end{proof}

Passing to Frobenius classes gives:

\begin{theo}[Frobenius Polynomial Approximate Ideal Arithmetic]\label{FrobInfDegIdeoArith}Let $\bast d,\bast e\in\bast\N$ and $\bar{\upmu}\geq\bar{\upnu}$. There is a well-defined map
\[   \bast\tilde{\tv{Z}}^{\bar{\upmu}}_{\bar{\upnu}^{\bast e}}(\uptheta )_{\bast d} \times
\bast\tilde{\tv{Z}}^{\bar{\upnu}}_{\bar{\upnu}^{\bast d}}(\upeta )_{\bast e}\stackrel{\boxtimes}{\longrightarrow}  \bast\tilde{\tv{Z}}^{\bar{\upnu}}(\uptheta\upeta )_{\bast d\bast e}.\]
If either $\bast d$ or $\bast e$ is 1, we may adjust the indices in the manner of item (2) of Corollary \ref{FrobFinDegreeCor}.
\end{theo}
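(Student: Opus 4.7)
The plan is to derive this Frobenius statement directly from Theorem \ref{semivirtualarith} by projecting to Frobenius classes, in exact parallel to how Corollary \ref{FrobFinDegreeCor} was extracted from Theorem \ref{OrdFinDegIdeoArith}. The miracle that makes the ornate ordinary indices collapse into the clean Frobenius pattern is twofold: first, the exponents $1-\bast e^{-1}$ and $1-\bast d^{-1}$ lie in $(\bast\R_{\rm fin})_+^\times$, so the corresponding Frobenius powers $\bar{\upmu}^{1-\bast e^{-1}}$ and $\bar{\upnu}^{1-\bast d^{-1}}$ are simply $\bar{\upmu}$ and $\bar{\upnu}$ respectively; second, Proposition \ref{tropsubistropprod} reduces tropical product to tropical subtraction in $\bstar\overline{\PR\R}_{\upvarepsilon}$, so that $\bar{\upmu}\cdot\bar{\upnu}=\min(\bar{\upmu},\bar{\upnu})=\bar{\upnu}$ under the hypothesis $\bar{\upmu}\geq\bar{\upnu}$.

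Concretely, given $\bast f\in\bast\tilde{\tv{Z}}^{\bar{\upmu}}_{\bar{\upnu}^{\bast e}}(\uptheta)_{\bast d}$ and $\bast g\in\bast\tilde{\tv{Z}}^{\bar{\upnu}}_{\bar{\upnu}^{\bast d}}(\upeta)_{\bast e}$, I would first select representatives $\upmu\in\bar{\upmu}$, $\upnu\in\bar{\upnu}$ for which the ordinary growth bounds $\upmu_{\bast d}(\bast f)>\upmu$, $\upmu_{\bast e}(\bast g)>\upnu$ and the ordinary decay bounds $\upnu_{\bast d}(\bast f)^{1/\bast e}\leq \upmu^{1-\bast e^{-1}}\cdot\upnu$, $\upnu_{\bast e}(\bast g)^{1/\bast d}\leq \upmu\cdot\upnu^{1-\bast d^{-1}}$ simultaneously hold. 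The growth inequalities are automatic from the strict Frobenius hypotheses; the decay inequalities require inflating $\upnu$ inside the class $\bar{\upnu}$ to absorb the infinitesimal factors on the right, which is legitimate since each right-hand side has Frobenius class exactly $\bar{\upmu}\cdot\bar{\upnu}=\bar{\upnu}$, matching the Frobenius decay hypotheses on $\bast f$ and $\bast g$. Theorem \ref{semivirtualarith} then delivers $\bast f\boxtimes\bast g\in\bast\tilde{\tv{Z}}^{\upmu\cdot\upnu}(\uptheta\upeta)_{\bast d\bast e}$.

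The principal subtlety I anticipate is ensuring that the ordinary bound $\upmu_{\bast d\bast e}(\bast f\boxtimes\bast g)>\upmu\cdot\upnu$ actually upgrades to a \emph{strict} Frobenius inequality $\bar{\upmu}_{\bast d\bast e}(\bast f\boxtimes\bast g)>\bar{\upnu}$, rather than a mere equality of Frobenius classes. For this one observes that the strict Frobenius growth hypotheses mean the ratios $\upmu_{\bast d}(\bast f)/\upmu$ and $\upmu_{\bast e}(\bast g)/\upnu$ are unbounded in $(\bast\R_{\rm fin})_+^\times$; combined with the supermultiplicativity of Proposition \ref{Mahlersubresultmult}, the ratio $\upmu_{\bast d\bast e}(\bast f\boxtimes\bast g)/(\upmu\cdot\upnu)$ is again unbounded, placing $\bar{\upmu}_{\bast d\bast e}(\bast f\boxtimes\bast g)$ in a Frobenius class strictly above $\bar{\upnu}$. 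The remaining edge cases $\bast d=1$ or $\bast e=1$, in which an exponent $\bast e^{-1}$ or $\bast d^{-1}$ becomes $1$, require the same index adjustment that appears in item (2) of Corollary \ref{FrobFinDegreeCor}, grounded ultimately in the linear approximate ideal product of \cite{Ge0}.
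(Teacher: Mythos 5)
Your proposal is correct and follows essentially the same route as the paper, which obtains this theorem simply by passing the ordinary statement of Theorem \ref{semivirtualarith} to Frobenius classes, using that bounded exponents such as $1-\bast e^{-1}$ act trivially on Frobenius classes and that $\bar{\upmu}\cdot\bar{\upnu}=\bar{\upnu}$ by Proposition \ref{tropsubistropprod}, exactly as in Corollary \ref{FrobFinDegreeCor}. One small caution: for the strict growth bound the relevant criterion is not that the ratio $\upmu_{\bast d}(\bast f)/\upmu$ be unbounded but that $\upmu_{\bast d}(\bast f)$ exceed every bounded power of $\upmu$, which is precisely what the strict Frobenius hypotheses provide and which, combined with Proposition \ref{Mahlersubresultmult}, gives $\bar{\upmu}_{\bast d\bast e}(\bast f\boxtimes\bast g)>\bar{\upnu}$ as you intend.
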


Let $\upmu\geq \upnu$ resp. $\bar{\upmu}\geq \bar{\upnu}$ and define the relations
\[\uptheta\, {}^{\bast d\!\!}_{\;\;\upmu}\!\boxtimes^{\!\!\bast e}_{\,\upnu}\,\upeta ,\quad  \uptheta\, {}^{\bast d\!\!}_{\;\;\bar{\upmu}}\!\boxtimes^{\!\!\bast e}_{\,\bar{\upnu}}\,\upeta\]
whenever a product of the type defined in Theorem \ref{semivirtualarith} resp.\ Theorem \ref{FrobInfDegIdeoArith} is nontrivial.
Note that it is not necessarily the case that $\uptheta\, {}^{\bast d\!\!}_{\;\;\upmu}\!\boxtimes^{\!\!\bast e}_{\,\upnu}\,\upeta$ 
$\Rightarrow$  
$\uptheta\, {}^{\bast d\!\!}_{\;\;\bar{\upmu}}\!\boxtimes^{\!\!\bast e}_{\,\bar{\upnu}}\,\upeta$ e.g.\ recall Note \ref{ordinarytofrobenspectra}.
By Corollary \ref{ratspeccoro} we have:

\begin{coro}  Suppose that $\upgamma \gtrdot\uptheta$ and $\upxi \gtrdot\upeta$ by $R,S\in\text{\sf Rat}_{\Z}$, respectively.  Then 
\[  \uptheta\, {}^{\bast d\!\!}_{\;\;\bar{\upmu}}\!\boxtimes^{\!\!\bast e}_{\,\bar{\upnu}}\,\upeta \quad\Longrightarrow \quad
\upgamma\, {}^{\bast d\cdot\deg (R)\!\!}_{\;\;\bar{\upmu}}\!\boxtimes^{\!\!\bast e\cdot\deg (R)}_{\,\bar{\upnu}}\,\upxi .
\]
\end{coro}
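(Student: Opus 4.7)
The plan is to apply the Frobenius part of Theorem \ref{ideoconsensusthm} to each of the two witnessing approximate ideal groups in the definition of $\uptheta\, {}^{\bast d}_{\;\;\bar{\upmu}}\!\boxtimes^{\bast e}_{\,\bar{\upnu}}\,\upeta$, transporting them to $\upgamma$ and $\upxi$ via $R$ and $S$, and then verifying that the surviving Frobenius indices match those demanded by the target relation. Unpacking Theorem \ref{FrobInfDegIdeoArith}, the hypothesis supplies nonzero $\bast f \in \bast\tilde{\tv{Z}}^{\bar{\upmu}}_{\bar{\upnu}^{\bast e}}(\uptheta)_{\bast d}$ and nonzero $\bast g \in \bast\tilde{\tv{Z}}^{\bar{\upnu}}_{\bar{\upnu}^{\bast d}}(\upeta)_{\bast e}$, while $\upgamma \gtrdot \uptheta$ and $\upxi \gtrdot \upeta$ rephrase as $\uptheta \lessdot \upgamma$ via $R$ and $\upeta \lessdot \upxi$ via $S$, which are the hypotheses required by Theorem \ref{ideoconsensusthm}.

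Applying the Frobenius inclusion of Theorem \ref{ideoconsensusthm} to each, I extract nonzero elements
\[\bast f \boxcircle R \in \bast\tilde{\tv{Z}}^{\bar{\upmu}}_{\bar{\upnu}^{\bast e}}(\upgamma)_{\bast \breve{d}},\qquad \bast g \boxcircle S \in \bast\tilde{\tv{Z}}^{\bar{\upnu}}_{\bar{\upnu}^{\bast d}}(\upxi)_{\bast \breve{e}},\]
of actual ordinary degrees $\bast d \cdot \deg(R)$ and $\bast e \cdot \deg(S)$, respectively; nonvanishing is clear because $\boxcircle R$ and $\boxcircle S$ act as multiplicative monomorphisms on $\bast\tilde{\tv{Z}}$.

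For the decay indices, since $\deg(R)$ is a positive integer lying in $(\bast\R_{\rm fin})_{+}^{\times}$, Theorem \ref{FrobProp} renders the Frobenius action by $\deg(R)$ trivial on $\bstar\overline{\PR\R}$; consequently $\bar{\upnu}^{\bast e \deg(R)} = (\bar{\upnu}^{\bast e})^{\deg(R)} = \bar{\upnu}^{\bast e}$ and likewise $\bar{\upnu}^{\bast d \deg(R)} = \bar{\upnu}^{\bast d}$ as elements of $\bstar\overline{\PR\R}_{\upvarepsilon}$. Thus the decay exponents carried by $\bast f \boxcircle R$ and $\bast g \boxcircle S$ coincide with those demanded by the target $\upgamma\, {}^{\bast d \cdot \deg(R)}_{\;\;\bar{\upmu}}\!\boxtimes^{\bast e \cdot \deg(R)}_{\,\bar{\upnu}}\,\upxi$.

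The main obstacle is the mismatch between the ordinary degree $\bast e \cdot \deg(S)$ with which $\bast g \boxcircle S$ naturally arrives and the ordinary degree $\bast e \cdot \deg(R)$ labelled in the target. Both lie in the common Frobenius degree class $\bast \breve{e}$ since $\deg(R)/\deg(S) \in (\bast\Q_{\rm fin})_{+}^{\times}$, and the Frobenius approximate ideal groups $\bast\tilde{\tv{Z}}^{\bar{\upmu}}_{\bar{\upnu}}(\cdot)_{\bast \breve{e}}$ depend only on this degree class. Reading the composability relation at the Frobenius level that its Frobenius indices $\bar{\upmu},\bar{\upnu}$ naturally record, compatibly with Corollary \ref{ratspeccoro}, then completes the argument.
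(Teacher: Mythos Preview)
Your approach is essentially the paper's: the paper's entire proof is ``By Corollary \ref{ratspeccoro} we have:'', and you have simply unpacked that citation by applying Theorem \ref{ideoconsensusthm} to each factor separately to transport the witnessing elements from $\uptheta$ to $\upgamma$ (via $R$) and from $\upeta$ to $\upxi$ (via $S$).

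You are right to flag the $\deg(R)$ versus $\deg(S)$ mismatch on the second degree index; the natural output of Corollary \ref{ratspeccoro} applied to $S$ is $\bast e\cdot\deg(S)$, not $\bast e\cdot\deg(R)$, and the paper's one-line proof does nothing to reconcile this. Your observation that the two lie in the same Frobenius degree class $\bast\breve{e}$ is correct and is the only sense in which the discrepancy can be harmlessly absorbed; the stated superscript $\bast e\cdot\deg(R)$ is most plausibly a typographical slip for $\bast e\cdot\deg(S)$, consistent with the paper's own proof sketch.
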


%\begin{theo}  Let $\uptheta\in \R$.  Then there exists a pure best class $\bast \widehat{g}\in\bast\tilde{\tv{Z}}(\uptheta )^{\text{\tt p}}$.
%\end{theo}

%\begin{proof}
%\end{proof}

There are nontrivial polynomial approximate ideal products in degree 1, coming from the approximate ideal product
of (\ref{gdprodintro}).  
In what follows, we will show
that there exist nontrivial products for (infinite degree) polynomial classes.  The following Proposition shows that $A$-numbers are
always composable with other reals, provided the relevant indices are $A$-exact (the latter term was defined at the beginning of \S \ref{polygdasec}).

\begin{theo}\label{AArithPos}  Let $\upalpha$ be an $A$-number and $\uptheta\in\R$.
\begin{itemize}
\item Suppose that $\bast \breve{d}>\bast\breve{e}$ and $ \bast\tilde{\tv{Z}}^{\bar{\upmu}}_{\bar{\upnu}^{\bast e}}(\uptheta )_{\bast d}$
is nonempty.  Then 
\[ \uptheta\, {}^{\bast d\!\!}_{\;\;\bar{\upmu}}\!\boxtimes^{\!\!\bast e}_{\,\bar{\upnu}}\,\upalpha.\]
\item  Suppose the triple $(\bar{\upmu},\bar{\upnu}^{\bast e},\bast d)$ is $A$-exact and $\bast\tilde{\tv{Z}}^{\bar{\upnu}}_{\bar{\upnu}^{\bast d}}(\uptheta )_{\bast e}$
is not empty.  Then 
\[ \upalpha\, {}^{\bast d\!\!}_{\;\;\bar{\upmu}}\!\boxtimes^{\!\!\bast e}_{\,\bar{\upnu}}\,\uptheta.\]
\end{itemize}
\end{theo}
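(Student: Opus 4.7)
My plan in both parts is to construct an element of the relevant approximate ideal of $\upalpha$ as a power of its minimal polynomial $m_\upalpha \in \Z[X]$, and then combine it with the given element at $\uptheta$ using the Frobenius product of Theorem \ref{FrobInfDegIdeoArith}. The key observation is that powers of $m_\upalpha$ sit in essentially every $\upalpha$-approximate ideal, since they vanish exactly at $\upalpha$ and their Mahler measure grows multiplicatively in a way that is cancelled exactly by the degree normalization.

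Set $k = \deg m_\upalpha$ and fix the constant $\mathfrak{m}(m_\upalpha)$. For any $\bast n \in \bast\N$ with $\bast n \geq k$, take $\bast l := \lfloor \bast n/k\rfloor$ componentwise, so that $k\bast l\leq \bast n$ and $k\bast l\in\bast\breve n$. Then $m_\upalpha^{\bast l}$ vanishes at $\upalpha$ and, by multiplicativity of the Mahler measure,
\[
\mathfrak{m}_{\bast n}(m_\upalpha^{\bast l}) \;=\; \mathfrak{m}(m_\upalpha)^{\bast l/\bast n} \;\leq\; \mathfrak{m}(m_\upalpha)^{1/k},
\]
a bounded non-infinitesimal. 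In Frobenius terms this gives $\bar\upmu_{\bast n}(m_\upalpha^{\bast l}) = \bar 1$ and $\bar\upnu_{\bast n}(m_\upalpha^{\bast l}) = -\infty$, so $m_\upalpha^{\bast l}$ lies in every group $\bast \tilde{\tv{Z}}^{\bar\uprho}_{\bar\upsigma}(\upalpha)_{\bast n}$ whose indices $\bar\uprho,\bar\upsigma$ belong to $\bstar\overline{\PR\R}_\upvarepsilon$.

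For the first part, the hypothesis $\bast\breve d > \bast\breve e$ provides enough degree margin that the construction applies on the $\bast e$-side (in particular $\bast e\geq k$). I take $\bast g := m_\upalpha^{\lfloor \bast e/k\rfloor} \in \bast\tilde{\tv{Z}}^{\bar\upnu}_{\bar\upnu^{\bast d}}(\upalpha)_{\bast e}$, pair it with a given nonzero $\bast f \in \bast\tilde{\tv{Z}}^{\bar\upmu}_{\bar\upnu^{\bast e}}(\uptheta)_{\bast d}$, and apply Theorem \ref{FrobInfDegIdeoArith}: the product $\bast f \boxtimes \bast g$ lands in $\bast\tilde{\tv{Z}}^{\bar\upnu}(\uptheta\upalpha)_{\bast d\bast e}$ and is nonzero, since the resultant product of two nonzero elements of $\bast\tilde{\tv{Z}}$ has nonzero leading coefficient ($a_{\deg \bast f}^{\deg \bast g} b_{\deg \bast g}^{\deg \bast f}$). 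This establishes the composability $\uptheta\,{}^{\bast d\!\!}_{\;\;\bar{\upmu}}\!\boxtimes^{\!\!\bast e}_{\,\bar{\upnu}}\,\upalpha$. For the second part, the $A$-exactness of $(\bar\upmu,\bar\upnu^{\bast e},\bast d)$ identifies $\bast\tilde{\tv{Z}}^{\bar\upmu}_{\bar\upnu^{\bast e}}(\upalpha)_{\bast d}$ with $\llparenthesis\upalpha\rrparenthesis^{\bar\upmu}_{\bast d}$, into which $\bast f := m_\upalpha^{\lfloor \bast d/k\rfloor}$ falls by the same computation. Pairing with the given nonzero $\bast g \in \bast\tilde{\tv{Z}}^{\bar\upnu}_{\bar\upnu^{\bast d}}(\uptheta)_{\bast e}$ and invoking Theorem \ref{FrobInfDegIdeoArith} once more produces a nonzero element of $\bast\tilde{\tv{Z}}^{\bar\upnu}(\upalpha\uptheta)_{\bast d\bast e}$.

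The main obstacle is verifying that the normalized Mahler measure of $m_\upalpha^{\bast l}$ falls in the Frobenius class $\bar 1$, which is precisely where the degree normalization (exponent $1/\bast n$) compensates the exponent $\bast l$ coming from multiplicativity. The role of the two distinct hypotheses then becomes transparent: $\bast\breve d > \bast\breve e$ provides the degree margin needed to realize $m_\upalpha^{\bast l}$ on the $\bast e$-side in the first part, while in the second part $A$-exactness replaces that margin by forcing the group at $\upalpha$ to coincide with the ideal $\llparenthesis\upalpha\rrparenthesis^{\bar\upmu}_{\bast d}$ of exact vanishings, which always contains such a power.
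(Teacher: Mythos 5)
Your proof is correct and is essentially the paper's argument made explicit: the paper reduces both bullets to the nontriviality of the factor at $\upalpha$, which it gets from the $A$-exactness of the relevant triple (i.e.\ from the exact vanishers in $\llparenthesis\upalpha\rrparenthesis$), and your $m_{\upalpha}^{\bast l}$ together with the degree-normalized Mahler-measure computation is precisely an explicit witness of that nontriviality, after which Theorem \ref{FrobInfDegIdeoArith} yields the relation exactly as in the paper. One caveat: your parenthetical claim that $\bast\breve{d}>\bast\breve{e}$ forces $\bast e\geq k=\deg m_{\upalpha}$ is not a valid inference ($\bast e$ may be a finite number below $k$, in which case a Liouville-type estimate shows $\bast\tilde{\tv{Z}}^{\bar{\upnu}}_{\bar{\upnu}^{\bast d}}(\upalpha )_{\bast e}=0$ and the relation cannot hold at all); this degenerate case is tacitly excluded in the paper's own appeal to $A$-exactness as well, so it is a shared implicit hypothesis rather than a defect peculiar to your argument.
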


\begin{proof}  If $\bast \breve{d}>\bast\breve{e}$ then $\bar{\upnu} >(\bar{\upnu}^{\bast d})^{1/\bast e}$, so the triple 
$(\bar{\upmu},\bar{\upnu}^{\bast d},\bast e)$
is $A$-exact and $ \uptheta\, {}^{\bast d\!\!}_{\;\;\bar{\upmu}}\!\boxtimes^{\!\!\bast e}_{\,\bar{\upnu}}\,\upalpha$.
The second bulleted item is trivial.
\end{proof}
%We finish this section by describing the Frobenius pure polynomial arithmetic of each of the Mahler classes.
The next result is negative: it asserts that $A$-numbers on the $A$-approximative indices, as well as $S$-numbers, may not be composed with one another.

%Write
%\[  \uptheta\, {}^{\bast d\!\!}_{\;\;\upmu}\!\boxtimes^{\!\!\bast e}_{\,\upnu}\,\upeta .\]
%Note that there is no need to distinguish the pure and the mixed cases, since being defined

%if either of the ordinary ideological products in Theorem  \ref{ordininfdegsemipure} is defined.
%$ \bast\tilde{\tv{Z}}^{\upmu}_{\upmu^{\bast e-1}\cdot\upnu^{\bast d}}(\uptheta )_{\bast d}$,
%$\bast \tilde{\tv{Z}}^{\upnu}_{\upmu^{\bast e}\cdot\upnu^{\bast d-1}}(\upeta )_{\bast d'}$ are nontrivial (not identically $1$)
%so that the $\boxtimes$-product between them is nontrivially defined (in the case of infinite degree polynomials, subject to the conditions described %above).

\begin{theo}\label{Anumarith}  Let $\uptheta$ be either an $A$-number or an $S$-number and let $\upeta\in\R$.   Suppose that $\bar{\upmu}\in\bstar\overline{\PR\R}_{\upvarepsilon}$
is not an $\bast e$th root of unity and that if $\uptheta$ is an $A$-number then the triple $(\bar{\upmu},\bar{\upnu}^{\bast e},\bast d)$ is $A$-approximative.
If $ \uptheta\, {}^{\bast d\!\!}_{\;\;\bar{\upmu}}\!\boxtimes{}^{\!\!\bast e}_{\,\bar{\upnu}}\,\upeta$ then 
\[ \bar{\upmu}^{1/\bast e}<\bar{\upnu}^{1/\bast d}.\]  In particular, 
$ \uptheta\, {}^{\bast d\!\!}_{\;\;\bar{\upmu}}\!\boxtimes^{\!\!\bast e}_{\,\bar{\upnu}}\,\upeta$ does {\rm {\bf not}} hold when
\begin{enumerate}
\item[1.] $\bast d=d<\infty$ or $\bast \breve{d}<\bast \breve{e}$.  
\item[2.] $\upeta$ is either an $A$-number for which the triple $(\bar{\upnu},\bar{\upnu}^{\bast d},\bast e)$ is $A$-approximative, or an $S$-number.
\end{enumerate}
\end{theo}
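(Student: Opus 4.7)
The plan is to extract a witness polynomial from the composability hypothesis and use the characterization of $S$- and $A$-numbers via best approximations to bound its Frobenius growth by a power of its decay. First, $\uptheta\,{}^{\bast d\!\!}_{\;\;\bar{\upmu}}\!\boxtimes^{\!\!\bast e}_{\,\bar{\upnu}}\,\upeta$ means that the left factor $\bast\tilde{\tv{Z}}^{\bar{\upmu}}_{\bar{\upnu}^{\bast e}}(\uptheta)_{\bast d}$ appearing in Theorem \ref{FrobInfDegIdeoArith} is non-trivial, so pick a nonzero $\bast f$ in it. When $\uptheta$ is an $S$-number, its transcendence forces $\bast f(\uptheta)\not=0$ automatically; when $\uptheta$ is an $A$-number, the $A$-approximative hypothesis on the triple $(\bar{\upmu},\bar{\upnu}^{\bast e},\bast d)$ guarantees the existence of such an $\bast f$ outside $\llparenthesis\uptheta\rrparenthesis^{\bar{\upmu}}_{\bast d}$, hence with $\bast f(\uptheta)\not=0$.

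The crux is the \emph{Frobenius growth-decay inequality}
\[
\bar{\upmu}_{\bast d}(\bast f)\;\leq\;\bar{\upnu}_{\bast d}(\bast f)^{1/\bast d},
\]
valid for any non-vanishing $\bast f\in\bast\tilde{\tv{Z}}(\uptheta)_{\bast d}$ at an $S$- or $A$-number $\uptheta$. Writing $\bast H=\mathfrak{h}(\bast f)$, the minimality of $f_{\bast H,\bast d}$ gives $|\bast f(\uptheta)|\geq \bast H^{-\bast d\,\mathfrak{e}(\bast d,\bast H)}$, hence $|\bast f(\uptheta)|^{1/\bast d}\geq \bast H^{-\mathfrak{e}(\bast d,\bast H)}$. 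For an $S$-number the exponent $\mathfrak{e}(\bast d,\bast H)$ is bounded by a non-infinitesimal constant close to $\mathfrak{e}<\infty$; for an $A$-number, a Liouville-type bound yields a non-infinitesimal constant depending only on $\deg(\uptheta)$. In either case, the bounded real exponent disappears under passage to $\bstar\overline{\PR\R}_{\upvarepsilon}$ (\emph{cf.}\ Note \ref{infinitesimalpower}), giving $\bar{\upnu}_{\bast d}(\bast f)\geq [\bast H^{-1}]$ and hence $\bar{\upnu}_{\bast d}(\bast f)^{1/\bast d}\geq [\bast H^{-1/\bast d}]=\bar{\upmu}_{\bast d}(\bast f)$. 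Equivalently, one can invoke Theorems \ref{Atheo} and \ref{S} directly, using that the union of the vanishing strips $V(\bast\widehat{g}_{\bast d'})$ over $\bast d'\in\bast\breve{d}$ forbids any non-vanishing polynomial from lying strictly above the diagonal $\bar{\upmu}=\bar{\upnu}^{1/\bast d}$.

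Chaining this with $\bar{\upmu}<\bar{\upmu}_{\bast d}(\bast f)$ and $\bar{\upnu}_{\bast d}(\bast f)\leq\bar{\upnu}^{\bast e}$ yields
\[
\bar{\upmu}\;<\;\bar{\upmu}_{\bast d}(\bast f)\;\leq\;\bar{\upnu}_{\bast d}(\bast f)^{1/\bast d}\;\leq\;(\bar{\upnu}^{\bast e})^{1/\bast d}\;=\;\bar{\upnu}^{\bast e/\bast d}.
\]
Raising to the infinitesimal power $1/\bast e$, which by Note \ref{infinitesimalpower} acts as an order-expanding tropical homomorphism on $\bstar\overline{\PR\R}_{\upvarepsilon}$ (while the assumption that $\bar{\upmu}$ is not an $\bast e$-th root of unity ensures $\bar{\upmu}^{1/\bast e}\not=\bar{1}$), yields the conclusion $\bar{\upmu}^{1/\bast e}<\bar{\upnu}^{1/\bast d}$. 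The two subsidiary statements then follow by contradiction with this inequality: in item~(1), both subcases force $\bast d\ll\bast e$, so that $1/\bast d$ strictly exceeds $1/\bast e$ and $\bar{\upnu}^{1/\bast d}<\bar{\upnu}^{1/\bast e}\leq\bar{\upmu}^{1/\bast e}$ (using $\bar{\upmu}\geq\bar{\upnu}$), contradicting the conclusion; for item~(2), a second application of the theorem with $\upeta$ in the role of $\uptheta$ (and symmetric parameters $(\bar{\upnu},\bar{\upnu},\bast e,\bast d)$) yields $\bar{\upnu}^{1/\bast d}<\bar{\upnu}^{1/\bast e}$, whence $\bar{\upmu}^{1/\bast e}<\bar{\upnu}^{1/\bast d}<\bar{\upnu}^{1/\bast e}$ forces $\bar{\upmu}<\bar{\upnu}$, contradicting $\bar{\upmu}\geq\bar{\upnu}$.

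The main obstacle is establishing the Frobenius growth-decay inequality for an \emph{arbitrary} (not necessarily best) polynomial; one must either bound $\mathfrak{e}(\bast d,\bast H)$ by a Frobenius-trivial constant, or trap the pair $(\bar{\upmu}_{\bast d}(\bast f),\bar{\upnu}_{\bast d}(\bast f))$ against a suitable best class via the vanishing-strip argument. It is exactly the failure of such a uniform bound --- the exponent $\mathfrak{e}(\bast d,\bast H)$ growing non-trivially with $\bast d$ or $\bast H$ --- that distinguishes $T$- and $U$-numbers from $S$-numbers, and thereby allows composability to occur in those Mahler classes.
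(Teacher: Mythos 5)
Your proof of the central inequality follows the paper's argument essentially step for step: pick a nonzero witness $\bast f$ in the first factor $\bast\tilde{\tv{Z}}^{\bar{\upmu}}_{\bar{\upnu}^{\bast e}}(\uptheta)_{\bast d}$ (outside $\llparenthesis\uptheta\rrparenthesis$ in the $A$-approximative case), use the Mahler-class characterization of the Frobenius spectra (Theorems \ref{Atheo}, \ref{S}) to get $\bar{\upmu}_{\bast d}(\bast f)\leq\bar{\upnu}_{\bast d}(\bast f)^{1/\bast d}$, chain this with $\bar{\upmu}<\bar{\upmu}_{\bast d}(\bast f)$ and the decay condition $\bar{\upnu}_{\bast d}(\bast f)^{1/\bast e}\leq\bar{\upnu}$, and use the hypothesis that $\bar{\upmu}$ is not an $\bast e$th root of unity to keep strictness after taking $1/\bast e$-powers. (A cosmetic caveat: the intermediate object $\bar{\upnu}^{\bast e}$, and $\bar{\upnu}^{\bast e/\bast d}$ when $\bast e/\bast d$ is infinite, are not actually defined; the membership condition should be read directly as $\bar{\upnu}_{\bast d}(\bast f)^{1/\bast e}\leq\bar{\upnu}$ and then raised to the $1/\bast d$, which is exactly the paper's chain $\bar{\upmu}^{1/\bast e}<\bar{\upmu}_{\bast d}(\bast f)^{1/\bast e}\leq\bar{\upnu}_{\bast d}(\bast f)^{1/\bast d\bast e}\leq\bar{\upnu}^{1/\bast d}$.) Your honesty about the key obstacle --- extending the growth-decay inequality from best classes to an arbitrary nonvanishing $\bast f$ --- is welcome; your vanishing-strip route is the one the paper implicitly uses, while your alternative via bounding $\mathfrak{e}(\bast d,\bast H)$ is shakier, since Mahler's exponent is only controlled for heights large relative to the degree.

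The genuine gaps are in the two subsidiary items. In item (1) you claim both subcases force $\bast d\ll\bast e$; this is false when $\bast d$ and $\bast e$ are both finite (item (1) imposes no condition on $\bast e$), and the strict inequality $\bar{\upnu}^{1/\bast d}<\bar{\upnu}^{1/\bast e}$ you assert can fail (e.g.\ if $\bar{\upnu}$ is a $\bast d$th root of unity, both sides are $\bar{1}$). Only the non-strict chain $\bar{\upnu}^{1/\bast d}\leq\bar{\upnu}^{1/\bast e}\leq\bar{\upmu}^{1/\bast e}$ is needed, and in the all-finite case the powers are trivial so the main inequality directly contradicts $\bar{\upmu}\geq\bar{\upnu}$. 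More seriously, in item (2) you apply the theorem a second time with $\upeta$ in the role of $\uptheta$ and parameters $(\bar{\upnu},\bar{\upnu},\bast e,\bast d)$: that application requires the hypothesis that $\bar{\upnu}$ is not a $\bast d$th root of unity, which is nowhere assumed, so the strict conclusion $\bar{\upnu}^{1/\bast d}<\bar{\upnu}^{1/\bast e}$ is unjustified as stated. The paper avoids this by rerunning the chain non-strictly on a witness $\bast g$ of the second factor, which needs no root-of-unity hypothesis: $\bar{\upnu}^{1/\bast d}\leq\bar{\upmu}_{\bast e}(\bast g)^{1/\bast d}\leq\bar{\upnu}_{\bast e}(\bast g)^{1/\bast d\bast e}\leq\bar{\upnu}^{1/\bast e}\leq\bar{\upmu}^{1/\bast e}$, which together with $\bar{\upmu}^{1/\bast e}<\bar{\upnu}^{1/\bast d}$ already yields the contradiction. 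With these two repairs your argument coincides with the paper's.
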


\begin{proof}  Let $\bast f\in  \bast\tilde{\tv{Z}}^{\bar{\upmu}}_{\bar{\upnu}^{\bast e}}(\uptheta )_{\bast d}$;
if $\uptheta$ is an $A$-number then we assume further that $\bast f$ does not belong to the semigroup $\llparenthesis\uptheta\rrparenthesis$ generated by the minimal polynomial of $\uptheta$ (see 
the paragraph preceding Proposition \ref{simplegda}). 
%and consider
%\[ \bast f\in \bast\tilde{\tv{Z}}^{\bar{\upmu}}_{(\bar{\upmu}\cdot\bar{\upnu})^{\bast e}}(\uptheta )^{\text{\tt p}}_{\bast d}-\llparenthesis\uptheta\rrparenthesis .\]
%denote by $\upmu (\bast f)=\mathfrak{h}(\bast f)^{-1}$ and
%by $\hat{\upmu}(\bast f)=  \upmu (\bast f)^{ 1/\bast d}$. 
By the classification of the Frobenius nonvanishing spectra, 
\[ \bar{\upmu}_{\bast d} (\bast f)\leq \bar{\upnu}_{\bast d}(\bast f)^{1/\bast d},\]
where the inequality is strict if $\uptheta$ is an $A$-number. 
Since $\bar{\upmu}\in\bstar\overline{\PR\R}_{\upvarepsilon}$
is not an $\bast e$th root of 1, $ \bar{\upmu}^{1/\bast e}< \bar{\upmu}_{\bast d}(\bast f)^{1/\bast e}$.  Moreover, by the hypotheses on $\bast f$,
\begin{align}\label{Anumberineq}
  \bar{\upmu}^{1/\bast e}< \bar{\upmu}_{\bast d}(\bast f)^{1/\bast e}\leq \bar{\upnu}_{\bast d}(\bast f)^{1/\bast d\bast e}\leq \bar{\upnu}^{1/\bast d} .
\end{align}
(The second inequality is not strict to allow for the possibility that $\bar{\upmu}_{\bast d}(\bast f)$ is a $\bast e$th root of unity.)
As a consequence, 
\begin{align}\label{Anumberineq2}\bar{\upmu}< \bar{\upmu}^{1/\bast e} <\bar{\upnu}^{1/\bast d} 
\leq \bar{\upmu}^{1/\bast d}.
\end{align}
If $\bast d=d\in\N$ then $\bar{\upmu}^{1/d}=\bar{\upmu}$ and (\ref{Anumberineq2}) yields $\bar{\upmu}<\bar{\upmu}$, contradiction. 
If $\bast \breve{d}<\bast \breve{e}$, then $\bar{\upmu}^{\bast d/\bast e}$ is defined since $\bast d/\bast e$ is infinitesimal and
\[ \bar{\upmu}<\bar{\upmu}^{\bast d/\bast e}\leq\bar{\upmu}^{1/\bast e}<1.\]   But (\ref{Anumberineq2}) implies
that $\bar{\upmu}^{\bast d/\bast e}<\bar{\upmu}$, contradiction.
Finally, suppose that $\upeta$ is either an $A$-number for which the triple $(\bar{\upnu},\bar{\upnu}^{\bast d},\bast e)$ is not $A$-exact, or an $S$-number.  By (\ref{Anumberineq}) we have that 
\begin{align}\label{2ndAnumberineq} \bar{\upmu}^{1/\bast e}< \bar{\upnu}^{1/\bast d}.
\end{align}
The analogue of (\ref{Anumberineq}) for the triple $(\bar{\upnu},\bar{\upnu}^{\bast d},\bast e)$ gives $\bar{\upnu}^{1/\bast d}\leq\bar{\upnu}^{1/\bast e}\leq
\bar{\upmu}^{1/\bast e}$, implying with (\ref{2ndAnumberineq}) that $\bar{\upmu}^{1/\bast e}<\bar{\upmu}^{1/\bast e}$, again, contradiction.
%we must have 
%\[  \bar{\upmu}<\bar{\upmu}^{1/\bast d}[\upmu^{\bast e-1/\bast d}]\cdot \bar{\upnu}. \]
%Since $\bar{\upmu}=[\upmu^{r}]$ for all $r\in\R_{+}$, this can happen $\Leftrightarrow$ $\bast e-1/\bast d\in\bast\Q_{\upvarepsilon}$ and 
%$\bar{\upmu}=[\upmu^{1-(\bast e-1/\bast d)}]< \bar{\upnu}$. 
\end{proof}

%Theorem \ref{Anumarith} concerns the Frobenius ideological arithmetic; it says that for this version, the union of the $A$-numbers and $S$-numbers 
%nearly forms a Frobenius
%anti-prime set (modulo the restrictions that $\bar{\upmu}$ not be an $\bast e$th root of $1$ and that relevant indices are not $A$-exact).  However this does %not imply that there do not exist ($A$-not exact) parameters 
%for which $ \uptheta\, {}^{\bast d\!\!}_{\;\;\upmu}\!\boxtimes{}^{\!\!\bast e}_{\,\upnu}\,\upeta$ holds.  Indeed, it is conjectured that the intersection of 
%the $A$-numbers with $\mathfrak{B}$ consists precisely of the real quadratic irrationalities.  If this is true then for $A$-numbers 
%$\uptheta,\upeta\not\in\mathfrak{B}$ we could find $\upmu,\upnu$ for which $ \uptheta\, {}^{\;\;1}_{\;\;\upmu}\!\boxtimes{}^{\,1}_{\,\upnu}\,\upeta$, and %these parameters might possibly
%be not exact $A$

%The appearance of four parameters in the polynomial ideological products make the analysis of the relation $ {}^{\bast d\!\!}_{\;\;\bar{\upmu}}\!\boxtimes^{\!\!\bast e}_{\,\bar{\upnu}}$ much more difficult. 
 In what follows consider the special case $\bar{\upmu}=\bar{\upnu}$.

\begin{lemm}\label{Sflatlemma}  Let $\upeta$ be trascendental.  Then for every infinite best degree $\bast e\in\bast \N-\N$, there exists $\bar{\upnu}\in\bstar\overline{\PR\R}_{\upvarepsilon}$
such that for all $\bast d$ with $\bast \breve{d}<\bast \breve{e}$,
\[\bast\tilde{\tv{Z}}^{\bar{\upnu}}_{\bar{\upnu}^{\bast d}}(\upeta )_{\bast e}\not=\{ 0\}.\]  
If $\uptheta$ is either a $T$-number or a $U$-number then we may take $\bast \breve{d}=\bast \breve{e}$ as well.  If $\uptheta$ is a $U$-number
then we may take $\bast \breve{d}=\bast \breve{e} =\breve{1}$.
\end{lemm}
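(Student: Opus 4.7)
The plan is to exhibit a nonzero polynomial in $\bast\tilde{\tv{Z}}^{\bar{\upnu}}_{\bar{\upnu}^{\bast d}}(\upeta)_{\bast e}$ by using a best polynomial class of degree $\bast e$. The first observation is that if $\bast f$ has growth class $\widehat{\bar{\upmu}}_{\bast e}$ and decay class $\widehat{\bar{\upnu}}_{\bast e}$, then the membership conditions
\[
\bar{\upnu}<\widehat{\bar{\upmu}}_{\bast e}\qquad\text{and}\qquad \widehat{\bar{\upnu}}_{\bast e}\leq\bar{\upnu}^{\bast d}
\]
can be met by setting $\bar{\upnu}:=\widehat{\bar{\upnu}}_{\bast e}^{1/\bast d}$, which achieves the decay condition with equality; the whole problem then reduces to verifying the single strict inequality $\widehat{\bar{\upnu}}_{\bast e}^{1/\bast d}<\widehat{\bar{\upmu}}_{\bast e}$ in $\bstar\overline{\PR\R}_{\upvarepsilon}$. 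The Frobenius fractional power $\widehat{\bar{\upnu}}_{\bast e}^{1/\bast d}$ is well-defined because $1/\bast d\in (\bast\R_{\rm fin})^{\times}_{+}$ acts as a tropical automorphism (Proposition \ref{FrobConHomeo}), and density of the order (Proposition \ref{FrobDLO}) ensures we have room to adjust $\bar{\upnu}$ if needed.

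Next I would apply the Mahler classification case by case. For an $S$-number, Theorem \ref{S} gives $\widehat{\bar{\upmu}}_{\bast e}=\widehat{\bar{\upnu}}_{\bast e}^{1/\bast e}$, so the target inequality becomes $\widehat{\bar{\upnu}}_{\bast e}^{1/\bast d}<\widehat{\bar{\upnu}}_{\bast e}^{1/\bast e}$. Since $\widehat{\bar{\upnu}}_{\bast e}$ is infinitesimal, this is strict in the Frobenius quotient precisely when $\bast e/\bast d$ is infinite, that is, when $\bast\breve{d}<\bast\breve{e}$. For a $T$- or $U$-number, the stronger strict inequality $\widehat{\bar{\upmu}}_{\bast e}>\widehat{\bar{\upnu}}_{\bast e}^{1/\bast e}$ from the respective classification theorem provides the needed slack: for $\bast d\leq\bast e$ we have
\[
\widehat{\bar{\upnu}}_{\bast e}^{1/\bast d}\;\leq\;\widehat{\bar{\upnu}}_{\bast e}^{1/\bast e}\;<\;\widehat{\bar{\upmu}}_{\bast e},
\]
allowing $\bast\breve{d}=\bast\breve{e}$.

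For the last sub-claim, $\upeta$ a $U$-number of type $\mathfrak{t}$ with $\bast\breve{d}=\bast\breve{e}=\breve{1}$, I would replace the infinite-degree best polynomial by a finite-degree one. The finite-degree clause of Theorem \ref{Utheo} supplies, for every $d\geq\mathfrak{t}$, a best polynomial class $\bast\widehat{g}_{d}\in\bast\tv{Z}(\upeta)_{d}$ satisfying $\widehat{\bar{\upmu}}_{d}>\widehat{\bar{\upnu}}_{d}$. Taking $\bast e:=d$ and noting that any $\bast d\in\breve{1}$ is bounded—so that $\bar{\upnu}^{\bast d}$ collapses to $\bar{\upnu}$ in the Frobenius quotient by Proposition \ref{FrobConHomeo}—the choice $\bar{\upnu}:=\widehat{\bar{\upnu}}_{d}$ verifies both conditions directly.

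The main obstacle is the careful bookkeeping of Frobenius-class powers when $\bast d$ may be infinite: one must confirm that the strict inequalities above are not collapsed by the $(\bast\R_{\rm fin})_{+}^{\times}$-action on $\bstar\PR\R_{\upvarepsilon}$, which is precisely why the $S$-case requires $\bast\breve{d}<\bast\breve{e}$ while the $T$- and $U$-cases tolerate $\bast\breve{d}=\bast\breve{e}$; and one must justify that the symbol $\bar{\upnu}^{\bast d}$ in the subscript is interpreted consistently with the Frobenius arithmetic established in Section \ref{tropical}.
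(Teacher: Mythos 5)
Your proposal is correct and follows essentially the same route as the paper: take a best polynomial class of degree $\bast e$ (or, for the $U$-number case, of finite degree $d\geq\mathfrak{t}$), choose $\bar{\upnu}$ at the left end of the interval $[\,\widehat{\bar{\upnu}}_{\bast e}^{1/\bast d},\widehat{\bar{\upmu}}_{\bast e})$ (the paper allows any point of it), and use the Mahler classification Theorems \ref{S}--\ref{Utheo} to see that this interval is nonempty exactly under the stated hypotheses on $\bast\breve{d},\bast\breve{e}$. One small repair: when $\bast d$ is infinite the exponent $1/\bast d$ is infinitesimal, hence \emph{not} an element of $(\bast\R_{\rm fin})^{\times}_{+}$, so the well-definedness of $\widehat{\bar{\upnu}}_{\bast e}^{1/\bast d}$ on Frobenius classes (and the triviality of bounded powers used in your degree-$\breve{1}$ case) should be justified by Note \ref{infinitesimalpower} together with Theorem \ref{FrobProp}, not by Proposition \ref{FrobConHomeo}.
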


\begin{proof}  Let $\bast  \widehat{g}$ be a best class for $\uptheta$ of infinite degree $\bast e$.  Then 
$\bar{\upmu}_{\bast e}(\bast   \widehat{g})\geq\bar{\upnu}_{\bast e}(\bast   \widehat{g})^{1/\bast e}$ and since
$\bast e $ is infinite, $\bar{\upnu}_{\bast e}(\bast   \widehat{g})^{1/\bast d}<\bar{\upmu}_{\bast e}(\bast   \widehat{g})$ for all $\bast d$ with $\bast \breve{d}<\bast \breve{e}$.  
Then the choice \[ \bar{\upnu}\in [\bar{\upnu}_{\bast e}(\bast   \widehat{g})^{1/\bast d},\bar{\upmu}_{\bast e}(\bast   \widehat{g}))\]
will work.  
If $\uptheta$ is either a $T$-number or a $U$-number and $\bast \breve{d}=\bast \breve{e}$ then $\bar{\upmu}_{\bast d}(\bast   \widehat{g})>\bar{\upnu}_{\bast d}(\bast   \widehat{g})^{1/\bast d}$
and we may select $\bar{\upnu}$ in the nontrivial interval $[\bar{\upnu}_{\bast d}(\bast   \widehat{g})^{1/\bast d},\bar{\upmu}_{\bast d}(\bast   \widehat{g}))$.  If $\uptheta$ is a $U$-number then we use a degree $d$ best class, $d\geq \mathfrak{t}$ = the type of $\upeta$, and we may select $\bar{\upnu}$ in the nontrivial interval 
$[\bar{\upnu}_{\breve{1}}(\bast   \widehat{g}),\bar{\upmu}_{\breve{1}}(\bast   \widehat{g}))$.
\end{proof}

\begin{theo}\label{Uproducts}  Let $\uptheta$ be 
%either a $T$-number of type $\mathfrak{t}>2$ or 
a $U$-number and let $\upeta$ be either a $T$-number or a $U$-number.  Then there exists $\bast d\in\bast \N-\N$
%, $\bast e\in\N$ with $\bast \breve{d}<\bast \breve{e}$ 
and $\bar{\upmu}\in\bstar\overline{\PR\R}_{\upvarepsilon}$ such that $ \uptheta\, {}^{\bast d\!\!}_{\;\;\bar{\upmu}}\!\boxtimes^{\!\!\bast d}_{\,\bar{\upmu}}\,\upeta$.  If both $\uptheta$ and $\upeta$ are $U$-numbers than we may choose $\bast d=d$ finite.
\end{theo}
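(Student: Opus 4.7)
My plan is to apply Lemma \ref{Sflatlemma} to each of $\uptheta,\upeta$ separately to obtain, for each, a nonempty interval of Frobenius classes $\bar{\upmu}$ witnessing nonvanishing of the relevant diagonal approximate ideal group, and then to arrange these two intervals so that they share a common element.

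For the second assertion, where both $\uptheta,\upeta$ are $U$-numbers with Mahler types $\mathfrak{t}_{\uptheta},\mathfrak{t}_{\upeta}$, I take $d=\max(\mathfrak{t}_{\uptheta},\mathfrak{t}_{\upeta})$. The last case of Lemma \ref{Sflatlemma} applied to each yields degree-$d$ best polynomial classes $\bast\widehat{g}^{\uptheta},\bast\widehat{g}^{\upeta}$ together with nonempty intervals $I^{\uptheta}=[\widehat{\bar{\upnu}}^{\uptheta}_{\breve{1}},\widehat{\bar{\upmu}}^{\uptheta}_{\breve{1}})$ and $I^{\upeta}=[\widehat{\bar{\upnu}}^{\upeta}_{\breve{1}},\widehat{\bar{\upmu}}^{\upeta}_{\breve{1}})$ of admissible $\bar{\upmu}$; any $\bar{\upmu}\in I^{\uptheta}\cap I^{\upeta}$ then furnishes the relation $\uptheta\,{}^{d\!\!}_{\;\;\bar{\upmu}}\!\boxtimes^{\!\!d}_{\,\bar{\upmu}}\,\upeta$.

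The central step is to ensure $I^{\uptheta}\cap I^{\upeta}\neq\emptyset$. I would exploit the freedom in the choice of representative height sequences $\{H_i^{\uptheta}\},\{H_i^{\upeta}\}$ underlying the two best polynomial classes: the hypothesis $\mathfrak{e}^{\uptheta}(d)=\mathfrak{e}^{\upeta}(d)=\infty$ says exactly that each of the super-approximation sets $A_N^{\uptheta,d}=\{H\in\N:\mathfrak{e}^{\uptheta}(H,d)>N\}$ and $A_N^{\upeta,d}$ is unbounded in $\N$, and this abundance allows one to choose interlocking subsequences $H_i\in A_i^{\uptheta,d}$ and $K_i\in A_i^{\upeta,d}$ with logarithmic ratios $\log H_i/\log K_i$ confined to a fixed bounded interval of $(0,\infty)$, placing the Frobenius classes $[\bast H^{-1}]$ and $[\bast K^{-1}]$ in a common Frobenius orbit. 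This aligns the upper endpoints of $I^{\uptheta}$ and $I^{\upeta}$, while their lower endpoints are these common upper endpoints raised to infinite hyperreal powers $\bast\mathfrak{e}^{\uptheta}d,\bast\mathfrak{e}^{\upeta}d$, so the intervals extend much further below and their overlap is nontrivial.

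For the first assertion, the same strategy applies with infinite degree: the middle case of Lemma \ref{Sflatlemma} (taking $\bast\breve{d}=\bast\breve{e}$) produces infinite-degree best polynomial classes for both $\uptheta$ (a $U$-number) and $\upeta$ (a $T$- or $U$-number) with corresponding admissible intervals, and the alignment of the infinite best degrees to a common $\bast d$ and of the Frobenius growth classes proceeds exactly as above. The principal obstacle throughout is the alignment step, which is the only point where a careful ultrafilter-theoretic selection of subsequences is required, resting on the unboundedness of the super-approximation sets $A_N^{\cdot,d}$ provided by the $U$- and $T$-number hypotheses.
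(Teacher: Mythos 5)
Your reduction of the problem to making the two admissible intervals $I^{\uptheta}$, $I^{\upeta}$ overlap is fine, but the step you rely on to force the overlap is precisely where the proof is missing, not routine. You assert that because the sets $A_N^{\uptheta,d}=\{H:\mathfrak{e}^{\uptheta}(d,H)>N\}$ and $A_N^{\upeta,d}$ are unbounded, one can select $H_i\in A_i^{\uptheta,d}$ and $K_i\in A_i^{\upeta,d}$ with $\log H_i/\log K_i$ confined to a fixed bounded interval of $(0,\infty)$, so that $[\bast H^{-1}]$ and $[\bast K^{-1}]$ land in the same Frobenius class. Unboundedness alone does not permit such a synchronization: two unbounded subsets of $\N$ can be interleaved so sparsely that no pair of elements (beyond finitely many) has comparable logarithms — for instance if the good heights of $\uptheta$ cluster near $2^{(2k)!}$ and those of $\upeta$ near $2^{(2m+1)!}$, every ratio $\log H/\log K=(2k)!/(2m+1)!$ is either $\leq 1/(2m+1)$ or $\geq 2m+2$ and so eventually escapes every fixed window. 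Nothing in the definitions of $T$- and $U$-numbers rules this kind of behaviour out, so "interlocking subsequences with bounded log-ratio" is an unproved Diophantine assertion about the pair $(\uptheta,\upeta)$, and it is in fact the entire content of the theorem; in the infinite-degree case you additionally assume, without argument, that the two numbers admit best classes of a common infinite degree $\bast d$.

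The paper's proof is organized precisely so as not to need two independently chosen best classes. It fixes a best class $\bast\widehat{g}$ only for $\upeta$ (via Lemma \ref{Sflatlemma}), sets $\bast d=\bast e$ and $\bast H=\mathfrak{h}(\bast\widehat{g})$, and then for $\uptheta$ takes $\bast f$ minimizing $|\bast f(\uptheta)|$ among $\bast f\in\bast\tilde{\tv{Z}}_{\bast d}$ with $\mathfrak{h}(\bast f)\leq\bast H$. By construction $\bar{\upmu}_{\bast d}(\bast f)\geq[\bast H^{-1/\bast d}]=\bar{\upmu}_{\bast d}(\bast\widehat{g})$, so the growth alignment you are trying to engineer by hand is automatic; the $U$-hypothesis on $\uptheta$ is used only to arrange that the exponent $\bast\mathfrak{e}=\mathfrak{e}(\bast d,\bast H)$ is infinite (re-choosing the height sequence of $\upeta$'s best class if necessary), which forces $\bar{\upnu}_{\bast d}(\bast f)^{1/\bast d}<\bar{\upmu}_{\bast d}(\bast\widehat{g})$ and hence yields a common $\bar{\upmu}$ in $[\bar{\upnu}_{\bast d}(\bast f)^{1/\bast d}+\bar{\upnu}_{\bast d}(\bast\widehat{g})^{1/\bast d},\,\bar{\upmu}_{\bast d}(\bast\widehat{g}))$. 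If you want to salvage your two-interval strategy, you must either prove the height-synchronization claim (not merely cite "abundance") or replace one of the two independent best classes by a height-constrained minimizer as the paper does.
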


\begin{proof}  %If $\uptheta$ is $T$, 
Choose $\bast d=\bast e$ and $ \bast  \widehat{g}$ with regard to $\upeta$ as in Lemma \ref{Sflatlemma}.
Since $\uptheta$ is a $U$-number, for $d$ sufficiently large, the exponent $\mathfrak{e}(d,H)$ tends to $\infty$
as $H\rightarrow\infty$.  Now let $\bast H:=\mathfrak{h}(\bast  \widehat{g})$; thus
 $[\bast H^{-1/\bast d}]=\bar{\upmu}_{\bast d}(\bast \widehat{g})$.  Let  $\bast f$
minimize $|\bast f(\uptheta )|$ amongst $\bast f \in\bast\tilde{\tv{Z}}_{\bast d}$ having $\mathfrak{h}(\bast f)\leq \bast H$.
Let $\bast\mathfrak{e}:=\mathfrak{e}(\bast d,\bast H)$; then
\[ \bar{\upmu}_{\bast d}(\bast f)\geq \bar{\upmu}_{\bast d}(\bast  \widehat{g}) = \bar{\upnu}_{\bast d}(\bast f)^{1/(\bast d\cdot\bast\mathfrak{e} )} \]
(the last equality follows from the defining equation $|\bast f(\uptheta )|^{1/\bast d}=\bast H^{-\bast\mathfrak{e}}$ of $\bast\mathfrak{e}$).
If $\bast \mathfrak{e}$ is infinite, we are done, for then
\[ \bar{\upnu}(\bast f)^{1/\bast d}<\bar{\upmu}_{\bast d}(\bast  \widehat{g}) \]
so taking 
\[ \bar{\upmu}\in [\bar{\upnu}(\bast f)^{1/\bast d}+\bar{\upnu}(\bast g)^{1/\bast d},\bar{\upmu}_{\bast d}(\bast  \widehat{g}))\] will 
give the relation $ \uptheta\, {}^{\bast d\!\!}_{\;\;\bar{\upmu}}\!\boxtimes^{\!\!\bast d}_{\,\bar{\upmu}}\,\upeta$.  If not, we choose
another best class $\bast  \widehat{g}$ having the same degree for which the associated height sequence $\bast H$ produces
an infinite $\bast\mathfrak{e}$: since $\uptheta$ is a $U$-number, such a sequence can always be produced.  
The argument is identical if $\upeta$ is also a $U$-number and the degree is chosen finite and large enough.
%choose $\bar{\upmu}_{\bast d}(\bast f)
%Choose $\bast d\in\bast N-\N$ a best degree associated to $\bast f$ so that for some  $\bast\mathfrak{t}>2$, 
%is realized; otherwise, if $\uptheta$ is $U$, choose $\bast d$ a best degree so that 
%$\bar{\upmu}_{\bast d} (\bast f)>\bar{\upnu}_{\bast d}(\bast f)^{1/\bast d^{\bast \mathfrak{t}-1}}$.  If $\uptheta$ is $T$, we may assume that 
%$\bast\mathfrak{t}\simeq\mathfrak{t}$ and if $\uptheta$ is $U$ we can take
%$\bast\mathfrak{t}$ infinite.  Now choose $\bast e$ with 
%\[\bast \breve{d}<\bast \breve{e}<  \bast \breve{d}^{\bast\mathfrak{t}-1}\]
%which can be done since $\bast \breve{d}$ is the class of bounded {\it multiples} of $\bast d$, so that 
%$\bast \breve{d}< \bast \breve{d}^{\bast\mathfrak{t}}$.  Choose a sequence class of integral heights $\bast H=\bast \{H_{i}\}$
%so that if $\bast g$ is the associated $\upeta$ best class then  $[\bast H^{1/\bast e}]=\bar{\upmu}_{\bast e}(\bast g)=\bar{\upmu}_{\bast d}(\bast f)$.  Take 
%$\bar{\upnu}\in [\bar{\upnu}_{\bast e}(\bast g)^{1/\bast d},\bar{\upmu}_{\bast e}(\bast g))$.
%$e\in\N$ and $\bar{\upmu}\in\bstar\overline{\PR\R}_{\upvarepsilon}$ be as in Lemma \ref{Sflatlemma}
%and $e>\mathfrak{e}=$ the exponent of $\upeta$.
\end{proof}

\begin{coro}  There exist nontrivial ordinary and Frobenius polynomial approximate ideal products, of both infinite degree and finite degree $>1$.  \end{coro}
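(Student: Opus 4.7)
The plan is to deduce this Corollary directly from Theorem \ref{Uproducts}, together with the nonemptiness of the $T$- and $U$-classes (Liouville numbers are $U$-numbers of type 1, and $T$-numbers are known to exist, see \cite{Bu}).

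For the infinite-degree case, I would apply the first assertion of Theorem \ref{Uproducts} to a pair $(\uptheta,\upeta)$ in which $\uptheta$ is a $U$-number and $\upeta$ is either a $T$- or a $U$-number. This produces an infinite $\bast d\in\bast\N-\N$ and a class $\bar{\upmu}\in\bstar\overline{\PR\R}_{\upvarepsilon}$ with $\uptheta\, {}^{\bast d\!\!}_{\;\;\bar{\upmu}}\!\boxtimes^{\!\!\bast d}_{\,\bar{\upmu}}\,\upeta$, which is exactly the assertion that the Frobenius product of Theorem \ref{FrobInfDegIdeoArith} is nontrivial on the prescribed indices. For the finite-degree-$>1$ case, I would instead use the final sentence of Theorem \ref{Uproducts}, taking both $\uptheta$ and $\upeta$ to be $U$-numbers (of respective types $\mathfrak{t}_1$, $\mathfrak{t}_2$); since the theorem permits any sufficiently large finite $d \geq \max(\mathfrak{t}_1,\mathfrak{t}_2)$, one is free to take $d>1$, producing a nontrivial finite-degree Frobenius product with $d>1$.

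To pass from the Frobenius product to an ordinary one, I would track the actual polynomials produced in the proof of Theorem \ref{Uproducts}. That proof constructs explicit $\bast f\in\bast\tilde{\tv{Z}}(\uptheta)_{\bast d}$ and $\bast\widehat{g}\in\bast\tilde{\tv{Z}}(\upeta)_{\bast d}$ with explicit normalized growth values $\upmu_{\bast d}(\bast f)$, $\upmu_{\bast d}(\bast\widehat{g})$ and decay values dominated by a common $\bar{\upnu}$-representative. Choosing $\upmu\in\bar{\upmu}$ with $\upmu < \min(\upmu_{\bast d}(\bast f),\upmu_{\bast d}(\bast\widehat{g}))$ and setting $\upnu=\upmu$, one checks that $\bast f$ and $\bast\widehat{g}$ lie in the respective ordinary groups $\bast\tilde{\tv{Z}}^{\upmu}_{(\upmu^{1-\bast d^{-1}}\cdot\upnu)^{\bast d}}(\uptheta )_{\bast d}$ and $\bast\tilde{\tv{Z}}^{\upnu}_{(\upmu\cdot\upnu^{1-\bast d^{-1}})^{\bast d}}(\upeta )_{\bast d}$ appearing in Theorem \ref{semivirtualarith}, so the ordinary product is nontrivial as well. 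The same construction works both in the infinite-degree case and in the finite-degree-$>1$ case.

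The main (mild) obstacle will be the bookkeeping required to verify that the ordinary indices attached to $\bast f$ and $\bast\widehat{g}$ do lie in the $\bar{\upmu},\bar{\upnu}$ classes that Theorem \ref{Uproducts} delivers, and that these ordinary indices satisfy the inequalities demanded by Theorem \ref{semivirtualarith}; however, in view of Note \ref{ordinarytofrobenspectra} and the fact that the bounds in the proof of Theorem \ref{Uproducts} were already quantitative ordinary bounds before being projected to Frobenius classes, this amounts only to recording the estimates that have already been made, rather than proving anything substantively new.
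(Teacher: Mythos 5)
Your proposal is correct and takes essentially the same route as the paper: the paper also deduces the corollary from Theorem \ref{Uproducts}, which furnishes nontrivial Frobenius products in both infinite and (sufficiently large) finite degree. The only difference is in the passage to ordinary products, where the paper does not revisit the construction of Theorem \ref{Uproducts} but simply notes that $\uptheta\, {}^{\bast d\!\!}_{\;\;\bar{\upmu}}\!\boxtimes^{\!\!\bast d}_{\,\bar{\upmu}}\,\upeta$ implies $ \uptheta\, {}^{\bast d\!\!}_{\;\;\upmu}\!\boxtimes^{\!\!\bast d}_{\,\upmu}\,\upeta$ for every representative $\upmu\in\bar{\upmu}$, so the explicit bookkeeping you describe, while sound, is not needed.
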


\begin{proof}  By Thereom \ref{Uproducts} there exist non trival Frobenius products of both finite and infinite degree.  But 
$\uptheta\, {}^{\bast d\!\!}_{\;\;\bar{\upmu}}\!\boxtimes^{\!\!\bast d}_{\,\bar{\upmu}}\,\upeta$ implies $ \uptheta\, {}^{\bast d\!\!}_{\;\;\upmu}\!\boxtimes^{\!\!\bast d}_{\,\upmu}\,\upeta$ for all $\upmu\in\bar{\upmu}$.
\end{proof}

\end{document}